\newtheorem{thm}{Theorem}[section]
\newtheorem{THM}{Theorem}
\newtheorem{cor}[thm]{Corollary}
\newtheorem{prop}[thm]{Proposition}
\newtheorem{lemma}[thm]{Lemma}
\theoremstyle{definition}
\newtheorem{definition}[thm]{Definition}
\newtheorem{remark}[thm]{Remark}
\newtheorem{example}[thm]{Example}
\newtheorem{question}[thm]{Question}
\DeclareMathOperator{\Pic}{Pic}
\DeclareMathOperator{\Pict}{Pic^{\tau}}
\DeclareMathOperator{\sing}{Sing}
\DeclareMathOperator{\Aff}{Aff}
\DeclareMathOperator{\Alb}{Alb}
\DeclareMathOperator{\alb}{alb}
\DeclareMathOperator{\utype}{utype}
\DeclareMathOperator{\ord}{ord}
\DeclareMathOperator{\Spec}{Spec}
\def\hot{\mathrm{h.o.t.}}
\def\Diff{\mathrm{Diff}(\mathbb C,0)}
\def\Difffor{\widehat{\mathrm{Diff}}(\mathbb C,0)}
\def\C{\mathbb C}
\def\Q{\mathbb Q}
\def\F{\mathcal F}
\def\X0{X^{\circ}}
\def\Y0{Y^{\circ}}
\numberwithin{equation}{section}       
\begin{document}

\title[Compact Leaves]
{Compact leaves of codimension one holomorphic foliations on projective manifolds}

\author[B. Claudon]{Beno\^{\i}t CLAUDON}
\address{IECL, Universit\'e de Lorraine, BP 70239, 54506 Vand{\oe}uvre-l\`es-Nancy Cedex, France}
\email{benoit.claudon@univ-lorraine.fr}

\author[F. Loray]{Frank Loray}
\address{Univ Rennes, CNRS, IRMAR - UMR 6625, F-35000 Rennes, France}
\email{frank.loray@univ-rennes1.fr}

\author[J.V. Pereira]{Jorge Vit\'{o}rio PEREIRA}
\address{IMPA, Estrada Dona Castorina, 110, Horto, Rio de Janeiro,
Brasil}
\email{jvp@impa.br}

\author[F. Touzet]{Fr\'ed\'eric Touzet}
\address{Univ Rennes, CNRS, IRMAR - UMR 6625, F-35000 Rennes, France}
\email{frederic.touzet@univ-rennes1.fr}

\subjclass{37F75; 14J70; 14B20}
\keywords{Codimension one holomorphic foliations; Compact leaves; Holonomy representations; Ueda theory; Transversely affine foliations.}

\thanks{This work was initiated during a research stay of the first author at IMPA. He would like to warmly thank the institute for its hospitality and excellent working conditions. The third author is supported by CNPq and FAPERJ,
and BC, FL and FT benefit from support of CNRS and ANR-16-CE40-0008 project ``Foliage''}

\begin{abstract}
This article studies  codimension one foliations on projective manifolds having a compact leaf
(free of singularities). It explores the interplay between Ueda theory (order of flatness of the normal bundle) and  the holonomy representation (dynamics of the foliation in the transverse direction). We address in particular the following problems: existence of foliation having as a leaf a given hypersurface with topologically torsion normal bundle,
global structure of foliations having a compact leaf whose holonomy is abelian (resp. solvable), and factorization results.
\end{abstract}

\maketitle

\selectlanguage{french}
\begin{center}
\textbf{FEUILLES COMPACTES DES FEUILLETAGES HOLOMORPHES\\
\vspace*{3pt}DE CODIMENSION 1 SUR LES VARI\'ET\'ES PROJECTIVES}
\end{center}
\begin{abstract}
Cet article étudie les feuilletages de codimension 1 sur les variétés projectives admettant une feuille compacte (ne rencontrant pas le lieu singulier du feuilletage). Les interactions entre la théorie de Ueda (ordre de platitude du fibré normal de la feuille) et la représentation d'holonomie (dynamique du feuilletage dans la direction transverse) sont explorées. Nous envisageons en particulier les problématiques suivantes : existence de feuilletages admettant pour feuille une hypersurface donnée possédant un fibré normal topologiquement de torsion, étude de la structure globale des feuilletages ayant une feuille compacte d'holonomie abélienne (resp. résoluble) et résultats de factorisations.
\end{abstract}

\setcounter{tocdepth}{1}
\sloppy
\tableofcontents

\section{Introduction}

 Let $X$ be a complex manifold and $\mathcal F$ a  codimension one singular holomorphic foliation on it.
If $Y$ is a compact leaf of $\mathcal F$ (the foliation is regular along $Y$) then the topology/dynamics
of $\mathcal F$ near $Y$ is determined by the holonomy  of $Y$. Given a base point $p \in Y$ and
a germ of  transversal $(\Sigma,p) \simeq (\mathbb C,0)$ to $Y$ at $p$, we can lift paths on $Y$ to nearby leaves in order to obtain
the holonomy representation of $\mathcal F$ along $Y$
\[
 \rho : \pi_1(Y,p) \longrightarrow \Diff \, .
\]

 The purpose of this article is  to investigate which
representations as above can occur as the holonomy representation
of a compact leaf of a codimension one foliation on a compact K\"{a}hler
manifold, and how they influence the geometry of the foliation $\mathcal F$.

\subsection{Previous results}\label{sec:previousresult}
To the best of our knowledge the  first work to  explicitly study compact leaves of foliations on compact
 manifolds is due to Sad \cite{Sad}.
One of his main results,  \cite[Theorem 1]{Sad}, is stated below.

\begin{thm}\label{T:Sad}
Let $C_0\subset \mathbb P^2$ be a smooth curve of degree $d\ge 3$ and $P$ be a set of $d^2$  very general points
on $C_0$. If $S$ is the blow-up of $\mathbb P^2$ along $P$  and $C$ is the strict transform of $C_0$ in $S$
then $C$ is not a compact leaf of any foliation on $S$.
\end{thm}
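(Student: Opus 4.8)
The plan is to rephrase the hypothesis ``$C$ is a leaf'' as a linear equivalence among the $d^2$ points, which a very general configuration cannot satisfy, after first disposing of the single ``resonant'' normal bundle for which this equivalence would be automatic. To set up the bookkeeping: the strict transform $C$ is isomorphic to $C_0$, hence has genus $g=\binom{d-1}{2}\ge 1$ (this is where $d\ge 3$ enters), and in $\Pic(S)$ one has $C=dH-\sum_{i=1}^{d^2}E_i$, where $H=\pi^*\mathcal O_{\mathbb P^2}(1)$ and the $E_i$ are the exceptional divisors of $\pi\colon S\to\mathbb P^2$; in particular $C^2=0$, so $N_{C/S}$ has degree $0$. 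Restricting line bundles to $C_0$, write $h=\mathcal O_{C_0}(1)$ and let $p_i\in C_0$ be the point where $C$ meets $E_i$; then $N_{C/S}\cong \mathcal O_{C_0}(d\,h-\sum_i p_i)$, where $h$ also denotes a hyperplane divisor. Now suppose $\mathcal F$ is a foliation on $S$ having $C$ as a leaf, with $N_{\mathcal F}$ its normal bundle and $\omega\in H^0(S,\Omega^1_S\otimes N_{\mathcal F})$ a reduced twisted $1$-form defining it. Since $\mathcal F$ is nonsingular along $C$ and leaves $C$ invariant, $\omega|_C\colon T_S|_C\to N_{\mathcal F}|_C$ is surjective with kernel $T_C$, hence $N_{\mathcal F}|_C\cong N_{C/S}$; writing $N_{\mathcal F}=aH+\sum_i c_iE_i$ with $a,c_i\in\mathbb Z$, this becomes the linear equivalence
\[
(d-a)\,h\ \sim\ \sum_{i=1}^{d^2}(c_i+1)\,p_i \qquad\text{on }C_0 .
\]

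I would then distinguish two cases. If $c_i=-1$ for all $i$, comparing degrees (equivalently, using $N_{\mathcal F}\cdot C=0$) forces $a=d$, so $N_{\mathcal F}=\mathcal O_S(C)$; then $\omega$ is a section of $\Omega^1_S(C)$, and since $C$ is a reduced invariant curve one checks locally (write $\omega=\eta/f_C$ with $\eta$ holomorphic and $\eta\wedge df_C\in(f_C)$, so $\omega=u\,\tfrac{df_C}{f_C}+\beta$) that $\omega$ actually lies in $H^0(S,\Omega^1_S(\log C))$. But the residue sequence $0\to\Omega^1_S\to\Omega^1_S(\log C)\to\mathcal O_C\to 0$, together with $H^0(S,\Omega^1_S)=0$ (as $S$ is rational) and the fact that the connecting map sends $1\in H^0(\mathcal O_C)$ to $c_1(\mathcal O_S(C))\ne 0$ in $H^1(S,\Omega^1_S)$, gives $H^0(S,\Omega^1_S(\log C))=0$, a contradiction. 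Otherwise $c_j\ne-1$ for some $j$, and then the morphism $C_0^{\,d^2}\to\Pic(C_0)$ sending $(p_1,\dots,p_{d^2})$ to $\mathcal O_{C_0}(\sum_i(c_i+1)p_i)$ is nonconstant (here the inequality $g\ge 1$ is used), so the set of configurations satisfying the linear equivalence above is a proper Zariski-closed subset $Z(a,c)\subsetneq C_0^{\,d^2}$.

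To conclude: $\Pic(S)\cong\mathbb Z^{1+d^2}$ is countable, so there are only countably many admissible classes $N_{\mathcal F}=aH+\sum_i c_iE_i$ and hence only countably many of the subvarieties $Z(a,c)$, none of which equals $C_0^{\,d^2}$; their union therefore does not cover $C_0^{\,d^2}$. A set $P$ of $d^2$ very general points of $C_0$ avoids $\bigcup Z(a,c)$, and by the argument above $C$ is then a leaf of no foliation on $S$. I expect the main obstacle to be the resonant case $N_{\mathcal F}=\mathcal O_S(C)$: one must verify both that a reduced defining form with this normal bundle leaving $C$ invariant is genuinely logarithmic along $C$, and that $H^0(S,\Omega^1_S(\log C))=0$ — the latter ultimately because the class of $C$ is nonzero in $H^2(S,\mathbb C)$.
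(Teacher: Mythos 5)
Your argument is correct, but note that the paper itself contains no proof of this statement: Theorem \ref{T:Sad} is quoted verbatim from Sad's article \cite{Sad} (his Theorem 1), so there is no internal proof to measure your write-up against. What you give is a clean, self-contained argument in the same spirit as the known obstruction: the key identity $N_{\mathcal F}|_C\cong N_{C/S}$ for a compact leaf, the decomposition $\Pic(S)=\mathbb Z H\oplus\bigoplus_i\mathbb Z E_i$, and the observation that for fixed $(a,c_i)$ not all equal to the resonant values the condition $(d-a)h\sim\sum_i(c_i+1)p_i$ cuts out a proper Zariski-closed subset of $C_0^{d^2}$ (here $g\ge 1$, i.e. $d\ge 3$, is exactly what makes the Abel--Jacobi type map nonconstant), so a very general configuration — meaning one avoiding the countable union of these loci — escapes all of them. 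Your treatment of the one class that survives for every configuration, $N_{\mathcal F}=\mathcal O_S(C)$, is also sound: invariance of the reduced smooth curve $C$ forces the defining form to be logarithmic along $C$ (the local division $\eta=u\,df_C+f_C\beta$ is valid since $C$ is smooth), and the residue sequence $0\to\Omega^1_S\to\Omega^1_S(\log C)\to\mathcal O_C\to 0$ together with $H^0(S,\Omega^1_S)=0$ and the injectivity of the connecting map $1\mapsto c_1(\mathcal O_S(C))\neq 0$ kills $H^0(S,\Omega^1_S(\log C))$. I see no gap; the only points worth making explicit in a final version are that the singular set of $\mathcal F$ misses $C$ (so that $T_{\mathcal F}|_C=T_C$ and the restriction isomorphism of normal bundles is legitimate) and that ``very general'' is being used in the standard sense of lying outside a countable union of proper closed subsets of $C_0^{d^2}$ (or of $\mathrm{Sym}^{d^2}C_0$, which amounts to the same).
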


This result answered negatively a question of Demailly, Peternell and Schneider \cite[Example 2]{DPS96} about the existence
of foliations having a compact leaf on the blow-up of $\mathbb P^2$ along $9$ very general points. Sad (loc. cit.) obtained other results
for the blow-up of smooth cubics which were later subsumed by the following result of Brunella  build
upon
the classification of foliations according to their  Kodaira dimension, see \cite[Chapter 9, Corollary 2]{Brunella}.

\begin{thm}
Let $\mathcal F$ be a foliation on a projective surface $S$ and suppose $\mathcal F$ admits an elliptic curve $E$
as a compact leaf. Then, either $E$ is a (multiple) fiber of an elliptic fibration or, up to ramified coverings and
birational maps, $E$ is a section of a $\mathbb P^1$-bundle. In the former case, $\mathcal F$ is the elliptic fibration
itself, or is turbulent with respect to it; and in the latter case, $\mathcal F$ is a Riccati foliation.
\end{thm}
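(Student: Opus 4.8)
The plan is to pass to a relatively minimal model of $\F$, constrain its numerical invariants by applying the Camacho--Sad and adjunction formulas along $E$, and then invoke the classification of foliations on projective surfaces by their Kodaira dimension. Reduction to a relatively minimal model costs nothing here: an $\F$-exceptional curve is $\F$-invariant, hence disjoint from $E$ (a point of $E$ lying on an invariant curve would be a singularity of $\F$, contradicting that $\F$ is regular along $E$), and the blow-ups needed to make $\F$ reduced are centered at $\sing(\F)$, again away from $E$; so after finitely many such birational modifications---all permitted by the statement, as are ramified coverings---we may assume $\F$ relatively minimal with $E$ still a smooth elliptic leaf.

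Since $\sing(\F)\cap E=\emptyset$, the Camacho--Sad index theorem yields $E^2=\sum_{p\in\sing(\F)\cap E}\mathrm{CS}(\F,E,p)=0$; and adjunction along the leaf $E$ identifies $K_{\F}|_E$ with $K_E$ and $N_{\F}|_E$ with $N_{E/S}=\mathcal O_E(E)$, so $K_{\F}\cdot E=0$ and $N_{\F}|_E$ has degree zero. (The topological triviality of $N_{\F}|_E$ is, of course, where Ueda theory enters in the later sections.) This already disposes of the extreme Kodaira dimensions. If $\mathrm{kod}(\F)=2$, write the Zariski decomposition $K_{\F}=P+N$ with $P$ nef, $P^2>0$, and $N\geq 0$ supported on a negative definite configuration; since $P\cdot E\geq 0$, the relation $0=K_{\F}\cdot E=P\cdot E+N\cdot E$ forces either $E\subseteq\mathrm{Supp}(N)$, hence $E^2<0$, or $P\cdot E=0$, hence again $E^2<0$ by the Hodge index theorem ($E$ being a nonzero effective divisor)---both contradicting $E^2=0$. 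If $\mathrm{kod}(\F)=-\infty$, the classification shows that $\F$ is birationally a rational fibration or a Riccati foliation over $\P^1$ with at most one invariant fiber, and in both cases every compact leaf is rational. Hence $\mathrm{kod}(\F)\in\{0,1\}$.

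It then remains to run through the relatively minimal foliations of Kodaira dimension $0$ and $1$ and to locate $E$ in each family. Suppose first that $\F$ is Riccati relative to a rational fibration $\pi\colon S\to B$; a compact leaf of such a foliation either lies in a (necessarily rational) invariant fiber or dominates $B$, so $E$ dominates $B$, and the vanishing $0=K_{\F}\cdot E=\deg\!\big(K_B\otimes\mathcal O_B(D)\big)\cdot\deg(\pi|_E)$ forces $\deg(K_B\otimes\mathcal O_B(D))=0$ (where $D$ is the divisor of invariant fibers), so in fact $\mathrm{kod}(\F)=0$; a ramified base change along the normalization of $E$ then makes $E$ a section of a $\P^1$-bundle with $\F$ still Riccati, which is the second alternative of the statement. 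Suppose next that $\F$ is turbulent relative to a fibration, or is itself a fibration: the leaf $E$ must then be a (possibly multiple) invariant fiber, and its being elliptic forces that fibration to be an elliptic fibration, so $\F$ is the elliptic fibration or is turbulent with respect to it, which is the first alternative. The only remaining $\mathrm{kod}(\F)=0$ models are the linear foliations on complex tori and their quotients by finite groups, and such a foliation has a compact leaf precisely when its slope is rational, in which case it is an elliptic fibration. In every case one of the two asserted structures occurs.

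The step I expect to be the main obstacle is this last one: it presupposes the full classification of relatively minimal foliations with $\mathrm{kod}(\F)\leq 1$, and within each family on that list one must pinpoint where a compact elliptic leaf can lie and which ramified covering or birational map brings $\F$ into one of the two normal forms. The numerical input $E^2=0$, $K_{\F}\cdot E=0$ is exactly what makes each individual case tractable---it is what rules out Riccati foliations of positive Kodaira dimension and what forces the slope to be rational---but assembling the cases into the clean dichotomy of the statement is where the real work lies.
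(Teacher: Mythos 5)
First, a point of comparison: the paper does not prove this statement at all; it quotes it from Brunella's book (Chapter 9, Corollary 2), recording only that the proof rests on the classification of foliations by Kodaira dimension. Your sketch therefore follows the same route as the cited proof, and its preparatory steps are correct: the reduction to a relatively minimal model does not disturb $E$, Camacho--Sad gives $E^2=0$, adjunction along the leaf gives $K_{\F}\cdot E=0$ and $\deg N_{\F}|_E=0$, and the Zariski decomposition/Hodge index argument does exclude $\mathrm{kod}(\F)=2$.

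The genuine gap lies in the remaining Kodaira dimensions, exactly the part you flag as \emph{the main obstacle}. Your statement of the $\mathrm{kod}(\F)=-\infty$ classification is incorrect: by the rationality criterion ($K_{\F}$ not pseudoeffective) together with McQuillan's theorem ($K_{\F}$ pseudoeffective and $\mathrm{kod}=-\infty$), a relatively minimal foliation of Kodaira dimension $-\infty$ is either a rational fibration or a Hilbert modular foliation; it is not ``a rational fibration or a Riccati foliation over $\P^1$ with at most one invariant fiber''. The Hilbert modular case is thus simply missing, and excluding an elliptic singularity-free leaf there needs an argument (for instance, $P\cdot E=0$ together with $P^2=E^2=0$ forces $E$ to be numerically proportional to the positive part $P$ of $K_{\F}$, which must then be contradicted using the structure of these foliations, whose invariant algebraic curves are rational); none of this appears in the sketch. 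In the $\mathrm{kod}\in\{0,1\}$ discussion there is a further glossed step: for a turbulent foliation you assert that the leaf ``must be a (possibly multiple) invariant fiber'', but a priori a compact leaf could be a multisection of the adapted elliptic fibration. One needs to observe that a singularity-free compact leaf meets no $\F$-invariant curve other than itself (hence avoids every invariant fiber), and then to handle separately the case where $\F$ is transverse to all fibers, in which case $\F$ turns out to be itself an elliptic fibration with $E$ among its fibers. Likewise the residual list of $\mathrm{kod}=0$ models is asserted rather than taken from the actual classification. So the proposal reproduces the intended strategy, but as written it is not yet a proof of the dichotomy: the decisive classification input is partly misquoted and the concluding case analysis is announced rather than carried out.
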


Bott's partial connection induces a flat connection on the normal bundle of compact leaves of foliations. Hence, our
subject is also closely related to the study of smooth divisors with numerically trivial normal bundle on
projective or compact K\"{a}hler manifolds. Serre constructed examples of curves on ruled surfaces with trivial normal bundle,
having  Stein complement but without non constant regular (algebraic) function \cite[Chapter VI, Example 3.2]{Hartshorne}.
Precisely, given an elliptic curve $C$, consider the unique unsplit extension $0\to \mathcal O_C\to E\to \mathcal O_C\to 0$;
on the total space $X=\mathbb P(E)$, the curve $Y\simeq C$ defined by the embedding $\mathcal O_C\to E$
provides us with such an example.
Motivated
by these examples, Hartshorne asked if the complement of a curve $C$ on a projective surface $S$ is Stein whenever $C^2 \ge 0$ and $S-C$
contains no complete curves \cite[Chapter VI, Problem 3.4]{Hartshorne}. This question was answered negatively by Ogus,
who exhibits examples
of  rational surfaces \cite[Section 4]{Ogus} containing elliptic curves with numerically trivial normal bundle, without complete curves in its
complement, and which are not Stein. Both  Ogus and Serre examples carry global foliations smooth along the  curves. A
variation on Ogus example is presented as   Example \ref{exampleOgusFred} below.

Ueda carried on the study of smooth curves on surfaces with numerically trivial normal bundle, looking for obstructions
to the existence of certain kind of foliations smooth along the curves. The {Ueda type} $k\in\{1,2,3,\ldots,\infty\}$ of $Y$ ($\utype (Y)$ for short) is, roughly speaking, defined by the first infinitesimal neighborhood of $Y$ for which the flat unitary foliation of $NY$
does not extend as a foliation with linearizable holonomy. See Section \ref{S:utype} for a precise definition.
There, we also review some of the results of Ueda and Neeman following \cite{Ueda} and \cite{Neeman}.



\subsection{Existence} Although most of our results deal with the global setting, where $X$ is assumed
to be projective or compact K\"ahler, we first prove the existence of a formal foliation in the semi-local setting.

\begin{THM}\label{THM:formalfoliation}
Let $Y\subset X$ a smooth curve embedded in a germ of surface $X$ such that $Y^2=0$. If one denotes by $Y(\infty)$ the formal completion of $X$ along $Y$, then
$Y$ is a leaf of a (formal) regular foliation $\hat{\F}$ on $Y(\infty)$.
\end{THM}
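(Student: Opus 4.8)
\emph{The plan.} I would realize $\hat{\F}$ as a compatible system of regular foliations: for each $k\ge 1$, a regular foliation $\F_k$ on the $k$-th infinitesimal neighborhood $Y(k)$ of $Y$ in $X$, admitting $Y$ as a leaf and with $\F_k$ restricting to $\F_{k-1}$; since $Y(\infty)$ is the direct limit of the $Y(k)$, such a system amounts to a formal regular foliation on $Y(\infty)$ with leaf $Y$. On a chart $(x_i,y_i)$ with $Y=\{x_i=0\}$, a regular foliation having $Y$ as a leaf is cut out by a $1$-form $dx_i+x_i\eta_i$; absorbing the $dx_i$-component of $\eta_i$ into a unit, we may normalize it to $\omega_i=\tfrac{dx_i}{x_i}+\theta_i$ with $\theta_i$ holomorphic. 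Integrability is automatic here (on a surface $\omega\wedge d\omega$ is a $3$-form, hence vanishes), so the whole problem is to choose the $\theta_i$ order by order in the $x_i$ so that the $\omega_i$ agree projectively on overlaps --- an obstruction problem over $Y$.

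\emph{Base case.} First I would treat $k=1$. Writing $\{g_{ij}\}$ for the transition functions of $N_Y$ and expanding the condition $\omega_i\wedge\omega_j=0$ to first order, one finds that the leading terms are forced to satisfy $\theta_i|_Y-\theta_j|_Y=-d\log g_{ij}$, i.e. to define a holomorphic connection on $N_Y$. The obstruction to the existence of such a connection is the Atiyah class of $N_Y$ in $H^1(Y,\Omega^1_Y)$, which on the curve $Y$ corresponds to $\deg N_Y=Y^2$ under the canonical isomorphism $H^1(Y,\Omega^1_Y)\simeq\C$; so the hypothesis $Y^2=0$ is precisely what makes the construction possible at order $1$. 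I would then fix once and for all a holomorphic connection on $N_Y$, taking the one with finite monodromy when $N_Y$ is a torsion point of $\Pic^0(Y)$.

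\emph{Inductive step.} This is the heart of the argument. Assuming $\F_{k-1}$ already built ($k\ge 2$), a direct --- if somewhat lengthy --- computation on the overlaps, in the same spirit as the first-order one, shows that the obstruction to extending $\F_{k-1}$ to a regular foliation $\F_k$ on $Y(k)$ with leaf $Y$ lies in $H^1(Y,N_Y^{-(k-1)}\otimes\Omega^1_Y)$, and in fact in the image of the map $H^1(Y,N_Y^{-(k-1)})\to H^1(Y,N_Y^{-(k-1)}\otimes\Omega^1_Y)$ induced by the flat connection on $N_Y^{-(k-1)}$ coming from the one fixed above; here I would invoke Ueda's computations as recalled in Section~\ref{S:utype}. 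Now $H^1(Y,N_Y^{-(k-1)}\otimes\Omega^1_Y)$ is Serre-dual to $H^0(Y,N_Y^{k-1})$, hence vanishes unless $N_Y^{k-1}\simeq\mathcal O_Y$; and if $N_Y^{k-1}\simeq\mathcal O_Y$ --- so that $N_Y$ is torsion and, by our choice, $N_Y^{-(k-1)}$ is trivial as a flat bundle --- the map above becomes the map $H^1(Y,\mathcal O_Y)\to H^1(Y,\Omega^1_Y)$ induced by $d\colon\mathcal O_Y\to\Omega^1_Y$, which vanishes because the Hodge--de Rham spectral sequence of the compact Riemann surface $Y$ degenerates at $E_1$ (equivalently, $\dim_{\C}H^1_{\mathrm{dR}}(Y)=h^1(Y,\mathcal O_Y)+h^0(Y,\Omega^1_Y)$). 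In both cases the obstruction vanishes, so $\F_k$ exists; passing to the limit over $k$ then produces $\hat{\F}$.

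\emph{The main obstacle.} The delicate step is the overlap computation itself: pinning down the obstruction class inside $H^1(Y,N_Y^{-(k-1)}\otimes\Omega^1_Y)$, exhibiting it in the image of the connection-induced differential (for which one leans on the analysis of Section~\ref{S:utype}), and arranging the successive $\F_k$ so that they remain mutually coherent. When $N_Y$ is non-torsion all the groups $H^1(Y,N_Y^{-(k-1)}\otimes\Omega^1_Y)$ vanish outright and nothing beyond the first step is needed; the genuine content of the statement is thus concentrated in the case where $N_Y$ is a torsion point of $\Pic^0(Y)$ --- exactly the range in which Ueda theory is non-trivial.
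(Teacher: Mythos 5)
Your framework and even your obstruction group are essentially the right ones: extending $\F_{k-1}$ from $Y(k-1)$ to $Y(k)$ is obstructed by a class in $H^2(Y,\Sigma_{-(k-1)})$, where $\Sigma_{-(k-1)}$ is the unitary local system attached to $N_Y^{\otimes-(k-1)}$, and on a curve this group is indeed isomorphic to $H^1(Y,N_Y^{-(k-1)}\otimes\Omega^1_Y)$; in particular everything is unobstructed when $N_Y$ is non-torsion, and your base case via the Atiyah class and $Y^2=0$ is fine. The gap is the pivotal assertion that the obstruction ``in fact'' lies in the image of the map $H^1(Y,N_Y^{-(k-1)})\to H^1(Y,N_Y^{-(k-1)}\otimes\Omega^1_Y)$ induced by your fixed flat connection, hence vanishes by Hodge--de Rham degeneration. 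Section \ref{S:utype} contains no such computation, and the claim is false for a general $\F_{k-1}$ in the torsion case. When the cocycle computation is actually carried out (Lemma \ref{2-cocycle}, with $\mu=k-1$), the obstruction class equals a universal expression in the lower-order data plus a non-zero multiple of the cup product $[a^{(\nu+1)}]\cup[a^{(k-\nu)}]$, where $\nu=\utype(Y)$, $[a^{(\nu+1)}]$ is the (non-trivial) Ueda class and $[a^{(k-\nu)}]$ records the choice made when $\F$ was extended at order $k-\nu$. By Lemma \ref{cup-product}, cup product with the Ueda class maps the set of admissible choices at that order \emph{onto} $H^2(Y,\C)$; the obstruction is moreover unaffected by reparametrizations fixing $\F_{k-1}$. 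Hence for a general choice of $\F_{k-1}$ the obstruction is non-zero and $\F_{k-1}$ simply does not extend to $Y(k)$: no argument can show it always vanishes, and a straight order-by-order induction of the type you propose breaks down exactly in the torsion case, which you yourself identify as the only case with content.

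What saves the theorem --- and what your linear induction cannot see --- is a retroactive correction mechanism: when the obstruction at order $k$ is non-zero (necessarily with $N_Y^{\otimes(k-1)}$ trivial, by Lemma \ref{vanishingH2}), one goes back and modifies the already-constructed foliation at order $k-\nu$; such a modification by a class $[\alpha]$ changes the obstruction precisely by a non-zero multiple of $[a^{(\nu+1)}]\cup[\alpha]$, and the non-degeneracy statement of Lemma \ref{cup-product} (this is where Hodge theory genuinely enters, via harmonic representatives, not via degeneration of the Fr\"olicher spectral sequence) allows one to choose $[\alpha]$ killing it. One must then re-extend through the intermediate orders $k-\nu+1,\dots,k-1$ and check those obstructions can again be dealt with; this is the content of Lemma \ref{L:***}, together with Lemma \ref{ifleq2nu} which handles the range $k\le 2\nu+1$ (there the obstruction is, up to a constant, the cup square of the Ueda class, hence zero). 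So the step you defer to ``a direct --- if somewhat lengthy --- computation'' is not a verification of automatic vanishing; it is the heart of the proof, and it requires both the non-vanishing of the Ueda class and the possibility of revisiting earlier choices, neither of which appears in your proposal.
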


Turning to the global setting, our second result concerns the existence of foliations smooth along hypersurfaces with topologically torsion (or flat) normal bundle.

\begin{THM}\label{THM:Existence}
Let $X$ be a compact K\"{a}hler manifold and $Y$ be a smooth hypersurface in $X$
with normal bundle $NY$. Assume there exists an integer $k$ and a flat line-bundle $\mathcal L$
on $X$ such that $\mathcal L_{|Y}= NY^{\otimes k}$. If $\utype(Y)\ge k$, then there exists a rational $1$-form $\Omega$ with coefficients
in $\mathcal L^*$ and polar divisor equal to $(k+1)Y$. Moreover, if $\nabla$ is the unitary flat connection
on $\mathcal L^*$, then $\nabla(\Omega)=0$, and $\Omega$ defines a transversely affine foliation on $X$ which admits
$Y$ as a compact leaf.
\end{THM}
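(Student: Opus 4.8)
The plan is to build $\Omega$ first on a tubular neighbourhood of $Y$, where the shape of the form is dictated by Ueda theory, and then to spread it over the whole of $X$ using the compact K\"ahler hypothesis. For the \emph{semi-local} part, cover a neighbourhood $V$ of $Y$ by open sets $U_i$ on which $Y=\{z_i=0\}$ for a holomorphic function $z_i$ and on which $\mathcal L$ is trivialized; on overlaps write $z_j=u_{ij}z_i$, so that $(u_{ij}|_Y)$ represents $NY$ while $(u_{ij}^{\,k}|_Y)$ differs from the unitary cocycle of $\mathcal L|_Y$ by a coboundary. On each $U_i$ I would start from the elementary model $\frac{dz_i}{z_i^{k+1}}$, which is closed (it is $d$ of $-\tfrac1{k}z_i^{-k}$), and correct it order by order --- modifying $z_i$ by higher-order terms and adding logarithmic tails --- so as to patch the resulting forms into a single section of $\Omega^1_X\otimes\mathcal L^*\otimes\mathcal O_X((k+1)Y)$ over $V$. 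The successive obstructions lie in the groups $H^1(Y,NY^{\otimes(-m)})$ and are exactly the Ueda obstruction classes recalled in Section \ref{S:utype}; they all vanish because $\utype(Y)\ge k$. This produces a meromorphic $1$-form $\Omega_V$ with coefficients in $\mathcal L^*|_V$, with polar divisor exactly $(k+1)Y$ --- its leading polar coefficient being, under the identification $\mathcal L^*|_Y\otimes NY^{\otimes k}\simeq\mathcal O_Y$, a non-vanishing locally constant function on $Y$ --- and satisfying $\nabla\Omega_V=0$, since the local relations $d\omega_i=\theta_i\wedge\omega_i$ and $d\theta_i=0$ (with $\theta_i$ the connection forms of the unitary flat $\nabla$) are built into the correction scheme. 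In particular $\Omega_V$ defines a transversely affine foliation on $V$, regular along $Y$ and admitting $Y$ as a leaf.

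For the \emph{globalization}, it remains to extend $\Omega_V$ to a global section $\Omega\in H^0(X,\Omega^1_X\otimes\mathcal L^*\otimes\mathcal O_X((k+1)Y))$. Covering $X$ by $V$ and by $X\setminus Y$ and running the standard \v{C}ech argument, the obstruction to extending $\Omega_V$ is a class in $H^1$ of an appropriate twisted sheaf of meromorphic forms on $X$ --- and, since the higher-order polar part of $\Omega_V$ is locally exact, this reduces to a question about the logarithmic de Rham complex of $\mathcal L^*$ on $(X,Y)$. Here is where the compact K\"ahler hypothesis enters: because $\mathcal L^*$ is a unitary flat line bundle on the compact K\"ahler manifold $X$, Hodge theory for unitary local systems applies, the relevant spectral sequence degenerates at $E_1$, and the obstruction class is forced to vanish. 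I expect this step to be the main difficulty: it is the only place where compactness and the K\"ahler condition are used, and it is what excludes the pathological semi-local configurations.

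Finally, the global extension $\Omega$ has polar divisor $\le(k+1)Y$, and near $Y$ it differs from $\Omega_V$ by a holomorphic section of $\Omega^1_X\otimes\mathcal L^*$, which is $\nabla$-closed by Hodge theory for the unitary local system $\mathcal L^*$; hence $\Omega$ still has a pole of order exactly $k+1$ along $Y$ and defines there the same regular foliation with leaf $Y$, so the polar divisor of $\Omega$ is precisely $(k+1)Y$. The identity $\nabla\Omega=0$ holds on the open set $V$, and $\nabla\Omega$ is a global section of $\Omega^2_X\otimes\mathcal L^*\otimes\mathcal O_X((k+1)Y)$, hence vanishes identically on the connected manifold $X$. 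Since $\nabla\Omega=0$ reads locally $d\Omega=\theta\wedge\Omega$ with $d\theta=0$, we get $\Omega\wedge d\Omega=\Omega\wedge\theta\wedge\Omega=0$: thus $\Omega$ is integrable, and the foliation it defines is transversely affine on all of $X$ and admits $Y$ as a compact leaf.
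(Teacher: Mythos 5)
Your overall architecture --- first construct a $\nabla$-closed form with polar divisor $(k+1)Y$ on a tubular neighbourhood $V$ of $Y$ using only Ueda theory, then extend it to $X$ using the compact K\"ahler hypothesis --- inverts the actual role of the hypotheses, and both halves have genuine gaps. For the semi-local step: $\utype(Y)\ge k$ only provides defining functions $y_i$ with $\lambda_{ij}y_j-y_i=a_{ij}y_i^{k+1}$, i.e.\ it kills the obstructions in $H^1(Y,NY^{\otimes -m})$ for $m<k$ and says nothing about the higher ones. In the borderline case $\utype(Y)=k$ --- which is the essential case of the theorem, since $\utype(Y)>k$ with $NY$ torsion of order $k$ is Neeman's fibration situation (Theorem \ref{thm:Neeman}) --- the next obstruction, the Ueda class itself, is nonzero by definition, so your order-by-order correction scheme cannot be justified by ``all obstructions vanish because $\utype(Y)\ge k$''. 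Moreover, even at the formal level the existence of such a closed form near $Y$ is delicate (Theorem \ref{THM:formalfoliation} only yields a formal foliation, and a formal closed $1$-form only under extra hypotheses), and a \emph{convergent} $\nabla$-closed meromorphic $1$-form on an Euclidean neighbourhood of $Y$ with polar divisor $(k+1)Y$ is not a consequence of the hypotheses: no semi-local vanishing theorem solves the relevant cohomological equation on a germ of neighbourhood of $Y$.

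The globalization step is also not available as sketched: a meromorphic $1$-form defined only near $Y$ does not extend to $X$ by degeneration of a Hodge-type spectral sequence; the \v{C}ech obstruction for the covering $\{V,\,X\setminus Y\}$ lives in the cohomology of a sheaf of meromorphic forms, which Hodge theory does not control, and extension results of this kind (Theorem \ref{T:Uedaextension}, due to Ueda) require dimension two and finite Ueda type and are among the delicate points of the paper. What the compact K\"ahler hypothesis actually buys is used at a single cohomological step performed globally on $X$ from the start: cover all of $X$ by the $U_i$ with local equations $y_i$ satisfying the Ueda relation near $Y$, set $a_{ij}=y_i^{-k}-\lambda_{ij}^{-1}y_j^{-k}$, a holomorphic cocycle with values in the flat bundle $\mathcal L^*$, and use harmonic theory for the unitary flat connection $\nabla$ on the compact K\"ahler manifold $X$ (a $\nabla\overline{\partial}$-lemma) to write $da_{ij}=\omega_i-\lambda_{ij}^{-1}\omega_j$ with $\omega_i$ closed holomorphic $1$-forms; then $\Omega=d(y_i^{-k})-\omega_i$ is already the desired global $\nabla$-closed rational $1$-form with polar divisor $(k+1)Y$, with no separate semi-local construction and no extension step. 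Unless you can substantiate the semi-local claim independently (which would be a stronger statement than the theorem restricted to a neighbourhood of $Y$), the proposal does not prove the result.
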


We refer to \cite{Gaeljvp} for the notion of transversely affine foliation.

When $\utype (Y)> k$, where $k$ is the analytical order of $NY$, this result is due to Neeman \cite[Theorem 5.1, p. 109]{Neeman}, where it is proved that $Y$ fits into a fibration.
\begin{thm}\label{thm:Neeman}
Let $X$ be a compact K\"{a}hler manifold and $Y$ be a smooth hypersurface in $X$
with torsion normal bundle of order $k$.
Assume that the Ueda type of $Y$ satisfies $\utype(Y)>k$, then $\utype(Y)=\infty$ and $kY$ is a fiber of a fibration on $X$.
\end{thm}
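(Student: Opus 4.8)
The statement asserts two things --- that $\utype(Y)$ jumps directly to $\infty$, and that $kY$ becomes a (multiple) fibre --- and the plan is to establish the first and then deduce the second. First I would record that the mere existence of a foliation along $Y$ is automatic: since $\utype(Y)\ge k$ and $NY^{\otimes k}=\mathcal O_Y$, Theorem \ref{THM:Existence} applied with the trivial flat line bundle $\mathcal L=\mathcal O_X$ already yields a global closed rational $1$-form $\Omega$ on $X$ with polar divisor $(k+1)Y$, hence a transversely affine foliation $\mathcal F$ with $Y$ as a smooth compact leaf. The real content is to promote this to a fibration, and that is where the \emph{strict} inequality $\utype(Y)>k$ enters.

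The first step is to prove $\utype(Y)=\infty$. Here I would run Ueda's obstruction calculus \cite{Ueda} (see also \cite{Neeman}): extending the flat unitary foliation of $NY$ from the $m$-th to the $(m+1)$-st infinitesimal neighbourhood of $Y$ is obstructed by a class $u_m(Y)\in H^1\!\big(Y,NY^{\otimes(-m)}\big)$, well defined once $u_1=\dots=u_{m-1}=0$, and the hypothesis gives $u_1=\dots=u_k=0$. Since $NY$ has order \emph{exactly} $k$, passing to the \'etale $\mu_k$-cover of $Y$ along which $NY$ trivialises and keeping track of weights shows that $u_m(Y)=0$ automatically whenever $k\nmid m$; so the only possibly obstructing classes are the $u_{jk}(Y)\in H^1(Y,\mathcal O_Y)$. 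At this point the compact K\"ahler hypothesis becomes essential: by Hodge theory on $Y$ (inherited from that of $X$) these classes are ``real'' --- they lie in the image of $H^1(Y,\mathbb R)\to H^1(Y,\mathcal O_Y)$ --- and Ueda's computation pins $u_{jk}(Y)$ down in terms of $u_k(Y)$, so that the vanishing of $u_k(Y)$ propagates to $u_{jk}(Y)=0$ for every $j\ge 1$. Hence $u_m(Y)=0$ for all $m$, that is, $\utype(Y)=\infty$.

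The second step is to globalise. Once $\utype(Y)=\infty$, the flat unitary foliation extends across the whole formal neighbourhood $Y(\infty)$ with finite, linear holonomy, hence it admits a formal first integral vanishing to order exactly $k$ along $Y$; Ueda's convergence argument (again using that $X$ is compact K\"ahler) makes this converge on an analytic neighbourhood $U$ of $Y$, producing a holomorphic submersion $f\colon U\to\mathbb D$ with $f^{-1}(0)=kY$ whose level sets are the leaves of $\mathcal F$ near $Y$. The fibres $f^{-1}(c)$, $c\in\mathbb D\setminus\{0\}$, are then pairwise disjoint smooth compact hypersurfaces of $X$, all lying in the fixed cohomology class $k[Y]$; by compactness of the space of effective divisors of class $k[Y]$ on the compact K\"ahler manifold $X$, this one-parameter family of disjoint divisors --- whose members sweep out $X$ --- extends to a fibration $g\colon X\to B$ onto a smooth curve $B$, with $g^{*}(b_0)=f^{-1}(0)=kY$ (this globalisation is standard; cf.\ \cite{Neeman}). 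The regularity of $\{dg=0\}$ along $Y$ then re-exhibits $\utype(Y)=\infty$, closing the loop.

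The hard part will be the propagation in the first step: showing that, after $u_k(Y)$ has been killed, no fresh Ueda obstruction can reappear at any of the levels $2k,3k,\dots$. This is precisely where the torsion order being \emph{exactly} $k$ (which confines the surviving obstructions to $H^1(Y,\mathcal O_Y)$) and the K\"ahler hypothesis (which makes those classes ``real'' and hence rigid) are both indispensable; it is the technical core of the Ueda--Neeman argument. By comparison, the convergence of the formal first integral, the disjointness of the nearby level sets, and the extension of a one-parameter family of disjoint divisors to a fibration on a compact K\"ahler manifold are routine given the cited literature.
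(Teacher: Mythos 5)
The decisive gap is in your Step 1. You try to prove $\utype(Y)=\infty$ by a purely obstruction-theoretic argument carried out on $Y$ alone: vanishing of $u_m$ for $k\nmid m$ by a weight argument on the $\mu_k$-cover, and then a claimed ``propagation'' $u_k=0\Rightarrow u_{jk}=0$ using only Hodge theory on $Y$. No such propagation exists: once the level-$k$ obstruction vanishes, the level-$2k$ class is a new, independent obstruction, not determined by $u_k$, and there is no reason (and no computation in Ueda or Neeman) forcing it to lie in the image of $H^1(Y,\mathbb R)$ or to vanish. In fact your Step 1, as written, uses the compact K\"ahler hypothesis only through $Y$, so it would prove the corresponding statement for a germ of neighborhood of $Y$ --- and that statement is false. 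Concretely, take $Y$ an elliptic curve and the suspension of the representation $\pi_1(Y)\to\Diff$ sending the two generators to $g(z)=-z$ and $f=\exp\bigl(v_{4,\lambda}\bigr)$ (these commute since $v_{4,\lambda}$ is invariant under $z\mapsto -z$): one gets a surface germ containing $Y$ with $\ord(NY)=2$ and $\utype(Y)=4$, so $\utype(Y)>k$ with $\utype(Y)<\infty$. Hence the first step cannot be repaired without bringing in genuinely global information about $X$, and since your Step 2 (convergence via Theorem \ref{T:Uedainfty}, which is legitimate here because $NY$ is torsion, and the globalization of the family of disjoint divisors) is predicated on Step 1, the argument as a whole does not go through.

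The paper's proof makes the global input explicit and, in fact, reverses your logical order: the fibration is constructed first and $\utype(Y)=\infty$ is a consequence. Using $\utype(Y)>k$ and the triviality of $NY^{\otimes k}$ one chooses equations $y_i$ of $Y$ with $y_i^k=(1+y_j^kr_{ij})y_j^k$ and observes that $a_{ij}=y_i^{-k}-y_j^{-k}$ is a \emph{holomorphic} cocycle on all of $X$, defining $\alpha\in H^1(X,\mathcal O_X)$ whose restriction to $Y$ is (up to a constant) the $k$-th Ueda class, hence zero (cf.\ Remark \ref{r:localglobal}). If $H^1(X,\mathcal O_X)\to H^1(Y,\mathcal O_Y)$ fails to be injective, Proposition \ref{P:Albanese} (an Albanese argument) already produces the fibration; if it is injective, then $\alpha=0$, one writes $a_{ij}=a_i-a_j$ and $f=y_i^{-k}-a_i$ is a global meromorphic function with $f^{-1}(\infty)=kY$, again a fibration, whence $\utype(Y)=\infty$. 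This is exactly where compactness of $X$ (not merely K\"ahlerness of $Y$) enters, and it is the ingredient missing from your proposal.
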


The reader interested in other fibration existence criteria may consult \cite{Dingo, jvpJAG, Totaro}.
Remark that, when $NY$ is {analytically trivial},  Theorem \ref{THM:Existence} implies the existence of global foliations on $X$ for which $Y$ is a compact leaf. This fact is in contrast with Theorem \ref{T:Sad}.  Sufficient and necessary conditions for the existence of a global foliation on compact K\"ahler manifold having a given compact leaf remain elusive. For instance, we are not aware of smooth hypersurfaces with torsion normal bundle and Ueda type strictly smaller than the order of the normal bundle which are
not compact leaves. For more on this matter see the discussion at Section \ref{S:QE}.

\subsection{Abelian holonomy}
For arbitrary representation $\rho : \pi_1(Y,p) \longrightarrow \Diff$
there exists a complex manifold
$U$ containing $Y$ as a hypersurface, and a foliation $\mathcal F$ on $U$ leaving $Y$ invariant and with holonomy representation
along $Y$ given by $\rho$, see \cite{Ily}.
If we start with an abelian representation $\rho$, it is well-known (see Section \ref{S:interpretation}) that there exists a formal meromorphic closed $1$-form in
$Y(\infty)$, the completion of $U$  along  $Y$, defining $\mathcal F_{|Y(\infty)}$. In many cases there do not exist (convergent) meromorphic closed $1$-forms
defining $\mathcal F$ on $U$, even if we restrict $U$.
Our third result says that, for codimension one foliations on projective manifolds, this can only
happen when the holonomy is linearizable.

\begin{THM}\label{THM:Abelian}
Let $X$ be a projective manifold and $Y$ be a smooth hypersurface in $X$.
Assume there exists a foliation $\mathcal F$ on $X$ admitting $Y$ as a compact leaf.
If the  holonomy of $\mathcal F$ along
$Y$ is  abelian,  then at least one of the following assertions holds true.
\begin{enumerate}
\item There exists a projective manifold $Z$ and generically finite morphism $\pi : Z \to X$
such that $\pi^* \mathcal F$ is defined by a closed rational $1$-form.
\item The holonomy of $\mathcal F$ along $Y$ is formally linearizable, $NY$ has infinite order and $\utype(Y)=\infty$.
\end{enumerate}
\end{THM}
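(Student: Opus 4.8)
The plan is to read off a transverse structure of $\mathcal F$ along $Y$ from the abelian group $G:=\rho(\pi_1(Y,p))\subset\Diff$, and then to globalize it over the projective manifold $X$. Since $Y$ is a compact leaf, Bott's partial connection furnishes a flat holomorphic connection on $NY$ whose monodromy is the linear part $\rho'\colon\pi_1(Y,p)\to\C^*$, $\gamma\mapsto\rho(\gamma)'(0)$ of the holonomy; in particular the order of $NY$ in $\Pic(Y)$ is finite precisely when $\operatorname{im}\rho'$ is finite. The starting point is the classification of abelian subgroups of $\Diff$: either $G$ is formally linearizable, or $G$ contains a non-trivial element $f(z)=z+az^{q+1}+\cdots$ ($a\neq0$, $q\geq1$) tangent to the identity; in the latter case, any germ commuting with $f$ has linear part a $q$-th root of unity and belongs to the formal flow of the dual vector field up to such a rotation, so every element of $G$ preserves the formal closed meromorphic $1$-form $\hat\omega_0=(z^{-q-1}+\lambda z^{-1})\,dz$.

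Assume first that $G$ is \emph{not} formally linearizable. Transporting $\hat\omega_0$ by the holonomy produces a formal closed meromorphic $1$-form on the completion $Y(\infty)$, with polar divisor $(q+1)Y$, defining $\mathcal F_{|Y(\infty)}$; note that here $\utype(Y)<\infty$, since the holonomy of $\mathcal F$ is already non-linearizable at a finite order. By the convergence results of Ueda theory \cite{Ueda} (finiteness of the Ueda type) this formal $1$-form converges in a genuine neighbourhood of $Y$, so that $\mathcal F$ is, near $Y$, the transversely affine foliation attached to these data as in Theorem~\ref{THM:Existence}. When $G$ \emph{is} formally linearizable but $\utype(Y)<\infty$, the same picture holds with $\hat\omega_0$ replaced by the corresponding Ueda normal form: $\mathcal F$ is again transversely affine (here transversely linear) along $Y$, defined near $Y$ by a convergent closed meromorphic $1$-form with polar divisor supported on $Y$. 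In both situations I claim one reaches conclusion~(1), by extending this semilocal closed meromorphic $1$-form, across the projective manifold $X$ and after a generically finite base change $\pi\colon Z\to X$, to a closed rational $1$-form defining $\pi^*\mathcal F$.

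This globalization is the main obstacle. One must propagate the transverse affine structure from a neighbourhood of $Y$ to all of $X$ --- controlling the monodromy of analytic continuation of the developing map, where the projectivity of $X$ enters through the finiteness of $\pi_1$ of the complement of a divisor, GAGA, and the structure theory of transversely affine foliations on projective manifolds --- and then choose the finite cover $\pi\colon Z\to X$ so that the ``connection part'' of the structure becomes an exact logarithmic form $dg/g$; dividing the structure $1$-form by $g$ then yields the desired closed rational $1$-form. The abelianness of $G$ is exactly what forces the transverse monodromy into the (virtually) translation-or-homothety subgroups of $\Aff(\C)$, which is what makes this reduction possible. I expect the convergence near $Y$ to be the routine Ueda-theoretic input, and the passage to a global \emph{algebraic} $1$-form to be where projectivity is genuinely needed --- a compact Kähler hypothesis would not obviously suffice here.

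It remains to treat $G$ formally linearizable with $\utype(Y)=\infty$. If $NY$ has finite order $k$, then $\utype(Y)=\infty>k$, so by Theorem~\ref{thm:Neeman} $kY$ is a fibre of a fibration on $X$; that fibration is defined by a closed rational $1$-form (pull back $dg/g$ for a non-constant rational function on the base, possibly after a further base change), giving~(1). Otherwise $NY$ has infinite order, $\utype(Y)=\infty$, and $\rho$ is formally linearizable --- which is precisely conclusion~(2). Since these cases are exhaustive, this proves the theorem.
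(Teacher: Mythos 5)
Your case analysis is broken by a false claim at the very first step: you assert that when the holonomy is not formally linearizable one automatically has $\utype(Y)<\infty$, ``since the holonomy is already non-linearizable at a finite order''. The Ueda type is an invariant of the embedding $Y\subset X$, not of the foliation: Lemma \ref{L:Uedaholonomy} only gives a lower bound for $\utype(Y)$ in terms of the order of linearizability, and there is no converse. Concretely, take $Y$ a fiber of a fibration (so $\utype(Y)=\infty$) invariant by the foliation defined by a closed rational $1$-form with a double pole along $Y$ (locally $dz+z^2\,\omega$ with $\omega$ pulled back from the fiber): the holonomy is abelian, tangent to the identity and non-linearizable, yet $\utype(Y)=\infty$. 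Your write-up therefore omits exactly the configuration ``abelian non-linearizable holonomy, $\utype(Y)=\infty$'', which the paper treats separately: Proposition \ref{P:closedformalform}(3) forces $NY$ to be torsion, Theorem \ref{thm:Neeman} then makes (a multiple of) $Y$ a fiber of a fibration, and Proposition \ref{P:infiniteUedatype} (Grothendieck comparison plus the transversely affine arguments of \cite{Gaeljvp}) produces the generically finite cover of conclusion (1). Nothing in your argument covers this case.

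Even granting $\utype(Y)<\infty$, the decisive step is missing. You claim the formal invariant closed $1$-form ``converges by the convergence results of Ueda theory (finiteness of the Ueda type)''. Finiteness of the Ueda type yields pseudoconvexity and extension theorems for \emph{convergent} meromorphic objects (Theorem \ref{T:Uedaextension}); it gives no convergence of formal data. Convergence of the invariant form is automatic only when the holonomy group is analytically normalizable — e.g.\ when its tangent-to-identity subgroup $G_1$ is non-cyclic (Corollary \ref{cor:rank2}) or the linear part is non-unitary — and in the exceptional case where $G_1$ is infinite cyclic and the group is not analytically normalizable (Theorem \ref{thm:exceptional}(1)) the formal normalization can genuinely diverge. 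That case is where the bulk of the paper's proof of Proposition \ref{P:finiteUedatype} lives: one constructs an honest analytic closed form $\eta$ from the Ueda cocycle and a flat line bundle, compares its period representation with that of the formal form $\hat\omega$ on the polar blow-up of $Y$, and only then concludes $\hat\omega=\eta$ is a closed rational $1$-form; your ``routine Ueda-theoretic input'' conceals precisely this argument. (Incidentally, in your sub-case ``formally linearizable with $\utype(Y)<\infty$'' the paper shows the situation is in fact empty, via Theorem \ref{T:Uedaextension} and the Residue Theorem, rather than leading to (1).) Finally, your globalization paragraph is a plan (developing map, GAGA, control of monodromy) rather than a proof; in the paper the extension across $X$ is either Ueda's Theorem \ref{T:Uedaextension} applied to the convergent closed form (after reducing to a surface by hyperplane sections), or the fibration argument already mentioned. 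As it stands, the proposal does not constitute a proof of the theorem.
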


\begin{question}
Let $\F$ be a foliation on a projective manifold fulfilling the hypothesis of Theorem \ref{THM:Abelian}. Does $\F$ automatically satisfy item (1)?
\end{question}

We can exclude the existence of  hypothetical counter-examples under more demanding, but quite natural hypothesis, see Section \ref{S:FormallyLinearizableHolonomy}.
We also obtain similar results (under additional restrictions) if the holonomy of $\F$ along $Y$ is only supposed to be solvable in Section \ref{S:solvablehol}.

\subsection{Factorization}
If $Y$ is a genus $g>1$ curve, then there is no difficulty to construct non-abelian representations of $\pi_1(Y)$ in $\Diff$.
If $Y$ is a higher dimensional manifold and $\rho : \pi_1(Y) \to \Difffor$ is a non virtually abelian representation then we are able to prove that the representation factors over a curve  in the following sense (see   Section \ref{S:Factor} for a more detailed discussion).
\begin{THM}\label{THM:Holfactorization} Let $Y$ be a compact K\"ahler manifold and $\rho : \pi_1(Y) \to \Difffor$ a morphism. Suppose $G=\mbox{Im}\ \rho$ is not virtually abelian, then its center $Z(G)$ is necessarily a finite subgroup and the induced representation ${\rho}': \pi_1(Y) \to G/Z(G)$ factors through an orbicurve.\end{THM}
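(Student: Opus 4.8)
The plan is to treat the two assertions separately: the finiteness of $Z(G)$ is a purely formal statement about subgroups of $\Difffor$, whereas the factorization of $\rho'$ combines the structure theory of such subgroups with the factorization theorems for homomorphisms of K\"ahler groups onto (orbi)curves.

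\textbf{Finiteness of the center.} I would argue by contradiction: assuming $Z(G)$ infinite, I show $G$ is virtually abelian. Take $f\in Z(G)$ and discuss according to its linear part $\lambda=f'(0)$. If $\lambda$ is not a root of unity, then (no resonances in dimension one) $f$ is formally linearizable, so after a formal change of coordinate $f(z)=\lambda z$; being central, $G$ then lies in the centralizer of $z\mapsto\lambda z$, which is the abelian group $\{z\mapsto\mu z\}$, so $G$ is abelian. If some nontrivial $f\in Z(G)$ is tangent to the identity, its centralizer in $\Difffor$ is abelian-by-finite, being a finite extension of the one-parameter group into which $f$ embeds, so $G$ is virtually abelian. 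Finally, if $Z(G)$ is an infinite torsion group it contains elements $f$ of finite order whose linear part is a primitive root of unity $\zeta_q$ with $q$ arbitrarily large; such $f$ are formally linearizable, and conjugating $f$ to $z\mapsto\zeta_q z$ puts $G$ inside the centralizer $C_q$ of that rotation, whose derived subgroup consists of formal diffeomorphisms tangent to the identity to order $>q$. As the order of tangency to the identity of a nontrivial formal diffeomorphism is a conjugacy invariant and $q\to\infty$, the derived subgroup of $G$ is trivial, so $G$ is abelian. In all cases $G$ is virtually abelian, a contradiction; hence $Z(G)$ is finite, and $\bar G:=G/Z(G)$ is still not virtually abelian.

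\textbf{Factorization.} Since an infinite finitely generated nilpotent group has infinite center, the previous step shows that every virtually nilpotent subgroup of $\Difffor$ is virtually abelian; in particular $G$ is not virtually nilpotent. I then distinguish two cases. If $G$ is virtually solvable, the classification of solvable subgroups of $\Difffor$ (Cerveau--Moussu) shows that a finite-index subgroup of $G$ is formally conjugate into one of the model groups $\C^*\ltimes\C$; failure of virtual nilpotence forces the linear-part character $\chi$ to have infinite image while the unipotent part yields a nonzero class in the twisted cohomology $H^1(Y_0,\C_\chi)$, where $Y_0\to Y$ is the finite \'etale cover attached to that subgroup. By the structure theory of rank-one local systems on compact K\"ahler manifolds (Beauville, Simpson, Arapura), a local system $\C_\chi$ with $\chi$ of infinite order and $H^1(Y_0,\C_\chi)\neq 0$ is pulled back from a hyperbolic orbicurve $C_0$ via a fibration $Y_0\to C_0$, through which the whole representation factors; choosing this pencil canonically lets one descend it to $Y$. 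If $G$ is not virtually solvable, I would invoke a rigidity statement for non-solvable subgroups of $\Difffor$ (in the spirit of Nakai's density theorem and Cerveau--Moussu) to exhibit a non-elementary action of a finite-index subgroup of $\pi_1(Y)$ on a Gromov-hyperbolic space, or a non-elementary Fuchsian quotient, and conclude by the Delzant--Gromov factorization theorem that this action, hence $\rho'$, factors through a hyperbolic orbicurve.

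\textbf{Where the difficulty lies.} The delicate point is the factorization in the non-virtually-solvable case: extracting a non-elementary action from the germ-level dynamics of $G$ and matching it with the hypotheses of an available K\"ahler factorization theorem. A secondary technical issue, already present in the solvable case, is the descent from the cover $Y_0$ back to $Y$, handled by selecting the orbifold pencil in a Galois-equivariant way; and if one prefers to run the whole argument through the $n$-jet truncations $\rho_n$ of $\rho$ (into the solvable groups of $n$-jets of formal diffeomorphisms), one must moreover check that the pencils so produced stabilize as $n\to\infty$, so that the full formal representation, and not merely its jets, factors.
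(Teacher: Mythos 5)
Your treatment of the finiteness of $Z(G)$ is essentially correct: the case analysis on a central element (non-root-of-unity linear part, nontrivial element tangent to the identity, infinite torsion center) can be run with the centralizer computations that are standard for $\Difffor$, and it recovers the paper's Theorem \ref{thm:center} by a slightly different route. Your virtually solvable case is also a plausible sketch: non-virtual-abelianity puts $G$ inside a model $\mathbb A_k\simeq\C^*\ltimes\C$ with linear character of infinite order and a non-coboundary affine cocycle, and the Beauville--Arapura theory of rank-one local systems (or the result of Arapoto--Cogolludo--Matei the paper invokes elsewhere) does give factorization through an orbicurve; no covering $Y_0$ is even needed there, since the classification conjugates $G$ itself, not merely a finite-index subgroup, into $\mathbb A_k$.

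The genuine gap is the non-virtually-solvable case, which is not a marginal case: by Remark \ref{r:tangenttoid} a non-abelian subgroup of $\Difffor_1$ is automatically non-solvable, so \emph{every} non-abelian representation tangent to the identity falls into the case you leave open, and this is precisely the heart of the theorem. Your proposal there (extract a non-elementary action of a finite-index subgroup of $\pi_1(Y)$ on a Gromov-hyperbolic space from the germ dynamics and apply Delzant--Gromov) is unsupported: non-solvable subgroups of $\Difffor$ come with no natural isometric action on such a space, and Nakai-type density results concern the topological dynamics of the pseudogroup, not actions amenable to K\"ahler factorization theorems. The paper proceeds quite differently: it stratifies by the linear part rather than by solvability of $G$, and (i) for trivial linear part it extracts from a non-abelian $\Gamma\subset\Difffor_1$ two non-proportional classes $a,b\in H^1(\Gamma,\C)$ with $a\wedge b=0$ (Proposition \ref{P:non_abelian_Diff}, a cocycle computation with $1/g(z)^\nu-1/z^\nu$) and concludes with Catanese's Castelnuovo--de Franchis lemma (Lemma \ref{L:Castelnuovo-deFranchis-Touzet}); (ii) it proves that factorization of a single finite jet $J^k\rho$ propagates to all jets, hence to $\rho$, using Deligne's semisimplicity of the monodromy on $H_1$ of the fibers (Lemma \ref{L:factorization_k-jets}); (iii) finite or infinite nontrivial linear parts are handled by passing to the cover killing $J^1\rho$, or by Campana's theorem on $H^1(Y,\chi)\neq0$, plus a descent argument. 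The stabilization-of-pencils issue you flag at the end as a ``secondary technical issue'' is exactly the content of step (ii) and is where the real work lies; without it, and without a substitute for step (i), the proposed proof does not go through.
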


Theorem \ref{THM:Holfactorization} admits a foliated counterpart.

\begin{THM}\label{THM:Factorization}
Let $X$ be a compact K\"{a}hler manifold and $Y$ be a smooth hypersurface in $X$.
Assume there exists a foliation $\mathcal F$ on $X$ admitting $Y$ as a compact leaf whose normal bundle has {finite} order $m$. Suppose moreover that $\utype (Y)\geq m$, and that the holonomy representation  $\pi_1(Y) \to \Difffor$ has a non virtually abelian image. Then, there exists a rational map $\pi : X \dashrightarrow  S$ to a projective surface $S$  and a foliation $\mathcal G$ on $S$ such that $\mathcal F = \pi^* \mathcal G$.
\end{THM}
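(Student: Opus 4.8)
The plan is to combine the factorization of the holonomy representation provided by Theorem \ref{THM:Holfactorization} with the structure of transversely projective foliations and descent along fibrations. Here is the outline.

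\medskip

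\noindent\textbf{Step 1: Reduce to a transversely projective foliation.}
First I would use the hypotheses $\utype(Y)\geq m$ together with Theorem \ref{THM:Existence} (applied to $\mathcal L = NY^{\otimes m}$, which is flat since $NY$ has finite order $m$) to produce a rational $1$-form $\Omega$ with values in $\mathcal L^*$, closed with respect to the flat unitary connection $\nabla$, with polar divisor $(m+1)Y$. This exhibits $\mathcal F$ as a transversely affine foliation in a neighborhood of $Y$, and in fact (by the results of \cite{Gaeljvp} together with the projectivity of $X$) as a transversely projective foliation on all of $X$: the holonomy takes values in a subgroup of $\mathrm{Aff}(\mathbb C)$ (or rather the affine structure globalizes to a projective one). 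The point of this step is that the transverse structure lets us translate the algebraic factorization of the holonomy group into a geometric factorization of the foliation.

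\medskip

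\noindent\textbf{Step 2: Factor the holonomy and produce an auxiliary fibration.}
Apply Theorem \ref{THM:Holfactorization} to $\rho : \pi_1(Y) \to \Difffor$. Since $G = \mathrm{Im}\,\rho$ is not virtually abelian, its center $Z(G)$ is finite and $\rho' : \pi_1(Y) \to G/Z(G)$ factors through an orbicurve; say $f : Y \to \mathcal C$ is the corresponding fibration onto an orbifold curve, so that $\rho'$ is the pullback by $f_*$ of a representation $\pi_1^{orb}(\mathcal C) \to G/Z(G)$. The key geometric consequence is that the restricted foliation $\mathcal F|_{Y(\infty)}$, or rather its transverse projective structure, only depends on the transverse variable of $Y(\infty)$ along the fibers of $f$ — i.e. the foliation is constant along the fibers of $f$. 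I would then argue, using the finite order of $NY$ and the fact that $mY$ behaves like a fiber (cf.\ Theorem \ref{thm:Neeman} in the nearby regime, but here we only need the Ueda data to control extension of the transverse structure), that $f$ extends to a rational fibration $F : X \dashrightarrow T$ to some base, at least after passing to the formal/analytic neighborhood of $Y$ and then globalizing using projectivity (Stein factorization and the algebraicity of fibrations on projective manifolds, à la the techniques behind \cite{Totaro, jvpJAG}).

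\medskip

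\noindent\textbf{Step 3: Descend the foliation.}
Finally, I would show that $\mathcal F$ is the pullback of a foliation on a surface. The fibration $F$ from Step 2 has fibers tangent to $\mathcal F$ away from finitely many special fibers — this is where the non-virtually-abelian hypothesis is essential, since it forces the generic fiber of $F$ to be $\mathcal F$-invariant (an abelian, e.g.\ linearizable, holonomy would not produce such a rigid structure). Taking the quotient, $\mathcal F$ descends to a foliation $\mathcal G_0$ on a variety of dimension $\dim X - 1$; but the holonomy of $\mathcal G_0$ is still the non-virtually-abelian group $G$ (or $G/Z(G)$), which by the same Theorem \ref{THM:Holfactorization} forces another fibration, and iterating we cut down the dimension. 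The process terminates exactly when the base is a surface $S$, because a non-virtually-abelian subgroup of $\Difffor$ (containing a hyperbolic element, hence with enough dynamics) cannot be realized transversely on a curve but is realized on a surface via a Riccati-type or transversely projective foliation $\mathcal G$ with $\mathcal F = \pi^* \mathcal G$. I would take care to track the normal-bundle/Ueda hypotheses through each step so that the descent is legitimate at every stage.

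\medskip

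\noindent\textbf{Main obstacle.}
The hard part will be Step 2: promoting the \emph{holonomy-level} factorization through an orbicurve (a statement about $\pi_1(Y)$) to an honest \emph{rational fibration on $X$} whose generic fiber is $\mathcal F$-invariant. On $Y$ itself one gets a fibration from Theorem \ref{THM:Holfactorization}, but extending it across the transverse direction requires combining the Ueda-type control ($\utype(Y)\geq m$) with the transversely projective structure from Step 1 in order to trivialize the foliation along the fibers in the formal neighborhood $Y(\infty)$, and then invoking projectivity to algebraize. Controlling the finitely many non-invariant fibers and ensuring the quotient remains smooth enough to iterate the descent is the delicate bookkeeping that the full proof must handle.
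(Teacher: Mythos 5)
There is a genuine gap, and it starts in your Step 1. Theorem \ref{THM:Existence} produces a rational $1$-form $\Omega$ which defines \emph{some} transversely affine foliation having $Y$ as a compact leaf, but that foliation is in general \emph{not} $\mathcal F$: the theorem gives no transverse structure on the given $\mathcal F$. In fact it cannot, under the hypotheses at hand: the holonomy of $\mathcal F$ is only assumed non virtually abelian, so it may be non-solvable, whereas the holonomy of a transversely affine (or projective near $Y$) foliation along a leaf lies in a subgroup of $\Aff(\mathbb C)$ (resp.\ $\PSL(2,\C)$), which is solvable (resp.\ forces strong restrictions). So the claim that ``the holonomy takes values in a subgroup of $\mathrm{Aff}(\mathbb C)$'' contradicts the very hypothesis of the theorem. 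The correct use of the existence machinery is much weaker and different: as recorded in Remark \ref{r:localglobal}, when $\utype(Y)\ge m=\ord(NY)$ the (non-zero) Ueda class of $Y$ is the restriction of a class $c'\in H^1(X,\mathcal O_X)$, and its conjugate yields a \emph{global holomorphic $1$-form} $\omega$ on $X$ whose restriction to $Y$ is non-zero and is pulled back from the curve furnished by Theorem \ref{THM:Holfactorization}. This global $1$-form, not a transverse structure on $\mathcal F$, is what lets one cross the transverse direction.

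Your Step 2 is then exactly the unproved heart of the matter: you need to promote the factorization of the holonomy on $Y$ (Theorem \ref{THM:Holfactorization}) to geometry inside $X$, and ``extend $f$ to a rational fibration $F:X\dashrightarrow T$'' is asserted without a mechanism, while your Step 3 (iterated descent, ``generic fiber $\mathcal F$-invariant'') is not how the conclusion is reached. The paper's argument, in the case $\utype(Y)=m$, is: along a smooth fiber $F$ of $Y\to C$ the holonomy is finite, so $\mathcal F$ has a holomorphic first integral $y^{m'}$ on a neighborhood $U$ of $F$ \emph{in $X$}; on $U$ the global form $\omega$ is exact, $\omega=dG$; intersecting the levels of $G$ with those of the first integral fills a neighborhood of $F$ with codimension-two analytic subsets contained in leaves of $\mathcal F$, equivalently the codimension-two foliation $\Omega\wedge\omega$ (with $\Omega$ a twisted $1$-form defining $\mathcal F$) has algebraic leaves, and Chow-scheme arguments then give $\pi:X\dashrightarrow S$ and $\mathcal G$ with $\mathcal F=\pi^*\mathcal G$ in one step---no fibration on $X$ extending $f$, and no dimension-by-dimension descent. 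Finally, the case $\utype(Y)>m$, where $Y$ is an actual fiber of a fibration (Theorem \ref{thm:Neeman}), needs its own separate treatment (Campana's factorization on nearby fibers when $N\mathcal F$ is non-torsion there, and an integrating-factor analysis otherwise, with the non-virtually-abelian hypothesis used to exclude the closed-form alternative via Proposition \ref{P:infiniteUedatype}); your proposal does not address this case. As written, the proposal would not close either case.
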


We emphasize that this statement includes the case where $Y$ has {analytically trivial normal bundle} and also the one where  $Y$ is the regular fiber (multiple or not) of a fibration $f:X\to C$. This latter case corresponds to  $\utype (Y)> m$  by virtue of Theorem \ref{thm:Neeman}.

\begin{question}\label{Q:factorization}
Let $Y$ be a smooth hypersurface on a projective manifold $X$. Assume that $Y$ is a compact leaf of a codimension one foliation $\mathcal F$
on $X$ and that its holonomy representation is not virtually abelian. Is it true that $\mathcal F$ is the pull-back
of a foliation on a projective surface?
\end{question}

The present work stemmed from our attempts to answer this last question. Theorem \ref{THM:Factorization} is
the most compelling evidence in its favor we found so far. Moreover, we can provide a complete answer to both questions listed above under quasi-smoothness assumptions (see Section \ref{S:QSmoothFol}).

\subsection{Structure of the article}
Section \ref{S:Ueda} aims at recalling the basics of Ueda theory for compact hypersurfaces with numerically trivial normal bundle and gives an account of results by Ueda and Neeman  used later. The existence of formal foliations is discussed  in Section \ref{S:Existence1} where Theorem \ref{THM:formalfoliation} is proved.
The analogue problem in the global setup is approached in Section \ref{S:Existence2} where we establish Theorem  \ref{THM:Existence}. There we also exhibit a variant of an example by Ogus of a curve contained in a surface whose complement has no compact curves  but is not Stein (Example \ref{exampleOgusFred}). Foliations with abelian holonomy along a compact leaf are studied in Sections \ref{S:subgroups} and \ref{S:abelianhol}. The former section recalls the classifications of solvable subgroups of $\Diff$ and Theorem \ref{THM:Abelian} is proved in the latter. In Section \ref{S:solvablehol}, we try to push one step further the study of foliations according to the complexity of the holonomy and prove some structure results when the holonomy is solvable. Factorization results such as Theorem \ref{THM:Factorization} are proved in Section \ref{S:Factor} and the case of quasi-smooth foliations is treated in Section \ref{S:QSmoothFol}. The paper ends with  an appendix on the extension of
 projective structures.

\section{Neighborhoods of divisors}\label{S:Ueda}

\subsection{Ueda type and Ueda class}\label{S:utype}
 We  recall below the definitions of  Ueda type and the Ueda class of
smooth divisors with topologically torsion normal bundles.
The point of view presented here follows closely  Neeman's exposition of the subject, see \cite{Neeman}. For an attempt to define the Ueda class in the higher codimension case, we refer the reader to the recent preprint \cite{Koi}

Let $Y$ be a smooth irreducible compact hypersurface on a complex manifold $U$. Assume that the normal bundle $N Y$ is topologically torsion
and that $Y$ and $U$ have the same homotopy type.  Let us assume that $N Y$ carries a flat unitary connection (this condition is automatically fulfilled if $Y$ is K\"{a}hler). Associated to it we get
a representation $\rho_Y : \pi_1(Y) \to S^1\subset \mathbb C^*$. Since $U$ and $Y$ have the same homotopy type, then we have a line bundle on $U$ endowed
with a flat unitary connection which we will denote by $(\widetilde{N_UY}, \nabla_U)$. The line bundle $\mathcal O_U(Y)$ is another extension of $N Y$
to the complex manifold $U$. Notice that it comes equipped with a flat logarithmic connection with trivial monodromy locally defined as $\nabla_Y = d - d \log f$
where $f$ is a local equation of $Y$.  Let us call $$(\mathcal U, \nabla)= (\mathcal O_U(Y),\nabla_Y) \otimes (\widetilde{N_UY},  \nabla_U)^*$$ the \textit{Ueda connection} and the underlying line bundle,
the \textit{Ueda line bundle}, which are both defined over $U$.

Let $I\subset \mathcal O_U$ be the ideal sheaf  defining $Y$.
We will denote  the $k$-th infinitesimal neighborhood of $Y$ in $U$ by $Y(k)$, \emph{i.e.}, $Y(k) = \Spec ( \mathcal O_U / I^{k+1} )$.
The \textit{Ueda type}  of $Y$ ($\utype (Y)$) is   defined as follows.
If $\mathcal U_{|Y(\ell)} \simeq \mathcal O_{Y(\ell)}$ for every $\ell < k$ and $\mathcal U_{|Y(k)} \not\simeq \mathcal O_{Y(k)}$, then
the  $\utype(Y)=k$. If  $\mathcal U_{|Y(\ell)} \simeq \mathcal O_{Y(\ell)}$ for every non-negative integer $\ell$
then $\utype(Y)=\infty$.

If $\utype(Y) = k < \infty$ then the {\it Ueda class} of $Y$ is defined as the element in $H^1(Y, I^{k}/I^{k+1})$ mapped  to $\mathcal U_{|Y(k)} \in \Pic(Y(k))$
through the truncated exponential sequence
\[
0\to H^1(Y, I^{k}/I^{k+1}) \to \Pic (Y(k)) \to \Pic(Y(k-1)) \, .
\]

In concrete terms and following Ueda's original definition \cite{Ueda}, $Y$ has Ueda type $\utype(Y)\ge k$ if, and only if
there exists a covering $U_i$ of a neighborhood of $Y$, and submersions $y_i \in \mathcal O(U_i)$ vanishing along $Y$ such that whenever $U_i \cap U_j \neq \emptyset$,
\begin{equation}\label{TransitionUedaCoordinates}
  \lambda_{ij}y_j-y_i =  a_{ij} y_i^{k+1}
\end{equation}
 for suitable $\lambda_{ij} \in S^1$ and $a_{ij} \in \mathcal O(U_i\cap U_j)$.
In this case, $\utype(Y)> k$ if, and only if one can write
$$a_{ij}{}_{|Y}=(\lambda_{ij}^{-k}a_j-a_i){}_{|Y}$$
for some collection of functions $a_i\in\mathcal O(U_i\cap Y)$.
If not, then the restriction $\lbrace\ a_{ij}{}_{|Y}\ \rbrace$ defines the Ueda class of $Y$. As observed by Ueda (loc. cit.), this latter is well defined as an element of $H^1(Y,{NY}^{\otimes -k})$  up to multiplication by a non-vanishing constant which comes from the choice of an isomorphism between $\mathcal O_Y$ and ${\mathcal U}_{|Y}$. Finally, the triviality of the line bundle $\mathcal U$ on an Euclidean neighborhood of $Y$ is precisely equivalent to the existence of a collection of transverse coordinates $y_i \in \mathcal O(U_i)$ with
\[
 y_i = \lambda_{ij}y_j
\]
for suitable $\lambda_{ij} \in S^1$; they are obtained from integration of Ueda connection.

\begin{lemma}\label{L:Uedaholonomy}
Let $\mathcal F$ be a codimension one foliation on a complex manifold $U$ having a compact leaf $Y$. If the holonomy of $\mathcal F$ along $Y$ is linearizable up to order $k$ and its  linear part  is unitary then $\utype(Y)\ge k$. Moreover, in this case
\[
N\mathcal F_{|Y(k)} = \mathcal O_{Y(k)}(Y) \otimes \mathcal U^{\otimes k}_{|Y(k)}.
\]
\end{lemma}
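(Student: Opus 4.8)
The plan is to reduce the statement to the concrete description of the Ueda type recalled around \eqref{TransitionUedaCoordinates}, and then to a short computation with transition cocycles on $Y(k)$; the one substantial point is the construction of transverse coordinates adapted to the holonomy.

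First I would set up such coordinates. After shrinking $U$ to a neighbourhood of $Y$ we may cover $Y$ by simply connected opens $V_i$ small enough that each $U_i\supset V_i$ is a flow box of $\F$ (this uses that $Y$ is a leaf and $\F$ is regular near $Y$), so that $\F|_{U_i}=\{dy_i=0\}$ for a submersion $y_i\in\mathcal O(U_i)$ vanishing along $Y$. By hypothesis there is a transversal $\Sigma$ at the base point $p$ on which the holonomy germs have linear $k$-jet with unitary linear part; transporting the corresponding coordinate along paths from $p$ to the $V_i$ yields coordinates $y_i$ as above whose transition germs $\phi_{ij}$, defined by $y_i=\phi_{ij}(y_j)$ on $U_i\cap U_j$, are holonomy germs of $\F$ along $Y$. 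Hence $y_i=\lambda_{ij}y_j+O(y_j^{k+1})$ with $\lambda_{ij}=\phi_{ij}'(0)\in S^1$ locally constant, and $\{\lambda_{ij}\}$ is simultaneously a cocycle for $NY$ and --- since the unitary linear part of the holonomy of the leaf $Y$ is exactly the monodromy of Bott's flat connection on $NY$, which is the flat unitary structure entering the definition of $\utype$ --- a flat unitary cocycle for $\widetilde{N_UY}$. I expect this translation from ``linearizable holonomy'' to ``adapted transverse coordinates'' (and the identification of $\{\lambda_{ij}\}$ with the flat cocycle of $\widetilde{N_UY}$) to be the only genuinely delicate step; everything afterwards is formal.

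Granting this, the first assertion is immediate: writing $y_i-\lambda_{ij}y_j=b_{ij}y_j^{k+1}$ with $b_{ij}$ holomorphic and using that $y_i/y_j$ is a unit --- so that $(y_i^{k+1})=(y_j^{k+1})=I^{k+1}$ on overlaps --- one may rewrite this as $\lambda_{ij}y_j-y_i=a_{ij}y_i^{k+1}$ with $a_{ij}\in\mathcal O(U_i\cap U_j)$, which is precisely condition \eqref{TransitionUedaCoordinates}, whence $\utype(Y)\ge k$.

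Finally I would compare, on $Y(k)$, the transition cocycles of the three line bundles in the trivializations fixed above (a line bundle being described by a cocycle $g_{ij}$ with $s_i=g_{ij}s_j$ for its sections). Setting $\phi_{ij}(y_j)=\lambda_{ij}y_j+a_{ij}y_j^{k+1}+O(y_j^{k+2})$, the forms $\omega_i=dy_i$ defining $\F$ satisfy $\omega_i=\phi_{ij}'(y_j)\omega_j$, so $N\F$ has cocycle $\phi_{ij}'(y_j)=\lambda_{ij}+(k+1)a_{ij}y_j^{k}+O(y_j^{k+1})$; the canonical section of $\mathcal O_U(Y)$ reads $y_i$ in a suitable frame, so $\mathcal O_U(Y)$ has cocycle $y_i/y_j=\lambda_{ij}+a_{ij}y_j^{k}+O(y_j^{k+1})$; and, $\widetilde{N_UY}$ having constant cocycle $\{\lambda_{ij}\}$, the Ueda bundle $\mathcal U=\mathcal O_U(Y)\otimes\widetilde{N_UY}^*$ has cocycle $1+\lambda_{ij}^{-1}a_{ij}y_j^{k}+O(y_j^{k+1})$, hence $\mathcal U^{\otimes k}$ has cocycle $1+k\lambda_{ij}^{-1}a_{ij}y_j^{k}+O(y_j^{k+1})$. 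Multiplying the cocycles of $\mathcal O_U(Y)$ and of $\mathcal U^{\otimes k}$ and discarding the terms lying in $I^{k+1}$ (which vanish on $Y(k)$) gives $\lambda_{ij}+(k+1)a_{ij}y_j^{k}$, exactly the cocycle of $N\F$ on $Y(k)$; therefore $N\F_{|Y(k)}\simeq\mathcal O_{Y(k)}(Y)\otimes\mathcal U^{\otimes k}_{|Y(k)}$. This last computation is routine but sensitive to the sign conventions for $N\F$, $NY$ and $\mathcal O_U(Y)$, which I would fix once and for all by verifying that each cocycle restricts on $Y$ to that of $NY$ (resp. to the trivial one, for $\mathcal U$).
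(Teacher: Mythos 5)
Your proposal is correct and follows essentially the same route as the paper: choose first integrals adapted to the holonomy so that $y_i=(\lambda_{ij}+a_{ij}y_j^k+\mathrm{h.o.t.})\,y_j$ with $\lambda_{ij}\in S^1$, read off $\utype(Y)\ge k$ from Equation (\ref{TransitionUedaCoordinates}), and then compare the cocycle of $N\mathcal F$ (obtained by differentiating) with $y_{ij}^{k+1}\lambda_{ij}^{-k}$, i.e.\ the cocycle of $\mathcal O_U(Y)\otimes\mathcal U^{\otimes k}$, modulo $I^{k+1}$. The only difference is presentational: you expand each cocycle separately and multiply, while the paper states the identification of the restricted cocycles directly.
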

\begin{proof}
The hypothesis on the holonomy of $\mathcal F$ along $Y$ allows us to choose a covering $\{ U_i \}$ of a neighborhood of $Y$ in $U$ and first integrals $y_i \in \mathcal O_U(U_i)$ for
$\mathcal F$ such that
\[
y_i = \underbrace{(\lambda_{ij} + a_{ij} y_j^k + \hot )}_{=y_{ij}} y_j
\]
where $\{\lambda_{ij}\}$ is a cocycle with values in $S^1$ defining $\widetilde{N_U Y}$, and $a_{ij}$ are complex constants.
Note that the cocycle $\{ y_{ij} \}$ defines the line-bundle $\mathcal O_U(Y)$. It follows
that $\utype(Y)\ge k$ according to  Equation (\ref{TransitionUedaCoordinates}).

Differentiating the above expression we obtain that
\[
dy_i = (\lambda_{ij} + (k+1) a_{ij} y_j^k + \hot ) dy_j.
\]
The cocycle $\{(\lambda_{ij} + (k+1) a_{ij} y_j^k + \hot )\}$ represents the normal bundle of $\mathcal F$ and, when restricted
to $Y(k)$, coincides with the cocycle $\{ y_{ij}^{k+1} \cdot  \lambda_{ij}^{-k} \}$. Since this last cocycle represents $\mathcal O_U(Y) \otimes \mathcal U^ {\otimes k}$, the lemma follows.
\end{proof}

\begin{remark}\label{rk:CounterExample}
Without assuming  the linear part of the holonomy unitary, the lemma above does not hold true, even if one assumes
that the holonomy is linearizable. Indeed, we will now proceed to construct an example of a surface $U$ containing  an elliptic curve $Y$
with trivial normal bundle, $\utype(Y)=1$ and such that $Y$ is a leaf of a smooth foliation $\mathcal F$
with linear (non unitary) monodromy.

Write  $Y$ as $\mathbb C / \Gamma$, where $\Gamma = \mathbb Z \oplus \mathbb Z \theta$ and choose a representation  $\gamma \mapsto c_{\gamma}$
of $\Gamma$ in $(\mathbb C,+)$.
We want to construct a pair of foliations $\mathcal F$ and $\mathcal G$ on a neighborhood of $Y$
such that the holonomy of $\mathcal F$ along $Y$ is given by  $\rho_{\mathcal F}(\gamma) = \{  y \mapsto \exp(\gamma) y  \}$;
while the holonomy of $\mathcal G$ is given by  $\rho_{\mathcal G}(\gamma)= \{ y \mapsto \frac{y}{1 + c_{\gamma} y} \}$.
For that sake,
on $\mathbb C \times (\mathbb C,0)$ with coordinates $(x,y)$, consider the functions $F=y$   and $G= \exp(x) y$.

We want to construct $\phi_{\gamma}$ such that
\[
F\circ \phi_{\gamma} = \exp(\gamma) F \quad \text{ and } \quad G \circ \phi_{\gamma} = \frac{G}{1 + c_{\gamma} G}.
\]
From the first equation we deduce that
$
\phi_{\gamma}(x,y) = (a_{\gamma}(x,y) , \exp(\gamma) y)
$
and the second equation implies
\[
\exp(a_{\gamma}(x,y) ) \exp(\gamma) y = \frac{ \exp(x) y } { 1 + c_{\gamma} \exp(x) y } .
\]
This last equation determines $a_{\gamma}$, \emph{i.e.}
\[
a_{\gamma}(x,y) = x - \gamma - \log(1+c_{\gamma} \exp(x) y ) \, .
\]
Hence, if we take
\[
U_{\gamma} = \{ (x,y) \in \mathbb C^2 \, ; \, |y| < |\exp(-x)| |c_{\gamma}^{-1}| \}
\]
then we can define $\phi_{\gamma}:U_{\gamma}\to \mathbb C^2$.
If we take the quotient of $U_1 \cap U_{\theta}$ by $\phi_1$ and $\phi_{\theta}$ we obtain
a surface containing $Y$ and having a pair of foliations with the sought properties.
If we choose the representation $\gamma \mapsto c_{\gamma}$  non commensurable with the periods of $Y$ then $\utype (Y)=1$.
\end{remark}

\begin{remark}
Lemma \ref{L:Uedaholonomy} holds true without unitary assumption if the compact leaf $Y$ admits a germ of transverse fibration.
Indeed, given a system of non unitary flat coordinates as in Equation (\ref{TransitionUedaCoordinates}),
the cocycle $\lbrace \lambda_{ij} \rbrace\in H^1(Y,\mathbb C^*)$ is equivalent in $H^1(Y,\mathcal O^*_Y)$ to a unitary cocycle
$\lbrace \mu_{ij} \rbrace \in H^1(Y,\mathbb C^*)$, \emph{i.e.} $\lambda_{ij}u_j=u_i\mu_{ij}$ with $u_i\in\mathcal O^*(U_i\cap U_j\cap Y)$.
The transverse fibration allow to extend the coboundary $u_i$ to the neighborhood of $Y$, \emph{i.e.} $u_i\in\mathcal O^*(U_i\cap U_j)$,
so that in new variables $y_i=u_i\tilde y_i$ we get a unitary Ueda cocycle  (\ref{TransitionUedaCoordinates}) linear up to order $k$.

In the example of Remark \ref{rk:CounterExample},
there is \textit{a posteriori} no  fibration transversal to $Y$, and this makes impossible to change
the linear part of Equation (\ref{TransitionUedaCoordinates}) into unitary ones without perturbing higher order terms  in the cocycle.
\end{remark}

\subsection{Hypersurfaces of infinite type}

Let $\Pic^{\tau}(Y)$ denote the group of line bundles with torsion Chern class, \emph{i.e.} with zero real Chern class.
Let $d: \Pic^{\tau}(Y) \times  \Pic^{\tau}(Y) \to \mathbb R$ be a homogeneous distance on $\Pic^{\tau}(Y)$, \emph{i.e.} if $L, L'$ and $L''$ are
elements of $\Pic^{\tau}(Y)$ then
$d ( L\otimes L', L \otimes L'') = d( L', L'')$  and
$d(L'^* , L''^*)= d(L',L'').$
Let $\mathfrak{E}_0 \subset  \Pic^{\tau}(Y)$ be the subset of torsion line bundles, and $\mathfrak{E}_1 \subset \Pic^{\tau}(Y) - \mathfrak{E}_0$ be the subset defined by the following Diophantine condition: $L \in \mathfrak E_1$ if and only if
there exists real constants  $\alpha,\epsilon>0$ (depending on $L$) such that
\[
d(\mathcal O_Y, L^{\otimes \nu}) \ge \frac{\epsilon}{\nu^{\alpha}}
\]
for every integer $\nu \ge 1$.

By definition, if $\utype(Y)=\infty$, then the restriction of $\mathcal U$ to the completion of $U$ along $Y$ is trivial, \emph{i.e.} $\mathcal U$ is trivial on a formal neighborhood of $Y$. The theorem below  due to Ueda (cf. \cite[Theorem 3]{Ueda} ) gives sufficient conditions to the
triviality of $\mathcal U$ on an Euclidean neighborhood of $Y$ in $U$.  Although Ueda states his result only for curves on surfaces, his proof works as it is to establish the more general  result below.

\begin{thm}\label{T:Uedainfty}
Let $Y$ be a smooth compact  connected K\"ahler hypersurface of a complex manifold $U$ with topologically torsion normal bundle. If $\utype(Y)= \infty$ and $NY \in \mathfrak{E}_0 \cup \mathfrak{E}_1$, then the Ueda line bundle $\mathcal U$ is trivial
on an Euclidean neighborhood of $Y$.
\end{thm}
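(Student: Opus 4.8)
The plan is to promote the formal triviality of the Ueda line bundle $\mathcal U$ along $Y$ — which is exactly the content of the hypothesis $\utype(Y)=\infty$, as recorded just before the statement — to genuine triviality on an Euclidean neighbourhood of $Y$, by proving that the formal trivializing coordinates converge. Fix a finite Ueda covering $\{U_i\}$ of a neighbourhood of $Y$ and transverse coordinates $y_i$ satisfying \eqref{TransitionUedaCoordinates}, whose higher order terms we write as $y_i=\lambda_{ij}y_j+\sum_{k\ge 1}a_{ij,k}y_j^{k+1}$. Since $\utype(Y)=\infty$, $\mathcal U$ is trivial on the formal completion of $U$ along $Y$, which in these coordinates means there are formal power series $\hat y_i=y_i\bigl(1+\sum_{n\ge 1}c_{i,n}y_i^{n}\bigr)$ such that $\hat y_i=\lambda_{ij}\hat y_j$ holds as an identity of formal series. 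The theorem reduces to the claim that, after shrinking the covering, one has $\sup_{U_i}\lvert c_{i,n}\rvert\le AB^{n}$ for some constants $A,B>0$: the $\hat y_i$ then become honest holomorphic coordinates on a neighbourhood of $Y$ with $\hat y_i=\lambda_{ij}\hat y_j$, which by the concrete description of $\mathcal U$ recalled in Section \ref{S:utype} means precisely that $\mathcal U$ is trivial there.

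The first step is the recursion governing the $c_{i,n}$. Substituting the expansion of $y_i$ in terms of $y_j$ into $\hat y_i=\lambda_{ij}\hat y_j$ and collecting the homogeneous part of degree $n+1$ produces, for each $n\ge 1$, a \v{C}ech equation of the form
\[
\lambda_{ij}^{\,n}c_{i,n}-c_{j,n}=Q_{ij,n},
\]
where $\{Q_{ij,n}\}_{ij}$ is a universal polynomial in the fixed coefficients $\{a_{ij,k}\}_{k\le n}$ and the lower order unknowns $\{c_{\bullet,k}\}_{k<n}$, and is a $1$-cocycle with values in the flat unitary line bundle $L_n:=NY^{\otimes(-n)}$ (the sign is immaterial, as $d(\mathcal O_Y,L)=d(\mathcal O_Y,L^{*})$). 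Because $\utype(Y)=\infty$ is precisely the statement that $\mathcal U$ is trivial on every infinitesimal neighbourhood, the class $[\{Q_{\bullet,n}\}]\in H^1(Y,L_n)$ vanishes at every stage, so each equation is solvable; I take $\{c_{\bullet,n}\}$ to be the solution of least $L^2$-norm, i.e.\ the one orthogonal to $H^0(Y,L_n)$.

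The heart of the argument is the quantitative control of this solution, namely a bound $\lVert\{c_{\bullet,n}\}\rVert\le\kappa_n\lVert\{Q_{\bullet,n}\}\rVert$. By Hodge theory on the compact K\"ahler manifold $Y$, solving the above \v{C}ech equation is equivalent, up to fixed constants depending only on the chosen covering, to inverting the $\bar\partial$-Laplacian twisted by $L_n$ on the orthocomplement of its kernel; hence $\kappa_n$ is comparable to $\lambda_1(L_n)^{-1/2}$, where $\lambda_1(L_n)$ is the first positive eigenvalue of that operator. Regarded as a function of $L_n\in\Pic^{\tau}(Y)$, this eigenvalue vanishes only at the trivial bundle — a nonzero holomorphic section of a flat unitary line bundle is nowhere vanishing, so $H^0$ can jump only at $\mathcal O_Y$ — so $\lambda_1(L_n)$ is bounded below by a fixed positive power of $d(\mathcal O_Y,NY^{\otimes(-n)})=d(\mathcal O_Y,NY^{\otimes n})$, and $\kappa_n$ above by a fixed negative power of it. This is where, and the only place where, the arithmetic hypothesis $NY\in\mathfrak{E}_0\cup\mathfrak{E}_1$ is used: if $NY\in\mathfrak{E}_0$, say torsion of order $m$, then $L_n$ ranges over the finite set $\{NY,\dots,NY^{\otimes m}=\mathcal O_Y\}$ and $\kappa_n\le\kappa_0$ is uniformly bounded; if $NY\in\mathfrak{E}_1$, the Diophantine inequality gives a polynomial bound $\kappa_n\le C_0\,n^{\beta}$.

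The final step feeds these estimates into a Cauchy--majorant induction. Passing between $L^2$- and sup-norms on the fixed covering by elliptic/Sobolev estimates, one gets $r_n:=\max_i\sup_{U_i}\lvert c_{i,n}\rvert\le\kappa_n\cdot P_n(\lVert a\rVert,r_1,\dots,r_{n-1})$ with $P_n$ an explicit polynomial coming from $Q_{ij,n}$, and one proves $r_n\le AB^{n}$ by induction. When $NY\in\mathfrak{E}_0$ the $\kappa_n$ are bounded and this is a routine majorant estimate. When $NY\in\mathfrak{E}_1$ the polynomial losses $\kappa_n$ accumulate through the tree of substitutions that builds $c_{\bullet,n}$, and controlling this accumulation is the main obstacle: one invokes the classical small-divisor bookkeeping \`a la Siegel, whereby the product of the factors $\kappa_k$ over the nodes of a recursion tree of total weight $\le n$ stays $\le C^{n}$ — an elementary AM--GM estimate, since $\kappa_k=O(k^{\beta})$ — while the number of such trees is itself exponentially bounded. (Alternatively, one may replace the term-by-term scheme by a quadratically convergent KAM iteration, at the price of carefully tracking the successive shrinkings of the $U_i$.) Once $r_n\le AB^{n}$ holds, the $\hat y_i$ converge on $\{\lvert y_i\rvert<1/B\}\cap U_i$, and $\mathcal U$ is trivial there, as claimed.
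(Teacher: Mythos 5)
Your proposal is essentially Ueda's original proof of \cite[Theorem 3]{Ueda} — order-by-order construction of linearizing transverse coordinates, with the hypothesis $NY\in\mathfrak{E}_0\cup\mathfrak{E}_1$ entering only through the small-divisor bound on least-norm solutions of the twisted cocycle equations, followed by a Siegel-type majorant argument for convergence — and this is exactly the route the paper takes, since it simply cites Ueda and observes that his proof for curves on surfaces works verbatim for compact K\"ahler hypersurfaces. The only caveat is that the convergence bookkeeping you wave at (``classical small-divisor bookkeeping \`a la Siegel'') is the technical heart of Ueda's argument rather than a routine estimate, but as an outline your approach matches the paper's.
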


Given a compact complex manifold $Y$ with non-zero $H^1(Y,\mathbb Q)$, it is possible to construct a neighborhood of $Y$
(with $Y$ a hypersurface) such that $NY$ is topologically trivial, $\utype(Y)= \infty$, but there is no
Euclidean neighborhood in which $\mathcal U$ is trivial. It suffices to take an abelian  representation of $\pi_1(Y)$ to $\Diff$ which is formally linearizable, but has infinitely many periodic
points converging to zero and consider the suspension of this representation, see \cite[\S 37]{Arnold} and \cite[Section 4]{Ueda}. Since these examples have infinitely many pairwise disjoint hypersurfaces with proportional
Chern classes and which do not fit into a fibration, they are not open subsets of compact complex manifolds, see for instance \cite{jvpJAG} and \cite{Totaro}. Up to date there are no known examples of hypersurfaces of compact manifolds with $\utype(Y)=\infty$ but no Euclidean neighborhood over which the Ueda line bundle $\mathcal U$ is trivial.

\subsection{Curves of finite type}\label{ssec:CurvesFiniteType}
Smooth compact curves $C$ on smooth surfaces $S$ (not necessarily compact) having numerically trivial normal bundle and
finite Ueda type present a mixed behavior, combining features of ample divisors and fiber of fibrations. According
to \cite[Section 5]{Hironaka} the transcendence degree of the field of formal meromorphic functions is infinite (like
for fibers of a fibration). On the other hand, the only formal holomorphic functions are the constants (like for ample divisors): indeed, such a function should be constant along the curve, say zero, and would provide
a (non reduced) equation for $C$, contradicting finiteness of Ueda type.

A much more striking similarity with ample divisors is given by the following result of Ueda \cite[Theorem 1]{Ueda}, see also \cite[Proposition 5.3, page 35]{Neeman}.

\begin{thm}\label{T:Ueda}
Let $C$ be a  smooth curve on a smooth  surface $S$. If  $C^2 = 0$ and $\utype(C) < \infty$, then  there exists
a neighborhood $U\subset S$ of $C$ and a strictly plurisubharmonic function $\Phi:U-C \to \mathbb R$ such that
$\lim_{p \to C} \Phi(p) = \infty$. In particular, if $S$ is compact, then $S-C$ is strongly pseudoconvex, and it contains only finitely many
compact curves; these curves can be contracted, and the resulting space of the contraction is a Stein space.
\end{thm}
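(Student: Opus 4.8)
The plan is to derive every assertion from the construction of one exhaustion function, which is the real content. It suffices to produce a neighbourhood $U$ of $C$ in $S$ and a smooth function $\Phi\colon U-C\to\mathbb R$ which is strictly plurisubharmonic on $U-C$ and satisfies $\Phi(p)\to+\infty$ as $p\to C$. Granting this, the last assertions are soft. Suppose $S$ is compact and put $M=S-C$. For $c$ large the set $\{\Phi>c\}\cup C$ is an open neighbourhood of $C$, its complement in $S$ is compact, and $\Phi$ restricts to a proper strictly plurisubharmonic exhaustion of the end of $M$ adjacent to $C$; hence $M$ is strongly pseudoconvex in the sense of Grauert. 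By Grauert's theory, $M$ is holomorphically convex and admits a Remmert reduction $r\colon M\to M'$ onto a Stein space, which contracts the maximal compact analytic subset $A\subset M$ to finitely many points and is biholomorphic elsewhere. As $\dim M=2$, the set $A$ is a compact analytic set of dimension $\le 1$, i.e. a finite union of points and compact curves; and any compact curve $D\subset M$ has compact --- hence finite --- image in the Stein space $M'$, so $D$ lies in a fibre of $r$, thus in $A$. Therefore $M$ contains only finitely many compact curves, these are precisely the one--dimensional components of $A$, they can be contracted, and the contraction $M'$ is Stein.

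It remains to build $\Phi$; this is \cite[Theorem 1]{Ueda} (see also \cite[Proposition 5.3]{Neeman}). Since $C^2=0$, the bundle $NC$ is numerically, hence topologically, trivial and carries a flat unitary connection, so the Ueda coordinates of Section \ref{S:utype} are available for $n:=\utype(C)<\infty$: there is a finite covering $\{U_i\}$ of a neighbourhood of $C$ and transverse submersions $w_i$ with $\lambda_{ij}w_j-w_i=a_{ij}w_i^{n+1}$, $\lambda_{ij}\in S^1$, $a_{ij}\in\mathcal O(U_i\cap U_j)$, whose restrictions $\{a_{ij}|_C\}$ represent the non-zero Ueda class in $H^1(C,NC^{\otimes -n})$. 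The function I would aim for has the shape
\[
\Phi=|w_i|^{-2n}e^{\rho_i}\qquad\text{on }U_i-C,
\]
with smooth real $\rho_i$ chosen so that the local expressions match on overlaps. From $\lambda_{ij}w_j=w_i(1+a_{ij}w_i^n)$ and $|\lambda_{ij}|=1$ one gets $|w_i|^{-2n}/|w_j|^{-2n}=|1+a_{ij}w_i^n|^{2n}$, so matching amounts to $\rho_i-\rho_j=-n\log|1+a_{ij}w_i^n|^{2}$, a $1$-cocycle of smooth real functions that vanishes identically along $C$; by softness of the sheaf of smooth functions it is a coboundary, and one normalises the primitive so that $\rho_i|_C=0$. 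Then $\Phi$ is a well-defined smooth function on $U-C$ blowing up along $C$. The finiteness of $\utype(C)$ is essential here: it is precisely what makes such a blow-up possible.

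The main difficulty is to arrange strict plurisubharmonicity on $U-C$. Setting $u=\log\Phi=-2n\log|w_i|+\rho_i$ and using that $\log|w_i|$ is pluriharmonic off $C$, one gets
\[
i\,\partial\bar\partial\Phi=\Phi\bigl(i\,\partial\bar\partial\rho_i+i\,\partial u\wedge\bar\partial u\bigr),
\]
in which $i\,\partial u\wedge\bar\partial u$ contributes the manifestly positive term $n^2|w_i|^{-2}\,i\,dw_i\wedge d\bar w_i$. Thus positivity transverse to $C$ is automatic and even blows up near $C$; the real issue is positivity in the directions tangent to $C$. There the relevant part of the Levi form is forced to vanish along $C$ itself --- its integral over $C$ equals $C^2=0$ --- and could a priori change sign on a punctured neighbourhood. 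Controlling it is the crux: one has to choose the primitive $\{\rho_i\}$, and if necessary adjust the Ueda coordinates, so that the low-order transverse Taylor coefficients of $\rho_i$ along $C$ are, modulo the obstruction measured by the Ueda class, of the right type (heuristically, harmonic along $C$), which forces the tangential Levi form to be non-negative and to dominate the mixed terms once $U$ is shrunk. This Levi-form estimate is the technical heart of Ueda's argument and the only step I do not expect to be routine.
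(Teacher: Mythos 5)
This statement is not proved in the paper at all: it is quoted verbatim as Ueda's result, with a pointer to \cite[Theorem 1]{Ueda} and \cite[Proposition 5.3]{Neeman}, so there is no in-paper argument to compare with, and your own fallback of citing Ueda for the construction of $\Phi$ is exactly what the authors do. Within that framework, the part of your write-up that is genuinely yours is sound: deducing the ``in particular'' clause from the existence of the strictly plurisubharmonic exhaustion via Grauert's theory and the Remmert reduction (maximal compact analytic set, finitely many compact curves, contraction to a Stein space) is correct and standard, and your gluing computation $\rho_i-\rho_j=-n\log|1+a_{ij}w_i^n|^2$ for the ansatz $\Phi=|w_i|^{-2n}e^{\rho_i}$ is consistent with the Ueda relation of Section \ref{S:utype}.

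As a self-contained proof, however, the proposal has a gap exactly at the theorem's analytic core, and you say so yourself: strict positivity of the Levi form in the directions tangent to $C$ is asserted to be arrangeable but is not established. Solving the additive cocycle $\{-n\log|1+a_{ij}w_i^n|^2\}$ by softness of the sheaf of smooth functions produces \emph{some} well-defined $\Phi$ blowing up along $C$, but an arbitrary smooth primitive gives no control whatsoever on $i\partial\bar\partial\rho_i$ in the tangential direction, and the transverse term $n^2|w_i|^{-2}\,i\,dw_i\wedge d\bar w_i$ cannot compensate it uniformly on a punctured neighbourhood. This is precisely where the hypothesis $\utype(C)<\infty$ must be used quantitatively: in Ueda's argument the correction terms are built from a harmonic representative of the non-zero Ueda class in $H^1(C,NC^{\otimes -n})$, its non-triviality is what produces the tangential positivity, and additional care is needed where that representative degenerates; none of this is carried out here. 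So the proposal should be read as a correct reduction of the soft statements to the hard one, plus a plausible but unproved sketch of the hard one --- acceptable only because, like the paper, it ultimately leans on \cite{Ueda} for Theorem 1.
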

Actually, Ueda gives precise estimates of the growth of $\Phi$ near $C$. In particular, such curves embedded in complex compact surfaces provide natural examples of \textit{nef} line bundles which do not carry any smooth hermitian metric with semi-positive curvature (see \cite{Ko}).

From the above theorem, it follows that $C$, like an ample divisor, has a fundamental system of strictly pseudoconcave
neighborhoods \cite[Corollary of Theorem 1]{Ueda}. Consequently \cite[Theorem 4]{Andreotti},
the field of germs at $C$ of (convergent) meromorphic functions has transcendence degree bounded by the dimension of the ambient space, like for
an ample divisor.

We will use mainly this pseudoconvexity statement through its combination with \cite[Theorem 3 and Lemma 5]{Ueda2}. This gives rise to the following extension result.
\begin{thm}\label{T:Uedaextension}
Let $C$ be a  smooth curve on a smooth  surface $S$ and let $\mathcal{E}$ be a holomorphic vector bundle on $S\backslash C$. If  $C^2 = 0$ and $\utype(C) < \infty$, then any holomorphic section of $\mathcal{E}$ defined on $U\backslash C$ (where $U$ is an euclidean neighborhood of $C$) has a meromorphic extension to the whole of $S\backslash C$.

Moreover, in the particular case $\mathcal{E}=\Omega^1$, any closed holomorphic one form defined on $U\backslash C$ extends holomorphically to $S\backslash C$.
\end{thm}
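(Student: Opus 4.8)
The plan is to deduce this extension result by combining the pseudoconvexity statement of Theorem \ref{T:Ueda} with the standard Hartogs/Andreotti--Grauert type extension machinery packaged in \cite{Ueda2}. First I would invoke Theorem \ref{T:Ueda} to produce a neighborhood $U_0 \subset S$ of $C$ and a strictly plurisubharmonic exhaustion $\Phi : U_0 - C \to \mathbb R$ with $\Phi \to \infty$ along $C$; shrinking if necessary I may assume $U \subset U_0$. The sublevel sets of $\Phi$ give a fundamental system of strictly pseudoconcave neighborhoods of $C$, so that $C$ has arbitrarily thin such neighborhoods and, dually, $S - C$ (when $S$ is compact) is strongly pseudoconvex near $C$ in the appropriate sense. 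The key input from \cite[Theorem 3 and Lemma 5]{Ueda2} is precisely that, given this pseudoconvexity picture, a holomorphic section of a holomorphic vector bundle $\mathcal E$ defined on a punctured neighborhood $U \backslash C$ extends \emph{meromorphically} across the missing set; the poles are controlled because one only loses positivity along $C$ and at worst picks up denominators that are powers of a (formal, or locally convergent) defining function. Thus I would: (i) cover $C$ by the pseudoconcave neighborhoods from $\Phi$; (ii) on each such piece apply the local extension lemma to push the given section of $\mathcal E$ from $U\backslash C$ to a meromorphic section on a full neighborhood of $C$ minus $C$; (iii) glue with the section already given on $S\backslash C$ — since the two agree on the overlap $U\backslash C$, which is nonempty and open, and since meromorphic continuation is unique, they patch to a global meromorphic section of $\mathcal E$ on $S\backslash C$.

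For the second, sharper assertion with $\mathcal E = \Omega^1$ and the form $\omega$ closed, I would first run the argument above to get a meromorphic extension $\tilde\omega$ of $\omega$ to $S\backslash C$, with polar set $D$ disjoint from $C$. The point is to upgrade ``meromorphic'' to ``holomorphic'': since $\omega$ is closed on $U\backslash C$ and closedness is a closed condition, $\tilde\omega$ is a closed meromorphic $1$-form on $S\backslash C$. A closed meromorphic $1$-form on a complex manifold has only logarithmic poles of order one along each irreducible component of its polar divisor, with constant residues; so locally near a generic point of a polar component $D_i$ one has $\tilde\omega = c_i \frac{dg_i}{g_i} + (\text{holomorphic})$ for a local equation $g_i$ of $D_i$ and a constant $c_i$. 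The residue $c_i$ is a locally constant function on the smooth locus of $D_i$, hence constant on that component; and because $\omega$ was genuinely holomorphic on the (nonempty, open) set $U\backslash C$, which meets no component of $D$ but whose analytic continuation forces global constraints — more precisely, integrating $\tilde\omega$ over a small loop around $D_i$ computes $2\pi i\, c_i$, and this period must vanish if the form extends holomorphically on a connected piece that wraps — I claim all the $c_i$ vanish. Here is the cleanest way to see it: the pseudoconcave neighborhood $V\backslash C$ of $C$ has the \emph{same} first cohomology behavior as needed so that any closed holomorphic $1$-form there is already exact after passing to a suitable cover, but more simply, since $\tilde\omega|_{U\backslash C} = \omega$ is holomorphic, $D$ does not meet $U$, yet $D$ is a compact analytic subset of $S\backslash C$ disjoint from a neighborhood of $C$, hence $D$ is compact in $S$; by Theorem \ref{T:Ueda} it can be contracted and the result is Stein, forcing any such $D$ to be contractible and its fundamental class to die — so the residues, being periods around the $D_i$, are zero. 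Therefore $\tilde\omega$ has no poles at all and is a holomorphic closed $1$-form on $S\backslash C$, proving the ``moreover''.

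The main obstacle I expect is the passage from meromorphic to holomorphic in the second part, i.e.\ showing the residues along the ``spurious'' polar components created by the extension actually vanish. The extension lemma of \cite{Ueda2} is inherently meromorphic and there is no a priori reason a generic section should extend holomorphically; what saves the closed-$1$-form case is the rigidity of closed meromorphic forms (order-one poles, constant residues) combined with the compactness/contractibility of the polar locus furnished by Ueda's pseudoconvexity theorem. I would need to be careful that the polar divisor $D$ is indeed disjoint from $C$ — which it is, since the section is holomorphic on $U\backslash C$ — and that contracting $D$ is compatible with the ambient structure so that the residue-period argument is valid; some care is also needed if $D$ is non-reduced or has components of different residues, but the local normal form of closed meromorphic forms handles each component separately. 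A secondary technical point is verifying the hypotheses of \cite[Theorem 3, Lemma 5]{Ueda2} match the pseudoconcavity produced by $\Phi$; this is essentially bookkeeping once the fundamental system of pseudoconcave neighborhoods is in hand.
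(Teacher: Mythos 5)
For the first assertion your plan coincides with the paper's own route, which is simply to combine Theorem \ref{T:Ueda} with \cite[Theorem 3 and Lemma 5]{Ueda2}; the paper gives no further argument, so there is nothing to object to there, except a slip in your step (iii): no section is ``already given on $S\backslash C$''. The extension goes outward, from the pseudoconcave neighborhood $U\backslash C$ of $C$ into the strongly pseudoconvex complement, and the gluing you describe is vacuous; what does the work is precisely the cited extension theorem of \cite{Ueda2}.

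The genuine gap is in your treatment of the ``moreover'' clause, which you try to derive independently from the first part. Your key claim that a closed meromorphic $1$-form has only logarithmic poles of order one is false: $d(1/z)=-dz/z^{2}$ is closed with a double pole. The correct local statement is only that the residue is constant along each polar component while the rest of the polar part is locally exact, so poles of arbitrary order can occur and vanishing of the residues alone would not yield holomorphy. Your argument that the residues vanish is also not valid: contracting the compact curves of $S\backslash C$ to get a Stein space does not change the complement of the polar set $D$, and the small loops linking the components $D_i$ need not become homologically trivial there, so nothing ``dies'' for the period computation. The standard way to kill the residues is the intersection-form argument: as currents $d\tilde\omega=2\pi i\sum_i c_i[D_i]$, hence $\sum_i c_i\,(D_i\cdot D_j)=0$ for every $j$, and the intersection matrix of the compact curves in $S\backslash C$ is negative definite (they are contractible by Theorem \ref{T:Ueda}), forcing $c_i=0$; one then still needs a separate argument (again using negative definiteness, or directly \cite[Lemma 5]{Ueda2}) to exclude the exact higher-order polar part. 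The paper takes both assertions directly from \cite{Ueda2}, and it is exactly at this second step that your independent derivation breaks down.
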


\subsection{Hypersurfaces of finite type}
Back to the case of divisors on projective manifolds, or more generally on compact K\"{a}hler manifolds, we
have yet another similarity between numerically trivial divisors with finite Ueda type and ample divisors: the  Lefschetz-like statement below.

\begin{prop}\label{P:Albanese}
Let $X$ be a compact K\"{a}hler manifold and $Y$ a smooth divisor on $X$ with numerically trivial normal bundle.
If $H^1(X,\mathcal O_X) \to H^1(Y,\mathcal O_Y)$ is not  injective, then $Y$ is a multiple fiber of a fibration on
$X$.
In particular, if $\utype(Y)<\infty$, then the restriction morphisms $H^1(X,\mathbb Q) \to H^1(Y,\mathbb Q)$ and $H^1(X,\mathcal O_X) \to H^1(Y,\mathcal O_Y)$
are injective.
\end{prop}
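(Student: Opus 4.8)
The plan is to prove the first statement by a suspension/fibration argument and then deduce the two injectivity claims as consequences. Suppose $H^1(X,\mathcal O_X)\to H^1(Y,\mathcal O_Y)$ is not injective, so there is a nonzero class $\alpha \in H^1(X,\mathcal O_X)$ restricting to $0$ in $H^1(Y,\mathcal O_Y)$. Via the Hodge decomposition on the compact K\"ahler manifold $X$, the class $\alpha$ is represented by a holomorphic $1$-form $\omega$ on $X$ (or rather $\alpha$ is the Dolbeault class of an antiholomorphic form; I will work with the conjugate holomorphic $1$-form, which is closed). First I would use that the restriction of $\alpha$ to $Y$ vanishes in $H^1(Y,\mathcal O_Y)$ to conclude that $\omega|_Y$ is an \emph{exact} holomorphic $1$-form on the compact K\"ahler manifold $Y$, hence \emph{identically zero} (a holomorphic $1$-form on a compact K\"ahler manifold is $d$-closed and never exact unless zero). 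Thus $\omega$ is a nonzero holomorphic $1$-form on $X$ vanishing identically on $Y$.

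Next I would invoke the classical consequence of the Albanese map: a nonzero holomorphic $1$-form $\omega$ on a compact K\"ahler manifold $X$ which vanishes along a divisor $Y$ forces $Y$ (with some multiplicity) to be contained in a fiber of the Albanese map, and more precisely one runs the argument of Castelnuovo--de Franchis / the structure theorem for the Stein factorization of $\alb_X$. Concretely: consider the morphism $\alb_X : X \to \Alb(X)$; the form $\omega$ is the pullback of a translation-invariant $1$-form on $\Alb(X)$. Since $\omega$ vanishes on all of $Y$, the differential $d(\alb_X)$ degenerates along $Y$ in the direction dual to $\omega$; combined with the fact that $NY$ is numerically trivial (so $Y$ has the right numerical behavior to be a fiber), one deduces that $Y$ is contained in a fiber of the map to a curve obtained by Stein-factorizing the image of a suitable projection of $\alb_X$. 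One must then check that $Y$ is actually a whole fiber (possibly multiple), not just contained in one — here the numerical triviality of $NY$ is essential, since the fiber of a fibration over a curve has numerically trivial normal bundle and $Y$ sits inside such a fiber with the same numerical class, forcing equality of supports. This gives that $Y$ is a (multiple) fiber of a fibration $f : X \to C$ over a curve.

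For the ``in particular'' clause, assume $\utype(Y) < \infty$. If $H^1(X,\mathbb Q) \to H^1(Y,\mathbb Q)$ failed to be injective, a fortiori (after tensoring with $\mathbb C$ and taking the $(0,1)$-part, using that the map is a morphism of Hodge structures) the map $H^1(X,\mathcal O_X) \to H^1(Y,\mathcal O_Y)$ would fail to be injective; so it suffices to treat the latter. But if $H^1(X,\mathcal O_X)\to H^1(Y,\mathcal O_Y)$ is not injective, the first part shows $Y$ is a multiple fiber of a fibration, hence $mY$ is a genuine fiber for some $m\ge 1$; by Theorem~\ref{thm:Neeman} this is exactly the situation $\utype(Y) = \infty$ (indeed $\utype(Y)>m\geq \ord(NY)$ forces $\utype(Y)=\infty$), contradicting $\utype(Y)<\infty$. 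Therefore both restriction maps are injective.

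The main obstacle I anticipate is the step upgrading ``$Y$ is contained in a fiber'' to ``$Y$ is a (multiple) fiber'': one must carefully set up the Stein factorization of an appropriate quotient map of the Albanese torus so that the image curve $C$ has genus $\ge 1$ and the fibration $f:X\to C$ has $Y$ among its fibers, and then use the numerical triviality of $NY$ together with the fact that distinct fibers are disjoint to rule out the possibility that $Y$ is a proper subvariety of a fiber or meets several fibers. This is where I would expect to have to be most careful, invoking the standard structure theory of the Albanese map for compact K\"ahler manifolds and possibly a connectedness (Stein factorization) argument together with the adjunction-type constraint $NY \equiv 0$.
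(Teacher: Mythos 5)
Your proposal runs along essentially the same lines as the paper's proof: non-injectivity of $H^1(X,\mathcal O_X)\to H^1(Y,\mathcal O_Y)$ is converted (by Hodge theory, equivalently by the non-surjectivity of $\Alb(Y)\to\Alb(X)$) into a holomorphic $1$-form on $X$ pulling back to zero on $Y$, one then projects the Albanese so as to kill the image of $Y$ and obtains a non-constant map contracting $Y$, and finally the numerical triviality of $NY$ is used to promote this to ``a multiple of $Y$ is a fiber of a fibration''. The step you single out as the main obstacle (upgrading ``contained in a fiber'' to ``is a multiple fiber over a curve'') is precisely the step the paper itself does not prove but delegates to Neeman (pp.\ 104, 109--110) and to Totaro's proof of his Theorem 2.1, so your level of detail there matches the paper's; just be aware that your intermediate phrasing (``the differential of $\alb_X$ degenerates along $Y$'') is not quite what the vanishing $i_Y^*\omega=0$ gives you — it gives that $\alb_X(Y)$ lies in a translate of a proper subtorus, which is what the quotient construction actually exploits.

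There is, however, one concrete slip in the ``in particular'' part. You justify the implication ``$mY$ a fiber $\Rightarrow\ \utype(Y)=\infty$'' by Theorem \ref{thm:Neeman}, and your parenthetical assumes $\utype(Y)>m\ge\ord(NY)$; but that theorem goes in the opposite direction ($\utype(Y)>\ord(NY)$ implies $\utype(Y)=\infty$ and fiber), and the hypothesis $\utype(Y)>m$ is exactly what you do not have — as written the deduction is circular. The fact you need is elementary and independent of Theorem \ref{thm:Neeman}: if $mY=f^*(c)$ for a fibration $f$, then near $Y$ one writes $f-c=u_i\,w_i^m$ with $u_i$ a unit and $w_i$ a local equation of $Y$, so the functions $y_i:=u_i^{1/m}w_i$ satisfy $y_i=\zeta_{ij}y_j$ with $\zeta_{ij}^m=1$; these are Ueda coordinates with unitary linear transitions at every order, whence $\utype(Y)=\infty$. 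With that substitution (and the Hodge-structure reduction of the $\mathbb Q$-statement to the $\mathcal O$-statement, which is correct as you wrote it), your argument for the last clause is complete.
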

\begin{proof}
If $H^1(X,\mathcal O_X) \to H^1(Y,\mathcal O_Y)$ is not injective, then the morphism between Albanese varieties
$\Alb(Y) \to \Alb(X)$ is not surjective. The composition of the Albanese morphism of $X$ with the quotient map $\Alb(X) \to \Alb(X)/\Alb(Y)$
is a non-constant morphism, which contracts $Y$. Since the normal bundle of $Y$ is numerically trivial, it follows that some multiple of $Y$
is a fiber of a fibration.
For more details see \cite[page 104 and proof of Theorem 5.3 in pages 109-110]{Neeman} and  \cite[proof of Theorem 2.1]{Totaro}.
\end{proof}

A slightly more general version, where we replace smooth divisor by a simple normal crossing divisor also holds true, see
\cite[proof of Theorem 2.3]{croco5}.

\section{Existence of formal foliations}\label{S:Existence1}

In this section we present the proof of Theorem \ref{THM:formalfoliation}.
Its content is not used in the remainder of the article, and the readers interested only on the global aspects
of our study can safely skip it.

\subsection{Notation}
Before going into the details of the proof of Theorem \ref{THM:formalfoliation}, let us introduce the following notations and definitions.

Select  an open covering $\lbrace U_i\rbrace $ of some neighborhood of $Y$. Let $\mathcal V=\lbrace V_i\rbrace$ be the open covering of $Y$ defined by $V_i:=U_i\cap Y$. One can choose $\lbrace U_i\rbrace$ in such a way that $\mathcal V$ is an acyclic covering by disks. Denote by $\widehat{U_i}$ the formal completion of $U_i$ along $V_i$. Let $\{y_i\}\in {\mathcal O}(\widehat{U_i})$ be an {\it admissible coordinates}, which means that $y_i$ is some formal submersion such that $\{{y_i}=0\}=V_i$ and such that
$$t_{ij}y_j -{y_i}= a_{ij}y_i^{2}$$
where the cocycle $\lbrace t_{ij} \rbrace \in Z^1(\mathcal V, {NY}^*)$ is unitary and $\lbrace a_{ij|V_i\cap V_j}\rbrace \in Z^1(\mathcal V, {NY}^{*})$.

\begin{definition}
The coordinates $\lbrace y_i\rbrace$ as above are said to be {foliated} whenever $y_i$ and $y_j$ satisfy
$$ t_{ij}y_j-y_i=\sum_{l=2}^{+\infty} a_{ij}^{(l)}{y_i}^l$$
where the $a_{ij}^{(l)}$'s are {locally constant}.
\end{definition}

\begin{remark}
Note that a system of admissible coordinates is foliated if and only if $dy_i\wedge dy_j=0$. The existence of such coordinates is thus equivalent to the existence of a regular foliation on $Y(\infty)$ leaving $Y$ invariant.
\end{remark}

We will make use of the following classical vanishing result (see for instance \cite[end of \S 1]{Ueda}).

\begin{lemma}\label{vanishingH2}
Let $\Sigma$ be a {non trivial} rank one unitary local system on a smooth
compact and connected  curve $Y$, then
$$H^2(Y, \Sigma)=0.$$
\end{lemma}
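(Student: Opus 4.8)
\emph{Proof plan.} Since $Y$ is a smooth compact connected complex curve it is a closed oriented real surface of some genus $g$, and a rank one unitary local system $\Sigma$ on $Y$ is the one attached to a character $\rho\colon\pi_1(Y)\to S^1\subset\mathbb{C}^*$; the hypothesis that $\Sigma$ is non-trivial means precisely that $\rho$ is non-trivial. If $g=0$ then $\pi_1(Y)$ is trivial, no such $\rho$ exists, and the statement is vacuous, so we may assume $g\ge 1$. The plan is to convert $H^2(Y,\Sigma)$ into a purely group-theoretic object by means of Poincar\'e duality with local coefficients.

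The first step is to apply Poincar\'e duality on the closed oriented surface $Y$: capping with the fundamental class yields an isomorphism $H^2(Y,\Sigma)\cong H_0(Y,\Sigma)$. The second step is the standard identification of $H_0(Y,\Sigma)$ with the coinvariants of the monodromy action, namely $H_0(Y,\Sigma)\cong \mathbb{C}/\langle(\rho(\gamma)-1)v\ :\ \gamma\in\pi_1(Y),\ v\in\mathbb{C}\rangle$. Since $\rho$ is non-trivial there exists $\gamma$ with $\rho(\gamma)\ne 1$, and then $(\rho(\gamma)-1)\mathbb{C}=\mathbb{C}$; hence the coinvariants vanish and $H^2(Y,\Sigma)=0$.

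I do not expect a genuine obstacle: once Poincar\'e duality for local systems is taken as known, the argument is entirely formal, the only point deserving a word being the degenerate genus $0$ case noted above. For readers preferring to remain within the language of coherent sheaves on the Riemann surface $Y$, one can argue equivalently as follows. Let $(L,\nabla)$ be the flat holomorphic line bundle with $\ker\nabla=\Sigma$; it has degree zero and is non-trivial precisely because $\rho$ is. On the curve $Y$ the connection fits into a short exact sequence of sheaves $0\to\Sigma\to\mathcal{O}_Y(L)\xrightarrow{\ \nabla\ }\Omega^1_Y\otimes L\to 0$, the surjectivity being the local solvability of a first order linear ordinary differential equation with holomorphic coefficients. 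Since $H^2(Y,\mathcal{O}_Y(L))=0$ on a curve, the long exact cohomology sequence exhibits $H^2(Y,\Sigma)$ as a quotient of $H^1(Y,\Omega^1_Y\otimes L)$, which by Serre duality equals $H^0(Y,L^{-1})^{*}$; and $H^0(Y,L^{-1})=0$ because $L^{-1}$ is a non-trivial line bundle of degree zero. In either approach one concludes $H^2(Y,\Sigma)=0$.
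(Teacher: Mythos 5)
Your proof is correct, and in fact you give two independent correct arguments. Note that the paper itself does not prove this lemma at all: it records it as a classical vanishing statement and refers to the end of \S 1 of Ueda's paper, so there is no internal proof to diverge from. Your first argument (Poincar\'e duality with local coefficients, $H^2(Y,\Sigma)\cong H_0(Y,\Sigma)$, which is the module of coinvariants and vanishes as soon as some $\rho(\gamma)\neq 1$) is the standard topological proof; your second argument (the resolution $0\to\Sigma\to\mathcal O_Y(L)\xrightarrow{\nabla}\Omega^1_Y\otimes L\to 0$, then Serre duality and $H^0(Y,L^{-1})=0$ for a non-trivial degree-zero bundle) is essentially the Hodge-theoretic proof one finds in Ueda's setting, and it uses the fact, worth stating explicitly, that a flat unitary line bundle on a compact Riemann surface is holomorphically trivial if and only if its character is trivial. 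Either argument alone suffices; the genus-zero remark correctly disposes of the vacuous case.
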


Let $\nabla$ the unitary flat connection defined on $N_Y$ and $\Sigma_1$ be the corresponding rank one local system. More generally, for each integer $k$, we will denote by $\Sigma_k$ the local system associated to $\nabla^{\otimes k}$ whose underlying line bundle is then $NY^{\otimes k}$.

Let us fix some positive integer $\nu$.  Let $\lbrace y_i\rbrace$ be a system of admissible coordinates and suppose that one can write
\begin{equation}\label{coefconstants}
t_{ij}{y_j }-y_i= a_{ij}^{(\nu +1)}{y_i}^{\nu + 1}+......+a_{ij}^{(\mu)}{y_i}^{\mu}+a_{ij}^{(\mu +1)}{y_i}^{\mu +1}
\end{equation}
where $\nu<\mu$,   $a_{ij}^{(l)}$ is {locally constant} for $\nu +1\leq l\leq \mu$ and $a_{ij}^{(\mu +1)}\in \mathcal{O}(\widehat{U_i}\cap \widehat{U_j})$.

\begin{definition}
 A system of admissible coordinates $\lbrace y_i \rbrace$ (with respect to $\nu$) which satisfies Equation (\ref{coefconstants}) is said to be a {$\mu$-foliated system of coordinates}.
\end{definition}

This means that we have a foliation on the $\mu^{\text{th}}$ infinitesimal neighborhood.

\begin{remark}
Note that $\lbrace a_{ij}^{(\nu +1)} \rbrace\in Z^ 1(\mathcal V, \Sigma_{-\nu})$. We will denote by $[a_{ij}^{(\nu +1)}]$ the corresponding class in $H^ 1(Y,\Sigma_{-\nu})$.
\end{remark}

\subsection{Auxiliary results}
The proof of our result will be a consequence of the following sequence of lemmas.

\begin{lemma}\label{cup-product}
The bilinear pairing
$$H^ 1(Y,\Sigma_{-\nu})\times H^ 1(Y,\Sigma_{\nu})\rightarrow H^2(Y,\C)=\C$$
given by the cup-product is non-degenerate. In particular, if  $\lbrace y_i\rbrace$ is a $\mu$-foliated system of coordinates (with respect to $\nu$) and if $\lbrace a_{ij}^{(\nu +1)}\rbrace $, seen as a cocycle in $Z^ 1(\mathcal V, NY^{\otimes{-\nu}})$, is not cohomologous to $0$,  then there exists $[b_{ij}]\in    H^ 1(Y,\Sigma_{\nu})$ whose cohomology class is trivial in $H^ 1(Y,NY^{\otimes \nu})$ and such that $[a_{ij}^{(\nu +1)}]\cup [b_{ij}]\not=0$.
\end{lemma}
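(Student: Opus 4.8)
The plan is to prove the claimed non-degeneracy of the cup-product pairing
\[
H^1(Y,\Sigma_{-\nu}) \times H^1(Y,\Sigma_{\nu}) \longrightarrow H^2(Y,\C) = \C
\]
by combining Serre duality with a Hodge-theoretic identification of the flat cohomology of the unitary local system $\Sigma_{\nu}$. First I would note that $\Sigma_{\nu}$ and $\Sigma_{-\nu}$ are dual rank one local systems (the underlying line bundles $NY^{\otimes\nu}$ and $NY^{\otimes -\nu}$ are dual, and the flat unitary connections are compatible), so that the cup-product followed by the trace map $H^2(Y,\C)\to\C$ is exactly the Poincaré duality pairing for local systems on the compact oriented surface $Y$. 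Poincaré duality for local systems on a compact manifold is a general fact: for any local system $\mathcal L$ on a closed oriented $n$-manifold, the pairing $H^i(Y,\mathcal L)\times H^{n-i}(Y,\mathcal L^\vee)\to H^n(Y,\C)\cong\C$ is perfect. Applying this with $n=2$, $i=1$, $\mathcal L = \Sigma_{-\nu}$, $\mathcal L^\vee = \Sigma_{\nu}$ gives the non-degeneracy directly.

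An alternative (and perhaps more self-contained in the analytic category) route would be to use Serre duality together with the fact that for a non-trivial unitary flat line bundle $L$ on a compact Riemann surface, the Dolbeault-type decomposition $H^1(Y,\Sigma_L) \cong H^0(Y,\Omega^1_Y\otimes L) \oplus \overline{H^0(Y,\Omega^1_Y\otimes L^{-1})}$ holds (this is Hodge theory for unitary local systems, which gives $H^1(Y,\Sigma_L)\cong H^1(Y,L)\oplus H^0(Y,\Omega^1_Y\otimes L)$ when $L$ has degree $0$ and is non-trivial, using $H^0(Y,L)=0$ and Serre duality $H^1(Y,L)\cong H^0(Y,\Omega^1_Y\otimes L^{-1})^\vee$). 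Serre duality $H^1(Y,L)\times H^0(Y,\Omega^1_Y\otimes L^{-1})\to H^1(Y,\Omega^1_Y)=\C$ is then exactly the restriction of the cup-product to the appropriate Hodge pieces, and it is perfect; matching up the two Hodge decompositions for $\Sigma_{-\nu}$ and $\Sigma_{\nu}$ shows the full cup-product pairing is perfect. I would probably present the first route as the cleaner statement, mentioning the second only if an explicit Dolbeault representative is needed downstream.

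For the ``In particular'' clause: suppose $\{y_i\}$ is $\mu$-foliated with respect to $\nu$ and the leading obstruction cocycle $\{a_{ij}^{(\nu+1)}\}\in Z^1(\mathcal V,NY^{\otimes -\nu})$ is not a coboundary, i.e.\ its class $[a_{ij}^{(\nu+1)}]\in H^1(Y,\Sigma_{-\nu})$ is non-zero. By the non-degeneracy just established, there exists a class in $H^1(Y,\Sigma_{\nu})$ pairing non-trivially with it; represent it by a cocycle $\{b_{ij}\}\in Z^1(\mathcal V,\Sigma_{\nu})$. The only remaining point is the requirement that the image of $[b_{ij}]$ in $H^1(Y,NY^{\otimes\nu})$ be \emph{trivial}. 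Here I would invoke that $\mathcal V$ is an acyclic covering of $Y$ by disks (as arranged in the Notation subsection), so $H^1$ is computed by Čech cocycles on $\mathcal V$; and I would use the Hodge decomposition $H^1(Y,\Sigma_{\nu})\cong H^1(Y,NY^{\otimes\nu})\oplus H^0(Y,\Omega^1_Y\otimes NY^{\otimes\nu})$ — choosing the $(0,1)$-part of any representative in $H^1(Y,\Sigma_\nu)$ lands in the $H^1(Y,NY^{\otimes\nu})$ summand, while the holomorphic part $H^0(Y,\Omega^1_Y\otimes NY^{\otimes\nu})$ maps to $0$ in $H^1(Y,NY^{\otimes\nu})$. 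Since $H^1(Y,\Sigma_\nu)$ is spanned by these two summands and the pairing with $[a_{ij}^{(\nu+1)}]$ is non-zero on the whole space, it must be non-zero on at least one summand; if it is non-zero on the holomorphic summand $H^0(Y,\Omega^1_Y\otimes NY^{\otimes\nu})$ we are done, and I expect this is exactly the case that matters — indeed by Serre duality the pairing of $H^1(Y,NY^{\otimes -\nu})$ against $H^0(Y,\Omega^1_Y\otimes NY^{\otimes\nu})$ is already perfect, so a non-zero class in $H^1(Y,\Sigma_{-\nu})$ whose image in $H^1(Y,NY^{\otimes -\nu})$ is non-zero is detected by the holomorphic summand alone. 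One subtlety: if the class $[a_{ij}^{(\nu+1)}]$ has zero image in $H^1(Y,NY^{\otimes -\nu})$ but is non-zero in $H^1(Y,\Sigma_{-\nu})$, then it is detected instead by the \emph{anti}-holomorphic summand $\overline{H^0(Y,\Omega^1_Y\otimes NY^{\otimes\nu})}$; I would need to check whether the intended application guarantees non-triviality already at the level of $H^1(Y,NY^{\otimes -\nu})$ (which is how the Ueda class is usually normalized) so that only the holomorphic $b$ is needed.

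\textbf{Main obstacle.} The genuinely substantive input is Poincaré/Serre duality for the unitary local system $\Sigma_\nu$ and the accompanying Hodge decomposition; the duality itself is standard, so the real work is bookkeeping — verifying that the cup-product on Čech cocycles for the covering $\mathcal V$ really computes the topological pairing, and pinning down which Hodge summand of $H^1(Y,\Sigma_\nu)$ contains the representative $\{b_{ij}\}$ so that its class dies in $H^1(Y,NY^{\otimes\nu})$. I expect the delicate point is precisely this last compatibility (holomorphic-part versus topological class), and ensuring it is consistent with how $[a_{ij}^{(\nu+1)}]$ is known to be non-trivial.
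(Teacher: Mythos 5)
Your proposal is correct and takes essentially the same route as the paper: Hodge theory for the unitary local systems on the curve, where the paper simply makes your Serre-duality step explicit by taking $b$ to be (the class of) $\overline{\Omega}$, the conjugate of the $(0,1)$-part $\Omega$ of the harmonic representative of $[a_{ij}^{(\nu+1)}]$ — a holomorphic $NY^{\otimes\nu}$-valued $1$-form, hence trivial in $H^1(Y,NY^{\otimes\nu})$, and pairing with $a$ to $\int_Y\Omega\wedge\overline{\Omega}\neq 0$. The subtlety you flag at the end does not arise: the lemma's hypothesis is precisely that the cocycle is not cohomologous to zero in $Z^1(\mathcal V,NY^{\otimes-\nu})$, i.e.\ its image in $H^1(Y,NY^{\otimes-\nu})$ is non-zero, so only the holomorphic summand is ever needed.
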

\begin{proof}
One can see $[a_{ij}^{(\nu +1)}]$ as an ${NY}^{\otimes{-\nu}}$ valued harmonic form $\alpha\in  {\mathcal H }^1 (NY^{\otimes{-\nu}})$ whose $(0,1)$ part $\Omega$ is non trivial. One can then write $\Omega=\overline{\omega }$ where $\omega\in     {H }^{1,0} (Y,NY^{\otimes{\nu}})$. The form $\omega$ corresponds to a cocycle $[b_{ij}]\in    H^ 1(Y,\Sigma_{\nu})$ satisfying the conclusion of the lemma.
\end{proof}

\begin{lemma}\label{coordinatemodif}
 Let $\lbrace y_i\rbrace$ be a $\mu$-foliated system of coordinates which satisfies Equation (\ref {coefconstants}) and $\lbrace z_i\rbrace$ another admissible system of coordinates related to $\lbrace y_i\rbrace$ by
 $$y_i= z_i-H_i {z_i}^{\mu +1}$$
where $H_i\in  {\mathcal O}(\widehat{U_i})$. Then $\lbrace z_i\rbrace$ is still a $\mu$-foliated system of coordinates and the changes of coordinates are given by:
$$t_{ij} z_j-z_i=a_{ij}^{(\nu+1)}{z_i}^{\nu +1}+......+a_{ij}^{(\mu)}{z_i}^{\mu}+b_{ij}^{(\mu +1)}{z_i}^{\mu +1}$$
where the functions $b_{ij}^{(\mu +1)}\in   {\mathcal O}(\widehat{U_i}\cap \widehat{U_j})$ satisfy
$${ b_{ij}^{(\mu +1)} }_{|Y}= {(a_{ij}^{(\mu +1)}+t_{ij}^{-\mu} H_j -H_i)}_{|Y}.$$
\end{lemma}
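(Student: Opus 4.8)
The plan is to carry out the substitution $y_i = z_i - H_i z_i^{\mu+1}$ directly and track the change it induces on the transition relations, working modulo $z_i^{\mu+2}$ throughout since we only care about a $\mu$-foliated structure and the restriction of the new $(\mu+1)$-st coefficient to $Y$. First I would record that, since $H_i z_i^{\mu+1}$ is of order $\mu+1 \geq 2$, the map $z_i \mapsto z_i - H_i z_i^{\mu+1}$ is a formal submersion vanishing exactly on $V_i$, so $\{z_i\}$ is again a system of admissible coordinates (the linear part is unchanged, hence the cocycle $\{t_{ij}\}$ is the same). I would also note the inverse substitution has the form $z_i = y_i + H_i y_i^{\mu+1} + (\text{order} \geq \mu+2)$, which is all that is needed below.

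Next I would substitute into Equation (\ref{coefconstants}). On the left-hand side, $t_{ij}y_j - y_i = t_{ij}(z_j - H_j z_j^{\mu+1}) - (z_i - H_i z_i^{\mu+1}) = (t_{ij}z_j - z_i) - t_{ij}H_j z_j^{\mu+1} + H_i z_i^{\mu+1}$. Since $z_j = t_{ij}^{-1} z_i + (\text{order} \geq 2)$, we have $z_j^{\mu+1} = t_{ij}^{-(\mu+1)} z_i^{\mu+1} + (\text{order} \geq \mu+2)$, so $t_{ij} H_j z_j^{\mu+1} = t_{ij}^{-\mu} H_j z_i^{\mu+1} + (\text{order} \geq \mu+2)$. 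On the right-hand side of (\ref{coefconstants}), each term $a_{ij}^{(l)} y_i^l$ with $\nu+1 \leq l \leq \mu$ becomes $a_{ij}^{(l)}(z_i - H_i z_i^{\mu+1})^l = a_{ij}^{(l)} z_i^l + (\text{order} \geq \mu+1)$, and for $l \geq \nu+1 \geq 1$ the correction is in fact of order $\geq l + \mu \geq \mu+1$; the constant-coefficient nature of these terms is preserved since only the leading monomial survives with its original (locally constant) coefficient. The term $a_{ij}^{(\mu+1)} y_i^{\mu+1}$ becomes $a_{ij}^{(\mu+1)} z_i^{\mu+1} + (\text{order} \geq \mu+2)$. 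Collecting the $z_i^{\mu+1}$ terms and solving for the new coefficient $b_{ij}^{(\mu+1)}$ of $z_i^{\mu+1}$ in the relation $t_{ij}z_j - z_i = \sum_{l=\nu+1}^{\mu} a_{ij}^{(l)} z_i^l + b_{ij}^{(\mu+1)} z_i^{\mu+1}$, one reads off $b_{ij}^{(\mu+1)} = a_{ij}^{(\mu+1)} + t_{ij}^{-\mu} H_j - H_i$ modulo terms vanishing on $Y$ (coming from the order $\geq \mu+2$ remainders and from the lower-degree corrections when restricted), which gives exactly the claimed formula for ${b_{ij}^{(\mu+1)}}_{|Y}$. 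This also shows $\{z_i\}$ is $\mu$-foliated: coefficients in degrees $\nu+1,\dots,\mu$ remain the locally constant $a_{ij}^{(l)}$, and there is a single holomorphic coefficient in degree $\mu+1$.

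I do not expect a genuine obstacle here — this is a bookkeeping computation — but the one point requiring care is the interplay between the ``modulo order $\mu+2$'' truncation and the fact that the conclusion only asserts an equality \emph{after restriction to $Y$}. Concretely, the substitutions in the lower-order terms $a_{ij}^{(l)} y_i^l$ produce corrections that are divisible by $z_i^{\mu+1}$ but also by $H_i$ (hence holomorphic, not locally constant); these feed into $b_{ij}^{(\mu+1)}$ but, being multiples of $z_i \cdot (\text{something})$ — no, rather being of the form $z_i^{\mu+1}\cdot(\text{holomorphic vanishing nowhere forced})$ — one must check they contribute a term that vanishes on $Y$. The cleanest way is to observe that any discrepancy between $b_{ij}^{(\mu+1)}$ and $a_{ij}^{(\mu+1)} + t_{ij}^{-\mu}H_j - H_i$ is a sum of products in which at least one factor is one of the $z_i^{m}$ with $m \geq 1$ evaluated against the order $\geq \mu+2$ tail, hence lies in the ideal of $Y$; restricting kills it. I would state this as the final line of the proof.
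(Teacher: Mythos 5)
Your computation is correct and is exactly the ``straightforward computation'' the paper leaves to the reader: substitute $y_i=z_i-H_iz_i^{\mu+1}$ into Equation (\ref{coefconstants}), use $z_j=t_{ij}^{-1}z_i+(\text{order}\geq 2)$, and observe that all remaining corrections are divisible by $z_i^{\mu+2}$ (since every $l$ in the sum satisfies $l\geq\nu+1\geq 2$, so the lower-order terms contribute in order $l+\mu\geq\mu+2$), hence only alter $b_{ij}^{(\mu+1)}$ by elements of the ideal of $Y$. The only cosmetic point is that your intermediate bound ``order $\geq\mu+1$'' should be stated as $\geq\mu+2$, which is what your final paragraph in effect uses and what makes the equality after restriction to $Y$ hold.
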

\begin{proof}
It is a straightforward computation.
\end{proof}

Consider the coboundary in $Z^2(\mathcal V, N_Y^{-\mu})$ defined as
$${a_{ijk}^{(\mu +1)}}= {(a_{ij}^{(\mu +1)} +t_{ij}^{-\mu} a_{jk}^{(\mu +1)}+t_{ik}^{-\mu} a_{ki}^{(\mu +1)})}_{|Y}.$$
It is \emph{a priori} given by a collection of holomorphic functions but the following lemma shows that these functions are actually locally constant.
\begin{lemma}\label{2-cocycle}
Let $\lbrace y_i\rbrace$ be a $\mu$-foliated system of coordinates which satisfies Equation (\ref {coefconstants}).
\begin{enumerate}
 \item Whenever $\mu\geq 2\nu +1$, there exists a universal polynomial
$P_{\mu +1}\in \C[X_{ij}, X_{jk}, X_{ik}, Y_{ij}^{(l)}, Y_{jk}^{(l)}, Y_{ik}^{(l)}], {\nu+1}\leq l\leq \mu-\nu$ such that
$${a_{ijk}^{(\mu +1)}}=P _{\mu +1}(t_{ij}, t_{jk}, t_{ik}, a_{ij}^{(l)}, a_{jk}^{(l)}, a_{ik}^{(l)})$$ $$+(\nu-\mu-1)a_{ij}^{(\nu +1)} t_{ij}^{-(\mu-\nu )} a_{jk}^{(\mu-\nu +1)}-\nu a_{ij}^{(\mu-\nu +1)}  t_{ij}^{-\nu} a_{jk}^{(\nu +1)}.$$
\item If $\nu<\mu<2\nu $, then $${a_{ijk}^{(\mu +1)}}=0.$$
\item If $\mu=2\nu $, then
$${a_{ijk}^{(\mu +1)}}= -(\nu +1)a_{ij}^{(\nu +1)} t_{ij}^{-\nu} a_{jk}^{(\nu +1)}.$$
\end{enumerate}
In particular, ${a_{ijk}^{(\mu +1)}}$ is always  {locally constant} and is hence well defined as an element of $Z^2(\mathcal V, \Sigma_{-\mu})$.
\end{lemma}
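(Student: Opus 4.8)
The statement is a purely computational assertion: substituting the defining relations of a $\mu$-foliated system into the cocycle expression $a_{ijk}^{(\mu+1)}$ and keeping track of which terms survive at order $\mu+1$ modulo $y_i^{\mu+2}$. So the proof I would write is essentially a bookkeeping argument, organized so that the combinatorics stays manageable.

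First I would fix the notation: write $g_{ij}(y_j) = t_{ij} y_j - \sum_{l=\nu+1}^{\mu} a_{ij}^{(l)} y_i^l - \cdots$ as a formal change of coordinate, i.e. $y_i = g_{ij}(y_j)$, valid modulo $y_i^{\mu+2}$. The cocycle condition for the admissible coordinates is $g_{ij} \circ g_{jk} = g_{ik}$ on triple overlaps (this already holds up to order $\mu$ since the system is $\mu$-foliated); the point is to extract the obstruction at order $\mu+1$. I would compute $g_{ij}(g_{jk}(y_k))$ as a power series in $y_k$ up to order $\mu+1$, and compare with $g_{ik}(y_k)$; the discrepancy, when restricted to $Y$ (i.e. working with the leading coefficients, which are the $t$'s and the locally constant $a^{(l)}$'s for $l \le \mu$, plus the holomorphic functions $a^{(\mu+1)}$), produces precisely $a_{ijk}^{(\mu+1)}$. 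Composition of the linear parts gives the term $a_{ij}^{(\mu+1)} + t_{ij}^{-\mu} a_{jk}^{(\mu+1)} + t_{ik}^{-\mu} a_{ki}^{(\mu+1)}$ (after the index manipulation matching the twist $\Sigma_{-\mu}$), and the remaining cross-terms come from the nonlinearity of composition: a term of order $l$ in $g_{ij}$ composed with a term of order $m$ in $g_{jk}$ contributes at order $l+m-1$.

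Now I would split into the three ranges by a dimension count on which products $(l,m)$ with $\nu+1 \le l,m \le \mu$ can reach order $\mu+1$, i.e. satisfy $l + m - 1 = \mu+1$, that is $l+m = \mu+2$.
\begin{itemize}
\item If $\nu < \mu < 2\nu$, then $l+m \ge 2\nu+2 > \mu+2$, so no such pair exists; the only contributions at order $\mu+1$ are the linear ones, and these cancel identically because $\{t_{ij}\}$ is a genuine cocycle and the $a^{(l)}$, $l\le\mu$, already satisfy the foliated cocycle relations — giving $a_{ijk}^{(\mu+1)} = 0$.
\item If $\mu = 2\nu$, the only solution is $l = m = \nu+1$, producing a single cross-term, namely the product of $a_{ij}^{(\nu+1)}$ with $a_{jk}^{(\nu+1)}$; chasing the derivative factor from $\frac{d}{dy}(t_{ij}y - a_{ij}^{(\nu+1)}y^{\nu+1}) = t_{ij} - (\nu+1)a_{ij}^{(\nu+1)}y^\nu$ yields the coefficient $-(\nu+1)$ and the twist $t_{ij}^{-\nu}$, i.e. $a_{ijk}^{(\mu+1)} = -(\nu+1)a_{ij}^{(\nu+1)} t_{ij}^{-\nu} a_{jk}^{(\nu+1)}$.
\item If $\mu \ge 2\nu+1$, several pairs $(l,m)$ with $l+m = \mu+2$ and $\nu+1\le l,m\le\mu-\nu$ occur; collecting all of them, the ones involving only intermediate orders $\nu+1 \le l \le \mu-\nu$ assemble into a universal polynomial $P_{\mu+1}$ in the $t$'s and the $a^{(l)}$'s, while the two extreme pairs $(l,m) = (\nu+1,\mu-\nu+1)$ and $(\mu-\nu+1,\nu+1)$ contribute the two explicitly displayed terms, with coefficients $(\nu-\mu-1)$ and $-\nu$ coming again from differentiating the $y^{\nu+1}$ and $y^{\mu-\nu+1}$ parts respectively.
\end{itemize}

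In all three cases the resulting expression involves only $t_{ij}$ (locally constant, unitary), the $a^{(l)}$ for $l\le\mu$ (locally constant by hypothesis of being $\mu$-foliated), and products thereof — hence $a_{ijk}^{(\mu+1)}$ is locally constant and defines an element of $Z^2(\mathcal V,\Sigma_{-\mu})$, which is the final claim.

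\emph{Main obstacle.} The only real difficulty is purely organizational: correctly tracking the twists $t_{ij}^{-r}$ attached to each summand (so that everything lands consistently in the local system $\Sigma_{-\mu}$) and extracting the exact integer coefficients from the chain-rule factors when composing $g_{ij}$ with $g_{jk}$. There is no conceptual subtlety — it is a ``straightforward computation'' in the same spirit as Lemma~\ref{coordinatemodif}, just longer — so in the paper I would state it as such, perhaps recording the one-variable composition formula
\[
(f\circ g)(y) = \sum_{n\ge 1}\Bigl(\sum_{l\ge 1} f_l\!\!\sum_{\substack{m_1+\cdots+m_l=n}} g_{m_1}\cdots g_{m_l}\Bigr) y^n
\]
and then simply isolating the $y^{\mu+1}$-coefficient, observing which index combinations are forced by the range of summation in each of the three regimes.
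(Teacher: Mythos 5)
Your proposal is essentially the paper's own argument: the paper expands the identically vanishing combination $A_{ijk}=\alpha_{ij}+t_{ij}\alpha_{jk}+t_{ik}\alpha_{ki}$ (with $\alpha_{ij}=t_{ij}y_j-y_i$) in powers of $y_i$ and reads off the coefficient of $y_i^{\mu+1}$, which is the same computation as your extraction of the $(\mu+1)$-jet of the cocycle relation among the transition maps, with the identical three-case count of which nonlinear terms can reach order $\mu+1$. The only minor imprecision in your bookkeeping is that when $\mu\geq 3\nu$ contributions involving two or more nonlinear factors (not only pairs with $l+m=\mu+2$) also reach order $\mu+1$; since all their indices are at most $\mu-2\nu+1\leq\mu-\nu$, they are harmlessly absorbed into the universal polynomial $P_{\mu+1}$, so your conclusion is unaffected.
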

\begin{proof}
Set $\alpha_{ij}= t_{ij}y_j-y_i$. Consider
$$A_{ijk}=\alpha_{ij}+t_{ij}\alpha_{jk}+t_{ik}\alpha_{ki}=\alpha_{ij}+t_{ij}\alpha_{jk}-\alpha_{ik}$$
and expand this expression with respect to $y_i$. As $\alpha_{ij},\,\alpha_{ik}$ are already expressed as a polynomial in the variable $y_i$, it is enough to  work with the middle term $\alpha_{jk}$, replacing $y_j$ by $y_i$ in accordance with Equation (\ref{coefconstants}). Using that $\lbrace y_i\rbrace$ is $\mu$-foliated, one easily observes that
$$A_{ijk}=\alpha_{ijk}^{(\nu +1)}y_i^{\nu +1}+....+\alpha_{ijk}^{(\mu )}y_i^{\mu}$$ $$+ (a_{ij}^{(\mu +1)} +t_{ij}^{-\mu} a_{jk}^{(\mu +1)}+t_{ik}^{-\mu} a_{ki}^{(\mu +1)}-\alpha_{ijk}^{(\mu +1)})y_i^{\nu +1}$$
where $\alpha_{ijk}^{(l)}$ is {locally constant} for $\nu +1\leq l\leq \mu$ and $\alpha_{ijk}^{(\mu +1)}\in\mathcal O (\widehat{U_i}\cap \widehat{U_j}\cap \widehat{U_k})$ with the additional property that ${\alpha_{ijk}^{(\mu +1)}}_{|Y}$ is locally constant.

On the other hand, $A_{ijk}$ vanishes identically. This forces the equality
$${\alpha_{ijk}^{(\mu +1)}}_{|Y}={a_{ijk}^{(\mu +1)}}$$
to hold. The expansion of $A_{ijk}$ with respect to $y_i$ allows us to express explicitly ${\alpha_{ijk}^{(\mu +1)}}_{|Y}$ and we obtain in that way the expected result.
\end{proof}

Denote by  $[{a_{ijk}^{(\mu +1)}}]$ the cohomology class of  ${a_{ijk}^{(\mu +1)}}$ in $H^2(Y, \Sigma_{-\mu})$.
\begin{lemma}\label{if0inH2}
Let $\lbrace y_i\rbrace$ be a $\mu$-foliated system of coordinates. Assume moreover that  $[{a_{ijk}^{(\mu +1)}}]=0$; then there exists a $(\mu +1)$-foliated system of coordinates $\lbrace z_i\rbrace$ such that
$$y_i=z_i+O({z_i}^{\mu +1}).$$
\end{lemma}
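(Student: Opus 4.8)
The plan is to produce a $(\mu+1)$-foliated system out of $\{y_i\}$ by a single coordinate change of the type permitted by Lemma~\ref{coordinatemodif}, namely $y_i=z_i-H_iz_i^{\mu+1}$ for a suitable choice of $H_i\in\mathcal O(\widehat{U_i})$. For any such choice, that lemma already guarantees that $\{z_i\}$ is again $\mu$-foliated, with the \emph{same} locally constant coefficients $a_{ij}^{(l)}$ of $z_i^l$ for $\nu+1\le l\le\mu$, and that the coefficient of $z_i^{\mu+1}$ in $t_{ij}z_j-z_i$ is the value along $Y$ of the new function $b_{ij}^{(\mu+1)}$, that is $(a_{ij}^{(\mu+1)}+t_{ij}^{-\mu}H_j-H_i)|_Y$. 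Hence $\{z_i\}$ is $(\mu+1)$-foliated precisely when, writing $h_i:=H_i|_Y$ (a holomorphic section of $NY^{\otimes-\mu}$ over the disk $V_i$), the $1$-cochain $\{a_{ij}^{(\mu+1)}|_Y+t_{ij}^{-\mu}h_j-h_i\}$ is locally constant, i.e.\ lies in $Z^1(\mathcal V,\Sigma_{-\mu})$. Only the sections $h_i$ enter this condition, so it suffices to produce them over the $V_i$ and extend them arbitrarily to the $\widehat{U_i}$; and whatever $H_i$ we end up with, $y_i=z_i-H_iz_i^{\mu+1}=z_i+O(z_i^{\mu+1})$ automatically.

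To find the $h_i$, first exploit the hypothesis: since $[a_{ijk}^{(\mu+1)}]=0$ in $H^2(Y,\Sigma_{-\mu})$ and $\mathcal V$ computes the cohomology of $\Sigma_{-\mu}$, the locally constant $2$-cocycle $\{a_{ijk}^{(\mu+1)}\}$ is the \v{C}ech coboundary of some locally constant $1$-cochain $\{c_{ij}\}\in C^1(\mathcal V,\Sigma_{-\mu})$. On the other hand, by its very definition in Lemma~\ref{2-cocycle}, that $2$-cocycle is also the \v{C}ech coboundary of the holomorphic $1$-cochain $\{a_{ij}^{(\mu+1)}|_Y\}$ with values in the line bundle $NY^{\otimes-\mu}$. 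Therefore $d_{ij}:=(a_{ij}^{(\mu+1)}-c_{ij})|_Y$ is a genuine $NY^{\otimes-\mu}$-valued $1$-\emph{cocycle}, with a class $[d]\in H^1(Y,NY^{\otimes-\mu})$, and the sections we are looking for — those $h_i$ with $a_{ij}^{(\mu+1)}|_Y+t_{ij}^{-\mu}h_j-h_i=c_{ij}$, i.e.\ $d_{ij}=h_i-t_{ij}^{-\mu}h_j$ — are exactly a holomorphic primitive of $\{d_{ij}\}$, which exists if and only if $[d]=0$. Now $\{c_{ij}\}$ is determined only up to a cocycle in $Z^1(\mathcal V,\Sigma_{-\mu})$, and replacing $\{c_{ij}\}$ by $\{c_{ij}+e_{ij}\}$ changes $[d]$ by the image of $[e]\in H^1(Y,\Sigma_{-\mu})$ under the natural map $j\colon H^1(Y,\Sigma_{-\mu})\to H^1(Y,NY^{\otimes-\mu})$ induced by the inclusion of flat sections into holomorphic ones. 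So everything reduces to the surjectivity of $j$. This is the only non-formal ingredient; the rest is bookkeeping with Lemmas~\ref{coordinatemodif} and~\ref{2-cocycle}.

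The surjectivity of $j$ is Hodge theory on the compact curve $Y$. From the holomorphic de~Rham resolution $0\to\Sigma_{-\mu}\to\mathcal O(NY^{\otimes-\mu})\xrightarrow{\nabla^{\otimes-\mu}}\Omega^1_Y\otimes NY^{\otimes-\mu}\to0$ of the flat line bundle (exact because on a curve every holomorphic $1$-form has local holomorphic primitives), the associated long exact sequence yields $\operatorname{coker}j\cong\ker\bigl(H^1(Y,\Omega^1_Y\otimes NY^{\otimes-\mu})\to H^2(Y,\Sigma_{-\mu})\bigr)$, so it is enough to see that connecting map is injective. If $NY$ is not $\mu$-torsion, then $\Sigma_{-\mu}$ is a non-trivial rank-one unitary local system, so $H^2(Y,\Sigma_{-\mu})=0$ by Lemma~\ref{vanishingH2}, while by Serre duality $H^1(Y,\Omega^1_Y\otimes NY^{\otimes-\mu})\cong H^0(Y,NY^{\otimes\mu})^*=0$ since $NY^{\otimes\mu}$ is a non-trivial line bundle of degree $0$; both groups vanish. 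If $NY$ is $\mu$-torsion, then $NY^{\otimes-\mu}=\mathcal O_Y$, $\Sigma_{-\mu}=\mathbb C_Y$, and the connecting map $H^1(Y,\Omega^1_Y)\to H^2(Y,\mathbb C_Y)$ is the trace isomorphism. Either way $\operatorname{coker}j=0$. (Conceptually, $\Sigma_{-\mu}$ being unitary, $H^1(Y,\Sigma_{-\mu})$ carries a weight-one Hodge structure and $j$ is the projection onto its $(0,1)$-part $H^1(Y,NY^{\otimes-\mu})$, in the spirit of the proof of Lemma~\ref{cup-product}.) Granting $j$ surjective, I would choose $\{c_{ij}\}$ so that $[d]=0$, pick holomorphic $h_i$ with $d_{ij}=h_i-t_{ij}^{-\mu}h_j$, extend them to $H_i\in\mathcal O(\widehat{U_i})$, and conclude by Lemma~\ref{coordinatemodif} that the resulting $\{z_i\}$ is the desired $(\mu+1)$-foliated system with $y_i=z_i+O(z_i^{\mu+1})$.
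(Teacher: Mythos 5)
Your proposal is correct and follows essentially the same route as the paper: choose $h_i$ so that $a_{ij}^{(\mu+1)}|_Y+t_{ij}^{-\mu}h_j-h_i$ becomes locally constant, extend to $H_i$, and apply Lemma~\ref{coordinatemodif} with $y_i=z_i-H_iz_i^{\mu+1}$. The only difference is that the paper simply asserts the key fact ("this cocycle is cohomologous to a locally constant one"), whereas you prove it, correctly, as the surjectivity of $H^1(Y,\Sigma_{-\mu})\to H^1(Y,NY^{\otimes-\mu})$ via the de Rham resolution, Lemma~\ref{vanishingH2} and Serre duality.
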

\begin{proof}
One can  write ${a_{ijk}^{(\mu +1)}}=\alpha_{ij}+t_{ij}^{-\mu} \alpha_{jk}+t_{ik}^{-\mu}\alpha_{ki}$ with $\alpha_{ij},\,\alpha_{jk}$ and $\alpha_{ik}$ locally constant. This means that $a_{ij}^{(\mu+1)}- \alpha_{ij}$ defines a cocycle in $Z^1(\mathcal V, {NY}^{\otimes -\mu})$. This cocycle is cohomologous to a locally constant one; in other words there exist $h_i\in \mathcal{O}(V_i)$ such that
$$e_{ij}:=a_{ij}^{(\mu+1)}+t_{ij}^{-\mu}h_j -h_i$$
is {locally constant}. Using Lemma \ref{coordinatemodif}, we are done setting
$$y_i=z_i-H_i {z_i}^{\mu+1}$$
where $H_i\in\mathcal O (\widehat{U_i})$ and coincides with $h_i$ when restricted to $V_i$.
\end{proof}

\begin{lemma}\label{ifleq2nu}
 Let $\lbrace y_i\rbrace$ be a $\mu$-foliated coordinate. Assume moreover that $\mu\leq 2\nu$. Then $[{a_{ijk}^{(\mu +1)}}]=0$.
\end{lemma}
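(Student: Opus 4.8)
The plan is to dispose of this with the explicit formulas of Lemma~\ref{2-cocycle}, splitting into the two ranges $\nu<\mu<2\nu$ and $\mu=2\nu$ permitted by the hypothesis $\mu\le 2\nu$. In the first range, part~(2) of Lemma~\ref{2-cocycle} gives ${a_{ijk}^{(\mu+1)}}=0$ on the nose, so the cohomology class is trivially $0$ and no work is needed. Everything therefore reduces to the boundary case $\mu=2\nu$.

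For $\mu=2\nu$, I would use Lemma~\ref{2-cocycle}(3), which exhibits the obstruction cocycle as
$$
{a_{ijk}^{(\mu +1)}}= -(\nu +1)\,a_{ij}^{(\nu +1)}\, t_{ij}^{-\nu}\, a_{jk}^{(\nu +1)}.
$$
The key observation is that, since $\lbrace a_{ij}^{(\nu +1)}\rbrace\in Z^1(\mathcal V,\Sigma_{-\nu})$ and the local system $\Sigma_{-\nu}$ has transition functions $t_{ij}^{-\nu}$, the right-hand side is, up to the nonzero scalar $-(\nu+1)$, exactly the standard \v{C}ech representative of the cup product $[a_{ij}^{(\nu +1)}]\smile[a_{ij}^{(\nu +1)}]\in H^2(Y,\Sigma_{-\nu}\otimes\Sigma_{-\nu})=H^2(Y,\Sigma_{-\mu})$: the factor $t_{ij}^{-\nu}$ is precisely the parallel transport in $\Sigma_{-\nu}$ occurring in that formula, and indeed the same normalization $t_{ij}^{-\mu}=t_{ij}^{-2\nu}$ is the one already used to define ${a_{ijk}^{(\mu +1)}}$ as a coboundary. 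Hence $[{a_{ijk}^{(\mu +1)}}]$ is a nonzero multiple of the self-cup-product of a degree-one class.

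I would then conclude by graded-commutativity of the cup product: as $\Sigma_{-\nu}$ has rank one, the transposition of the two tensor factors of $\Sigma_{-\nu}\otimes\Sigma_{-\nu}$ is the identity, so $[a_{ij}^{(\nu +1)}]\smile[a_{ij}^{(\nu +1)}]=-[a_{ij}^{(\nu +1)}]\smile[a_{ij}^{(\nu +1)}]$ in $H^2(Y,\Sigma_{-\mu})$, which forces this class to vanish because $H^2(Y,\Sigma_{-\mu})$ is a $\C$-vector space. Equivalently and more concretely, one represents $[a_{ij}^{(\nu +1)}]$ by a harmonic $\Sigma_{-\nu}$-valued $1$-form $\alpha$ as in the proof of Lemma~\ref{cup-product}; then $\alpha\wedge\alpha$ represents the cup product and vanishes identically because $\alpha\wedge\alpha=-\alpha\wedge\alpha$ on the curve $Y$. (One may also note that when $\Sigma_{-\mu}$ is non-trivial the conclusion is immediate, since then $H^2(Y,\Sigma_{-\mu})=0$ by Lemma~\ref{vanishingH2}.) I expect the only slightly delicate point to be the bookkeeping of the previous paragraph — matching the factor $t_{ij}^{-\nu}$ of Lemma~\ref{2-cocycle}(3) with the transition data of $\Sigma_{-\nu}$ entering the \v{C}ech cup product — but this is a routine normalization check rather than a genuine obstacle.
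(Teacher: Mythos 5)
Your proposal is correct and follows the paper's own argument: case $\nu<\mu<2\nu$ is immediate from Lemma \ref{2-cocycle}(2), and for $\mu=2\nu$ the class is identified, via Lemma \ref{2-cocycle}(3), with the nonzero multiple $-(\nu+1)[a_{ij}^{(\nu+1)}]^2$ of a self-cup-product, which vanishes. You merely spell out (via graded commutativity, or the harmonic representative $\alpha\wedge\alpha=0$ on the curve) why that square is zero, a point the paper leaves implicit.
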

\begin{proof}
By Lemma \ref{2-cocycle}, this is obvious if $\mu< 2\nu$. When $\mu=2\nu$, the same lemma shows that $[{a_{ijk}^{(\mu +1)}}]$ coincides with the cup-product $-(\nu +1)[a_{ij}^{(\nu +1)} ]^2=0$.
\end{proof}

\begin{lemma}\label{L:***}
Let $\lbrace y_i\rbrace$ be a $\mu$-foliated system of  coordinates written with the notation of Equation (\ref{coefconstants}) and assume that  $\mu\geq 2\nu +1$, then there exists
an admissible system of coordinates $\lbrace z_i\rbrace$ of the form $y_i=z_i+ O({z_i}^{\mu-\nu+1})$ such that $\lbrace z_i\rbrace$ is a $\boldsymbol {(\mu +1)}$-{foliated system of coordinates}.
\end{lemma}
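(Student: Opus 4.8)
The plan is to modify the transition data of $\{y_i\}$ at orders running from $\mu-\nu+1$ up to $\mu+1$ so as to kill the obstruction class $[a_{ijk}^{(\mu+1)}]\in H^2(Y,\Sigma_{-\mu})$, and then to invoke Lemma \ref{if0inH2}.

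First I would dispose of the easy case. If the local system $\Sigma_{-\mu}$ is non-trivial, then $H^2(Y,\Sigma_{-\mu})=0$ by Lemma \ref{vanishingH2}, so $[a_{ijk}^{(\mu+1)}]=0$ and Lemma \ref{if0inH2} already produces a $(\mu+1)$-foliated system with $y_i=z_i+O(z_i^{\mu+1})$, which has the desired form since $\mu+1>\mu-\nu+1$. So from now on assume $\Sigma_{-\mu}$ is trivial; then $NY$ is torsion, $H^2(Y,\Sigma_{-\mu})\cong H^2(Y,\C)\cong\C$, and — this is the point that makes the argument run — $\Sigma_{-(\mu-\nu)}\cong\Sigma_{\nu}$, both as local systems and as holomorphic line bundles (because $\Sigma_{-\mu}$, hence $NY^{\otimes\mu}$, is trivial). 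One may further assume that $[a^{(\nu+1)}]$ is non-zero in $H^1(Y,NY^{\otimes-\nu})$: this is the non-vanishing of the Ueda class in the relevant range (e.g.\ $\nu=\utype(Y)$), and if it failed one could kill $a^{(\nu+1)}$ by a further linearisation and restart the construction with a larger $\nu$.

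Now the crux. Applying Lemma \ref{cup-product} to the non-zero class $[a^{(\nu+1)}]$ (via the identification $\Sigma_{-(\mu-\nu)}\cong\Sigma_{\nu}$), choose a cocycle $\delta\in Z^1(\mathcal V,\Sigma_{-(\mu-\nu)})$ whose class is trivial in $H^1(Y,NY^{\otimes-(\mu-\nu)})$ and satisfies $[a^{(\nu+1)}]\cup[\delta]\ne 0$; triviality in line-bundle cohomology means $\delta_{ij}=t_{ij}^{-(\mu-\nu)}g_j-g_i$ for holomorphic $g_i\in\mathcal O(V_i)$, which is precisely what will permit a coordinate change realising $\delta$. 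The goal is to build from $\{y_i\}$ a new $\mu$-foliated system $\{w_i\}$ with $y_i=w_i+O(w_i^{\mu-\nu+1})$, whose coefficients of order $\le\mu-\nu$ are unchanged (in particular $a^{(\nu+1)}$) and whose coefficient of order $\mu-\nu+1$ is $a^{(\mu-\nu+1)}_{ij}+\delta_{ij}$. Granting this, since the order-$(\mu-\nu+1)$ coefficient enters the formula of Lemma \ref{2-cocycle}(1) for $a_{ijk}^{(\mu+1)}$ only through the two cup-product terms, while $P_{\mu+1}$ and $a^{(\nu+1)}$ are untouched, the obstruction class of $\{w_i\}$ is that of $\{y_i\}$ shifted by $(2\nu-\mu-1)[a^{(\nu+1)}]\cup[\delta]$; as $\mu\ge 2\nu+1$ the factor $2\nu-\mu-1$ is non-zero, so — $H^2(Y,\Sigma_{-\mu})$ being one-dimensional — replacing $\delta$ by a suitable scalar multiple makes the obstruction class of $\{w_i\}$ vanish. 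Lemma \ref{if0inH2} applied to $\{w_i\}$ then yields a $(\mu+1)$-foliated $\{z_i\}$ with $w_i=z_i+O(z_i^{\mu+1})$, and composing, $y_i=z_i+O(z_i^{\mu-\nu+1})$, as required.

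The main obstacle is exactly the construction of $\{w_i\}$: realising the prescribed change of the order-$(\mu-\nu+1)$ coefficient by a coordinate change of order $\mu-\nu+1$ without destroying local constancy of the coefficients of the intermediate orders $\mu-\nu+2,\dots,\mu$. Starting from $y_i=w_i-G_iw_i^{\mu-\nu+1}$ with $G_i$ a holomorphic extension of $g_i$ (generalising Lemma \ref{coordinatemodif}), each intermediate order picks up a correction which need not be locally constant; one absorbs these corrections order by order using higher-order terms of the coordinate change, the obstruction at order $l$ living in $H^2(Y,\Sigma_{-(l-1)})$, which vanishes when $\Sigma_{-(l-1)}$ is a non-trivial local system and otherwise must be treated by the same cup-product device at a lower level (an induction on $\mu$, whose base cases are handled by Lemma \ref{ifleq2nu} together with Lemma \ref{if0inH2}). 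Pushing this bookkeeping through — an elaboration of the computation behind Lemma \ref{2-cocycle} — is where the real work sits.
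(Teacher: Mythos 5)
Your overall strategy is exactly the paper's: split on triviality of $\Sigma_{-\mu}$, and in the trivial case use the non-degeneracy of the cup product (Lemma \ref{cup-product}, applied to the class $[a^{(\nu+1)}]$, which is indeed nonzero in the intended application since it is the Ueda class) to pick a cocycle $\alpha\in Z^1(\mathcal V,\Sigma_{-(\mu-\nu)})$ that is a coboundary in $H^1(Y,NY^{\otimes-(\mu-\nu)})$, realise it by a retroactive change $y_i=z_i-H_iz_i^{\mu-\nu+1}$, and read off from Lemma \ref{2-cocycle} that the order-$(\mu+1)$ obstruction shifts by $(2\nu-\mu-1)[a^{(\nu+1)}]\cup[\alpha]$, which can be made to cancel $[a^{(\mu+1)}_{ijk}]$ before concluding with Lemma \ref{if0inH2}. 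All of that, including the coefficient $2\nu-\mu-1$, matches the paper.

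The gap is the step you yourself flag as "where the real work sits", and your sketch of how to close it is not the right mechanism. After the retroactive change, the coefficients at the intermediate orders $\mu-\nu+2,\dots,\mu$ are no longer locally constant, but the obstruction to restoring constancy at each such order is \emph{automatically} zero: by Lemma \ref{2-cocycle} the obstruction cocycle at order $l\le\mu$ only involves the coefficients of orders $\le l-\nu$, which are unchanged from the original system $\{y_i\}$, so it coincides with the corresponding cocycle of $\{y_i\}$; and for $\{y_i\}$ these cocycles are coboundaries of \emph{locally constant} cochains (the original $a^{(l)}_{ij}$ are constant for $l\le\mu$), hence trivial in $H^2(Y,\Sigma_{-(l-1)})$ whether or not that local system is trivial. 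One then simply applies Lemma \ref{if0inH2} repeatedly; these corrections are changes of order $\ge\mu-\nu+2$ and, again by the shape of the formula in Lemma \ref{2-cocycle}, they do not alter the coefficients (orders $\le\mu-\nu$ and $\mu-\nu+1$) that determine the order-$(\mu+1)$ class, so the computed cancellation survives. Your proposed fallback — invoking "the same cup-product device at a lower level" when $\Sigma_{-(l-1)}$ is trivial, with an induction on $\mu$ based on Lemma \ref{ifleq2nu} — is both unnecessary and dangerous: a further retroactive change at an order $\le\mu-\nu$ would modify the coefficients entering $P_{\mu+1}$ and the bilinear terms, and hence could change the order-$(\mu+1)$ obstruction in an uncontrolled way, undermining the cancellation you arranged. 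So as written the argument is incomplete at precisely the point where the paper's use of Lemma \ref{2-cocycle} (vanishing and stability of the intermediate classes) is needed.
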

\begin{proof}
According to Lemma \ref{if0inH2}, we are done whenever $[{a_{ijk}^{(\mu +1)}}]=0$. This occurs in particular if $N_Y^{-\mu}$ is non trivial, in virtue of Lemma \ref{vanishingH2}. One can then assume that $N_Y^{-\mu}$ is trivial and that $[{a_{ijk}^{(\mu +1)}}]\neq0$.  Note that a simple change of $\mu$-coordinates of the form $y_i=z_i+O(z_i^{\mu +1})$ does not affect  $[{a_{ijk}^{(\mu +1)}}]\not=0$, this justifies that we have to act retroactively, as stated in the lemma, in order to modify suitably this cohomology class.

To this end, let us consider $\lbrace \alpha_{ij}^{}\rbrace \in Z^1(\mathcal V, \Sigma_{-(\mu -\nu)})$ which is a coboundary when seen as a cocycle in $Z^1(\mathcal V, {NY}^{\otimes{-(\mu -\nu)}})$ and such that
$$[{a_{ijk}^{(\mu +1)}}]=-(2\nu-\mu-1)[a_{ij}^{(\nu +1)}]\cup [\alpha_{ij}^{}].$$
Remark that $\Sigma_{-(\mu -\nu)}=\Sigma_{\nu}$, hence such an $\alpha_{ij}$ exists, according to Lemma \ref{cup-product}.

Let $h_i\in {\mathcal O}(V_i)$ such that $t_{ij}^{-(\mu-\nu)}h_j-h_i=\alpha_{ij}$. Consider $H_i\in{\mathcal O}(\widehat{U_i})$ such that ${H_i}_{|V_i}=h_i$. Define $\lbrace z_i\rbrace$ such that $y_i=z_i -H_i {z_i}^{\mu -\nu +1}$. By Lemma \ref{coordinatemodif}, $\lbrace z_i\rbrace$ is a $(\mu -\nu +1)$-foliated system of coordinates and more precisely,
$$t_{ij} z_j-z_i=a_{ij}^{(\nu+1)}{z_i}^{\nu +1}+......+a_{ij}^{(\mu-\nu)}{z_i}^{\mu-\nu}+b_{ij}^{(\mu -\nu+1)}{z_i}^{\mu-\nu +1}+b_{ij}^{(\mu -\nu+2)}{z_i}^{\mu-\nu +2}$$
where ${b_{ij}^{(\mu -\nu+1)}}$ is constant and equal to $a_{ij}^{(\mu -\nu+1)}+\alpha_{ij}$. If $\mu -\nu +2<\mu +1$, then one has automatically $ [b_{ijk}^{(\mu -\nu+2)} ]=[a_{ijk}^{(\mu -\nu+2)}]=0$. Actually, equality holds thanks to Lemma \ref{2-cocycle}, noticing that the coefficients $a_{ij}^{(l)},\ \nu+1\leq l \leq\mu -\nu$ associated to $\lbrace y_i\rbrace$ remain the same for $\lbrace z_i\rbrace$. Invoking again Lemma \ref{if0inH2}, one can assume that ${b_{ij}^{(\mu -\nu+2)}}_{|Y}$ is constant. One can repeat the same operation for ${b_{ijk}^{(\mu -\nu+3)} }$ provided that $\mu -\nu +3<\mu +1$. Proceeding inductively, we can eventually assume that $\lbrace z_i\rbrace$ is a $\mu$-foliated system of coordinates which satisfies the relation
$$t_{ij} z_j-z_i=a_{ij}^{(\nu+1)}{z_i}^{\nu +1}+......+a_{ij}^{(\mu-\nu)}{z_i}^{\mu-\nu}+b_{ij}^{(\mu -\nu+1)}{z_i}^{\mu-\nu +1}$$ $$+....+b_{ij}^{(\mu)}{z_i}^{\mu} +b_{ij}^{(\mu +1)}{y_i}^{\mu +1}$$
where $b_{ij}^{(\mu +1)}\in \mathcal O (\widehat{U_i}\cap \widehat{U_j})$.
Applying again Lemma \ref{2-cocycle}, and recalling that ${b_{ij}^{(\mu -\nu+1)}}=a_{ij}^{(\mu -\nu+1)}+\alpha_{ij}$, one obtains that
$$b_{ijk}^{(\mu +1)}=a_{ijk}^{(\mu +1)}+(\nu-\mu-1)a_{ij}^{(\nu +1)} t_{ij}^{-(\mu-\nu )} \alpha_{jk}-\nu \alpha_{ij}  t_{ij}^{-\nu} a_{jk}^{(\nu +1)}.$$
One can recognize a cup-product in the right hand side, namely
$$[b_{ijk}^{(\mu +1)}]=[a_{ijk}^{(\mu +1)}]+(2\nu-\mu-1)[a_{ij}^{(\nu +1)} ]\cup [\alpha_{ij}].$$
One concludes observing that $\alpha_{ij}$ has been chosen in such a way that $ [b_{ijk}^{(\mu +1)}]=0$. One thus obtains the desired  $(\mu +1)$-foliated system of coordinates.
\end{proof}

\subsection{Proof of Theorem \ref{THM:formalfoliation}}\label{sec:proofFormalFred}
Note firstly that when $\utype(Y)=\infty$ the existence of a formal foliation having $Y$ as a compact leaf
follows from the definition of Ueda type. Form now assume that $\nu=\utype (Y)$ is finite.
 In particular there exists a $(\nu +1)$-foliated system of coordinates $\lbrace y_i\rbrace$. Note also that $a_{ij}^{(\nu +1)}$ is nothing but the Ueda class and then satisfies the hypothesis of Lemma \ref{cup-product}. Combining Lemmas \ref{if0inH2} and \ref{ifleq2nu}, one can assume that $\lbrace y_i\rbrace$ is indeed a $(2\nu +1)$-foliated system of coordinates. The existence of a formal foliation follows  from Lemma \ref{L:***}
\qed

\begin{remark}
The formal foliation constructed above comes equipped with a holonomy representation  $\hat\rho:\pi_1(Y)\to\Difffor$ whose linear part is unitary and $\utype (Y) +1$ coincides with the first jet for which
the representation $\hat\rho$ fails to be linearizable. 
\end{remark}

\subsection{Higher dimensional version of Theorem \ref{THM:formalfoliation}}

In the above proof, we use essentially the vanishing of the second cohomology group for non trivial unitary rank one local systems, and the surjectivity of the cup product map (from $H^1\wedge H^1$ to $H^2$). These properties are rather specific to the curve case. However, it is possible to prove the existence of formal foliation in some higher dimensional cases. In particular, it is possible to prove the following statement.

\begin{thm}\label{T:higher dimensional formal foliation}
Let $Y\subset X$ be a smooth hypersurface (compact K\"ahler) embedded in a germ of neighborhood $X$. Assume moreover that $NY=\mathcal{O}_Y$ and that $h^1(Y,\mathcal{O}_Y)=1$. If $Y(\infty)$ denotes the formal completion of $X$ along $Y$, then $Y$ is a leaf of a (formal) foliation $\hat{\F}$ on $Y(\infty)$ defined by a closed formal one form $\hat{\omega}$.
If $\nu=\utype(Y)$ is finite, then $\hat{\omega}$ can be described locally as follows:
$$\hat{\omega}=_{loc}\frac{\mathrm{d}y}{y^{\nu+1}}+\lambda\frac{\mathrm{d}y}{y},\ \ \ \lambda\in\C.$$
\end{thm}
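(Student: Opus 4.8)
The plan is to separate the two cases $\utype(Y)=\infty$ and $\utype(Y)<\infty$. In the infinite case the form is essentially handed to us: since $NY=\mathcal O_Y$, the flat unitary line bundle $\widetilde{N_XY}$ is holomorphically trivial (distinct unitary characters of $\pi_1(Y)$ give distinct elements of $\mathrm{Pic}^0(Y)$), hence the Ueda line bundle $\mathcal U$ coincides with $\mathcal O_X(Y)$; by definition of infinite Ueda type, $\mathcal O_X(Y)$ is trivial on $Y(\infty)$, so $Y$ has a formal reduced equation $f$, and $\hat\omega=df/f$ is a closed formal $1$-form with polar divisor $Y$ defining a regular foliation with $Y$ as a leaf. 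So all the substance is in the finite case, which I would treat by an inductive construction parallel to the proof of Theorem \ref{THM:formalfoliation}, producing a closed \emph{meromorphic} form rather than a bare foliation.

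\textbf{The finite case.} Assume $\nu=\utype(Y)<\infty$. Fix admissible coordinates $\{y_i\}$; since $NY=\mathcal O_Y$ the cocycle $\{t_{ij}\}$ can be taken trivial, and after the reductions already recorded in Section \ref{S:Existence1} (Lemmas \ref{if0inH2} and \ref{ifleq2nu}) we may assume $y_j-y_i=a^{(\nu+1)}_{ij}y_i^{\nu+1}+\cdots$ with $a^{(\nu+1)}_{ij}$ locally constant and $[a^{(\nu+1)}]\neq0$ in $H^1(Y,\mathcal O_Y)$, the Ueda class. Because $h^1(Y,\mathcal O_Y)=1$ and $Y$ is compact K\"ahler, $H^1(Y,\mathbb C)=H^{1,0}(Y)\oplus H^{0,1}(Y)$ is $2$-dimensional with each summand a line; let $\eta$ generate $H^{1,0}(Y)$, so that the harmonic representative of the Ueda class is a nonzero multiple of $\overline\eta$. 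I would look for $\hat\omega$ locally of the form $\omega_i=\frac{dy_i}{y_i^{\nu+1}}+\lambda\frac{dy_i}{y_i}$ in suitably adapted transverse coordinates, $\lambda\in\mathbb C$ an unknown residue, and improve these coordinates order by order. On an overlap the discrepancy $\omega_i-\omega_j$ is exact, equal to $d\varphi_{ij}$ with $\varphi_{ij}$ holomorphic and $\varphi_{ij}|_Y=-a^{(\nu+1)}_{ij}$, so a naive gluing is obstructed exactly by $[a^{(\nu+1)}]$; as in Section \ref{S:Existence1} this cannot be removed at once, and one passes from an ``$\mu$-foliated-closed'' datum to a ``$(\mu+1)$-foliated-closed'' one at the cost of an obstruction class in $H^2(Y,\mathbb C)$. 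Running the bookkeeping that produced Lemma \ref{2-cocycle} in this closed-form setting, every such obstruction takes the shape of a cup product $[\overline\eta]\cup[\text{class in }H^1(Y,\mathbb C)]$, i.e. a multiple of $[\overline\eta\wedge\eta]$ (note $[\overline\eta]\cup[\overline\eta]=0$ since $\overline\eta$ is a $1$-form). Since $\dim H^{1,0}(Y)=1$, the map $[\alpha]\mapsto[\overline\eta]\cup[\alpha]$ sends $H^{1,0}(Y)=\mathbb C\eta$ isomorphically onto $\mathbb C[\overline\eta\wedge\eta]$ — nonvanishing because $\int_Y i\,\eta\wedge\overline\eta\wedge\omega_Y^{\,n-1}>0$ for a K\"ahler form $\omega_Y$ — and $H^{1,0}(Y)$ is precisely the set of classes which are coboundaries in $H^1(Y,\mathcal O_Y)$; hence the retroactive coordinate change of Lemma \ref{L:***} can be performed at every stage. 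Passing to the limit yields the desired closed $\hat\omega$, with polar divisor $(\nu+1)Y$ and $Y$ as a leaf.

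\textbf{The local normal form.} Given $\hat\omega$ closed with polar divisor $(\nu+1)Y$, its residue along $Y$ is a single constant $\lambda$ by connectedness of $Y$. Locally a primitive of $\hat\omega$ has the form $-\tfrac1{\nu}g\,y^{-\nu}+(\text{lower order poles})+\lambda\log y$ with $g$ a unit, and a formal change of the transverse coordinate straightens this to $-\tfrac1{\nu}y^{-\nu}+\lambda\log y$; differentiating gives $\hat\omega=_{loc}\frac{dy}{y^{\nu+1}}+\lambda\frac{dy}{y}$. (This is the standard formal normal form of a closed meromorphic $1$-form with a pole of order $\ge2$ along a smooth hypersurface; the only delicate point, the unit factor in the leading coefficient, is absorbed by the coordinate change.)

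\textbf{Main obstacle.} The crux is the claim used in the second paragraph: that at every step of the induction the obstruction in $H^2(Y,\mathbb C)$ really lies in the line $\mathbb C[\overline\eta\wedge\eta]$ reachable by cup product with the Ueda class. For curves one either has $H^2$ of the relevant local system vanishing (Lemma \ref{vanishingH2}) or the obstruction is manifestly a cup product; in the present higher-dimensional situation $H^2(Y,\mathbb C)$ is large and the cup product $H^1\wedge H^1\to H^2$ is far from surjective, so one genuinely needs an a priori structural reason — a careful closed-form analogue of Lemma \ref{2-cocycle} controlling the ``polynomial in the lower coefficients'' term $P_{\mu+1}$ — to guarantee this containment, while simultaneously keeping track of the pole along $Y$ and of the residue parameter $\lambda$ in computations that, in Section \ref{S:Existence1}, were purely holomorphic. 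This is where I expect essentially all the technical work to concentrate.
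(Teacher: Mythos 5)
Your treatment of the infinite Ueda type case and of the final local normal form is fine, but the heart of the theorem is the finite-type construction, and there your argument has a genuine gap that you yourself flag: you propose to rerun the coordinate-improvement scheme of Section \ref{S:Existence1} and you need, at every inductive step, that the obstruction class in $H^2(Y,\mathbb C)$ lies in the line $\mathbb C[\overline\eta\wedge\eta]$, i.e.\ in the image of cup product with the Ueda class. In the curve case this issue is invisible because either $H^2$ of the relevant local system vanishes (Lemma \ref{vanishingH2}) or the pairing $H^1\times H^1\to H^2=\mathbb C$ is non-degenerate (Lemma \ref{cup-product}), so the obstruction, whatever it is, can be corrected retroactively; in higher dimension the obstruction produced by the analogue of Lemma \ref{2-cocycle} contains the ``universal polynomial'' term $P_{\mu+1}$ in the lower-order coefficients, and there is no structural reason offered for this term to land in the one-dimensional image of $[\overline\eta]\cup(-)$ inside a large $H^2(Y,\mathbb C)$. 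Since you explicitly leave this containment unproven (``this is where I expect essentially all the technical work to concentrate''), the proposal does not establish the statement.

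The paper avoids this difficulty by a different bookkeeping that never produces a degree-two obstruction. Instead of improving foliated coordinates, it works directly with the (multivalued) primitive of the sought closed form: one writes, order by order,
\[
\frac{1}{y_i^{\nu}}-\frac{1}{y_j^{\nu}}+\lambda\log\frac{y_i}{y_j}=a_{ij}+a_{ij}^{(n)}y_j^{\,n},
\]
shows by a residue argument that $\{a_{ij}^{(n)}{}_{|Y}\}$ is a $1$-cocycle with values in $\mathcal O_Y$, and then uses exactly the hypothesis $h^1(Y,\mathcal O_Y)=1$ together with the non-triviality of the Ueda class $[a_{ij}]$: every obstruction decomposes as $\lambda^{(n)}[a_{ij}]$ plus a coboundary, the coboundary part being absorbed by a change $\tfrac1{z_i}=\tfrac1{y_i}-A_iy_i^{\,n}$ and the multiple of the Ueda class by the coordinate change $z_i=y_i-\tfrac{\lambda^{(n)}}{n-1}y_i^{\,n+1}$ when $n>1$ (at the first order it is absorbed instead by introducing the residue term $\lambda\log$, which is where the constant $\lambda$ in the normal form comes from). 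So all obstructions live in $H^1(Y,\mathcal O_Y)$, where the hypothesis applies verbatim, and the closed form $\hat\omega$ with local model $\frac{dy}{y^{\nu+1}}+\lambda\frac{dy}{y}$ is obtained in the limit. If you want to salvage your approach, you would either have to prove the $H^2$-containment you postulate, or switch, as the paper does, to constructing the primitive so that the obstruction theory takes place one degree lower.
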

\begin{proof}
As above, we can assume that $\nu:=\utype(Y)$ is finite and we consider first the case $\nu=1$. Let us choose $\lbrace y_i\rbrace$ a system of coordinates such that
$${y_i}= {y_j}-a_{ij}{y_j}^{2} +o({y_j}^{2})$$
or equivalently
$$\frac{1}{y_i}-\frac{1}{y_j}=a_{ij} +o(1)$$
where $[{a_{ij}}_{|Y}]$ defines a cocycle whose class (in $H^1( Y, \mathcal{O}_Y)$) is not trivial and which can be assumed to be locally constant (every cocycle is cohomologous to a constant one). One can then write
$$\frac{1}{y_i}-\frac{1}{y_j}=a_{ij} +b_{ij} {y_j}$$
with $n>0$ and $a_{ij}$ locally constant in $Y(\infty)$.

\noindent \textbf{Claim:} $\lbrace {b_{ij}}_{|Y}\rbrace$ is a cocycle with coefficients in $\mathcal{O}_Y$.\\
Indeed, differentiate the previous equality and set $\omega_{ij}=-\frac{dy_i}{{y_i}^2} +\frac{dy_j}{{y_j}^2} $. Let $k$ be such that $V_k\cap V_i\cap V_j\not=\emptyset$. Because $y_k=y_i + o(y_i )$, one obtains that ${\mbox{Res}}_Y \frac{\omega_{ij}}{{y_k}}$ is independent of $k$, hence defines a cocycle which is nothing but $\lbrace {b_{ij}}_{|Y}\rbrace$. \qed

Because $H^1(Y,\mathcal{O}_Y)$ is one dimensional, there exists $\lambda\in \C$ such that $[b_{ij}]=\lambda [a_{ij}]$. Let $a_{ij}^{(1)}\in {\mathcal O}(\widehat{U_i}\cap \widehat{U_j})$ be such that
$$\frac{1}{y_i}-\frac{1}{y_j}+\lambda\log\frac{y_i}{y_j}=a_{ij} +a_{ij}^{(1)} {y_j}.$$
The cocycle $\lbrace {a_{ij}^{(1)}}_{|Y}\rbrace$ is then cohomologous to $0$.

The proof can then be simply deduced from the following sequence of observations (whose proofs are omitted for the sake of brevity). As in the proof of Theorem \ref{THM:formalfoliation}, we use changes of coordinates in order to force obstructions to vanish inductively.

\begin{lemma}
Assume that for some $n>0$,
\begin{equation}\label{ordern}
\frac{1}{y_i}-\frac{1}{y_j}+\lambda\log\frac{y_i}{y_j}=a_{ij} +a_{ij}^{(n)} {{y_j}}^n
\end{equation}
then $\lbrace {a_{ij}^{(n)}}_{|Y}\rbrace$  is a cocycle.
\end{lemma}

\begin{lemma} Assume that the coordinates $\lbrace y_i\rbrace$ satisfy the relation (\ref{ordern}) with $n>1$. The cocycle $\lbrace {a_{ij}^{(n)}}_{|Y}\rbrace$ is then cohomologous to $\lambda^{(n)}[a_{ij}]$ (for some $\lambda^{(n)}\in\C$). In the new system of coordinates given by
$$z_i=y_i-\frac{\lambda^{(n)}}{n-1}{y_i}^{n+1},$$
the relation (\ref{ordern}) becomes
$$\frac{1}{z_i}-\frac{1}{z_j}+\lambda\log\frac{z_i}{z_j}=a_{ij} +\tilde{a}_{ij}^{(n)} {{z_j}}^n$$
where $\lbrace \tilde{a}_{ij\vert Y}^{(n)}\rbrace$ is cohomologous to zero.
\end{lemma}

\begin{lemma}
Assume that the coordinates $\lbrace y_i\rbrace$ satisfy the relation (\ref{ordern})  with $ \lbrace {a_{ij}^{(n)}}_{|Y}\rbrace$ cohomologous to zero. Write ${a_{ij}^{(n)}}_{|Y}=a_i-a_j$ and consider the transformation $$\frac{1}{z_i}= \frac{1}{y_i}-A_i {y_i}^n$$ with $A_i\in{\mathcal O}(\widehat{U_i})$ such that ${A_i}_{|Y}=a_i$. The relation (\ref{ordern}) can then be read
$$\frac{1}{z_i}-\frac{1}{z_j}+\lambda\log\frac{z_i}{z_j}=a_{ij} +a_{ij}^{(n+1)} {{z_j}}^{n+1}.$$
\end{lemma}

Noticing that the order (with respect to $y_i$) of the transformations which are involved in these successive reductions increases with $n$, one obtains (in the limit) the sought formal foliation. If the Ueda type satisfies $\nu>1$, the same proof works when considering the quantity
$$\frac{1}{y_i^\nu}-\frac{1}{y_j^\nu}=a_{ij}+o(1).$$
This concludes the proof of Theorem \ref{T:higher dimensional formal foliation}.
\end{proof}

\section{Existence of global foliations} \label{S:Existence2}
In this section we turn our attention to the existence problem of foliations with prescribed compact leaves on compact K\"ahler manifolds.

\subsection{Proof of Theorem \ref{THM:Existence}}\label{S:UedaLorayNeeman}
Let $Y \subset X$ be a smooth  divisor on a compact K\"{a}hler manifold  with normal bundle $NY$. Let $\mathcal L$ be a line bundle on $X$, topologically torsion, such that $NY^{\otimes k} = \mathcal L_{|Y}$. We will denote by $\nabla$ the unique flat unitary connection (identified with its $(1,0)$ part) on $\mathcal L^*$.

Since $\utype(Y)\ge k$,
there exists an open covering $\{ U_i\}$ of $X$ and  holomorphic functions $y_i \in \mathcal O_X(U_i)$
such that
 $Y\cap U_i = \{ y_i =0 \}$; and
 if $U_i \cap U_j \cap Y \neq \emptyset$, then
\begin{equation}\label{E:ueda}
y_i^k = ( \lambda_{ij} + y_j^k r_{ij}   ) y_j^k \,
\end{equation}
for some constants $\lambda_{ij} \in S^1$ and some holomorphic functions $r_{ij}  \in \mathcal O_U(U_i\cap U_j)$.
Of course the cocycle $\{\lambda_{ij}\}$ represents the line-bundle $\mathcal L$ in $H^1(X,S^1)$.

Whenever $U_i \cap U_j \neq \emptyset$, set
\begin{equation}\label{E:cocyle}
a_{ij}  = \frac{1}{y_i^k} - \frac{1}{\lambda_{ij}} \frac{1}{y_j^k}.
\end{equation}
Equation (\ref{E:ueda})
implies that $a_{ij} \in \mathcal O_X(U_i\cap U_j)$. The collection $\{ a_{ij} \}$ determines
an element  $\alpha$ of $H^1(X, \mathcal L^*)$.

One can then write
$$a_{ij} =h_i-h_j$$
where the $h_i$'s are {\it smooth} local sections of $\mathcal L^*$. Note that the collection of $\overline{\partial} h_i$ defines a globally defines $(0,1)$ form $\eta$ valued in $\mathcal L^*$. Consequently, $\xi=\nabla \eta$ is closed with respect to $\nabla+\overline{\partial}$. By harmonic theory on compact K\"{a}hler manifold\footnote{This can be done exactly along the same lines than the classical $\partial\overline{\partial}$ lemma, replacing the $\partial$ operator by the unitary flat connection $\nabla$.}, $\xi$ is not only $\nabla$-exact but is actually $\nabla \overline{\partial}$-exact. One can then assume that $\xi=0$. This obviously implies that one can locally express the differential of the cocycle $a_{ij}$ as
  \[
d a_{ij} =  \omega_i -  \frac{1}{\lambda_{ij}} \omega_j
\]
where $\omega_i$ is a closed {\it holomorphic} form. This proves that
\[
\Omega=d (\frac{1}{y_i^k})- \omega_i = \frac{1}{\lambda_{ij}} \left( d (\frac{1}{y_j^k})- \omega_j \right)
\]
defines a $\nabla$-closed rational section of $\Omega^1_X \otimes \mathcal L^*$ with
polar divisor equal to $(k+1) Y$.
 \qed


\begin{cor} Let  $Y \subset X$ be a smooth  divisor on a compact K\"{a}hler manifold  with normal bundle torsion of order $k$ and $\utype(Y)\ge k$. Then there exists a closed rational form on $X$ with poles of order $k+1$ on $Y$ which defines a foliation having $Y$ as a compact leaf.
\end{cor}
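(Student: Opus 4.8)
The plan is to deduce the corollary directly from Theorem \ref{THM:Existence} by observing that when $NY$ is torsion of order $k$, the line bundle $\mathcal L = NY^{\otimes k}$ is the trivial line bundle $\mathcal O_X$, but with the subtlety that we need the flat unitary line bundle on $X$ whose restriction to $Y$ is $NY^{\otimes k} = \mathcal O_Y$; since $NY$ has torsion Chern class and $Y$ is a smooth divisor on a compact K\"ahler manifold, $\mathcal O_X$ itself (equipped with the trivial connection $d$) is such a bundle, at least after using Proposition \ref{P:Albanese} to ensure the restriction map $H^1(X,\mathbb Q) \to H^1(Y,\mathbb Q)$ is injective and hence that $\mathcal L$ extends as the trivial flat bundle. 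So first I would set $\mathcal L = \mathcal O_X$ with $\nabla = d$.

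Next, I would apply Theorem \ref{THM:Existence}: with $\mathcal L = \mathcal O_X$ and $\utype(Y)\ge k$, the theorem furnishes a rational $1$-form $\Omega$ with coefficients in $\mathcal L^* = \mathcal O_X$ (i.e., an honest rational $1$-form on $X$), with polar divisor exactly $(k+1)Y$, and satisfying $\nabla(\Omega) = d\Omega = 0$. Thus $\Omega$ is a closed rational $1$-form on $X$ with poles of order $k+1$ along $Y$. This is precisely the form required by the corollary.

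Finally, I would check that $\Omega$ defines a foliation having $Y$ as a compact leaf. A nonzero closed rational $1$-form always defines a (codimension one) foliation $\mathcal F$ on $X$; the assertion that $Y$ is a compact leaf of $\mathcal F$ (smooth along $Y$) is already part of the conclusion of Theorem \ref{THM:Existence}, where it is stated that $\Omega$ defines a transversely affine foliation admitting $Y$ as a compact leaf. One can see this concretely from the local expression $\Omega = d(1/y_i^k) - \omega_i$ built in the proof above: near $Y$, after clearing the pole, $\Omega$ is (up to a unit and the holomorphic correction term $\omega_i$) proportional to $dy_i/y_i^{k+1}$, so $Y = \{y_i = 0\}$ is invariant and $\mathcal F$ is regular along it.

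The only genuine point requiring care—and the main (modest) obstacle—is the extension step: passing from $NY^{\otimes k} \cong \mathcal O_Y$ on $Y$ to a \emph{flat} line bundle $\mathcal L$ on $X$ restricting to it, as Theorem \ref{THM:Existence} demands. Since $NY$ is topologically torsion, $NY^{\otimes k}$ is torsion in $\operatorname{Pic}(Y)$, but being \emph{analytically} trivial on $Y$ is automatic here as $NY$ has order exactly $k$; and $\mathcal O_X$ visibly restricts to $\mathcal O_Y$, so one may simply take $\mathcal L = \mathcal O_X$ with its trivial unitary connection, and no nontrivial extension argument is actually needed. With this choice everything goes through verbatim, and the corollary follows.
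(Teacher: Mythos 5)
Your proposal is correct and is essentially the paper's own proof, which consists precisely in applying Theorem \ref{THM:Existence} with $\mathcal L=\mathcal O_X$ (with its trivial flat connection), using that $NY^{\otimes k}\cong\mathcal O_Y$ because $NY$ has order exactly $k$. The detour through Proposition \ref{P:Albanese} is, as you yourself observe at the end, unnecessary: $\mathcal O_X$ visibly restricts to $\mathcal O_Y$ and no extension argument is needed.
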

\begin{proof}
Apply Theorem \ref{THM:Existence} with  $\mathcal{L}=\mathcal{O}_X$
\end{proof}
\begin{remark}\label{r:localglobal}
In this case (see proof of Theorem \ref{THM:Existence}), one can notice that the Ueda class, which lies in $H^1(Y, {\mathcal O}_Y)$, is a restriction of some global class in $H^1(X, {\mathcal O}_X)$.
\end{remark}

Recall that the case $\utype (Y)>k=\ord(NY)$ gives more specific information and is covered by Neeman's result  (Theorem \ref{thm:Neeman} stated in the introduction). For the sake of completeness, we give a simplified proof of this statement.
\subsection{Proof of Theorem \ref{thm:Neeman}}
Assume that $\utype(Y)> k$ and let us consider (as in the  proof of Theorem \ref{THM:Existence}) an open covering $\{ U_i\}$ of $X$ and  holomorphic functions $y_i \in \mathcal O_X(U_i)$
such that
 $Y\cap U_i = \{ y_i =0 \}$; and
 if $U_i \cap U_j \cap Y \neq \emptyset$ then
\begin{equation}\label{E:uedabis}
y_i^k = ( 1 + y_j^k r_{ij}   ) y_j^k \,
\end{equation}
for some holomorphic function $r_{ij}  \in \mathcal O_U(U_i\cap U_j)$.

Whenever $U_i \cap U_j \neq \emptyset$, set
\begin{equation}\label{E:cocylebis}
a_{ij}  = \frac{1}{y_i^k} - \frac{1}{y_j^k}.
\end{equation}
Equation (\ref{E:uedabis})
implies that $a_{ij} \in \mathcal O_X(U_i\cap U_j)$. The collection $\{ a_{ij} \}$ determines
an element  $\alpha$ of $H^1(X, \mathcal O_X)$.

The cohomology class  $-k\alpha_{|Y} \in H^1(Y,\mathcal O_Y)$ coincides with Ueda's original
definition\footnote{Recall that $\alpha$ is only well-defined up to  multiplication by a complex constant
which comes from the choice of an isomorphism between $\mathcal O_Y$ and $\mathcal I^k/ \mathcal I^{k+1}$.} of the $k$-th Ueda class of $Y$ in $X$. The assumption $\utype(Y)>k$ implies that $-k \alpha_{|Y} = 0$ and to conclude the proof we argue differently according to the injectivity (or not) of the restriction map $H^1(X,\mathcal O_X) \to H^1(Y,\mathcal O_Y)$.

If $H^1(X,\mathcal O_X) \to H^1(Y,\mathcal O_Y)$ is not injective then  Proposition \ref{P:Albanese} implies that  $kY$ is a fiber of a fibration and we have $\utype(Y)=\infty$.

If $H^1(X,\mathcal O_X) \to H^1(Y,\mathcal O_Y)$ is injective, $\alpha$ is then zero in $H^1(X,\mathcal O_X)$ and we can write $a_{ij} = {a_i}_{|U_i \cap U_j} - {a_j}_{|U_i \cap U_j}$
for suitable $a_i \in \mathcal O_X(U_i)$. Hence, we can construct a morphism  $f: X \to \mathbb P^1$ such that $f^{-1}(\infty) = k Y$ by setting
$f_{|U_i} = y_i^{-k} - a_i$. It follows that  $\utype(Y) = \infty$, and that $kY$ is a fiber of a fibration.
\qed

\begin{remark}
The smoothness of the divisor is not really important in the proof of Theorem \ref{thm:Neeman}.
In particular, the argument here presented  can be easily adapted to give an alternative proof of  \cite[Theorem 2.3]{croco5}.
\end{remark}

\subsection{Hypersurfaces with non-torsion line-bundles}
Below is a version of Theorem \ref{thm:Neeman} for non-torsion normal bundles, also due to Neeman \cite[Theorem 5.6, p. 113]{Neeman}.

\begin{thm}\label{thm:neemanextension}
Let $Y$ be a smooth and irreducible hypersurface of a compact K\"{a}hler  manifold $X$. Assume that $NY=\mathcal O_X(Y)_{|Y}$ has topologically torsion normal bundle
but has analytically infinite order.
If $\utype(Y)=\infty$ and for some positive integer $k>0$, the $k$-th power  $\rho^k$ of the unitary representation $\rho:\pi_1(Y) \to S^1$ associated to $NY$
extends to a representation $\tilde \rho: \pi_1(X) \to S^1$, then
there exists an integral effective  divisor $D$, disjoint from $Y$ and cohomologous to $kY$ for some positive integer .
In particular, at a sufficiently small Euclidean neighborhood of $Y$, the Ueda line bundle  $\mathcal U$ is  trivial.
\end{thm}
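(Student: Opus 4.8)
\emph{Strategy.} I would upgrade the formal triviality of the Ueda line bundle along $Y$ — which comes for free from the hypothesis $\utype(Y)=\infty$ — into the existence of a genuine global section on $X$; the divisor $D$ is then its zero locus, and its disjointness from $Y$ is forced by the order of its pole along $Y$. \emph{Step 1 (a flat extension of $NY^{\otimes k}$).} Let $\mathcal L$ be the flat unitary line bundle on $X$ with monodromy $\tilde\rho$. Since $\tilde\rho$ restricts to $\rho^k$ on $\pi_1(Y)$, we have $\mathcal L_{|Y}\cong NY^{\otimes k}$; and on a tubular neighborhood $U$ of $Y$ (so that $\pi_1(U)=\pi_1(Y)$) the bundles $\mathcal L_{|U}$ and $\widetilde{N_UY}^{\otimes k}$ are flat unitary with the same monodromy, hence isomorphic through a flat isomorphism. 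Set $\mathcal N:=\mathcal O_X(kY)\otimes\mathcal L^{*}$, so that $\mathcal N_{|Y}\cong\mathcal O_Y$. It suffices to exhibit a non-zero holomorphic section of $\mathcal N$: its zero divisor $D_0$ will then be cohomologous to $k[Y]-c_1(\mathcal L)$, and taking $m$ copies, where $m$ kills the torsion class $c_1(\mathcal L)$, will produce the asserted divisor $D=mD_0$ cohomologous to $mkY$.

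\emph{Step 2 (formal triviality of $\mathcal N$).} By definition of $\utype(Y)=\infty$, the Ueda bundle $\mathcal U=\mathcal O_U(Y)\otimes\widetilde{N_UY}^{*}$ is trivial on the formal completion $Y(\infty)$, i.e. $\mathcal O_U(Y)\cong\widetilde{N_UY}$ formally along $Y$; taking $k$-th powers and tensoring with $\mathcal L_{|U}^{*}\cong\widetilde{N_UY}^{\otimes -k}$ gives $\mathcal N_{|Y(\infty)}\cong\mathcal O_{Y(\infty)}$. In \v{C}ech terms, choosing (exactly as in the proof of Theorem~\ref{THM:Existence}) a covering $\{U_i\}$ of $X$ and Ueda coordinates $y_i\in\mathcal O_X(U_i)$ with $Y\cap U_i=\{y_i=0\}$ and $y_i^k=(\lambda_{ij}+y_j^k r_{ij})y_j^k$, where $\lambda_{ij}\in S^1$ is a cocycle for $\mathcal L$, the functions $a_{ij}:=y_i^{-k}-\lambda_{ij}^{-1}y_j^{-k}$ are holomorphic on $U_i\cap U_j$ and satisfy $a_{ik}=a_{ij}+\lambda_{ij}^{-1}a_{jk}$, hence define a class $\alpha\in H^1(X,\mathcal L^{*})$; the restriction $\alpha_{|Y}\in H^1(Y,NY^{\otimes-k})$ is the $k$-th Ueda class and vanishes because $\utype(Y)>k$, and more generally the formal triviality of $\mathcal N$ says that $\alpha$ maps to $0$ in $H^1$ of every infinitesimal neighborhood of $Y$.

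\emph{Step 3 (globalization — the main point).} It remains to show that $\alpha$ vanishes in $H^1(X,\mathcal L^{*})$ itself, i.e. that $a_{ij}=h_i-\lambda_{ij}^{-1}h_j$ for holomorphic $h_i\in\mathcal O_X(U_i)$; then $\sigma:=\{y_i^{-k}-h_i\}$ glues to a non-zero global section of $\mathcal N$ with a pole of order exactly $k$ along $Y$. I would run the Hodge-theoretic argument of the proof of Theorem~\ref{THM:Existence}: write $a_{ij}=h_i-h_j$ with smooth $h_i$, form the global $\mathcal L^{*}$-valued $(0,1)$-form $\eta=\{\overline\partial h_i\}$, note that $\xi=\nabla\eta$ is $(\nabla+\overline\partial)$-closed, and use the $\nabla\overline\partial$-lemma on the compact K\"ahler manifold $X$ (harmonic theory for the flat unitary connection $\nabla$ on $\mathcal L^{*}$) to reduce to $da_{ij}=\omega_i-\lambda_{ij}^{-1}\omega_j$ with $\omega_i$ holomorphic and closed. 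The extra input beyond Theorem~\ref{THM:Existence} is that, because $\alpha$ vanishes along $Y$ to infinite order, the $\nabla$-closed rational section $\Omega=d(y_i^{-k})-\omega_i$ of $\Omega^1_X\otimes\mathcal L^{*}$ is $\nabla$-exact on $Y(\infty)$; one must convert this formal statement into the exactness of $\Omega$ (equivalently $\alpha=0$) on all of $X$, using that a $\nabla$-harmonic representative which is cohomologically trivial to all orders along $Y$ must be identically zero. \emph{This is the step where I expect the real difficulty to lie}: it is precisely the passage from a formal statement near $Y$ to a global one, and it is here that compactness and the K\"ahler hypothesis are genuinely needed (refining, rather than merely quoting, the proof of Theorem~\ref{THM:Existence}); one can also try to settle it by the dichotomy used in the proof of Theorem~\ref{thm:Neeman}, distinguishing whether the restriction $H^1(X,\mathcal L^{*})\to H^1(Y,NY^{\otimes-k})$ is injective.

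\emph{Step 4 (conclusion).} Let $D_0=\operatorname{div}(\sigma)$, the zero divisor of $\sigma$ viewed as a section of $\mathcal N=\mathcal O_X(kY)\otimes\mathcal L^{*}$. Near $Y$ the pole of order $k$ of $y_i^{-k}$ cancels the zero of order $k$ of the tautological section of $\mathcal O_X(kY)$, so $\sigma$ is nowhere vanishing on a neighborhood of $Y$; hence $D_0$ is effective and disjoint from $Y$, and $D:=mD_0$ (with $m$ as in Step~1) is an integral effective divisor, disjoint from $Y$, cohomologous to $mkY$. Finally, $D_0\cap Y=\emptyset$ implies that $\mathcal N$ is trivial on $X\setminus D_0\supset Y$, so $\mathcal O_X(kY)\cong\mathcal L$ on a tubular neighborhood $U\subset X\setminus D_0$ of $Y$, whence $\mathcal U^{\otimes k}\cong\mathcal O_X(kY)\otimes\widetilde{N_UY}^{\otimes-k}$ is trivial on $U$; since $c_1(\mathcal U_{|U})$ lies in $H^2(U,\mathbb Z)=H^2(Y,\mathbb Z)$ and restricts to $c_1(\mathcal U_{|Y})=0$, and since $\mathcal U_{|Y}\cong\mathcal O_Y$, one deduces that $\mathcal U$ itself is trivial on a sufficiently small Euclidean neighborhood of $Y$. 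Note that this yields the Euclidean triviality of $\mathcal U$ without any Diophantine hypothesis on $NY$, in contrast with Theorem~\ref{T:Uedainfty}.
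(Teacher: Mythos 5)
The paper itself does not prove this statement: it is imported verbatim from Neeman's memoir \cite[Theorem 5.6]{Neeman}, so there is no internal proof to compare against; your proposal therefore has to stand on its own, and as written it does not.

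The reductions in Steps 1, 2 and 4 are sensible (and the final deduction that $\mathcal U^{\otimes k}$ trivial near $Y$ forces $\mathcal U$ trivial can be repaired: a $k$-torsion line bundle on a tubular neighborhood is flat, hence given by a unitary character of $\pi_1(U)\simeq\pi_1(Y)$, and a unitary character of a compact K\"ahler manifold inducing the trivial holomorphic bundle is trivial). But the entire content of the theorem is concentrated in Step 3, which you explicitly leave open. Proving $\alpha=0$ in $H^1(X,\mathcal L^*)$ -- equivalently producing a global meromorphic section of $\mathcal L^*$ with polar divisor exactly $kY$ -- is not a refinement of the harmonic-theory argument of Theorem \ref{THM:Existence}: that argument only yields the $\nabla$-closed twisted $1$-form $\Omega$, and your proposed principle that ``a $\nabla$-harmonic representative which is cohomologically trivial to all orders along $Y$ must be identically zero'' is unjustified. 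For a divisor with numerically trivial normal bundle there is no Lefschetz-type injectivity of $H^1(X,\mathcal L^*)$ into the cohomology of the formal completion; the kernel is controlled by the complement $X\setminus Y$, which in this infinite Ueda type regime can be cohomologically large (compare the suspension examples after Theorem \ref{T:Uedainfty} and Example \ref{exampleOgusFred}). Nor does the dichotomy of Theorem \ref{thm:Neeman} transfer: its non-injective branch rests on Proposition \ref{P:Albanese}, i.e.\ on the Albanese description of $H^1(X,\mathcal O_X)$, and the analogous non-injectivity question for $H^1(X,\mathcal L^*)\to H^1(Y,NY^{\otimes -k})$ with $\mathcal L$ a nontrivial flat bundle would require Green--Lazarsfeld/Beauville-type structure results for flat line bundles with nonvanishing $H^1$ (cf.\ \cite{Beauville}) together with a further argument to extract the divisor $D$; none of this is supplied.

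There is also a subtler defect already in Step 2: the class $\alpha\in H^1(X,\mathcal L^*)$ depends on the chosen admissible coordinates $y_i$; only its restriction to $Y$ (the $k$-th Ueda class, up to a scalar) is canonical. The hypothesis $\utype(Y)=\infty$ gives, for each $\ell$, \emph{some} system of coordinates whose associated cocycle vanishes to order $\ell$ along $Y$, but these systems vary with $\ell$, and two admissible systems produce classes in $H^1(X,\mathcal L^*)$ differing by the twisted coboundary of a cochain of meromorphic sections with poles of order $\le k$ on $Y$; deciding whether that difference vanishes is essentially the problem you are trying to solve. So even the assertion that your fixed $\alpha$ restricts to zero on every infinitesimal neighborhood is not established. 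In short, the skeleton (Steps 1, 2, 4) is a reasonable reformulation, but the globalization step that carries the whole theorem is missing, and for it one must either reproduce Neeman's argument or supply a genuinely new one.
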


\begin{example}\label{exampleOgusFred}
Consider a genus $g>1$ curve $C$ and a representation $\underline{\rho}:\pi_1(C)\to \mathrm{Aut}(\mathbb P^1)$
into the unitary dihedral subgroup with infinite image: the elements are all elliptic, permuting $\{0,\infty\}\subset\mathbb P^1$.
On a $2$-fold (\'etale) cover $\tilde C\stackrel{2:1}{\to}C$, the representation lifts as $\rho:\pi_1(\tilde C) \to S^1$ with infinite image
(fixing $0$ and $\infty$).
Considering the suspension of $\underline{\rho}$, we get a Riccati foliation $\mathcal F$ on a ruled surface $\pi:X\to C$
with an invariant $2$-section $X\supset Y\stackrel{2:1}{\to}C$. After fiber product
$$ \xymatrix{
    \tilde X \ar[r]^-{2:1} \ar[d]_{\tilde\pi}  &  X\supset Y \ar@<-10pt>[d]^\pi \\
    \tilde C \ar[r]^-{2:1} & C\hskip0.6cm
  }$$
we get the total space $\tilde\pi:\tilde X\to \tilde C\simeq Y$ of a non-torsion line bundle on which $Y$ lifts as
the zero and infinity sections; the germ of neighborhood $(X,Y)$ is isomorphic to the germ $(\tilde X,\tilde Y_0)$
at the zero section. The Ueda line bundle is therefore trivial at a neighborhood of $Y$ where we have local flat analytic structure.
However, no power of the local holonomy representation of $\pi_1(Y)$ extends to a representation of $\pi_1(X)$.
The complement $X- Y$ contains no complete curve since such a curve would lift as a finite section,
contradicting that $NY$ is non torsion. Moreover,  $X- Y$ is not Stein. Indeed by virtue of the maximum principle, every $f\in\mathcal {O}(X-Y)$ is constant on the closure of the leaves in $X-Y$ (which are compact Levi-flat hypersurfaces). This implies that $f$ is actually {\it constant}.
In particular, this answers negatively   \cite[Chapter VI, Problem 3.4]{Hartshorne}.

As already mentioned the previous example is a variation of an example by Ogus \cite[Section 4]{Ogus}, where the ambient $X$ is singular but rational.
The construction is similar. It starts with $\tilde\pi:\tilde X\to \tilde C$, the total space of a non torsion line bundle over an elliptic curve $\tilde C$,
equipped with a holomorphic connection, and then form the quotient $X$ by a lift of the elliptic involution. There are $8$ fixed points
away from the zero and infinity sections, giving rise to singular points for $X$. Take $Y$ to be the image of any of the two sections of $\tilde\pi$.

Another interesting example,  which is somehow opposite to Hartshorne question,  is provided by the surface $S$ obtained by blowing up $9$ points on a smooth cubic curve $C_0$ on ${\mathbb P}^2$. For a generic choice of these $9$ points, related to the Diophantine condition stated in Theorem \ref{T:Uedainfty}, the strict transform $C$ is left invariant by a regular foliation (only defined in its neighborhood) such that the leaves closure are compact Levi-flat hypersurfaces.  This prevents $S-C$ from being Stein. However, it is still unknown whether there exists  some exceptional configuration of points for which $S-C$ is Stein (see the discussion in \cite{Brunella2}).
\end{example}

\subsection{Curves with torsion normal bundle}\label{S:QE} Theorem \ref{THM:Existence} says nothing about the existence of
smooth foliations along $Y$ when $\utype(Y)< \ord(NY)$. In particular, it leaves open the following natural question:

\begin{question}\label{Q:existence}
Is every curve $C \subset S$  with torsion normal bundle on a projective surface $S$ a compact leaf of a foliation?
\end{question}

Given any curve $C$ of genus $g(C)>1$, it is relatively easy to construct smooth foliations on projective surfaces
having $C$ as a compact leaf satisfying $\utype(C)=1$  and $\ord(NC)$ arbitrary. If we take a general representation
$\rho:\pi_1(C) \to \Aff(\mathbb C)$ with linear part of order $k$ then the Riccati foliation obtained through the suspension
of $\rho$ is an example with $\utype(C)=1$ and $\ord(NC)=k$. Taking some Kawamata coverings of these examples (see \cite[Proposition 4.1.12]{Lazarsfeld}), one can obtain
examples with $\utype(C) = n$ and $\ord(NC)=kn$ for arbitrary $k$ and $n$.

Perhaps one of the simplest tests for Question \ref{Q:existence} is the existence of foliations along a smooth curve $C$  of genus
$3$ with $\utype(C) = 1$ and $\ord(NC)=2$ constructed in the following way, see \cite[page 119]{Neeman}. Start with a smooth
quartic $C_0 \subset \mathbb P^2$ and consider two bitangents to it. The two bitangents determine $4$ points on $C_0$. Choose twelve other
points as the intersection of $C_0$ with a generic cubic. The blow-up of $\mathbb P^2$ on these $16$ points gives a rational surface
containing a curve $C$, the strict transform of $C_0$, having torsion normal bundle of order two and Ueda type equal to one. We do not
know if there exists a foliation on $S$ regular along $C$.

\section{Groups of (formal) germs of biholomorphisms}\label{S:subgroups}
In this section we review the  bits of the theory of subgroups of $\Diff$ and $\Difffor$ relevant to our study.
The only  new result in this section is Proposition \ref{P:utypek}.

\subsection{Formal classification of solvable subgroups}\label{S:formalclassification}
  Let $\Diff$ denotes the group of germs of analytic diffeomorphisms of one variable fixing $0\in\C$ and $\Difffor$ its formal completion. In order to state the classification of formal subgroups of $\Difffor$, it will be convenient to use the vector fields (or rather theirs flows/exponentials)
\[
v_{k,\lambda} = \frac{z^{k+1}}{1+ \lambda {z^k}} \frac{\partial}{\partial z}
\]
for $k \in \mathbb N^*$ and $\lambda \in \mathbb C$. Note that  $h= \exp\left( v_{k,\lambda}\right)\in \Diff$ is tangent to identity at order $k$, \emph{i.e.} $h(z)= z+z^{k+1}+\cdots$.  We recall firstly the classification of elements of $\Difffor$ up to formal conjugacy (see for instance \cite[\S 1.3]{Frankpseudo} ).

\begin{thm}
  Let $f(z)=az+...\in \Difffor,a\in{\C}^*$, be distinct from the identity. Then $f$ is conjugated to one and only one of the following models:
  \begin{enumerate}
\item $f_0 (z)=az$ where $a\in{\C}^*$.
\item $f_0(z)=a \cdot \exp\left( v_{k,\lambda}\right); a^k =1$.
\end{enumerate}
\end{thm}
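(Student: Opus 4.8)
The plan is to reduce everything to the linear part $a = f'(0)$ and then analyze the two cases $a$ not a root of unity (equivalently, the linearizable case in a formal sense) versus $a$ a root of unity. First I would observe that formal conjugacy preserves the multiplier $a$, so $a$ is an invariant; this handles uniqueness of the linear part across the two model families and shows the families (1) and (2) are distinguished by whether $a$ is a root of unity other than what forces tangency to identity. For case (1), when $a$ is not a root of unity I would prove formal linearizability by solving the conjugacy equation $h \circ f_0 = f \circ h$ order by order: writing $h(z) = z + \sum_{n \geq 2} h_n z^n$ and $f(z) = az + \sum_{n\geq 2} f_n z^n$, matching coefficients of $z^n$ gives $h_n(a^n - a) = (\text{polynomial in } f_2,\dots,f_n, h_2,\dots,h_{n-1})$, and the factor $a^n - a = a(a^{n-1}-1)$ is nonzero precisely because $a$ is not a root of unity, so each $h_n$ is uniquely determined. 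The same computation shows that if $a$ \emph{is} a root of unity but $f$ is not tangent to identity at any finite order... wait — that cannot happen, since if $a^d = 1$ then $f^{\circ d}$ has linear part $1$; so if $f \neq \mathrm{id}$ one is pushed into case (2).

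For case (2), suppose $a$ is a primitive $d$-th root of unity. I would first pass to $g = f^{\circ d}$, which is tangent to the identity, and invoke the classical formal normal form for tangent-to-identity germs: any $g(z) = z + c z^{k+1} + \cdots$ with $c \neq 0$ is formally conjugate to $\exp(v_{k,\lambda})$ for a unique pair $(k,\lambda)$, where $k$ is the order of tangency and $\lambda$ is the (formal) residue-type invariant, the iterative logarithm / Écalle–Voronin formal invariant. The integer $k$ and the residue $\lambda$ of $g$ are conjugacy invariants. Then I would lift this back: one shows $f$ itself is conjugate to $a \cdot \exp(v_{k,\lambda})$ by using that $f$ commutes with its formal normal form's structure — concretely, $f$ normalizes the formal flow of the normalizing vector field, forcing $f$ to be the time-? composition of the rotation $z \mapsto az$ with an element of that flow; the condition $a^k = 1$ appears because the vector field $v_{k,\lambda}$ must be invariant under $z \mapsto az$, i.e. $a^\ast v_{k,\lambda} = v_{k,\lambda}$, which holds iff $a^k = 1$. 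Uniqueness of $(k,\lambda)$ follows from their being invariants of $g = f^{\circ d}$.

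The main obstacle I expect is the passage in case (2) from the normal form for the tangent-to-identity iterate $f^{\circ d}$ back to a normal form for $f$ itself: one must show that the formal vector field $X$ with $g = \exp(X)$ can be chosen so that $f^\ast X = X$, i.e. that $f$ preserves the normalizing data of its own power, and hence that $f \circ \exp(-X/d)$ is a formal periodic map of order $d$, thus formally linearizable to $z \mapsto az$. This is where the compatibility $a^k = 1$ is forced and where some care with the (possibly non-unique) choice of infinitesimal generator $X$ of $g$ is needed. I would cite the standard references for the tangent-to-identity normal form (e.g. \cite[\S 1.3]{Frankpseudo}) rather than reprove it, and concentrate the argument on this lifting step and on the invariance computations that pin down $k$, $\lambda$, and the constraint on $a$.
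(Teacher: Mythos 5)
Note first that the paper does not prove this theorem at all: it is recalled as classical with a pointer to \cite[\S 1.3]{Frankpseudo}, so there is no ``paper proof'' to compare with, and your outline is essentially the standard argument. In case (2) the lifting step you single out as the main obstacle does go through, and more easily than you fear: in the formal category the infinitesimal generator $X$ of a tangent-to-identity germ distinct from the identity is \emph{unique} (the exponential is a bijection from formal vector fields of order $\ge 2$ onto $\Difffor_1$), so $f_*X=X$ is automatic from $f\circ f^{\circ d}=f^{\circ d}\circ f$; after normalizing $X$ to $v_{k,\lambda}$, the relation $f_*v_{k,\lambda}=v_{k,\lambda}$ forces $f=b\cdot\exp\left(t\,v_{k,\lambda}\right)$ with $b^k=1$, the equation $f^{\circ d}=\exp\left(v_{k,\lambda}\right)$ pins down $b=a$ and $t=1/d$, and a homothety rescales the time $t$ to $1$ at the cost of replacing $\lambda$ by $\lambda/t$. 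So your worry about a non-unique generator is unfounded, and this part of the plan is sound.

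There is, however, a genuine gap in your case division. You assert that the two families are distinguished by whether $a$ is a root of unity, and that if $a^d=1$ and $f\neq\mathrm{id}$ then one is ``pushed into case (2)''. This is false: a non-identity germ with $a$ of order $d$ may satisfy $f^{\circ d}=\mathrm{id}$ (for instance $f(z)=-z/(1-z)=-z-z^2-\cdots$, an involution). For such $f$ your case (2) machinery breaks down at the first step, since $g=f^{\circ d}$ is the identity and has no normal form $\exp\left(v_{k,\lambda}\right)$; and indeed $f$ cannot be conjugate to any model of type (2), because those have infinite order (as the paper remarks). These periodic germs belong to family (1): a finite-order element of $\Difffor$ is linearizable via the averaging map $h(z)=\frac{1}{d}\sum_{j=0}^{d-1}a^{-j}f^{\circ j}(z)$, which satisfies $h'(0)=1$ and $h\circ f=a\,h$. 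The correct trichotomy is therefore: $a$ not a root of unity (your order-by-order linearization); $a$ of order $d$ with $f^{\circ d}=\mathrm{id}$ (averaging, model (1) with $a$ a root of unity); $a$ of order $d$ with $f^{\circ d}\neq\mathrm{id}$ (your case (2)). You should also record, for the ``only one'' part, that a linearizable germ with root-of-unity multiplier is never conjugate to a model $a\cdot\exp\left(v_{k,\lambda}\right)$ precisely because of this finite/infinite order dichotomy; together with the invariance of the multiplier $a$, of $k$, and of the residue $\lambda$, this completes uniqueness.
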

In the first case $f$ is said to be (formally) \textit{linearizable}. In the second item, remark that $z\rightarrow az$ commutes with $\exp\left( v_{k,\lambda}\right)$ and that $f_0$ (hence $f$) has infinite order.

The formal classification of solvable subgroups of $\Difffor$ is given in \cite{CeMo}; we also refer to \cite[\S 1.4]{Frankpseudo} and to the recent monograph \cite{CaCeDeBook}.

 \begin{thm}\label{T:formalG}
Let $G \subset \Difffor$ be a subgroup. If $G$ is virtually solvable, then
there exists $ {\varphi} \in \Difffor$ such that $ \varphi_* G$ is a subgroup of one of the following models:
\begin{enumerate}
\item $\mathbb L = \{ f(z) = a z; a \in \mathbb C^* \}$;
\item $\mathbb E_{k,\lambda} = \{ f(z) = a \cdot \exp\left(t v_{k,\lambda}\right); a^k =1 , t \in \mathbb C\}$, for some $k \in \mathbb N^*$ and $\lambda \in \mathbb C$;
\item $\mathbb A_k = \{ f(z) = az / (1 - b z^k)^{1/k}; a \in \mathbb C^*, b\in \mathbb C\}$, for some $k\in \mathbb N^*$.
\end{enumerate}
\end{thm}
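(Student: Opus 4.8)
The plan is to bootstrap from the single-element classification just stated, through abelian subgroups, to general virtually solvable subgroups, by induction on the derived length; the whole argument is powered by a short list of centralizer and normalizer computations inside $\Difffor$. Throughout, ``conjugate'' means formally conjugate, $\id$ is the identity germ, and for a nontrivial $f\in\Difffor$ tangent to the identity at order exactly $k$ I use that its formal infinitesimal generator is, after conjugation, a scalar multiple of $v_{k,\lambda}$, with $(k,\lambda)$ a conjugacy invariant. A first reduction shows it suffices to treat \emph{solvable} $G$: let $H\trianglelefteq G$ be a finite-index solvable normal subgroup (the normal core of a finite-index solvable subgroup); if $H$ is finite then $G$ is finite, hence cyclic and linearizable, hence contained in $\mathbb{L}$; otherwise apply the solvable case to $H$ and conclude with the normalizer lemma below, since $G$ normalizes $H$.

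\textbf{Abelian case.} Let $G$ be abelian. If $G$ has an element $f$ whose linear part is not a root of unity, then by the single-element theorem $f$ is linearizable; conjugating $f$ to $z\mapsto az$, a coefficient comparison (using $a^{n}\ne a$ for $n\ne 1$) identifies the centralizer of $z\mapsto az$ with $\mathbb{L}$, so $G\subset\mathbb{L}$. If $G$ contains a nontrivial germ tangent to the identity, normalize its generator to $v_{k,\lambda}$; a direct computation with the (multivalued) primitive $-\tfrac{1}{kz^{k}}+\lambda\log z$ identifies the centralizer of $\exp(v_{k,\lambda})$ with $\mathbb{E}_{k,\lambda}$, so $G\subset\mathbb{E}_{k,\lambda}$. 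In the remaining case the linear-part homomorphism $G\to\C^{*}$ is injective with torsion image, so $G$ is a directed union of finite cyclic groups, each linearizable by a canonical averaging procedure in a compatible way, whence $G\subset\mathbb{L}$. The same bookkeeping yields the \textbf{normalizer lemma}: $N_{\Difffor}(\mathbb{L})=\mathbb{L}$; $N_{\Difffor}(\mathbb{E}_{k,\lambda})=\mathbb{E}_{k,\lambda}$ for $\lambda\ne 0$; $N_{\Difffor}(\mathbb{E}_{k,0})=\mathbb{A}_{k}$; and $N_{\Difffor}(\mathbb{A}_{k})=\mathbb{A}_{k}$. The instructive case is $N(\mathbb{E}_{k,0})=\mathbb{A}_{k}$: in the coordinate $w=z^{-k}$ one has $v_{k,0}=z^{k+1}\partial_{z}=-k\,\partial_{w}$, so $\mathbb{E}_{k,0}$ becomes the group of translations of $w$ and $\mathbb{A}_{k}$ becomes $\Aff(\C)$ acting on $w$; anything normalizing the translations is affine in $w$. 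For $\lambda\ne 0$ the $\log$-term rigidifies $\mathbb{E}_{k,\lambda}$ and blocks this jump.

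\textbf{Solvable case}, by induction on the derived length $d$ of $G$; the base $d\le 1$ is the abelian case. For $d\ge 2$ set $A=[G,G]$, which is characteristic in $G$ of derived length $d-1$, hence, by induction, conjugate into some model $M$, and $G\subset N_{\Difffor}(A)$. If $A$ contains a nontrivial germ tangent to the identity (so $M\in\{\mathbb{E}_{k,\lambda},\mathbb{A}_{k}\}$), then that germ alone forces $N_{\Difffor}(A)\subset N_{\Difffor}(M)$, and the normalizer lemma gives $G\subset N_{\Difffor}(M)$, again one of the three models ($N(\mathbb{E}_{k,0})=\mathbb{A}_{k}$ being the sole case changing the type). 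If $A$ contains an element with non-torsion linear part, then, as in the abelian case, every $g\in G$ centralizes it, so $G\subset\mathbb{L}$ and $d\le 1$ --- an empty branch. The only degenerate possibility left is $A$ torsion without nontrivial tangent-to-identity elements: if $A$ is infinite it is conjugate to the full group of roots of unity, whose normalizer is $\mathbb{L}$ (empty branch again); if $A$ is finite, conjugate it to $\mu_{n}\subset\mathbb{L}$ and observe that $N_{\Difffor}(\mu_{n})=Z_{\Difffor}(\mu_{n})$ is mapped by the $n$-fold ramified cover $t=z^{n}$ onto $\Difffor(\C_{t},0)$ with kernel $\mu_{n}$; the image $\bar G$ of $G$ there satisfies $[\bar G,\bar G]=\{1\}$, so the abelian case applies to $\bar G$, and pulling the resulting model back through $t=z^{n}$ places $G$ in $\mathbb{L}$ or in some $\mathbb{E}_{nk,\lambda}$. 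The ``virtually solvable'' reduction from the first paragraph runs identically with $A$ replaced by $H$.

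The main obstacle is the normalizer lemma --- above all the identity $N(\mathbb{E}_{k,0})=\mathbb{A}_{k}$ together with the self-normalizing property of $\mathbb{L}$, $\mathbb{E}_{k,\lambda}$ ($\lambda\ne 0$) and $\mathbb{A}_{k}$; this is exactly the point where the third model $\mathbb{A}_{k}$ is forced to appear, and it is the technical heart extracted from \cite{CeMo}. A secondary difficulty is making the induction close on the branches where $[G,G]$ (or the auxiliary subgroup $H$) is finite or merely torsion, which is why one passes through the ramified cover $t=z^{n}$ to reduce to the abelian classification rather than arguing directly with flows.
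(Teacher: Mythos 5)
The paper does not actually prove Theorem \ref{T:formalG}: it is quoted as the known formal classification of solvable subgroups, with references to Cerveau--Moussu \cite{CeMo} and \cite[\S 1.4]{Frankpseudo}. Your reconstruction follows essentially that classical route (single-germ normal forms, centralizer computations, the canonically attached infinitesimal generator $\C v_{k,\lambda}$ and its dual form $\omega_{k,\lambda}$, normalizers of the models), repackaged as an induction on derived length plus a normalizer lemma; the key computations you rely on --- $\mathcal C(az)=\mathbb L$ for $a$ non-torsion, $\mathcal C(\exp v_{k,\lambda})=\mathbb E_{k,\lambda}$, $N(\mathbb E_{k,0})=\mathbb A_k$ via $w=z^{-k}$, the residue argument rigidifying $\mathbb E_{k,\lambda}$ for $\lambda\neq0$, and the $t=z^n$ cover in the finite-commutator branch --- are all correct, and the case analysis is exhaustive. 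So the outline is sound and is in substance the argument of \cite{CeMo}/\cite{Frankpseudo}.

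Three places need tightening. First, in the branch where $A=[G,G]$ has an element $h$ with non-torsion linear part, your justification ``as in the abelian case, every $g\in G$ centralizes it'' is not the right reason ($G$ is not assumed abelian there); what saves you is that, the tangent-to-identity branch having been excluded, $A\cap\Difffor_1=\{\id\}$, so elements of $A$ are determined by their linear parts, and since conjugation preserves linear parts, $ghg^{-1}\in A$ forces $ghg^{-1}=h$; then $G\subset\mathcal C(h)$, which is conjugate to $\mathbb L$. State the cases as mutually exclusive so this is available. Second, ``if $A$ is infinite it is conjugate to the full group of roots of unity'' is false (think of a Pr\"ufer group of $2$-power roots of unity); but you only need that $A$ is an infinite group of roots of unity of unbounded order, and then the same linear-part argument gives $N(A)=\mathcal C(A)=\mathbb L$, so the branch still closes. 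Third, in the torsion abelian sub-case, ``canonical averaging in a compatible way'' is not automatic: the average over $C_1$ and over $C_2\supset C_1$ are different maps. The standard repair is to take a chain $C_1\subset C_2\subset\cdots$ exhausting $G$, note that two coordinates linearizing $C_m$ differ by $z\mapsto z\,u(z^{n_m})$, so the linearizers can be normalized to satisfy $\varphi_{m+1}\equiv\varphi_m \bmod z^{\,n_m+1}$, and pass to the coefficientwise (Krull) limit; alternatively invoke the linearizability criterion the paper quotes as Theorem \ref{thm:triviallinearpart} / Corollary \ref{cor:finiteorder}, but in a from-scratch proof this step does require the stabilization argument.
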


When $G$ is abelian, we are in cases (1) or (2). Indeed, abelian subgroups of $\mathbb A_k$ actually lie (up to conjugacy) in
$\mathbb L$ or $\mathbb E_{k,0}$.

\begin{definition}\label{def:filtration-diff}
Let $\Difffor_1=\{h\in \Difffor \mid h(z)=z+o(z)\}$ be the subgroup of $\Difffor$ whose elements are tangent to identity. More generally, if $k\ge1$, let us define
$$\Difffor_k=\{h\in \Difffor \mid h(z)=z+o(z^k)\}.$$
\end{definition}

Notice that for  $h\in\Difffor_1$, the following assertions are equivalent: $h=Id$;  $h$ has finite order; and  $h$ is linearizable.
More generally, we have the useful linearizability criterion, see \cite[Corollaire 1.4.2]{Frankpseudo}.

\begin{thm}\label{thm:triviallinearpart}
Let $G$ be a subgroup of $\Difffor$. Set $G_1=G\cap \Difffor_1$. Then, up to formal conjugacy,  $G$ is a subgroup of $\mathbb L$ if and only if $G_1=\{Id\}$.
\end{thm}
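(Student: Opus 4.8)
The plan is to reduce the statement to the classification in Theorem \ref{T:formalG}, using the linear part homomorphism $\ell\colon \Difffor \to \C^*$, $f(z)=az+\cdots \mapsto a$, whose kernel is exactly $\Difffor_1$; in particular $\Difffor_1$ is a normal subgroup of $\Difffor$.

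For the easy implication I would argue as follows: if $\varphi_* G \subseteq \mathbb L$ for some $\varphi\in\Difffor$, then, conjugation preserving the normal subgroup $\Difffor_1$, we get $\varphi_*(G_1)=\varphi_* G\cap\Difffor_1\subseteq \mathbb L\cap\Difffor_1$, and the latter is trivial because an element $z\mapsto az$ of $\mathbb L$ is tangent to the identity only when $a=1$. Hence $G_1=\{\Id\}$.

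For the converse, assume $G_1=G\cap\Difffor_1=\{\Id\}$, i.e.\ $\ell|_G$ is injective. Then $G\cong\ell(G)\leq\C^*$ is abelian, hence virtually solvable, so Theorem \ref{T:formalG} together with the observation immediately following it (that abelian subgroups fall into cases (1) or (2)) shows that, up to formal conjugacy, $G$ lies in $\mathbb L$ or in some $\mathbb E_{k,\lambda}$. In the first case we are done. In the second, I would note that $a\cdot\exp(tv_{k,\lambda})$ (with $a^k=1$) is tangent to the identity if and only if $a=1$, so $\mathbb E_{k,\lambda}\cap\Difffor_1=\{\exp(tv_{k,\lambda}):t\in\C\}\cong(\C,+)$; since this intersects $G$ only in $\{\Id\}$, the homomorphism $\mathbb E_{k,\lambda}\to\mathbb Z/k\mathbb Z$, $a\cdot\exp(tv_{k,\lambda})\mapsto a$ (a homomorphism because $z\mapsto az$ commutes with $\exp(tv_{k,\lambda})$ when $a^k=1$) is injective on $G$, forcing $G$ to be cyclic of some order $m$ dividing $k$. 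It then remains to linearize a finite cyclic group: if $g$ generates $G$ and $a=\ell(g)$, the averaged map
$$\varphi(z)=\frac{1}{m}\sum_{j=0}^{m-1}a^{-j}\,g^{\circ j}(z)$$
has $\varphi'(0)=1$, hence belongs to $\Difffor$, and a reindexing using $g^{\circ m}=\Id$ and $a^{-m}=1$ yields $\varphi\circ g=a\cdot\varphi$, so $\varphi_* G\subseteq\mathbb L$.

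The argument is short only because Theorem \ref{T:formalG} does the real work: the genuine content — the exclusion of pathological infinite abelian subgroups of $\Difffor$ that are not formally linearizable — is already packaged there. Within the proof itself, the one step requiring some care is the case $G\subseteq\mathbb E_{k,\lambda}$, namely identifying $\mathbb E_{k,\lambda}\cap\Difffor_1$ with the one-parameter flow of $v_{k,\lambda}$ and deducing that triviality of $G_1$ forces $G$ to be finite; the averaging trick linearizing a finite cyclic subgroup is classical and poses no difficulty.
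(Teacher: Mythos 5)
Your argument is correct. Note that the paper itself gives no proof of this statement: it is quoted from \cite[Corollaire 1.4.2]{Frankpseudo}, so there is no internal argument to compare with, and deriving it from the classification in Theorem \ref{T:formalG} is a perfectly legitimate route. Both directions check: the forward implication only needs that $\Difffor_1$ is the kernel of the linear-part homomorphism (hence normal, so the hypothesis $G_1=\{\Id\}$ is conjugation-invariant) together with $\mathbb L\cap\Difffor_1=\{\Id\}$; in the converse, injectivity of the linear part on $G$ makes $G$ abelian, Theorem \ref{T:formalG} plus the remark following it (disposing of $\mathbb A_k$) reduces to $G\subseteq\mathbb E_{k,\lambda}$ up to conjugacy, the identification $\mathbb E_{k,\lambda}\cap\Difffor_1=\{\exp(tv_{k,\lambda})\}$ shows $G$ embeds into the group of $k$-th roots of unity and is therefore finite cyclic, and the averaging operator (linear coefficient $1$, relation $\varphi\circ g=a\cdot\varphi$) linearizes it. One small simplification you could have used: in the $\mathbb E_{k,\lambda}$ case the averaging is not even needed, since for $g=a\cdot\exp(t v_{k,\lambda})\in G$ with $a$ of order $m$ one has $g^{\circ m}=\exp(mt\,v_{k,\lambda})\in G\cap\Difffor_1=\{\Id\}$, which forces $t=0$; hence the conjugation furnished by Theorem \ref{T:formalG} already places every element of $G$ in $\mathbb L$. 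This does not affect correctness --- your version is sound as written.
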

 \begin{cor}\label{cor:finiteorder}Let $G$ be a subgroup of $\Difffor$.  Then every element of $G$ has finite order if, and only if, $G$ is conjugated to a subgroup of
 $${\mathbb L}_\mathbb Q :=\{ f(z) = a z; a \in e^{2i\pi\mathbb Q} \}.$$
 \end{cor}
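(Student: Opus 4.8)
The plan is to deduce Corollary~\ref{cor:finiteorder} directly from the linearizability criterion (Theorem~\ref{thm:triviallinearpart}) together with elementary facts about the filtration $\Difffor_k$ recorded in Definition~\ref{def:filtration-diff} and the remark following it. First I would prove the \emph{if} direction, which is immediate: every element of ${\mathbb L}_{\mathbb Q}$ is of the form $f(z)=az$ with $a=e^{2i\pi q}$, $q\in\mathbb Q$, hence $f^{n}=\id$ as soon as $nq\in\mathbb Z$; and finite order is preserved under conjugacy, so the same holds for any subgroup conjugate to a subgroup of ${\mathbb L}_{\mathbb Q}$.

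For the \emph{only if} direction, suppose every element of $G\subset\Difffor$ has finite order. The key step is to show $G_{1}:=G\cap\Difffor_{1}=\{\id\}$, for then Theorem~\ref{thm:triviallinearpart} gives, up to formal conjugacy, $G\subset\mathbb L=\{f(z)=az;\ a\in\mathbb C^{*}\}$, and it only remains to identify which linear maps have finite order. So let $h\in G_{1}$, i.e. $h(z)=z+o(z)$. By the equivalence stated right after Definition~\ref{def:filtration-diff} (for $h\in\Difffor_{1}$: $h=\id$ $\iff$ $h$ has finite order $\iff$ $h$ is linearizable), the hypothesis that $h$ has finite order forces $h=\id$. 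Hence $G_{1}=\{\id\}$ and the criterion applies.

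Finally, after conjugating so that $G\subset\mathbb L$, write a general element as $f(z)=az$. Then $f^{n}(z)=a^{n}z$, so $f$ has finite order if and only if $a$ is a root of unity, i.e. $a\in e^{2i\pi\mathbb Q}$. Therefore $G$ is conjugate to a subgroup of ${\mathbb L}_{\mathbb Q}$, as claimed.

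I do not expect any serious obstacle here: the corollary is essentially a packaging of Theorem~\ref{thm:triviallinearpart} with the torsion-triviality fact for tangent-to-identity germs, and the only point requiring a word of care is making sure the conjugating element $\varphi$ producing $G\subset\mathbb L$ is the same for all of $G$ — which is exactly the content of Theorem~\ref{thm:triviallinearpart}, not something one needs to reprove.
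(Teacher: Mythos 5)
Your argument is correct and is exactly the derivation the paper intends: the corollary is stated as an immediate consequence of Theorem~\ref{thm:triviallinearpart} together with the observation following Definition~\ref{def:filtration-diff} that a finite-order element of $\Difffor_1$ is the identity, which is precisely how you use them. The remaining identification of finite-order linear maps with rotations by roots of unity is as trivial as you make it, so nothing is missing.
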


\subsection{Non solvable subgroups}
 For further use, we recall the characterization of non solvable subgroups of $\Difffor$ \cite[Theorem 1.4.1] {Frankpseudo}.
 \begin{thm}
 Let $G$ be a subgroup of $\Difffor$. Then the following properties are equivalent:
 \begin{enumerate}
 \item $G$ is non solvable.
 \item $G$ is non virtually solvable.
 \item $G$ is not metabelian.
 \item For every $k > 0 $, we have $G\cap\Difffor_k\not=\{Id\}$, \emph{i.e.}, there exist non trivial elements of $G$
tangent to the identity at arbitrarily large order.
 \end{enumerate}
 \end{thm}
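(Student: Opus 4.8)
The plan is to prove the cycle of implications $(2)\Rightarrow(1)\Rightarrow(3)\Rightarrow(4)\Rightarrow(2)$. The first two arrows are purely formal: a metabelian group is solvable and a solvable group is virtually solvable, so "solvable $\Rightarrow$ virtually solvable" and "metabelian $\Rightarrow$ solvable" yield $(2)\Rightarrow(1)$ and $(1)\Rightarrow(3)$ by contraposition. Throughout I will use that the order of tangency to the identity of a non-trivial germ is a conjugacy invariant in $\Difffor$; consequently each $\Difffor_k$ is conjugacy-stable, so a formal change of coordinate $\varphi$ carries $G\cap\Difffor_k$ onto $(\varphi_*G)\cap\Difffor_k$, and one of these is trivial exactly when the other is.

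For $(4)\Rightarrow(2)$ I would argue contrapositively. If $G$ is virtually solvable then, after a formal change of coordinate, Theorem~\ref{T:formalG} places $G$ inside one of the models $\mathbb L$, $\mathbb E_{k_0,\lambda}$ or $\mathbb A_{k_0}$. In $\mathbb L$ no non-trivial element is tangent to the identity, so $G\cap\Difffor_1=\{\Id\}$. In $\mathbb E_{k_0,\lambda}$ and $\mathbb A_{k_0}$ an element tangent to the identity has trivial linear part, and the expansions $\exp(t\,v_{k_0,\lambda})(z)=z+t\,z^{k_0+1}+\cdots$ and $z(1-b\,z^{k_0})^{-1/k_0}=z+\tfrac{b}{k_0}z^{k_0+1}+\cdots$ show that such an element is either the identity or has tangency order exactly $k_0$; hence $G\cap\Difffor_{k_0+1}=\{\Id\}$. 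In every case $G\cap\Difffor_k=\{\Id\}$ for some $k$, i.e. $(4)$ fails.

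The substantial step is $(3)\Rightarrow(4)$, again proved contrapositively: assuming $G\cap\Difffor_k=\{\Id\}$ for some $k\ge 1$, I will show $G$ is metabelian. The linear-part morphism $G\to\C^*$ has abelian image and kernel $G_1:=G\cap\Difffor_1$, so $G'\subseteq G_1$ and it is enough to prove $G_1$ abelian. The key input is the classical structure of $\Difffor_1$: identified through the formal exponential with the pro-nilpotent Lie algebra of formal vector fields vanishing to order $\ge 2$, it has associated graded (for the filtration $\{\Difffor_j\}_{j\ge1}$) equal to the positive part of the Witt algebra $\bigoplus_{j\ge1}\C\,e_j$, with $[e_a,e_b]=(b-a)e_{a+b}$ where $e_j$ is the class of $z^{j+1}\partial_z$; moreover the group commutator of germs of tangency orders $a$ and $b$ has symbol the bracket of their symbols, hence has tangency order exactly $a+b$ whenever $a\neq b$. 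Now suppose $G_1$ is non-abelian. Its non-trivial elements cannot all share one tangency order $m$, for then $G_1\subseteq\Difffor_m$ and the homomorphism $\Difffor_m\to(\C,+)$ reading off the coefficient of $z^{m+1}$ would embed $G_1$, making it abelian. So pick $h,g\in G_1$ of tangency orders $a\neq b$; then the iterated commutators $g,\,[g,h],\,[[g,h],h],\dots$ all lie in $G_1$, are all non-trivial (the two tangency orders involved remain distinct at each stage, so their symbols never vanish), and have tangency orders $b,\,a+b,\,2a+b,\dots$ tending to infinity. Choosing $n$ with $na+b\ge k$ produces a non-trivial element of $G_1\cap\Difffor_k$, a contradiction; hence $G_1$ is abelian, $G''=\{\Id\}$, and $G$ is metabelian.

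The hard part will be this last step, more precisely the commutator-growth phenomenon in $\Difffor_1$: one must establish that group commutators "jump" in the filtration in a non-degenerate way, equivalently that a non-abelian subgroup of $\Difffor_1$ automatically contains germs tangent to the identity to unbounded order. This rests on the graded Witt-algebra description of $\Difffor_1$ and on the crucial non-vanishing $[e_a,e_b]\neq 0$ for $a\neq b$. Once that is in place, everything else is bookkeeping, with the classification (Theorem~\ref{T:formalG}) supplying $(4)\Rightarrow(2)$.
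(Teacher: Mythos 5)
Your proof is correct, and in fact the paper contains no proof of its own to compare with: the statement is recalled there as a known result, with a citation to \cite[Theorem 1.4.1]{Frankpseudo}. Your argument is the standard one and is complete. The two formal implications $(2)\Rightarrow(1)\Rightarrow(3)$ are as you say; $(4)\Rightarrow(2)$ correctly reduces, via conjugacy-invariance of the tangency order, to inspecting the three models of Theorem \ref{T:formalG}, where any non-trivial element tangent to the identity in $\mathbb E_{k_0,\lambda}$ or $\mathbb A_{k_0}$ has tangency order exactly $k_0$, so $G\cap\Difffor_{k_0+1}=\{\Id\}$; and $(3)\Rightarrow(4)$ hinges on the classical computation that for $f(z)=z+\alpha z^{a+1}+\cdots$ and $g(z)=z+\beta z^{b+1}+\cdots$ with $\alpha,\beta\neq0$ and $a\neq b$, the commutator is $z+\alpha\beta(b-a)z^{a+b+1}+\cdots$, which you correctly identify as the key input (it is the statement that group commutators induce the Witt bracket on the graded pieces $\Difffor_j/\Difffor_{j+1}$, and is proved in the cited references). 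Your treatment of the degenerate case, where all non-trivial elements of $G_1$ share one tangency order $m$ and the coefficient of $z^{m+1}$ gives an injective homomorphism $G_1\to(\C,+)$, is also correct, and the iterated commutators $[\,\cdots[[g,h],h]\cdots,h]$ of orders $na+b\to\infty$ then contradict $G\cap\Difffor_k=\{\Id\}$, giving $G''=\{\Id\}$. So modulo the classical commutator-order formula, which is legitimately quoted, nothing is missing.
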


 \subsection{Centralizers}
 Let  $h\in \Difffor$ and denote by $\mathcal C (h)$ the centralizer of $h$ in $\Difffor$.
 \begin{thm}[\cite{Frankpseudo} Proposition 1.3.2]\label{thm:centralizers}
 The group $\mathcal{C}(h)$ is given by
 \begin{enumerate}
 \item If $h (z)=az$ where $a\in{\C}^*$ is not a root of unity, then ${\mathcal C} (h)=\mathbb L$.
\item If $h(z)=a \cdot \exp\left( v_{k,\lambda}\right)$ with $a^k =1$, then ${\mathcal C}(h)= \{e^{2i\pi p/k}\cdot \exp\left(t v_{k,\lambda}\right),\ p\in\mathbb Z, t\in\C \}$
\end{enumerate}
\end{thm}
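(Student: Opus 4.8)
The plan is to exploit that both the assertion and the relation ``$g$ centralizes $h$'' are invariant under formal conjugacy, and then to use the formal classification recalled above, according to which every $h\neq\id$ is conjugate to one of the two listed models. Thus it suffices to compute $\mathcal C(h)$ when $h$ is literally of type (1) or of type (2); only the second case requires genuine work.

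In case (1) the inclusion $\mathbb L\subseteq\mathcal C(h)$ is immediate, since $\mathbb L$ is abelian and contains $h$. For the reverse inclusion I would take a general $g(z)=\sum_{n\ge1}c_nz^n$ with $c_1\neq0$, write out $g(az)=a\,g(z)$, and compare coefficients: this gives $c_n(a^{n-1}-1)=0$ for every $n$, and as $a$ is not a root of unity one gets $c_n=0$ for all $n\ge2$, i.e. $g\in\mathbb L$.

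In case (2), write $v=v_{k,\lambda}$ and let $R_\zeta$ denote the rotation $z\mapsto\zeta z$. For the inclusion ``$\supseteq$'' a one-line substitution shows $(R_\zeta)_*v=v$ whenever $\zeta^k=1$; such an $R_\zeta$ then commutes with every $\exp(tv)$ and with the scaling $R_a$, and since flows of a single vector field commute, every germ $e^{2i\pi p/k}\exp(tv)$ commutes with $h=R_a\circ\exp(v)$. For ``$\subseteq$'' I would first observe $h^k=\exp(kv)$ (using $a^k=1$ and that $R_a$ commutes with $\exp(v)$), so that any $g\in\mathcal C(h)$ commutes with the nontrivial tangent-to-identity germ $\phi:=\exp(kv)$. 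Since the formal infinitesimal generator is unique among formal vector fields vanishing to order $\ge2$, the relation $g\phi g^{-1}=\exp(g_*(kv))=\phi$ upgrades to $g_*(kv)=kv$, i.e. $g$ preserves $v$; equivalently $g^*\eta=\eta$ for the dual form $\eta=v^{-1}\,dz=(z^{-k-1}+\lambda z^{-1})\,dz$.

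The remaining step — and, I expect, the one requiring the most care — is to classify formal germs $g$ fixing $0$ with $g^*\eta=\eta$. I would compare the coefficient of $z^{-k-1}$ on the two sides of $g^*\eta=g'(z)\bigl(g(z)^{-k-1}+\lambda g(z)^{-1}\bigr)dz=\eta$, obtaining $g'(0)^k=1$, so that $g=R_\zeta\circ g_1$ with $\zeta^k=1$ and $g_1$ tangent to the identity (hence $g_1$ again preserves $v$). If $g_1\neq\id$ has order $m$, so $g_1(z)=z+bz^{m+1}+\cdots$ with $b\in\C\setminus\{0\}$, a direct expansion shows that the coefficient of $z^{m-k-1}$ in $g_1^*\eta-\eta$ is a nonzero scalar multiple of $(m-k)$; hence $g_1^*\eta=\eta$ forces $m=k$. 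Finally, if $m=k$ then $g_1\circ\exp(-bv)$ lies in $\Difffor_{k+1}$ and still preserves $v$, so by the previous sentence it is the identity; thus $g_1=\exp(bv)$ and $g=R_\zeta\exp(bv)=e^{2i\pi p/k}\exp(bv)$. Combining this with the trivial inclusion $\mathcal C(h)\subseteq\mathcal C(h^k)=\mathcal C(\exp(kv))$ and the already established ``$\supseteq$'' closes the circle. The delicate points are the bookkeeping in the pullback of $\eta$ and making sure that ``$g$ preserves $v$'' is legitimately equivalent to commuting with the flow of $v$, so that one never has to manipulate the multivalued primitive $\int\eta$ directly.
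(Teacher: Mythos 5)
Your argument is correct, and both cases check out: the coefficient comparison in case (1), and in case (2) the reduction to $h^k=\exp(kv_{k,\lambda})$, the use of the uniqueness of the formal infinitesimal generator to convert commutation into invariance of $v_{k,\lambda}$ (equivalently of the dual form $\omega_{k,\lambda}$), and the order-by-order classification of the formal symmetries of that form. The paper itself offers no proof — it quotes the statement from \cite{Frankpseudo} — but your route is the standard one and is exactly in line with the paper's own description, in Section \ref{S:interpretation}, of $\mathbb E_{k,\lambda}$ as the symmetry group of $\omega_{k,\lambda}$.
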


\begin{thm}\label{thm:center}
Let $G$ a subgroup of $\Difffor$. Assume in addition that $G$ is non virtually abelian. Then the center $Z(G)$ of $G$ is finite, hence conjugated in $\Difffor$ to a finite subgroup of $\mathbb L$.
\end{thm}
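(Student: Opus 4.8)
The plan is to show that any element $g$ in the center $Z(G)$ must be tangent to the identity at all orders, and then invoke the structure theory to conclude $Z(G)$ is trivial, hence finite (and the ``conjugated to a finite subgroup of $\mathbb L$'' clause follows from Corollary \ref{cor:finiteorder} once finiteness is known). First I would use the assumption that $G$ is non virtually abelian (equivalently non solvable, by the characterization recalled just above) to produce, for every $k>0$, a nontrivial element $h_k \in G \cap \Difffor_k$. Since $g$ is central, $g$ commutes with each $h_k$.

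Next I would analyze the centralizer constraint. Fix $g \in Z(G)$, $g \neq \Id$. By the formal classification of individual elements, $g$ is conjugate either to a linear map $z \mapsto az$ or to $a\cdot\exp(v_{k,\lambda})$ with $a^k=1$. In either case Theorem \ref{thm:centralizers} describes $\mathcal C(g)$ precisely: if $g$ is linear with $a$ not a root of unity then $\mathcal C(g) = \mathbb L$, which contains no nontrivial element tangent to the identity, contradicting the existence of $h_k \in G \subset \mathcal C(g)$ with $h_k \in \Difffor_1$, $h_k \neq \Id$. If $g = a\exp(v_{k,\lambda})$ then $\mathcal C(g) = \{e^{2\pi i p/k}\exp(t v_{k,\lambda})\}$, and the only elements of this group tangent to the identity to order $>k$ are those with $t=0$, i.e. just $\Id$; again this contradicts the existence of $h_{k+1}$. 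The remaining case is $g$ linear with $a$ a root of unity, say $a^N = 1$: then $g$ itself is linearizable of finite order, and after conjugating $G$ so that $g(z) = az$ exactly, the centralizer of $g$ in $\Difffor$ is larger, so I need a separate argument here.

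For that remaining case I would argue as follows. After a formal change of coordinate assume $g(z) = az$ with $a$ a primitive $N$-th root of unity, $N \geq 2$ (the case $a = 1$ means $g = \Id$ already). Every $h \in G$ commutes with $g$; a germ commuting with $z \mapsto az$ must be of the form $h(z) = z\, u(z^N)$ for a unit power series $u$ (its Taylor expansion only involves exponents $\equiv 1 \pmod N$). In particular $G$ embeds, via $h \mapsto$ (the germ acting on the coordinate $w = z^N$), into $\Difffor$ acting on $w$; this map is a group homomorphism with kernel contained in $\langle g\rangle$ (the elements acting trivially on $w$ are exactly the linear maps $z\mapsto \zeta z$ with $\zeta^N=1$ that lie in $G$). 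So $G$ has a finite normal cyclic subgroup $K \subseteq \langle g\rangle$ with $G/K$ isomorphic to a subgroup of $\Difffor$, hence itself not virtually abelian. But then $Z(G)$ maps into $Z(G/K)$ modulo $K$; iterating, one reduces to the previously treated cases where the central element is tangent to the identity or not a root of unity—and those forced the group to be virtually abelian. More directly: since $G$ is non solvable it contains $h_k \in \Difffor_k$ nontrivial for all $k$; writing such $h_k$ in the $w=z^N$ coordinate gives nontrivial elements of the quotient tangent to the identity at order $\sim k/N$, so the quotient is non solvable, and we may run the centralizer dichotomy on $\bar g \in Z(G/K)$—but $\bar g$ is trivial, so $Z(G) \subseteq K$ is finite.

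The main obstacle I expect is precisely this last configuration: when the central element $g$ is a finite-order linear rotation, its centralizer in $\Difffor$ is not small, so the quick contradiction used in the generic cases fails, and one genuinely has to pass to the ramified quotient coordinate $w = z^N$ (or argue with the finite cyclic normal subgroup it generates) to reduce back to the cases already handled. Everything else—producing tangent-to-identity elements from non-solvability, reading off centralizers from Theorem \ref{thm:centralizers}, and the final identification of a finite tangent-to-identity-free subgroup of $\Difffor_1$-trivial type with a subgroup of $\mathbb L_{\mathbb Q}$ via Corollary \ref{cor:finiteorder}—is routine.
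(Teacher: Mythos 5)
There is a genuine gap, and it sits at the very first step: you assert that ``non virtually abelian'' is equivalent to ``non solvable''. That is false, and it is not what the characterization recalled in the paper says: the theorem there equates \emph{non solvable} with non virtually \emph{solvable} (and with the existence, for every $k$, of nontrivial elements of $G\cap\Difffor_k$). A subgroup of $\mathbb A_k$ with infinite linear part and nontrivial $G_1$ (for instance the group generated by $z\mapsto 2z$ and $z\mapsto z/(1-z^k)^{1/k}$) is solvable and \emph{not} virtually abelian, yet its nontrivial tangent-to-identity elements are all tangent at order exactly $k$, so your elements $h_{k+1},h_{k+2},\dots$ simply do not exist. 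Consequently your exclusion of a central element of the form $a\exp(t v_{k,\lambda})$, $t\neq 0$, collapses precisely in this solvable case, which is the case the paper treats in detail: there one should instead observe that $\mathcal C(h)=\{e^{2i\pi p/k}\exp(t v_{k,\lambda})\}$ is \emph{abelian} and contains $G$, contradicting the hypothesis directly, with no need of high-order tangent elements. (The paper splits into $G$ non solvable, where it quotes \cite[Proposition 1.5.1]{Frankpseudo}, and $G$ solvable non abelian, where it runs exactly this centralizer argument inside $\mathbb A_k$.)

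Even granting non-solvability, your argument does not actually establish finiteness of $Z(G)$. Your cases (1) and (2) only show that every nontrivial central element is a torsion rotation; the remaining issue is to rule out an infinite torsion center (a Pr\"ufer-type group of roots of unity is an abelian torsion group, so Corollary \ref{cor:finiteorder} alone does not give finiteness). Your quotient construction $w=z^N$ does not close this: the statement ``$\bar g$ is trivial'' concerns only the element $g$ you started with and says nothing about the images of the other central elements, which could again be finite-order rotations in the $w$-coordinate, so the ``iteration'' need not terminate and the claimed inclusion $Z(G)\subseteq K$ is unjustified. The missing (and decisive) observation, which is how the paper concludes, is a \emph{uniform} bound: fix one nontrivial $g_1\in G_1$ (it exists by Theorem \ref{thm:triviallinearpart} since $G$ is not abelian) and conjugate it to $\exp(v_{k,\lambda})$; every $h\in Z(G)$ lies in $\mathcal C(g_1)=\{e^{2i\pi p/k}\exp(t v_{k,\lambda})\}$ by Theorem \ref{thm:centralizers}, and $t_h\neq0$ would force $G\subseteq\mathcal C(h)$ abelian, so $Z(G)$ sits inside the cyclic group of rotations of order dividing $k$, hence is finite. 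With that replacement both of your problematic steps disappear, and the ramified coordinate $w=z^N$ is not needed at all.
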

\begin{proof}
When $G$ is non solvable, this is covered by \cite[Proposition 1.5.1]{Frankpseudo}. Let us assume that $G$ is solvable and non abelian. Up to conjugation, $G$ is a subgroup of $\mathbb A_k$ such that $G_1$ is non trivial. Hence it contains an element conjugated to $g_1= \exp\left( v_{k,\lambda}\right) $. Now let us choose $h\in Z(G)$: in particular $h$ lies in $\mathcal{C} (g_1)$ and $G$ is contained in $\mathcal{C}(h)$. According to Theorem \ref{thm:centralizers}, there exists $t_h\in\C$ such that $h=a \cdot \exp\left( t_hv_{k,\lambda}\right)$ with $a^k =1$. Assuming that $t_h\not=0$ and using Theorem \ref{thm:center} once more, one can infer that $G\subset {\mathcal C} (h)$ is abelian. Since we assumed this was not the case from the beginning, we get $t_h=0$ and $Z(G)$ is necessarily conjugated to a subgroup of $\{z\rightarrow e^{2i\pi p/k}z,\ p\in\mathbb Z \}$.
\end{proof}

\begin{remark}\label{r:tangenttoid}
  If $G$ is a subgroup of $\Difffor_1$, the following properties are equivalent:
  \begin{itemize}
  \item[$(i)$] $G$ is abelian.
  \item[$(ii)$] $G$ is virtually abelian.
  \item[$(iii)$] $G$ is solvable.
  \item[$(iv)$] The non trivial elements of $G$ are tangent to identity to the same order.
  \end{itemize}
\end{remark}

\subsection{Interpretation}\label{S:interpretation}
The groups appearing in Theorem \ref{T:formalG} are  groups of symmetries of meromorphic $1$-forms   or of vector spaces of meromorphic $1$-forms.

The group $\mathbb L$ is the group of symmetries of the logarithmic $1$-form $\omega:=\frac{dz}{z}$, while for fixed $k \in \mathbb N^*$ and $\lambda \in \mathbb C$, the group $\mathbb E_{k,\lambda}$ is precisely the group of symmetries
of the  $1$-form
\[
\omega_{k,\lambda} =  \frac{dz}{z^{k+1}}   + \lambda \frac{dz}{z}  \, .
\]
Moreover, for a fixed $k \in \mathbb N^*$, the group $\mathbb A_k$ is the subgroup   of $\Difffor$ that preserves the vector space
of $1$-forms $\mathbb C \frac{dz}{z^{k+1}}$. Notice that $\mathbb E_{k,0}$ is a
subgroup of $\mathbb A_k$.

\begin{remark}\label{rk:abelianhol}
It follows from the above description and Theorem \ref{T:formalG} that an abelian subgroup
$G\subset\Diff$ preserves a formal meromorphic $1$-form $\omega$. Consequently,
if $G$ is the holonomy group of the compact leaf
$Y$ of a foliation on a complex manifold $U$, then by standard arguments using flow-box, the $1$-form $\omega$
extends to a formal meromorphic $1$-form on the completion $Y(\infty)$ of $U$ along $Y$.
\end{remark}

\subsection{Ueda type and formal closed meromorphic $1$-forms}
If $Y$ is a hypersurface of a complex manifold $U$, the existence of formal closed meromorphic $1$-forms on $Y(\infty)$ gives
information about the Ueda type of $Y$.

\begin{prop}\label{P:utypek}
Let $Y$ be a smooth irreducible compact  hypersurface on a complex manifold $U$. Let $\hat{\omega} \in H^0(Y(\infty), \Omega^1_{Y(\infty)} ( (k+1) Y))$
be a formal closed meromorphic $1$-form on $Y(\infty)$ with poles of order $k+1$ on $Y$.
If $k\ge 1$ then $\utype(Y)\ge k$ and the order of the
normal bundle of $Y$ divides $k$.
\end{prop}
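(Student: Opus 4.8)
The plan is to work locally in terms of the admissible/foliated coordinates attached to $Y$ and read off what the existence of a closed formal meromorphic $1$-form with a pole of order exactly $k+1$ forces. First I would fix a covering $\{U_i\}$ of a neighborhood of $Y$ together with submersions $y_i$ cutting out $Y$; the transition functions $y_i = y_{ij} y_j$ with $y_{ij}|_Y = \lambda_{ij}$ give a cocycle representing $\mathcal O_U(Y)$, and $\{\lambda_{ij}\}$ represents $NY$ in $H^1(Y,\mathcal O_Y^*)$. Writing $\hat\omega$ in these coordinates, a closed meromorphic $1$-form with pole order $k+1$ has, in each chart, a well-defined principal part; the residue along $Y$ is a global constant (closedness forces the residue of $\hat\omega$ along the hypersurface $Y$ to be locally constant, hence constant since $Y$ is connected), so after subtracting $c\,\mathrm dy_i/y_i$ we may reduce to the case where $\hat\omega$ has no residue and hence, locally, $\hat\omega = \mathrm d(g_i/y_i^{k})$ up to holomorphic terms, for suitable units/functions. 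The key computation is then to compare the leading coefficient of the polar part of $\hat\omega$ in chart $U_i$ with that in chart $U_j$: because $\hat\omega$ is globally defined, the order-$k$ poles must match under $y_i = y_{ij}y_j$, and $\mathrm d(1/y_i^k) = \lambda_{ij}^{-1}\mathrm d(1/y_j^k) + (\text{lower order poles})$ modulo the precise transition. This shows the leading polar coefficients patch as a section of $NY^{\otimes(-k)}$ that is nowhere vanishing (the pole order is exactly $k+1$, so the leading coefficient never dies on $Y$), which already forces $NY^{\otimes k}$ to be trivial on $Y$, i.e. $\mathrm{ord}(NY)\mid k$.

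For the inequality $\utype(Y)\ge k$: once $NY^{\otimes k}=\mathcal O_Y$, choose a nowhere-vanishing section trivializing it, and use it to normalize the leading coefficient of $\hat\omega$ to be $1$ in every chart. Then the local first integrals $f_i := -1/(k y_i^{k}) + (\text{holomorphic})$, read modulo $y_i^{k+1}$, are forced by closedness of $\hat\omega$ to agree up to the unitary constants $\lambda_{ij}$ and up to order $k$; unwinding $y_i^k = (\lambda_{ij} + y_j^k r_{ij})y_j^k$ shows precisely that the Ueda cocycle linearizes up to order $k-1$ and that the obstruction at level $\le k-1$ vanishes — equivalently, one produces a system of transverse coordinates satisfying Equation (\ref{TransitionUedaCoordinates}) with $\lambda_{ij}\in S^1$, which by the concrete description in Section \ref{S:utype} is exactly the statement $\utype(Y)\ge k$. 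A cleaner route, if available: $\hat\omega$ defines a formal foliation on $Y(\infty)$ whose holonomy along $Y$ is abelian with multiplier-$k$-th-root linear part (the symmetry group of $\omega_{k,\lambda}$-type $1$-forms, cf.\ Section \ref{S:interpretation}), so its linear part is unitary of order dividing $k$, and then Lemma \ref{L:Uedaholonomy} — applied after passing to the cyclic cover that kills the linear part, or directly in the formal category — yields $\utype(Y)\ge k$.

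The main obstacle I anticipate is the bookkeeping of the transition behavior of the polar part: one must verify that the leading coefficient genuinely transforms as a section of $NY^{\otimes(-k)}$ (not merely of $\mathcal O_U(-kY)|_{Y}$) and that closedness, rather than just meromorphy, is what pins down both the constancy of the residue and the absence of a contribution from the holomorphic correction terms to the level-$k$ comparison. Concretely, the subtle point is that $\mathrm d(1/y_i^{k})$ expanded via $y_i = (\lambda_{ij}+\cdots)y_j$ produces a whole tail of poles of orders $k+1$ down to $2$, and one has to check that the discrepancies at orders $k,\dots,2$ can be absorbed into a coboundary (holomorphic corrections plus a residue term), which is where the hypothesis $k\ge 1$ and the exact-pole-order assumption are used. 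Everything else — the identification with Ueda's cocycle condition and the divisibility claim — is then a matter of comparing with the definitions recalled in Section \ref{S:utype}.
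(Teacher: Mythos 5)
Your proposal is correct and follows essentially the paper's own argument: the paper writes $\hat\omega$ locally in the normal form $\frac{dz_i}{z_i^{k+1}}+\lambda\frac{dz_i}{z_i}$, notes that the normalizing coordinates are unique up to left composition by elements of $\mathbb E_{k,\lambda}$ (Section \ref{S:interpretation}), so that $z_i=(\lambda_{ij}+a_{ij}z_j^k+\cdots)z_j$ with $\lambda_{ij}^k=1$ and constant coefficients, and concludes with Lemma \ref{L:Uedaholonomy} — which is precisely your ``cleaner route,'' your explicit comparison of polar primitives being just an unwinding of the same mechanism. One small remark: the cyclic cover you suggest is unnecessary, since Lemma \ref{L:Uedaholonomy} only requires the linear part to be unitary (here it consists of $k$-th roots of unity, which also gives the divisibility $\mathrm{ord}(NY)\mid k$) and its proof works verbatim on the infinitesimal neighborhoods $Y(k)$, hence in the formal category.
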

\begin{proof}
The $1$-form $\hat \omega$  can be locally written as
$ \frac{dz}{z^{k+1}}   + \lambda \frac{dz}{z}$ where $z$ is a formal submersion cutting out $Y$. More precisely, there exists an
open covering $U_i$ of $Y(\infty)$ and $z_i \in H^0(U_i, \mathcal O_{Y(\infty)})$  such that $\hat \omega_{|U_i} = \frac{dz_i}{z_i^{k+1}}   + \lambda \frac{dz_i}{z_i}$
and $Y \cap U_i = \{ z_i = 0\}$. It follows from the interpretation of $\mathbb E_{k,\lambda}$ given in Section \ref{S:interpretation} that, once the covering is fixed, the only freedom we have in the choice of
the functions $z_i$ is left composition by elements of $\mathbb E_{k,\lambda}$. Therefore, over non-empty intersections $U_i\cap U_j$,  the formal functions $z_i$ satisfy
\[
z_i  =  ( \lambda_{ij} + a_{ij} z_j^k + b_{ij} z_j^{2k} + \ldots )\cdot z_j
\]
with $\lambda_{ij}^k = 1$ and $a_{ij}, b_{ij} \in \mathbb C$. The proposition follows from Lemma \ref{L:Uedaholonomy}.
\end{proof}

\subsection{Rigidity} The analytic classification of subgroups of $\Diff$ does not coincide with
the formal classification. Nevertheless, for a large class of groups we have the following  rigidity statement
proved in \cite[Proposition 1]{CeMo}.

\begin{thm}
Let $G$ be a subgroup of $\Diff$ and let $G_1\subset G$ be the subgroup of elements with trivial linear part. If  $G_1$ is not cyclic, then
 every formal conjugation $\varphi \in \Difffor$ of $G$ with another (convergent) subgroup $\tilde G \subset \Diff$ is in fact holomorphic, \emph{i.e.}
 \[
 \varphi_* G = \tilde G \implies \varphi \in \Diff \, .
 \]
\end{thm}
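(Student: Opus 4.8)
The statement is a rigidity property of the local pseudo-group generated by $G$, and the plan is to deduce it from the analytic classification of germs tangent to the identity (Leau--Fatou flower theorem and \'Ecalle--Voronin moduli) together with the structural results on subgroups of $\Difffor$ recalled above. First I would reduce to $G_1$: any formal $\varphi$ with $\varphi_*G=\tilde G$ also satisfies $\varphi_*G_1=\tilde G\cap\Difffor_1$ (the linear part is a conjugacy invariant), so it suffices to treat a non-cyclic --- in particular non-trivial --- subgroup $G_1\subset\Diff$ all of whose elements are tangent to the identity; recall such a group is torsion free, a non-identity germ tangent to the identity having infinite order. By Remark~\ref{r:tangenttoid} there are two cases. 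Either $G_1$ is abelian: then all its nontrivial elements have one and the same tangency order $k$ and, by Theorem~\ref{T:formalG} in the abelian case, a formal change of coordinate puts $G_1$ inside the model flow $\{\exp(tv_{k,\lambda})\}$. Or $G_1$ is non-abelian, hence non-solvable, so it contains non-commuting elements and elements tangent to the identity at arbitrarily high order.

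For the analytic input, fix a convergent $g_0\in G_1\setminus\{\Id\}$ of tangency order $k$. The flower theorem provides $k$ attracting and $k$ repelling petals at $0$ with Fatou coordinates in which $g_0$ reads $w\mapsto w+1$; the transition maps between consecutive petal coordinates form the \'Ecalle--Voronin modulus $\mathfrak m(g_0)$, a complete invariant of $g_0$ up to analytic conjugacy, which is trivial exactly when $g_0$ is analytically conjugate to $\exp(v_{k,\lambda})$. An analytic germ commuting with $g_0$ acts on each petal as a translation $w\mapsto w+\tau$, and an elementary periodicity argument (the transition germs differ from a translation by a $1$-periodic term tending to $0$ at $\pm i\infty$, hence a further small or irrational period forces that term to vanish) shows that the flow part of $\mathcal C_{\mathrm{an}}(g_0)$, a subgroup of $\mathbb C$ containing $\mathbb Z$, is either cyclic or else makes $\mathfrak m(g_0)$ trivial. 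Combined with the possible cyclic permutation of the $k$ petals, this gives: $\mathcal C_{\mathrm{an}}(g_0)$ is virtually cyclic --- an extension of a subgroup of $\mathbb Z/k\mathbb Z$ by a cyclic group --- unless $\mathfrak m(g_0)$ is trivial, in which case $g_0$ is analytically conjugate to $\exp(v_{k,\lambda})$.

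In the abelian case this closes the argument: every element of $G_1$ commutes with $g_0$, so $G_1\subset\mathcal C_{\mathrm{an}}(g_0)$; were $\mathfrak m(g_0)$ nontrivial, $G_1$ would be virtually cyclic and torsion free, hence cyclic, contradicting the hypothesis. Thus $\mathfrak m(g_0)$ is trivial, so every element of $G_1$ is analytically conjugate to a flow element and the formal normalization $\varphi_0$ carrying $G_1$ into $\{\exp(tv_{k,\lambda})\}$ may be taken convergent. Doing the same for $\tilde G_1$ with a convergent $\tilde\varphi_0$, and using that $\varphi_*G=\tilde G$ forces the same model flow, the map $\psi:=\tilde\varphi_0\circ\varphi\circ\varphi_0^{-1}$ conjugates the infinite group $(\varphi_0)_*G_1$ to $(\tilde\varphi_0)_*\tilde G_1$ and therefore normalizes $\{\exp(tv_{k,\lambda})\}$ (an infinite subgroup of this flow has it as formal Lie closure); since the formal normalizer of this model flow is convergent, $\varphi=\tilde\varphi_0^{-1}\circ\psi\circ\varphi_0$ is convergent.

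In the non-solvable case I would instead use that the pseudo-group generated by $G_1$ near $0$ has dense orbits (Nakai) and admits no nontrivial invariant formal datum, and run the \'Ecalle--Voronin mechanism simultaneously on the petal configurations of a pair of non-commuting convergent generators --- configurations any conjugacy must preserve --- so that the translation data on the petals are now constrained by the moduli of several generators at once, leaving no room for divergence in $\varphi$. I expect this step, and more generally the passage from ``$\varphi$ conjugates individual convergent germs to convergent germs'' (which by itself gives nothing, by the very existence of \'Ecalle--Voronin moduli) to ``$\varphi$ convergent'', to be the main obstacle: the whole weight of the proof falls on exploiting the non-cyclicity of $G_1$ to rigidify the analytic centralizer, respectively the transverse structure, enough that no divergence can survive. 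I would organize the details following \cite{CeMo}.
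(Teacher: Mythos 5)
Your reduction to $G_1$ and your treatment of the abelian case are essentially sound: the dichotomy for the analytic centralizer of a parabolic germ (a commuting germ becomes an exact translation in each Fatou coordinate, the periodicity constraint on the horn maps then forces either a cyclic time group or trivial \'Ecalle--Voronin modulus) is a classical fact, and once the modulus of some $g_0\in G_1$ is trivial you correctly normalize all of $G_1$ analytically using Theorem \ref{thm:centralizers}, then conclude through the normalizer of the model flow, which indeed consists of convergent germs (for $\lambda\neq 0$ it is the centralizer $\{a\exp(tv_{k,\lambda}),\ a^k=1\}$, for $\lambda=0$ it is $\mathbb A_k$). Two small points there: you should say explicitly that the formal invariants $(k,\lambda)$ of the two normalized groups coincide because $\varphi$ is a formal conjugacy, and your "formal Lie closure" remark can be replaced by the cleaner observation that a single nontrivial relation $\psi\circ\exp(t_0v_{k,\lambda})\circ\psi^{-1}=\exp(s_0v_{k,\lambda})$ already gives $\psi_*v_{k,\lambda}=(s_0/t_0)\,v_{k,\lambda}$, by uniqueness of the infinitesimal generator of a germ tangent to the identity.

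The genuine gap is the non-abelian (hence non-solvable) case, which you only sketch and yourself flag as the main obstacle: "running \'Ecalle--Voronin simultaneously on several generators" and invoking dense orbits is a plan, not an argument. Nor can this case be reduced to the abelian one: a non-abelian $G_1\subset\Difffor_1$ need not contain any non-cyclic abelian subgroup --- for instance two generic germs tangent to the identity generate a free group of rank two, all of whose abelian subgroups are cyclic --- so the hypothesis "$G_1$ non-cyclic" cannot be exploited through a commuting pair in that situation. That case is precisely the rigidity theorem for non-solvable subgroups of $\Diff$ (Shcherbakov, Nakai, Cerveau--Moussu), proved by a different mechanism (elements tangent to the identity at arbitrarily large order, density of the pseudo-group dynamics, and a rigid transverse structure pinning down the conjugacy), and without it the proposal does not prove the statement. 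Note finally that the paper itself gives no proof but cites \cite[Proposition 1]{CeMo} (see also \cite{Frankpseudo}); your abelian argument is in the spirit of the proof of that case in the literature, but the proposal as written is incomplete.
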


\begin{cor}\label{cor:rank2}
Let $G \subset \Diff$ be a solvable subgroup. If the rank of $G_1$ is at least two then $G$ is holomorphically conjugated to a subgroup
of $E_{p,\lambda}$ or $\mathbb A_p$ for some $\lambda \in \mathbb C$ and some $p \in \mathbb N^*$.
\end{cor}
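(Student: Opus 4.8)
The plan is to reduce Corollary~\ref{cor:rank2} to the preceding rigidity theorem combined with the formal classification of solvable subgroups (Theorem~\ref{T:formalG}) and the linearizability criterion (Theorem~\ref{thm:triviallinearpart}). First I would observe that, since $G$ is solvable (in particular virtually solvable), Theorem~\ref{T:formalG} provides a formal conjugation $\varphi\in\Difffor$ taking $G$ into one of the three model groups $\mathbb L$, $\mathbb E_{k,\lambda}$, or $\mathbb A_p$. The hypothesis that $G_1=G\cap\Difffor_1$ has rank at least two immediately rules out the linear model: if $\varphi_\ast G\subset\mathbb L$ then $\varphi_\ast G_1=\{\Id\}$ by Theorem~\ref{thm:triviallinearpart}, contradicting $\rank G_1\ge 2$. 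Hence $\varphi_\ast G$ is a subgroup of $\mathbb E_{p,\lambda}$ or $\mathbb A_p$ for suitable $p\in\mathbb N^\ast$, $\lambda\in\mathbb C$.

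Next I would upgrade this formal statement to an analytic one. The point is that $G_1$ having rank $\ge 2$ forces $G_1$ to be non-cyclic, so the rigidity theorem quoted just above applies: any formal conjugation of $G$ to a \emph{convergent} subgroup $\tilde G\subset\Diff$ is automatically convergent. It therefore remains only to exhibit such a convergent target. Here I would use that $\mathbb E_{p,\lambda}$ and $\mathbb A_p$ are themselves groups of germs of \emph{holomorphic} diffeomorphisms (their elements are flows of the rational vector fields $v_{p,\lambda}$ composed with roots of unity, or the explicit convergent formulas $az/(1-bz^p)^{1/p}$), so $\varphi_\ast G$ is a convergent subgroup; applying rigidity to the conjugation $\varphi$ between $G$ and $\varphi_\ast G$ yields $\varphi\in\Diff$, i.e.\ $G$ is holomorphically conjugate to a subgroup of $\mathbb E_{p,\lambda}$ or $\mathbb A_p$.

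The one point requiring a little care — and the step I expect to be the main (minor) obstacle — is verifying that the hypothesis ``$\rank G_1\ge 2$'' really does imply $G_1$ is non-cyclic in the sense needed by the rigidity theorem, and that it is preserved under the formal conjugation $\varphi$ (so that the subgroup of trivial-linear-part elements of $\varphi_\ast G$ is the non-cyclic group $\varphi_\ast G_1$). Conjugation by $\varphi$ preserves the filtration $\Difffor_k$ and hence sends $G_1$ isomorphically onto $(\varphi_\ast G)_1$, so rank is preserved; and a group of rank $\ge 2$ is certainly not cyclic. With these observations the corollary follows directly, with no genuine computation beyond recalling that the model groups consist of convergent germs.
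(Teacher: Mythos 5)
Your proposal is correct and follows exactly the route the paper intends: the corollary is stated there without proof as an immediate consequence of the formal classification (Theorem \ref{T:formalG}), the exclusion of the linear model via the nontriviality of $G_1$, and the rigidity theorem applied to the convergent target groups $\mathbb E_{p,\lambda}$ and $\mathbb A_p$. Your added checks (that conjugation preserves the linear part, hence $\varphi_*G_1=(\varphi_*G)_1$ and rank is preserved, and that rank $\ge 2$ implies non-cyclic) are exactly the observations needed and are handled correctly.
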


\subsection{Exceptional solvable subgroups}\label{subsec:exceptionalGroups}
The solvable subgroups of $\Diff$ which are not analytically normalizable are described in the next result (see \cite[Proposition 2.10.4]{Frankpseudo}).

\begin{thm}\label{thm:exceptional}
If $G \subset \Diff$ is a solvable subgroup such that $G_1$ has rank one (i.e is infinite cyclic), then
we have one of the following possibilities.
\begin{enumerate}
\item $G$ is abelian and is formally conjugated to the group generated by $ f=\exp(v_{kq,\lambda})$ and $g=\exp(\frac{2\pi i}{q})\exp(\frac{n}{q} v_{kq,\lambda})$, where
$\lambda \in \mathbb C$, $q,k \in \mathbb N^*$ and $n\in \mathbb Z$. In particular, $G_1$ is generated by $f$.
\item $G$ is not abelian and is formally conjugated to the group generated by $f(z)=z/(1-z^p)^{1/p}$ and $g(z)=\exp(2 \pi i/ 2q)z$,
where $p,q \in \mathbb N^*$ satisfies $p=kq$ for some  odd natural number $k$. In this case, the subgroup
generated by $f$ and $g^2$ is an abelian subgroup of index two.
\end{enumerate}
\end{thm}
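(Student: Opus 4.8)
The plan is to push $G$ into the formal models of Theorem~\ref{T:formalG} and then normalize by a short chain of elementary conjugations; since the conclusion is only a \emph{formal} normalization, convergence of $G$ plays no role. First I would apply Theorem~\ref{T:formalG}: being solvable, $G$ is, up to formal conjugacy, contained in $\mathbb{L}$, in some $\mathbb{E}_{K,\lambda}$, or in some $\mathbb{A}_K$. Since $G_1 = G\cap\Difffor_1$ is infinite cyclic, in particular nontrivial, Theorem~\ref{thm:triviallinearpart} excludes $\mathbb{L}$. As $\mathbb{E}_{K,\lambda}$ is abelian (the order-$K$ homothety commutes with the flow of $v_{K,\lambda}$, cf. Section~\ref{S:interpretation}), a non-abelian $G$ must sit inside some $\mathbb{A}_K$, while an abelian $G$ sits inside some $\mathbb{E}_{K,\lambda}$ (Theorem~\ref{T:formalG}, together with the remark right after it that an abelian subgroup of $\mathbb{A}_k$ is already conjugate into $\mathbb{L}$ or $\mathbb{E}_{k,0}$, so case~(3) does not arise for abelian $G$). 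In either model $G_1$ coincides with the intersection of $G$ with the ``unipotent part'' --- the flow $\{\exp(t v_{K,\lambda})\}$ of $\mathbb{E}_{K,\lambda}$, resp. the translations $\{z/(1-bz^{K})^{1/K}\}$ of $\mathbb{A}_K$ --- since those are precisely the elements of the model tangent to the identity; hence $G_1$ is normal in $G$ (the kernel of the linear-part homomorphism) and $K$ is the tangency order to the identity of a generator of $G_1$.

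\textbf{Abelian case.} The image of $G_1$ in the flow $\{\exp(t v_{K,\lambda})\}\cong(\mathbb{C},+)$ is $\mathbb{Z}t_0$ for some $t_0\neq0$, so after a homothety $z\mapsto cz$ (which rescales $v_{K,\lambda}$ and changes $\lambda$ harmlessly) one may assume $G_1=\langle f\rangle$ with $f=\exp(v_{K,\lambda})$. The linear-part homomorphism identifies $G/G_1$ with a subgroup of $\mu_K$, hence with $\mu_q$ for some $q\mid K$; choosing $g\in G$ with linear part $e^{2\pi i/q}$ (replacing $g$ by a power coprime to $q$ if needed) gives $G=\langle f,g\rangle$ with $g=e^{2\pi i/q}\exp(s\,v_{K,\lambda})$. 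Finally $g^{q}\in G_1=\langle f\rangle$ forces $qs=n\in\mathbb{Z}$, and with $k:=K/q$ this is model~(1) verbatim.

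\textbf{Non-abelian case.} The key device is the affine picture of $\mathbb{A}_K$: via $w=1/z^{K}$, the element $az/(1-bz^{K})^{1/K}$ takes the affine form $w\mapsto a^{-K}w-a^{-K}b$, so $\mathbb{A}_K$ surjects onto $\mathrm{Aff}(\mathbb{C})$ with kernel the homotheties of order dividing $K$, and $G_1$ becomes a group of translations. Conjugating by a homothety I normalize a generator $f$ of $G_1$ to $z/(1-z^{p})^{1/p}$ with $p=K$. Conjugation by an element of $G$ with linear part $a$ multiplies translations by $a^{-K}$; since $G_1$ is a normal infinite cyclic subgroup of the translation line, every such $a$ satisfies $a^{-K}\in\{\pm1\}$, so the group $Q\subset\mu_{2K}$ of linear parts of $G$ is finite cyclic, say $Q=\langle a_0\rangle$ of order $n$, and the non-commutativity of $G$ makes the conjugation action of $Q$ on $G_1$ nontrivial, i.e. $a_0^{K}=-1$. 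An elementary $2$-adic computation from $\mathrm{ord}(a_0)=n$, $a_0^{2K}=1$, $a_0^{K}\neq1$ then yields $n=2q$ with $q\mid K$ and $k:=K/q$ odd. Choosing $g_0\in G$ with linear part $a_0$, its affine form is $w\mapsto-w+\beta_0$; conjugating by the translation $w\mapsto w-\beta_0/2$ --- which lies in $\mathbb{A}_K$ and commutes with $f$ --- replaces $g_0$ by the unique element of $\mathbb{A}_K$ with affine form $w\mapsto-w$ and linear part $a_0$, namely $z\mapsto a_0 z$. Replacing this by the power carrying $a_0$ to $e^{2\pi i/n}=e^{2\pi i/2q}$ yields $G=\langle f,g\rangle$ with $g(z)=e^{2\pi i/2q}z$, i.e. model~(2); and $\langle f,g^{2}\rangle$ is abelian of index two, since the $K$-th power of the linear part of $g^{2}$ is $1$ (so $g^{2}$ centralizes $G_1$) while $\langle\overline{g^{2}}\rangle$ has index two in $G/G_1\cong\mu_n$.

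\textbf{Main obstacle.} The abelian case and the reduction step are routine. The real content is the non-abelian case: carrying out the conjugation computation in the affine model accurately enough to see both that $Q$ is finite and that non-commutativity is equivalent to $a_0^{K}=-1$, and then squeezing the precise shape $p=kq$ with $k$ \emph{odd} out of the order relations on $a_0$. An auxiliary but indispensable point is that an element of $\mathbb{A}_K$ is determined by its linear part together with its image in $\mathrm{Aff}(\mathbb{C})$ --- this is what guarantees that, after the final conjugation, $g$ is an \emph{exact} homothety with no higher-order tail.
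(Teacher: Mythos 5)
Your argument is correct, but there is no in-paper proof to compare it with: the paper states Theorem \ref{thm:exceptional} as a quotation from \cite[Proposition 2.10.4]{Frankpseudo} and gives no argument of its own. Your blind derivation — first placing $G$, up to formal conjugacy, in one of the models of Theorem \ref{T:formalG} (with $\mathbb L$ excluded by Theorem \ref{thm:triviallinearpart} since $G_1\neq\{\mathrm{Id}\}$, and the abelian case pushed into some $\mathbb E_{K,\lambda}$ by the remark following Theorem \ref{T:formalG}), then normalizing generators by homothety and translation conjugations inside the model — is a sound self-contained route, essentially the one followed in the cited reference. The key computations check out: the chart $w=1/z^{K}$ identifies the tangent-to-identity part of $\mathbb A_K$ with the translations and exhibits $\mathbb A_K\to\Aff(\mathbb C)$ with kernel the homotheties of order dividing $K$; normality of the infinite cyclic $G_1$ forces all linear parts into $\mu_{2K}$; non-commutativity forces $a_0^{K}=-1$ (the point you state without spelling out is the one-line observation that $a^{K}=1$ for all linear parts would put $G$ inside $\mathbb E_{K,0}$, which is abelian); and the arithmetic $\ord(a_0)=2q$, $\gcd(2q,K)=q$ indeed yields $K=kq$ with $k$ odd. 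The uniqueness of an element of $\mathbb A_K$ given its linear part and its affine image is exactly what kills the higher-order tail of $g$ after the translation conjugation, and your verifications that $\langle f,g\rangle$ exhausts $G$ (a subgroup containing $G_1$ and surjecting onto the group of linear parts is everything) and that $\langle f,g^{2}\rangle\subset\mathbb E_{K,0}$ is abelian of index two are correct; the abelian case is likewise handled correctly.
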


\subsection{Formally linearizable case}  This corresponds to the situation where $G\subset\Diff$ is formally conjugated to a subgroup of $\mathbb L$. It is well known that there exists examples where this conjugation cannot be made analytic and that this obstruction is related to the presence of small divisors, see for instance \cite{PerezMarco}. Note also that analytic conjugation to a subgroup of $\mathbb L$ holds if and only if there exists a germ of logarithmic  $1$-form $G$ invariant, cf.  \cite[Proposition 3.1.1]{Frankpseudo}.

\section{Abelian holonomy}\label{S:abelianhol}

Most of this section is devoted to  establish  Theorem \ref{THM:Abelian}. The core of its  proof naturally splits in two different cases according
to the  Ueda type of $Y$. The case of infinite Ueda type is treated in Subsection \ref{SS:abelianfiber} while
the case of finite Ueda type is treated in Subsection \ref{SS:abelianfinite}. In Subsection \ref{S:FormallyLinearizableHolonomy}
we  show how to treat the case of foliations with formally linearizable holonomy under additional assumptions.

\subsection{Closed formal meromorphic $1$-forms}
We start by showing that the formal analogue of  Theorem \ref{THM:Abelian} holds true at the formal completion of $X$ along $Y$.

\begin{prop}\label{P:closedformalform}
Let $Y$ be a smooth compact divisor on a projective manifold $X$. Assume that $Y$ is a compact leaf of a foliation $\mathcal F$ on $X$ and that the holonomy of $\mathcal F$ along $Y$ is abelian. The following assertions hold true.
\begin{enumerate}
\item there exists a closed formal meromorphic $1$-form $\hat \omega$ on $Y(\infty)$ defining $\mathcal F_{|Y(\infty)}$;
\item if the holonomy of $\mathcal F$ is formally linearizable, then $\hat{\omega}$ is logarithmic and $\utype(Y)=\infty$;
\item if the holonomy of $\mathcal F$ is not formally linearizable, then $\ord_Y (\hat \omega)_{\infty} -1$ is an integral multiple of $\ord(NY)$.
\end{enumerate}
Moreover, the $1$-form $\hat \omega$ is convergent on a neighborhood of $Y$ whenever the holonomy group
has non unitary linear part in case {\rm (2)}, or its subgroup tangent to the identity is non cyclic in case {\rm (3)}.
\end{prop}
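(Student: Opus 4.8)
The plan is to prove Proposition \ref{P:closedformalform} by combining the local classification of abelian subgroups of $\Difffor$ (Theorem \ref{T:formalG} and the interpretation in Section \ref{S:interpretation}) with the Ueda-theoretic statements already established.

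\textbf{Step 1: Constructing the formal $1$-form.} Since the holonomy group $G = \mathrm{Im}\,\rho$ is abelian, Remark \ref{rk:abelianhol} tells us that $G$ preserves a formal meromorphic $1$-form $\omega$ on the transversal $(\Sigma, p)$, and that this $1$-form propagates along leaves by flow-box coordinates to a global formal meromorphic $1$-form $\hat\omega$ on $Y(\infty)$ defining $\mathcal F_{|Y(\infty)}$. Concretely: after formal conjugation, $G$ is a subgroup of $\mathbb L$ (linearizable case) or of $\mathbb E_{k,\lambda}$ (non-linearizable case). In the first case $\omega = dz/z$ is logarithmic and $\hat\omega$ is a closed logarithmic $1$-form; in the second, $\omega = \omega_{k,\lambda} = dz/z^{k+1} + \lambda\, dz/z$, which is also closed, so $\hat\omega$ is closed with pole order $k+1$ along $Y$. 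This gives item (1).

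\textbf{Step 2: Identifying the pole order and the Ueda type.} For item (2), if the holonomy is formally linearizable then $\hat\omega$ is logarithmic, i.e. $\ord_Y(\hat\omega)_\infty = 1$; applying Proposition \ref{P:utypek} with a logarithmic form (the $k=0$ situation, handled by noting the transition functions $z_i = (\lambda_{ij} + \cdots)z_j$ force the Ueda cocycle to vanish at every order, cf. the last paragraph of Section \ref{S:utype}) yields $\utype(Y) = \infty$. For item (3), the non-linearizable abelian case puts $G$ inside $\mathbb E_{k,\lambda}$ for some $k \geq 1$, so $\hat\omega$ has a pole of order exactly $k+1$ along $Y$; Proposition \ref{P:utypek} then gives both $\utype(Y) \geq k$ and, crucially, that $\ord(NY)$ divides $k = \ord_Y(\hat\omega)_\infty - 1$. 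One must check that the pole order is intrinsic (independent of the formal conjugation used), which follows because $\mathbb E_{k,\lambda}$ is not conjugate to $\mathbb E_{k',\lambda'}$ for $k \neq k'$ (different order of tangency of the subgroup tangent to the identity), as recorded in the classification theorems of Section \ref{S:subgroups}.

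\textbf{Step 3: Convergence under rigidity hypotheses.} For the final "Moreover" clause, the point is that the formal conjugation bringing $G$ to its normal form can be chosen analytic under the stated hypotheses, after which $\omega$ and hence $\hat\omega$ converge near $Y$. In case (2) with non-unitary linear part: the subgroup $\mathbb L$ with some generator $z \mapsto az$, $|a| \neq 1$, is hyperbolic and linearization is automatically convergent (Koenigs), and one then needs the invariant logarithmic form to extend convergently — this is exactly the content of the remark in Section \ref{subsec:exceptionalGroups} (analytic conjugacy to a subgroup of $\mathbb L$ is equivalent to the existence of a convergent invariant logarithmic form), which applies once a single non-resonant hyperbolic element is present. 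In case (3) with $G_1$ non-cyclic (rank $\geq 2$): this is precisely the hypothesis of the rigidity theorem of Cerveau--Moussu quoted in Section \ref{S:subgroups} (via Corollary \ref{cor:rank2}), so the formal normalizing coordinate is in fact holomorphic, and $\hat\omega = \hat\omega_{k,\lambda}$ becomes a convergent meromorphic $1$-form on a genuine neighborhood of $Y$.

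\textbf{Main obstacle.} The routine part is Step 1 (standard flow-box propagation). The delicate points are, first, making sure in Step 2 that the pole order $k+1$ is well-defined and that the divisibility $\ord(NY) \mid k$ is correctly extracted from Proposition \ref{P:utypek} — this requires care with the normalization ambiguity of the Ueda class; and second, in Step 3, correctly matching the two convergence hypotheses to the two available rigidity inputs (Koenigs/logarithmic-form extension for the non-unitary linearizable case, and Cerveau--Moussu rigidity for the non-cyclic $G_1$ case), since these are genuinely different mechanisms and one must verify that the global formal $1$-form inherits convergence from the local normalization along a leaf through the monodromy.
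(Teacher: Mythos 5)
Your Steps 1 and 3, and the treatment of item (3), essentially coincide with the paper's proof: item (1) is Remark \ref{rk:abelianhol}, item (3) follows from Proposition \ref{P:utypek} once one notes the invariant transverse form cannot be logarithmic, and the convergence clause is obtained exactly as you say (Koenigs in case (2), Corollary \ref{cor:rank2} in case (3)); the pointer to Section \ref{subsec:exceptionalGroups} for the equivalence ``analytically linearizable $\Leftrightarrow$ invariant convergent logarithmic form'' is a slight misattribution (it is in the subsection on the formally linearizable case), but harmless.

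There is, however, a genuine gap in your Step 2 for item (2). You claim that formal linearizability gives transition relations $z_i=(\lambda_{ij}+\cdots)z_j$ which ``force the Ueda cocycle to vanish at every order,'' hence $\utype(Y)=\infty$. This only works when the constants $\lambda_{ij}$ can be taken \emph{unitary}: the Ueda type is defined with respect to the unitary flat structure on $NY$ (note the $\lambda_{ij}\in S^1$ in Equation (\ref{TransitionUedaCoordinates}) and in the last paragraph of Section \ref{S:utype}), and Lemma \ref{L:Uedaholonomy} is proved only under the unitarity assumption. When the linear part of the holonomy is non-unitary, your argument breaks down, and it must: Remark \ref{rk:CounterExample} exhibits a germ of surface containing an elliptic curve $Y$ with trivial normal bundle, a smooth foliation along $Y$ with linear (hence linearizable) non-unitary holonomy, and $\utype(Y)=1$. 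So the conclusion $\utype(Y)=\infty$ in the non-unitary subcase is not a formal/local statement at all; it is precisely where projectivity of $X$ enters, and your proof never uses it. The paper handles this subcase as follows: non-unitary linear part gives a hyperbolic element, so by Koenigs the holonomy is \emph{holomorphically} linearizable and one gets a convergent closed logarithmic $1$-form $\omega$ near $Y$; assuming $\utype(Y)<\infty$ and cutting by general hyperplane sections to reduce to a curve on a surface, Theorem \ref{T:Uedaextension} extends $\omega$ to a closed logarithmic form on all of $X$ with poles only on $Y$, and the non-zero residue along $Y$ then contradicts the Residue Theorem (the residue divisor would force $[Y]=0$, impossible on a projective surface). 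You need to add this global argument (or an equivalent one) to close item (2); the unitary subcase is indeed immediate from the definition, as you implicitly use.
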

\begin{proof}
(1) The existence of a closed formal meromorphic $1$-form defining $\mathcal F_{|Y(\infty)}$ is explained in Remark \ref{rk:abelianhol}.

(2) If the holonomy is formally linearizable, then the holonomy of $\mathcal F$ is formally conjugated to a subgroup of $\mathbb L$, and consequently $\hat \omega$ is logarithmic. If it is formally  linearizable and unitary, then
$\utype(Y)=\infty$ by definition. If it is formally linearizable, but not unitary, then holonomy group of $\mathcal F$ along $Y$ is holomorphically linearizable, and we obtain a closed logarithmic $1$-form $\omega$ on an Euclidean neighborhood of $Y$. Aiming at a contradiction, assume $\utype(Y) \neq \infty$. Taking general hyperplane sections, we can
reduce to the case where $Y$ is a curve on $X$. If $\utype(Y) \neq \infty$, then Theorem \ref{T:Uedaextension} implies that $\omega$ extends to a closed logarithmic $1$-form on $X$ with polar set contained in $Y$. Since the residue
of $\omega$ along $Y$ is not zero, we arrive at a contradiction with the Residue Theorem.

(3) If $i : (\mathbb C,0) \to X$ is a  germ of transversal to $\mathcal F$ through a point of $Y$, then
$i^* \hat \omega$ is a formal $1$-form  on $(\mathbb C,0)$ invariant by the holonomy of $\mathcal F$. It cannot be logarithmic, since in this case, the holonomy would be linearizable.
The assertion follows from Proposition \ref{P:utypek}.

Finally, the convergence follows from Koenigs Linearization Theorem in case (2), when the holonomy
contains a contraction;  and from Corollary \ref{cor:rank2} in case (3).
\end{proof}

\begin{remark}
Recall from Remark \ref{rk:CounterExample} that (2) does not hold true when $\mathcal F$ is only defined on an analytic neighborhood of $Y$.
\end{remark}
\subsection{Fiber of a fibration}\label{SS:abelianfiber}

\begin{prop}\label{P:infiniteUedatype}
Let $Y$ be smooth compact divisor on a projective manifold $X$ which is a fiber (multiple or not) of a fibration
$f:X \to C$. Assume that $Y$ is a compact leaf of a foliation $\mathcal F$ on $X$, and that the holonomy of $\mathcal F$ along $Y$ is abelian.
Then,  there exists
a projective manifold $Z$, and a generically finite morphism $\pi : Z \to X$, such that $\pi^* \mathcal F$ is defined by a closed rational $1$-form.
\end{prop}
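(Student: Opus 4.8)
The plan is to separate the case where $\mathcal F$ is the fibration itself from the case where it is transverse to the generic fibre of $f$, and in the latter to describe $\mathcal F$ near $Y$ by a convergent closed meromorphic $1$-form and then spread that form over all of $X$ using the fibration.

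If $\mathcal F$ coincides with the foliation $\mathcal F_f$ whose leaves are the fibres of $f$, choose any nonconstant rational function $\phi$ on $C$; then $f^*(d\phi)$ is a closed rational $1$-form defining $\mathcal F$, and the statement holds with $\pi=\mathrm{id}$. From now on $\mathcal F\neq\mathcal F_f$, so the generic fibre of $f$ is not $\mathcal F$-invariant and $\mathcal F$ is transverse to the fibres of $f$ over a Zariski-open subset of $C$. Writing $mY=f^*(c_0)$ with $c_0=f(Y)$, we have $NY^{\otimes m}=\mathcal O_X(mY)_{|Y}=f^*\mathcal O_C(c_0)_{|Y}=\mathcal O_Y$, so $NY$ is torsion, say of order $m_0\mid m$; in particular the linear part of the (abelian) holonomy of $\mathcal F$ along $Y$ — the monodromy of $N\mathcal F_{|Y}=NY$ under the Bott connection — is a finite cyclic group of order $m_0$.

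By Proposition \ref{P:closedformalform} the restriction $\mathcal F_{|Y(\infty)}$ is defined by a closed formal meromorphic $1$-form $\hat\omega$. If the holonomy is formally linearizable, then (conjugated into $\mathbb L$) it equals its linear part, hence is the finite cyclic group of order $m_0$; a finite subgroup of $\Diff$ is holomorphically linearizable, so $z^{m_0}$ (with $z$ the linearizing transverse coordinate) defines a genuine holomorphic first integral of $\mathcal F$ on a neighbourhood of $Y$. Shrinking to a tubular neighbourhood on which this first integral is proper, all nearby leaves are compact, hence algebraic (being compact analytic subsets of the projective manifold $X$); thus $\mathcal F$ is, near $Y$, the foliation induced by a meromorphic fibration, and after a blow-up $\mathcal F$ is globally a fibration on $X$, defined by the pull-back of a rational $1$-form on the base curve. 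If the holonomy is not formally linearizable, then by Theorem \ref{T:formalG} it is conjugate into $\mathbb E_{k,\lambda}$ for some $k$ (necessarily a multiple of $m_0$) and $\hat\omega$ has the shape $\frac{dz}{z^{k+1}}+\lambda\frac{dz}{z}$; when the subgroup of the holonomy tangent to the identity has rank at least two, Corollary \ref{cor:rank2} makes the holonomy analytically normalizable, so $\hat\omega$ converges to a genuine closed meromorphic $1$-form $\omega$ on a Euclidean neighbourhood $U=f^{-1}(\Delta)$ of $Y$ with polar divisor $(k+1)Y$. The remaining, exceptional, case of an infinite cyclic tangent-to-the-identity subgroup is postponed.

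It then remains to globalize $\omega$. Let $S\subset C$ be the finite set of critical values of $f$ together with the values over which a fibre of $f$ is $\mathcal F$-invariant; then $c_0\in S$ and, shrinking $\Delta$, $\Delta\setminus\{c_0\}\subset C\setminus S$. Over $C\setminus S$ the map $f$ is a smooth fibration transverse to $\mathcal F$, so $\mathcal F$ restricted to $f^{-1}(C\setminus S)$ is a flat $f$-connection whose transverse structure is holomorphic and, on $f^{-1}(\Delta\setminus\{c_0\})$, coincides with the transversely affine structure carried by $\omega$; hence $\omega$ extends to a closed meromorphic $1$-form on $f^{-1}(C\setminus S)\cup U$, and then across the finitely many remaining special fibres with at worst logarithmic poles, by the extension properties of transverse affine structures on projective manifolds. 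Up to a generically finite cover $\pi\colon Z\to X$ clearing the finite multiplicative part of the monodromy, this produces the closed rational $1$-form defining $\pi^*\mathcal F$. The main obstacle is this globalization step: one must match the structure produced near $Y$ with the one furnished by the flatness of $\mathcal F$ relative to $f$ over $C\setminus S$ and control its poles along the special fibres — and it is exactly here that the hypothesis that $Y$ is a fibre, i.e. that $\mathcal F$ is transverse to the general fibre of $f$, is used essentially. A secondary issue is disposing of the exceptional infinite cyclic holonomy, where $\hat\omega$ may fail to converge near $Y$ and one has to show that this is incompatible with $Y$ being a fibre, or remove it by a further finite cover.
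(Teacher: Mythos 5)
Your case division breaks down at its very first step: you assert that, because $NY$ is torsion of order $m_0$, the linear part of the holonomy is the finite cyclic group of order $m_0$. This is false. The linear part of the holonomy is the monodromy of the Bott partial connection on $N\mathcal F_{|Y}=NY$, and this is only one of the many flat structures carried by the torsion line bundle $NY$; it can be infinite and non-unitary even when $NY$ is holomorphically trivial. This is exactly the phenomenon of Remark \ref{rk:CounterExample}, and Remark \ref{rem:disprepencyFibration} shows that for a fiber $Y$ with contact order $k=0$ between $\mathcal F$ and the fibration the linear part is $\gamma\mapsto e^{\int_\gamma\alpha}$ with $\alpha$ a nonzero holomorphic $1$-form on $Y$, hence never unitary. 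Consequently, in your ``formally linearizable'' branch the holonomy need not be finite, there is in general no holomorphic first integral $z^{m_0}$, nearby leaves are not compact, and $\mathcal F$ is not a fibration after blow-up; what one actually gets there is a convergent closed \emph{logarithmic} $1$-form near $Y$ (Koenigs), so the globalization problem you face in the other branch reappears in this one and is not addressed.

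The two places you yourself flag as obstacles are genuine gaps, and they are precisely what the paper's proof is engineered to avoid. First, the exceptional case (holonomy with infinite cyclic tangent-to-identity part, not analytically normalizable) is left open; it cannot simply be ``removed by a further finite cover'', and the paper never needs convergence of $\hat\omega$ at all: using that $Y$ is a fiber, Grothendieck's comparison theorem applied to $f_*\bigl(N\mathcal F\otimes\mathcal O_X(-nY)\bigr)$ shows $N\mathcal F\simeq\mathcal O_X(nY)$ on a \emph{Zariski} neighborhood of $Y$, so $\mathcal F$ is defined by a rational $1$-form $\omega$ with poles only on $(n)Y$ near $Y$; comparing $\omega$ with the formal closed form $\hat\omega$ and using connectedness of the fibers yields $d\omega=g\,f^*\alpha\wedge\omega$ with $g\,f^*\alpha$ a closed rational $1$-form, i.e.\ a global transversely affine structure. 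Second, your globalization step is unjustified: removing finitely many values of $C$ does not make $f$ transverse to $\mathcal F$ along entire fibers, since the tangency divisor between $\mathcal F$ and the fibration need not be vertical, so the ``flat $f$-connection/suspension'' picture is unavailable, and the claimed extension of $\omega$ with at worst logarithmic poles across the special fibers has no argument behind it. Finally, ``clearing the finite multiplicative part of the monodromy'' by a generically finite cover requires knowing that the relevant periods lie in $\pi i\,\mathbb Q$; this is exactly the dichotomy treated in the paper via \cite{Gaeljvp}, where the alternative outcomes (periods not commensurable with $\pi i$, or $\eta$ with higher-order poles) force $\mathcal F$ to coincide with the fibration, a case your argument does not isolate.
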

\begin{proof}
Proposition \ref{P:closedformalform} implies the existence of a closed formal  meromorphic $1$-form $\hat \omega$ defining
  $\mathcal F_{|Y(\infty)}$. In particular, $N\mathcal F_{|Y(\infty)} = \mathcal O_{Y(\infty)}(nY)$ where $n$ is the order of poles  of $\hat \omega$ along $Y$. Using the fact that $Y$ is a fiber, we can infer that $N\mathcal F\otimes\mathcal O_{X}(-nY)$ is actually trivial on a Zariski open neighborhood of $Y$. The sheaf $f_*\left(N\mathcal F\otimes\mathcal O_{X}(-nY)\right)$ being torsion free on a curve is a vector bundle (see for instance \cite[Chap.V, Cor.(5.15)]{KobBook}) and has thus locally a nowhere vanishing section if it is non zero. To see that it is indeed the case, it is enough to prove that the formal completion at $f(Y)$ is non zero. But Grothendieck's comparison Theorem \cite[Theorem 4.I.5]{EGA3partie1} implies that the formal completion of $f_* (N \mathcal F\otimes \mathcal O_{X}(-nY)) $ at $f(Y)$ is isomorphic to
$f_*(N \mathcal F\otimes \mathcal O_{Y(\infty)}(-nY))$. As $N \mathcal F\otimes \mathcal O_{Y(\infty)}(-nY)= \mathcal O_{Y(\infty)}$, we finally get
$$\widehat{f_*\left(N \mathcal F\otimes \mathcal O_{X}(-nY)\right)}\simeq f_*\left(N \mathcal F\otimes \mathcal O_{Y(\infty)}(-nY)\right)= f_*(\mathcal{O}_{Y(\infty)})=\widehat{\mathcal{O}_{C,f(Y)}},$$
the fibers of $f$ being connected. Thus there exists a Zariski open neighborhood $U$ of $Y$ where $N\mathcal F_{|U} \simeq \mathcal O_U(nY)$. Consequently, $\mathcal F$ is defined by a rational $1$-form $\omega$ having poles of order $n$ on $Y$ and no other poles on $U$. Therefore, there exists on $Y(\infty)$ a formal holomorphic function $\hat g$ such that
$\hat \omega = \hat g \omega_{|Y(\infty)}$.  Differentiating, and recalling that $d\hat \omega=0$, we obtain that
\[
 d \omega_{|Y(\infty)} = - \frac{d \hat g}{\hat g} \wedge \omega_{|Y(\infty)}\, .
\]
But again, the connectedness of the fibers of $f$ implies that $\hat g = \hat h\circ f$ for some formal holomorphic function $\hat h$ on
a formal neighborhood of $f(Y)$. It follows that for any rational $1$-form $\alpha$ on $C$ without zeros or poles on $f(Y)$,  $d \omega$ and $f^* \alpha \wedge \omega$ are holomorphically proportional on $Y(\infty)$ and consequently rationally proportional on all $X$. More explicitly, there exists a rational function $g$, regular on a neighborhood of $Y$, such that $d \omega = g\cdot f^*\alpha \wedge \omega$. Since $g$ is regular on $Y$, it must be constant along fibers of $f$ and $d ( g\cdot f^*\alpha ) =0$. Therefore $\mathcal F$ is a transversely affine foliation and the result follows from  arguments used in \cite{Gaeljvp} as we will now explain.

Let $\eta = g\cdot f^* \alpha$ and notice that the multi-valued $1$-form $\exp(\int \eta) \omega$ is closed.
If the periods of $\eta$ are not rational multiples of $\pi i$,  then the monodromy of $\exp(\int \eta) \omega$  factors through $C$ \cite[Theorems 4.1 and 5.1]{Gaeljvp}
and $\mathcal F$ must coincide with the fibration. If $\eta$ has poles of order greater than one,  then the restriction of   $\exp(\int \eta) \omega$
to a general fiber of $f$ is exact \cite[proof of Theorem 5.2]{Gaeljvp}. Therefore $\mathcal F$ must coincide with the fibration. At this point, we deduce that, if $\mathcal F$ is not the fibration, then $\eta$ has at worst logarithmic poles, and all its periods
are rational multiples of $\pi i$. It follows that $\mathcal F$ is defined by rational closed one form on a
suitable finite ramified covering of $X$. Taking the resolution of singularities of this covering gives the generically finite morphism $\pi :Z \to X$ such that $\pi^* \mathcal F$ is defined by a closed rational $1$-form. \end{proof}

\begin{remark}\label{rem:disprepencyFibration}We can also prove that $\mathcal F$ is defined by a closed meromorphic $1$-form
at the (analytic) neighborhood of $Y$ by the following alternative argument. Locally along $Y$,
we can define $\mathcal F$ by $\omega_i=dy+y^{k+1}\alpha_i$ with $y$ a (global) reduced analytic equation of $Y$,
$\alpha_i$ holomorphic $1$-form on $U_i$ and $k\in\mathbb Z_{\ge0}$ maximal with these properties. In fact,
$k$ is the contact order of tangency between $\mathcal F$ and the fibration $dy=0$. On $U_i\cap U_j$, we have
$\alpha_i=\alpha_j+g_{ij}\omega_j$ so that ${\alpha_i}_{|Y}={\alpha_j}_{|Y}$ define a global holomorphic
$1$-form $\alpha$ on $Y$. The $1$-form $\alpha$ measures the discrepancy between the foliation and the fibration at the order $k$, the first order for which they differ.

More precisely, if $k=0$, then the holonomy along a loop $\gamma\in\pi_1(Y)$ computed in variable $y$ writes
$$y\mapsto e^{\int_\gamma\alpha}y+\cdots;$$
since all periods of $\alpha$ cannot be imaginary, we deduce that the linear part of the holonomy is not unitary.
This implies that the holonomy group is analytically linearizable and the formal $1$-form $\hat{\omega}$ constructed in Proposition \ref{P:closedformalform} is actually analytic.

Now, when $k>0$, then the holonomy along a loop $\gamma\in\pi_1(Y)$ computed in variable $y$ writes
$$y\mapsto y+\left(\int_\gamma\alpha\right) y^{k+1}+\cdots;$$
since all periods of $\alpha$ cannot be $\mathbb Q$-proportional, we see that the holonomy
cannot be cyclic and the formal $1$-form $\hat{\omega}$ constructed in Proposition \ref{P:closedformalform} is actually analytic.
\end{remark}

\begin{cor}\label{C:finiteUedatype}
Let $Y$ be a smooth compact divisor on a projective manifold $X$.  Assume that $Y$ is a compact leaf of a foliation $\mathcal F$ on $X$, and that the holonomy of $\mathcal F$ along $Y$ is abelian and non formally linearizable.
If $\utype(Y)= \infty$,   then  there exists
a projective manifold $Z$ and generically finite morphism $\pi : Z \to X$ such that $\pi^* \mathcal F$ is defined by a closed rational $1$-form.
\end{cor}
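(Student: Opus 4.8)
The strategy is to reduce to Proposition \ref{P:infiniteUedatype} by showing that the hypotheses force $Y$ to be a (possibly multiple) fiber of a fibration on $X$. The first step is that $NY$ is torsion of finite order. Since $Y$ is a compact leaf of $\mathcal F$ with abelian holonomy, Proposition \ref{P:closedformalform} provides a closed formal meromorphic $1$-form $\hat\omega$ on $Y(\infty)$ defining $\mathcal F_{|Y(\infty)}$. As the holonomy is abelian, it falls in case (1) or (2) of Theorem \ref{T:formalG}, and since it is \emph{not} formally linearizable, case (1) is excluded (Theorem \ref{thm:triviallinearpart}); hence it is formally conjugate into some $\mathbb E_{k,\lambda}$ with $k\in\mathbb N^*$, so that $\hat\omega$ has polar divisor $(k+1)Y$ with $k\ge 1$. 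Applying Proposition \ref{P:utypek} to $\hat\omega$ (equivalently, invoking item (3) of Proposition \ref{P:closedformalform}), we conclude that $\ord(NY)$ divides $k$; in particular $NY$ is torsion of some finite order $\ell:=\ord(NY)\le k$.

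Next, since $\utype(Y)=\infty$ by hypothesis, we have $\utype(Y)>\ell=\ord(NY)$, so Neeman's Theorem \ref{thm:Neeman} applies: $\ell Y$ is a fiber of a fibration $f:X\to C$, and thus $Y$ itself is a fiber of $f$, a multiple one when $\ell>1$. As $Y$ is then a compact leaf of $\mathcal F$ with abelian holonomy which is a (multiple or not) fiber of a fibration, Proposition \ref{P:infiniteUedatype} produces a projective manifold $Z$ and a generically finite morphism $\pi:Z\to X$ with $\pi^*\mathcal F$ defined by a closed rational $1$-form, which is the desired conclusion.

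The proof is essentially a matter of combining results already established; the only part carrying real content is the reduction step showing that $NY$ is torsion --- equivalently, excluding the case where $NY$ is topologically torsion but of infinite analytic order (the Serre/Ogus situation of Example \ref{exampleOgusFred}). This is precisely where non-formal-linearizability of the holonomy enters decisively: it forces the transverse model to be of type $\mathbb E_{k,\lambda}$ rather than $\mathbb L$, hence $\hat\omega$ to have a pole of order at least $2$, after which Proposition \ref{P:utypek} rules out infinite order. Granting this step, Theorem \ref{thm:Neeman} and Proposition \ref{P:infiniteUedatype} finish the proof with no further obstacle.
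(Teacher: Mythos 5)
Your proof is correct and follows essentially the same route as the paper: item (3) of Proposition \ref{P:closedformalform} (which you usefully unpack via Theorem \ref{T:formalG} and Proposition \ref{P:utypek}) forces $NY$ to be torsion, then Theorem \ref{thm:Neeman} with $\utype(Y)=\infty$ makes $Y$ a fiber of a fibration, and Proposition \ref{P:infiniteUedatype} concludes.
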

\begin{proof}
According to Proposition \ref{P:closedformalform} (item (3)) the normal bundle of $Y$ is torsion. Since $\utype(Y)=\infty$, Theorem \ref{thm:Neeman} gives a fibration $f : X \to C$ having $Y$ as one of its fibers, and the corollary follows from Proposition \ref{P:infiniteUedatype}.
\end{proof}

\subsection{Finite Ueda type}\label{SS:abelianfinite}

\begin{prop}\label{P:finiteUedatype}
Let $Y$ be smooth compact divisor on a projective manifold $X$. Assume that $Y$ is a compact leaf of a foliation $\mathcal F$ on $X$, and that the holonomy of $\mathcal F$ along $Y$ is abelian. Assume moreover that $\utype(Y) <  \infty$. Then, the holonomy of $\mathcal F$ along $Y$ is not linearizable,
and $\mathcal F$  is defined by a closed rational $1$-form.
\end{prop}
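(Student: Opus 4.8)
The first assertion is immediate: if the holonomy of $\mathcal F$ along $Y$ were formally linearizable, Proposition \ref{P:closedformalform}(2) would force $\utype(Y)=\infty$. So the holonomy group $G$ is not linearizable, hence $G_1:=G\cap\Difffor_1\neq\{\id\}$ and, by Theorem \ref{T:formalG} and the interpretation of the models in Section \ref{S:interpretation}, $G$ is formally conjugate to a subgroup of some $\mathbb E_{k,\lambda}$; correspondingly, Proposition \ref{P:closedformalform}(1),(3) furnishes a closed formal meromorphic $1$-form $\hat\omega$ on $Y(\infty)$ defining $\mathcal F_{|Y(\infty)}$, with polar divisor $(k+1)Y$ and with $\ord(NY)$ dividing $k$ (so $NY$ is torsion). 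The whole point is then to algebraize $\hat\omega$: to produce a closed \emph{rational} $1$-form on $X$ restricting to a constant multiple of $\hat\omega$.

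First I would reduce to the case $\dim X=2$. If $\dim X\geq 3$, cut $X$ with $\dim X-2$ general members of a sufficiently ample linear system to reach a smooth projective surface $S$ with $C:=Y\cap S$ a smooth curve; then $NC=NY_{|C}$ is numerically trivial so $C^2=0$, the Lefschetz hyperplane theorem gives $\pi_1(C)\twoheadrightarrow\pi_1(Y)$ so that $\mathcal F_{|S}$ has $C$ as a compact leaf with the same (abelian, non-linearizable) holonomy $G$, and the Ueda class of $Y$ restricts injectively to a general such $C$, whence $\utype(C)<\infty$. Conversely, a closed rational $1$-form defining $\mathcal F_{|S}$ is unique up to a constant — since $\mathcal F$ has no rational first integral, its holonomy being infinite — and has polar divisor $(k+1)C$, so as $S$ varies these forms vary algebraically and patch (over the Zariski-dense union of the $S$'s through a general point) to a closed rational $1$-form on $X$ with polar divisor $(k+1)Y$ defining $\mathcal F$. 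Thus I may assume $X=S$ is a projective surface and $Y=C$ a curve with $C^2=0$ and $\utype(C)<\infty$.

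In the surface case the strategy is: show that $\mathcal F$ is defined, on a neighbourhood $U$ of $C$, by a \emph{convergent} closed meromorphic $1$-form $\omega$ with polar divisor $(k+1)C$; then $\omega_{|U\setminus C}$ is a closed holomorphic $1$-form, which by Theorem \ref{T:Uedaextension} extends holomorphically to $S\setminus C$, and being meromorphic of bounded order along $C$ as well it is a closed rational $1$-form on $S$ with polar divisor $(k+1)C$, defining $\mathcal F$. To get such an $\omega$, I would distinguish cases according to $G$. If $\rank(G_1)\geq 2$, the rigidity theorem for subgroups of $\Diff$ (preceding Corollary \ref{cor:rank2}) together with Corollary \ref{cor:rank2} — and the reduction of abelian subgroups of $\mathbb A_p$ into $\mathbb L$ or $\mathbb E_{p,0}$, the case $\mathbb L$ being excluded by non-linearizability — shows that $G$ is \emph{analytically} conjugate to a subgroup of $\mathbb E_{k,\lambda}$; since $\mathbb E_{k,\lambda}$ is the symmetry group of $\omega_{k,\lambda}=\frac{dz}{z^{k+1}}+\lambda\frac{dz}{z}$, the formal $1$-form $\hat\omega$ then converges on a neighbourhood of $C$ (this is the convergence clause in Proposition \ref{P:closedformalform}), giving the desired $\omega$. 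There remains the exceptional case $\rank(G_1)=1$, i.e. $G$ one of the groups of Theorem \ref{thm:exceptional}(1): here $G_1$ is generated by a parabolic germ $f$ and the formal normalization of $G$ may genuinely diverge, so one must rule out a non-trivial \'Ecalle--Voronin modulus for $f$. The idea is that the finiteness of $\utype(C)$ forces $S\setminus C$ to be strongly pseudoconvex (Theorem \ref{T:Ueda}), so that the field of germs at $C$ of meromorphic functions has finite transcendence degree (cf. Section \ref{ssec:CurvesFiniteType}), which is incompatible with a ``wild'' holonomy germ; hence $f$ is normalizable after all and we are back to the previous case. (A cyclic étale cover reducing $G$ to $\langle f\rangle$ may be used along the way; any closed rational $1$-form obtained upstairs descends, being a constant multiple of each of its Galois translates, because the only formal functions on $Y(\infty)$ are constants.)

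I expect the main difficulty to be precisely this exceptional case $\rank(G_1)=1$: when $\rank(G_1)\geq 2$ the holonomy is analytically normalizable ``for free'' by the rigidity of solvable subgroups of $\Diff$, and then everything reduces to Ueda's extension theorem; but excluding the non-analytically-normalizable exceptional models of Theorem \ref{thm:exceptional}(1) requires genuinely using the global hypotheses — projectivity of $X$ and finiteness of the Ueda type — to show that such holonomy germs cannot be realized.
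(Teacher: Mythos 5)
Your first assertion, the reduction to a surface section (which the paper performs via the extension property of integrating factors, cf.\ the footnote in its proof and Section \ref{sec:extension}, rather than by the somewhat glossed ``patching over varying sections'' you sketch), and the case $\rank(G_1)\ge 2$ (rigidity, analytic normalization, convergence of the invariant $1$-form, extension by Theorem \ref{T:Uedaextension}) all agree in substance with the paper's argument. The problem is the remaining case, $G_1$ infinite cyclic with $G$ not analytically normalizable, which you explicitly leave open: the sentence ``finite transcendence degree \ldots is incompatible with a wild holonomy germ'' is an unsupported hope, not an argument. Neither Theorem \ref{T:Ueda} nor the transcendence-degree bound of Section \ref{ssec:CurvesFiniteType} gives any control on the \'Ecalle--Voronin modulus of the generator of $G_1$, and passing to a cyclic cover does not change the difficulty. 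Since this exceptional case is exactly where the statement has content beyond Corollary \ref{cor:rank2}, the proposal has a genuine gap.

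The paper does not ``rule out'' this case by local analysis at all; it works inside it with a global construction. Using Theorem \ref{thm:Neeman} and item (3) of Proposition \ref{P:closedformalform} one gets $k=\utype(Y)=\ord(NY)$, and for the groups of Theorem \ref{thm:exceptional}, item (1), the coefficients of the $(k+1)$-jets of the transition data are \emph{integers}: choosing analytic equations $y_i$ agreeing with the formal first integrals $z_i$ modulo $z_i^{k+2}$, one obtains a cocycle $-\tfrac{1}{ky_i^k}+\tfrac{1}{ky_j^k}=m_{ij}+f_{ij}$ with $m_{ij}\in\mathbb Z$, whose exponential defines a flat unitary line bundle on the projective (K\"ahler) $X$; this yields a globally defined closed $1$-form $\eta=\tfrac{dy_i}{y_i^{k+1}}+\tfrac{1}{2i\pi}\tfrac{dh_i}{h_i}$ whose holonomy along $Y$ agrees with that of $\mathcal F$ up to order $k+1$. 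One then compares the period representations of the formal form $\hat\omega$ and of $\eta$ on the real polar blow-up of $Y$, restricted to the finite-index subgroup $K=\ker j^1\rho$, to conclude first that $\lambda=0$ (a nonzero residue would produce formal reduced equations for $Y$ contradicting $\utype(Y)<\infty$) and then that $\hat\omega-\eta=df$ with $f$ formal, hence constant by finiteness of the Ueda type; so $\hat\omega=\eta$ is a closed rational $1$-form defining $\mathcal F$. This construction--comparison argument, which is the heart of the proposition, is entirely absent from your proposal.
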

\begin{proof}
By taking general hyperplane sections, we can assume\footnote{It is a consequence of the extension property of integrating factors, see for instance \cite[Theorem 5.1, p.47]{CeMa}. See also Section \ref{sec:extension}.} that $\dim(X)=2$.
Let $G\subset \Diff$ be the image of
the holonomy representation of $Y$, and $G_1$ the subgroup of elements in $G$ with
trivial linear part.

If  $G_1$ is trivial, \emph{i.e.} if the holonomy of $\mathcal F$ along $Y$ is formally linearizable (Theorem \ref{thm:triviallinearpart}), then Proposition \ref{P:closedformalform} item (2) implies that $\utype(Y)=\infty$ contrary to our assumption. This shows that the holonomy of $\mathcal F$ is not linearizable.

If  $G_1$ is not cyclic, then $G$ is analytically normalizable according to
Corollary \ref{cor:rank2} and there exists a closed meromorphic $1$-form defined on a (germ of) transversal
of $\mathcal F$  through a point of $Y$ which is invariant by the holonomy of
$\mathcal F$. Using the local triviality of the foliation, we deduce the existence of
a meromorphic closed $1$-form on a neighborhood of $Y$ defining $\mathcal F$. Then Theorem \ref{T:Uedaextension} allows us to extend the $1$-form to a global meromorphic $1$-form defined on the whole
surface $X$.

If  $G_1$  is infinite cyclic and $G_1$ is not analytically normalizable, the holonomy is then described  up to formal conjugacy by item (1) of Theorem \ref{thm:exceptional}. In particular there exists a formal coordinate $z$ such that the formal $1$-form
\[  \frac{dz}{z^{k+1}} + \lambda \frac{dz}{z}\]
is invariant by holonomy for some suitable choice of $\lambda\in\C$.  The holonomy of the foliation (evaluated along a loop $\gamma$ $\in \pi_1 (Y)$) is thus given by
\[ h_\gamma (z)= a_\gamma \left(z+ m_\gamma z^{k+1} + o (z^{k+1})\right)\] where ${a_\gamma}^k =1$ and $m_\gamma$ is an integer. Equivalently,  we can choose local
formal submersive first integrals $z_i$ for $\mathcal F$ vanishing on $Y$ such that
\begin{equation}\label{e:cyclic}
 z_i= a_{ij} \left( z_j +m_{ij} {z_j}^{k+1} + o ({z_j}^{k+1})\right)
 \end{equation}
with ${a_{ij}}^k=1$ and $m_{ij}\in\mathbb Z$ and such that
 \[\widehat{\omega}= \frac{dz_i}{z_i^{k+1}} + \lambda \frac{dz_i}{z_i}\]
  defines a formal closed meromorphic $1$-form on $Y(\infty)$.
  From Equation (\ref{e:cyclic}), we can derive
\[
-\frac{1}{kz_i^k}+\frac{1}{kz_j^k} =m_{ij} + o({z_j}).
\]

Since $\utype (Y)<\infty$, one can infer from Theorem \ref{thm:Neeman} and item (3) of Proposition \ref{P:closedformalform} that $k=\utype (Y)$ coincides with the order of $NY$.

Let now $y_i$ be a system of local \textit{analytic} equations for $Y$ such that $y_i=z_i\ \mbox{mod}\ {z_i}^{k+2}$. From the previous equality, one can deduce\footnote{Let us note that we are using the same construction as in the proof of Theorem \ref{THM:Existence}, but we use the additional information coming from the assumption on the rank of $G_1$.} that
\[
-\frac{1}{ky_i^k} + \frac{1}{ky_j^k} = m_{ij} + f_{ij}
\]
where $f_{ij}=0$ on $Y$.
Let $g_{ij}= m_{ij} + f_{ij}$  be the (non trivial) cocycle in $H^1(X, \mathcal O_X)$
(away from $Y$, we complete the system of coordinates by $y_i=1$ for instance).
From the exponential sequence in cohomology (and the fact that $X$ is K\"ahler),
it follows that $\exp(2i \pi g_{ij})\in H^1(X, \mathcal O_X^*)$ defines a flat line bundle on $X$:
it admits a unitary connection, \emph{i.e.} there exist local units
$h_i , h_j$ and constants $b_{ij}$ (with $|b_{ij}|=1$) such that
\[
 \exp(2i\pi g_{ij}) =h_i^{-1}h_j b_{ij}.
\]
In restriction to $Y$, we get that $\exp(2i\pi g_{ij})_{|Y}=\exp(2i\pi m_{ij})\equiv1$ is the trivial connection.
By uniqueness of the unitary connection, we must have ${b_{ij}}_{|Y}=1$, and all $h_i$ are constants
in restriction to $Y$. By construction,
\[
\eta = \frac{dy_i}{{y_i}^{k+1}} + \frac{1}{2i\pi}\frac{dh_i}{h_i}
\]
is a well defined closed $1$-form  such that the induced foliation $\mathcal G$ has holonomy along $Y$ equal to the holonomy of $\mathcal F$ up to order  $k+1$ (note that ${dh_i}_{|Y}\equiv0$).

 We want to
compare $\eta$ and $\widehat{\omega}$. Notice that the integration of $\eta$ defines a representation
$\tau_{\eta} : \pi_1(U) \to \mathbb C$ where $U$ is a tubular neighborhood of $Y$ ($C^{\infty}$ neighborhood of course).
Even if $\widehat{\omega}$ is only a formal $1$-form, it  defines a representation $\tau_{\hat \omega} : \pi_1(U-Y) \to \mathbb C$ as follows. Let $\pi : (\tilde U,E) \to (U,Y)$ be the real analytic polar blow-up along $Y$. The exceptional divisor is a $S^1$-bundle over $Y$ with fiber over a point $y\in Y$  parametrizing the rays on
a transversal to $Y$ through $y$.
Take  two adjacent simply connected open subset $V$ and $W$ of $E$.  We can naturally associate to them  two {\it primitives} of $\hat \omega$: those are formal complex functions of the form $-\frac{1}{kz^k} + \lambda \log z$ where  $\log z$ is a branch of the logarithmic meaningful along the rays parametrized by $V$ and $W$. On the intersection $V\cap W$, these two primitives differ by a
constant since their differentials are the same. Following paths on $E$ we obtain a representation $\tau_{\hat \omega} : \pi_1(E) \to \mathbb C$. Since $E$ is a retraction of $U - Y$,  we have the sought representation.

Notice that the  $S^1$-principal bundle $E$  comes endowed with a flat connection with monodromy given by $j^1 \rho : \pi_1(Y) \to \mathbb Z/k\mathbb Z \subset S^1$ the first jet of
the holonomy of $\mathcal F$. If $K = \ker  j^1 \rho$, then we can inject  $K$ into $\pi_1(E)$ by lifting paths in $K$ to flat sections of $E \to Y$.
Let us compare $\tau_{\hat \omega}$ with $\rho_{\mathcal F}$ for $\gamma \in K$.
Along $\gamma$, we see that formal first integrals $z_i$ defined before are simply related by
\[
z_i = z_j ( 1 + m_{ij} z_j ^k + \ldots )
\]
so that $m_{ij} \in \mathbb Z$ can be identified with the image of $\gamma$ under the $(k+1)$-th jet
$$j^{k+1} \rho_\F : K \to (\mathbb C,+) \subset \Difffor$$
of the holonomy of $\mathcal F$ along $Y$. From the equation above we deduce that
\[
-\frac{1}{kz_i^k} + \lambda\log z_i = -\frac{1}{kz_j^k} + \lambda \log z_j +m_{ij}
\]
(recall that we work with a fixed determination of the logarithm which makes sense along $\gamma$ in $K$).
Therefore the restrictions of $j^{k+1}\rho_{\mathcal F}$ and $\tau_{\hat \omega}$ to $K$ coincide. An analogous  relation
holds for the restrictions of $j^{k+1}\rho_{\mathcal G}=j^{k+1}\rho_{\mathcal F}$ and ${\tau_{\eta}}$ to $K$.
Therefore, the period representation $\tau_{\hat \omega - \eta}: \pi_1(E) \to \mathbb C$ of $\hat \omega - \eta$ vanishes on $K$.

Since $y_i=z_i\ \mbox{mod}\ {z_i}^{k+2}$, we deduce from local expressions of $\hat \omega$ and $\eta$   that
$$\hat \omega - \eta=\lambda \frac{dz_i}{z_i}+\text{holomorphic}.$$
Therefore, after integrating, we get a collection of formal reduced equations
$$w_i=\exp\left(2i\pi\int\frac{\hat \omega - \eta}{\lambda}\right)$$
for $Y$, which are well-defined up to multiplication by constants, i.e. a local system.
By construction, the corresponding representation $\pi_1(Y)\to \mathbb{C}^*$
is trivial on $K$, which has finite index in $\pi_1(Y)$, hence it is finite.
We deduce that $w_i=a_{ij}w_j$ along $Y$ with $a_{ij}^k=1$, contradicting that $\utype (Y)<\infty$.
We conclude that this case does not happen, \emph{i.e.} $\lambda=0$ and $\hat \omega - \eta$
has no pole.

We can argue as before that periods of $\hat \omega - \eta$ along
$Y$ vanish on the finite index subgroup $K<\pi_1(Y)$, and therefore on $\pi_1(Y)$.
We can then write $\hat{\omega}-\eta=df$ with $f$ formal along $Y$. As $\utype(Y)<\infty$, we can deduce that $f$ is constant (see Section \ref{ssec:CurvesFiniteType}). Therefore, $\hat{\omega}=\eta$
is a closed rational $1$-form on $X$ defining the foliation $\mathcal F$.
\end{proof}

\begin{remark}
The starting point of the proof of Proposition \ref{P:finiteUedatype} is the existence of a
closed formal meromorphic $1$-form  $\hat \omega$ defining $\mathcal F_{|Y(\infty)}$
and the induced isomorphism  $N\mathcal F_{|Y(\infty)} = \mathcal O_{Y(\infty)}((k+1) Y)$.
A posteriori, we prove that this isomorphism extends to an isomorphism of $N\mathcal F_{|U}$
and $\mathcal O_U((k+1)Y)$ where $U$ is suitable Zariski neighborhood of $Y$ in $X$. If we could
prove that such isomorphism holds true, a priori,  for some Euclidean neighborhood $U$, then it  would be
easy to conclude the proof of Proposition \ref{P:finiteUedatype} since there
would exists (because of the isomorphism) a  meromorphic  $1$-form $\omega$ (convergent but not necessarily
closed) defining $\mathcal F_{|U}$ and with polar divisor equal to $(k+1)Y$. Comparing $\omega_{|Y(\infty)}$ and
$\hat \omega$, we see that they differ by multiplication by a formal holomorphic function. But since $Y$ has finite Ueda type, it is G1 in the sense of  \cite[Section 5]{Hironaka}: this holomorphic function must be constant. We deduce that $\omega_{|Y(\infty)}$
is closed and so is $\omega$.
\end{remark}

\subsection{Proof of Theorem \ref{THM:Abelian}}
If the  holonomy of $\mathcal F$ along $Y$ is formally linearizable, $NY$ has infinite order and $\utype(Y)=\infty$ then
there is nothing else to prove (this is item (2) of the statement).
If  $\utype (Y)=\infty$ and $N_Y$ torsion then  Proposition \ref{P:infiniteUedatype} implies the result.
If instead  $\utype (Y)<\infty$ then  now is Proposition \ref{P:finiteUedatype} that proves Theorem \ref{THM:Abelian}.
The only remaining possibility is that  $\utype (Y)=\infty$, $N_Y$ has infinite order and the holonomy group $G$ along $Y$ is not formally linearizable. In this situation, $G$ is formally conjugated to a subgroup of $\mathbb E_{k,\lambda}$ (see Theorem \ref{T:formalG}) and in particular has finite linear part. This obviously contradicts the fact that $N_Y$ has infinite order. Since these three further cases exhaust all possibilities Theorem \ref{THM:Abelian} follows.
\qed

\subsection{Formally linearizable holonomy}\label{S:FormallyLinearizableHolonomy}

If $\mathcal F$ is a codimension one foliation on a projective manifold $X$ with a compact leaf $Y$ having formally linearizable holonomy,
then the existence of a formal logarithmic $1$-form $\hat{\omega}$ defining $\mathcal F_{|Y(\infty)}$ implies that the bundle $N\mathcal F \otimes \mathcal O_X(-Y)$ has trivial restriction to $Y(\infty)$. It seems reasonable to imagine that this is only possible
because $N\mathcal F \otimes \mathcal O_X(-Y)$ is trivial at an Euclidean neighborhood of $Y$, and even better that it is numerically trivial at a Zariski neighborhood of $Y$. Except when $Y$ is a fiber  a fibration, and we can appeal to Grothendieck's comparison Theorem, we are not aware of results supporting these hopes.
In order to be able to push forward our investigations, in the remaining of this section, we will work under the following hypothesis: $N\mathcal F \otimes \mathcal O_X(-Y)$ is numerically equivalent to a $\mathbb Q$-divisor $D$ whose support is disjoint from $Y$.

\begin{prop}\label{P:abelianunitary}  Let $Y$ be a smooth compact divisor on a compact K\" ahler manifold $X$ with $\utype(Y)=\infty$. Assume that $Y$ is a compact leaf of a foliation $\mathcal F$ on $X$ having formally linearizable holonomy and that $N\mathcal F \otimes \mathcal O_X(-Y)$ is numerically equivalent to a $\mathbb Q$-divisor $D$ with support disjoint from $Y$. Then $\F$ can be defined by a logarithmic closed form, possibly after a finite ramified covering \'etale over $Y$.
In particular, the non analytically linearizable case does not occur under these assumptions.
\end{prop}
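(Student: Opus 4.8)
The plan is to upgrade the purely formal object produced by Proposition \ref{P:closedformalform} to an honest closed logarithmic form, after a controlled covering; the numerical hypothesis is exactly what is needed to control the situation away from $Y$. First, since the holonomy is formally linearizable and $\utype(Y)=\infty$, Proposition \ref{P:closedformalform} (items (1)--(2)) provides a closed formal \emph{logarithmic} $1$-form $\hat\omega$ on $Y(\infty)$ defining $\mathcal F_{|Y(\infty)}$. Having a pole of order exactly one along $Y$, this amounts to the isomorphism $N\mathcal F_{|Y(\infty)}\simeq\mathcal O_{Y(\infty)}(Y)$, i.e. the line bundle $\mathcal M:=N\mathcal F\otimes\mathcal O_X(-Y)$ is formally trivial along $Y$; in particular $\mathcal M_{|Y}\simeq\mathcal O_Y$, and, $Y$ being compact K\"ahler, the corresponding unitary flat connection on $\mathcal M_{|Y}$ has trivial monodromy.

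Next I would globalize. After a finite covering ramified over $\mathrm{supp}(D)$ and \'etale over $Y$ one may assume $D$ integral, and then $\mathcal L:=\mathcal M\otimes\mathcal O_X(-D)$ lies in $\Pict(X)$ and is still trivial on $Y(\infty)$, so $\mathcal L_{|Y}\simeq\mathcal O_Y$ with trivial monodromy. Two cases occur. If $H^1(X,\mathbb Q)\to H^1(Y,\mathbb Q)$ is not injective, then $H^1(X,\mathcal O_X)\to H^1(Y,\mathcal O_Y)$ is not injective either (Hodge decomposition), so Proposition \ref{P:Albanese} makes $Y$ a multiple fiber of a fibration $f:X\to C$; the holonomy being abelian, Proposition \ref{P:infiniteUedatype} applies, and since the formal model is logarithmic the rational $1$-form it produces has first-order pole along $Y$, so that --- running the end of that proof, or simply pulling back $\mathrm{d}t/(t-c)$ with $c=f(Y)$ when $\mathcal F=f$ --- after a cyclic covering ramified over finitely many fibers disjoint from $Y$ the foliation is defined by a closed logarithmic $1$-form. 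If instead $H^1(X,\mathbb Q)\to H^1(Y,\mathbb Q)$ is injective, then $H_1(Y,\mathbb Z)\to H_1(X,\mathbb Z)$ has finite cokernel, hence the unitary character of $\mathcal L$, being trivial on the image of $\pi_1(Y)$, has finite image, so $\mathcal L$ is torsion. The associated cyclic \'etale covering $p:Z\to X$ trivializes $\mathcal L$ and is trivial over $Y$ (the monodromy of $\mathcal L_{|Y}$ vanishes), and we obtain $p^*N\mathcal F\simeq\mathcal O_Z(p^*Y+p^*D)$ with $p^*Y$ a disjoint union of copies of $Y$ and $p^*D$ effective and disjoint from them.

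It remains to produce on $Z$ a genuine closed logarithmic $1$-form defining $p^*\mathcal F$. The isomorphism $p^*N\mathcal F\simeq\mathcal O_Z(p^*Y+p^*D)$ gives a meromorphic $1$-form $\Omega$ defining $p^*\mathcal F$ whose polar divisor lies in $p^*Y+p^*D$, has order one along $p^*Y$, and coincides there with the logarithmic model $\hat\omega$. To force $\mathrm{d}\Omega=0$ I would re-run the harmonic-theory argument of the proof of Theorem \ref{THM:Existence}: the relevant \v{C}ech $1$-cocycle is, by triviality of $\mathcal M$ on $Z$ modulo $\mathcal O_Z(p^*D)$, a $\overline\partial$-coboundary of smooth local sections, and the $\nabla\overline\partial$-lemma on the compact K\"ahler manifold $Z$ replaces it by a holomorphic coboundary; this correction turns $\Omega$ into a closed form, creates no new pole, and keeps the pole along $p^*Y$ logarithmic since $\Omega$ already equals $\hat\omega$ there. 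Once $\mathcal F$ (or $p^*\mathcal F$) is defined by a closed logarithmic $1$-form its holonomy is analytically linearizable (cf. \cite[Proposition 3.1.1]{Frankpseudo}), so the non-analytically-linearizable case is excluded. The principal difficulty is this last step: extracting from the formal triviality along $Y$ and the torsion statement a \emph{convergent closed logarithmic} form, i.e. simultaneously gaining convergence and keeping the poles of order one.
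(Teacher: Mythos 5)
The decisive step of your argument is missing, and the mechanism you propose for it would not work. Having produced (after coverings) a rational $1$-form $\Omega$ defining $\F$ with polar divisor contained in $p^*Y+p^*D$, you assert that it ``coincides'' with the formal model $\hat\omega$ along $Y(\infty)$ and that closedness can then be forced by ``re-running'' the harmonic-theory argument of Theorem \ref{THM:Existence}. Neither claim is justified: a priori one only has $\Omega_{|Y(\infty)}=\hat g\,\hat\omega$ for some formal function $\hat g$, and the $\nabla\overline{\partial}$-argument of Theorem \ref{THM:Existence} constructs a closed form out of Ueda coordinates, i.e.\ out of convergent first integrals linearized to the relevant order --- exactly the data that is unavailable when the holonomy is only \emph{formally} linearizable, which is the case the proposition is meant to exclude; invoking it here is circular, and it cannot ``correct'' a given $\Omega$ into a closed form defining the same foliation. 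The paper's proof settles this point by an elementary dichotomy on $\hat g$: if $\hat g$ is constant, then $d\Omega$ vanishes on $Y(\infty)$ and hence identically on $X$ by the identity principle, giving the closed logarithmic form; if $\hat g$ is not constant, then $\hat g-\hat g(Y)$ is a non-constant formal function vanishing on $Y$, which forces $NY$ to be torsion, so that (by Theorem \ref{thm:Neeman}, since $\utype(Y)=\infty$) $Y$ is a fiber of a fibration and Proposition \ref{P:infiniteUedatype} concludes. Your proposal contains no substitute for this comparison, and you yourself flag the step as unresolved.

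There is also a gap at the very first reduction: the existence of ``a finite covering ramified over $\mathrm{supp}(D)$ and \'etale over $Y$'' making $D$ integral is not automatic --- a cyclic root covering requires an actual $r$-th root of $\mathcal O_X(rD)$ in $\Pic(X)$, and Kawamata-type coverings ramify along auxiliary divisors that meet $Y$. The paper avoids this by working with the genuine line bundle $N\F^{\otimes r}\otimes\mathcal O_X(-rY-rD)$, which lies in $\Pict(X)$, and by analyzing the monodromy of its unitary flat connection (which is trivial along $Y$); only once this monodromy has been killed does one have $(N\F\otimes\mathcal O_X(-Y))^{\otimes r}\simeq\mathcal O_X(rD)$ as line bundles, and it is this isomorphism that makes available a cyclic covering ramified only over $|D|$, hence \'etale over $Y$. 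Your intermediate dichotomy (non-injectivity of $H^1(X,\mathbb Q)\to H^1(Y,\mathbb Q)$ giving a fibration via Proposition \ref{P:Albanese}, injectivity giving torsion of the flat bundle) is a reasonable variant of the paper's trichotomy on the order of the monodromy, but it must be run on the integral tensor power as above, not after an unjustified preliminary covering.
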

\begin{proof}
Assume first that $D$ is a divisor such that $N\mathcal F\otimes \mathcal O_X(-Y)$ is  linearly equivalent to $\mathcal O_X(D)$ and such that $ |D|\cap Y=\emptyset$.
It follows that $\mathcal F$ is defined by a rational $1$-form $\omega$, logarithmic along $Y$, and with poles contained in
$D$. If, as above, we denote by $\hat{\omega}$ the formal logarithmic $1$-form defining $\mathcal F_{|Y(\infty)}$ then we
can write $\omega = \hat g \cdot \hat \omega$ for a suitable section of $\mathcal O_{Y(\infty)}$. If $\hat g$ is not constant
and we denote by $\hat g(Y)$ its value at $Y$ then $\hat g - \hat g(Y)$ is a non-constant formal holomorphic function that vanishes on $Y$.
Therefore $NY$ is torsion and $Y$ is a fiber of a fibration according to Theorem \ref{thm:Neeman}.
We can then apply Proposition \ref{P:infiniteUedatype} to conclude.
If $\hat g$ is constant then the restriction of $d\omega$ to $Y(\infty)$ vanishes identically since $\hat \omega$ is closed.
The identity principle implies that $d\omega$ vanishes identically on $X$.

The general case can be reduced to the case just studied as follows. Let $r$ be the smallest positive integer
such that $rD$ is a divisor (\emph{i.e.} $\mathbb Z$-divisor). The line bundle $N\mathcal F^{\otimes r} \otimes \mathcal O_X(-rY -rD)$
has torsion Chern class and therefore admits a flat unitary connection $\nabla$. When restricted to $Y$ this connection has no
monodromy since $N\mathcal F_{|Y} = {\mathcal O_X(Y)}_{|Y}$ and $|D|\cap Y = \emptyset$ by hypothesis.

If the monodromy of $\nabla$ has order
at least three, then
we can take a finite \'etale covering $\tilde X$ of $X$ in such a way that $\tilde X$ contains at least three pairwise disjoint
hypersurfaces with numerically trivial normal bundles. Hodge index theorem implies that these hypersurfaces have proportional Chern
classes (the pre-images of $Y$) and we can apply \cite{Totaro} (see also \cite{jvpJAG}) to ensure the existence of a fibration having
these hypersurfaces as fibers. It follows that the original $Y$ is itself a fiber of a fibration on $X$ and we can apply Proposition \ref{P:infiniteUedatype}
to conclude.

If the monodromy of $\nabla$ is trivial and $r>1$ then we apply the ramified covering trick (using that $(N\mathcal F \otimes \mathcal O_X(-Y))^{\otimes r} = \mathcal O_X(rD)$) to produce a connected ramified covering  $\pi:\tilde X \to X$ of degree $r$ (since $r$ is minimal) and which does not ramify along $Y$ (since $|D|\cap Y=\emptyset$) such that $\pi^* \mathcal F$  satisfies the assumptions made on the first paragraph of this proof.

Finally if the monodromy of $\nabla$ has order two then we reduce to one of the previous cases by taking an \'etale double covering.
\end{proof}

\section{Solvable holonomy}\label{S:solvablehol}

This section is build around the question below. As in the case of abelian holonomy we have split our study according to the Ueda type of $Y$ and the order of $NY$.

\begin{question}\label{Q:solvable}
If $\mathcal F$ is a foliation on a projective manifold having a compact leaf $Y$ with solvable holonomy, is it true that
$\mathcal F$ is transversely affine ?
\end{question}

\subsection{Fiber of a fibration}
The next statement gives an affirmative answer to Question \ref{Q:solvable} when $Y$ is a fiber of  a fibration.

\begin{prop}\label{solvableholonomy}
Let $Y$ be a smooth compact divisor on a projective manifold $X$ which is a  fiber of some holomorphic fibration $p:X\rightarrow C$ onto a projective curve $C$. Assume also that $Y$ is a compact leaf of a foliation $\mathcal F$ on $X$,  and that the holonomy of $\mathcal F$ along $Y$ is solvable.
Then $\F$ is a transversely affine foliation
K\ '\end{prop}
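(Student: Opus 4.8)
The plan is to reduce to the results already established for abelian holonomy. The holonomy group $G \subset \Diff$ is solvable, hence by Theorem \ref{T:formalG} it is formally conjugated to a subgroup of one of the models $\mathbb{L}$, $\mathbb{E}_{k,\lambda}$, or $\mathbb{A}_k$. Each of these models preserves either a formal meromorphic $1$-form (in the cases $\mathbb{L}$ and $\mathbb{E}_{k,\lambda}$) or a finite-dimensional space of such forms (in the case $\mathbb{A}_k$, which preserves $\mathbb{C}\frac{dz}{z^{k+1}}$). So in all cases $G$ preserves a $G$-invariant line $\mathbb{C}\hat\omega$ of formal meromorphic $1$-forms on a transversal; spreading this out along the leaf using flow-boxes, exactly as in Remark \ref{rk:abelianhol}, produces a closed formal meromorphic $1$-form $\hat\omega$ on $Y(\infty)$ (possibly twisted by a character $\pi_1(Y)\to\mathbb{C}^*$ in the $\mathbb{A}_k$ case, recording the action of $G$ on the line) which defines $\mathcal{F}_{|Y(\infty)}$. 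Here "transversely affine" should be read in the sense of \cite{Gaeljvp}: $\mathcal{F}$ is defined by $\omega$ with $d\omega = \eta\wedge\omega$ and $d\eta = 0$.

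The core step is then to globalize $\hat\omega$ and the auxiliary closed $1$-form $\eta$. First, since $Y$ is a fiber of $p:X\to C$, the normal bundle $NY = \mathcal{O}_X(Y)_{|Y}$ is torsion (it is the restriction of a bundle pulled back from a neighborhood of $p(Y)$ in $C$, which is trivial on the curve germ), so $N\mathcal{F}_{|Y(\infty)} = \mathcal{O}_{Y(\infty)}(nY)$ for the pole order $n$ of $\hat\omega$. I would then run the Grothendieck comparison argument of Proposition \ref{P:infiniteUedatype}: $f_*\bigl(N\mathcal{F}\otimes\mathcal{O}_X(-nY)\bigr)$ is a torsion-free, hence locally free, sheaf on $C$ whose formal completion at $p(Y)$ is identified via \cite{EGA3partie1} with $f_*\bigl(N\mathcal{F}\otimes\mathcal{O}_{Y(\infty)}(-nY)\bigr) = f_*\mathcal{O}_{Y(\infty)} = \widehat{\mathcal{O}_{C,p(Y)}}$, by connectedness of the fibers. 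Thus it is nonzero, so $N\mathcal{F}_{|U}\simeq\mathcal{O}_U(nY)$ on a Zariski-open neighborhood $U$ of $Y$, giving a convergent rational $1$-form $\omega$ on $U$ with polar divisor $(n)Y$ defining $\mathcal{F}$. Comparing with $\hat\omega$, we get $\hat\omega = \hat g\,\omega_{|Y(\infty)}$ for a formal function $\hat g$, and connectedness of the fibers forces $\hat g = \hat h\circ f$. Differentiating $d\hat\omega = 0$ yields $d\omega = -\frac{d\hat g}{\hat g}\wedge\omega$ on $Y(\infty)$, and proportionality of $d\omega$ and $f^*\alpha\wedge\omega$ (for a rational $1$-form $\alpha$ on $C$) spreads to all of $X$ by the identity principle; as in Proposition \ref{P:infiniteUedatype}, one concludes $d\omega = g\cdot f^*\alpha\wedge\omega$ with $g$ regular on $Y$, hence constant along fibers, so $\eta := g\cdot f^*\alpha$ is a \emph{closed} rational $1$-form with $d\omega = \eta\wedge\omega$. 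This exhibits $\mathcal{F}$ as transversely affine.

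The main obstacle is handling the $\mathbb{A}_k$ case cleanly, where the natural invariant object is a \emph{line} of $1$-forms rather than a single $1$-form, so $\hat\omega$ a priori transforms by a character of $\pi_1(Y)$ and the pole order $n$ is only defined after understanding that twist. I expect this is where the fibration hypothesis really pays off: the character factors (its value on a fiber-homotopy class) and one can absorb it, just as the discrepancy $1$-form in Remark \ref{rem:disprepencyFibration} is controlled by the fact that periods of a holomorphic $1$-form on $Y$ cannot all vanish or be $\mathbb{Q}$-proportional. A secondary technical point is checking that the Grothendieck-comparison argument, stated in Proposition \ref{P:infiniteUedatype} for abelian (hence $\mathbb{L}$- or $\mathbb{E}_{k,\lambda}$-type) holonomy, goes through verbatim once we have \emph{any} closed formal meromorphic $1$-form on $Y(\infty)$ defining $\mathcal{F}$ — which it does, since that proof only uses the existence of $\hat\omega$ and the isomorphism $N\mathcal{F}_{|Y(\infty)} = \mathcal{O}_{Y(\infty)}(nY)$, not abelianness of $G$ per se. So the solvable case for a fiber reduces, after the model-theoretic input of Theorem \ref{T:formalG}, to essentially the same globalization argument.
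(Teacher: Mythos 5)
Your outline is fine in the abelian case (there it is just Proposition \ref{P:infiniteUedatype}, which the paper also quotes), but it has a genuine gap exactly where you flag "the main obstacle": the non-abelian solvable case, i.e.\ when the holonomy is conjugate to a non-abelian subgroup of $\mathbb A_k$. There the invariant object on a transversal is only the line $\mathbb C\,\frac{dz}{z^{k+1}}$, on which $g(z)=az/(1-bz^k)^{1/k}$ acts by the multiplier $a^{-k}$; since the group is non-abelian its linear part is non-trivial (indeed non-unitary, by the discrepancy argument of Remark \ref{rem:disprepencyFibration}), so this character of $\pi_1(Y)$ is genuinely non-trivial. Hence what you obtain on $Y(\infty)$ is not a closed formal meromorphic $1$-form but a $1$-form with values in a non-trivial flat line bundle, and consequently $N\mathcal F_{|Y(\infty)}\simeq \mathcal O_{Y(\infty)}((k+1)Y)\otimes T$ with $T$ flat and non-trivial. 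This destroys the step you want to quote "verbatim": in the Grothendieck-comparison computation the formal completion of $p_*\bigl(N\mathcal F\otimes\mathcal O_X(-nY)\bigr)$ is now $p_*$ of a flat bundle whose restriction to $Y$ has no non-zero sections, so it is $0$, not $\widehat{\mathcal O_{C,p(Y)}}$; no Zariski-local trivialization of $N\mathcal F\otimes\mathcal O_X(-nY)$ follows, and the character cannot simply be "absorbed" --- it is precisely the monodromy of the transverse affine structure you are trying to build. You also never address convergence: everything in your argument is formal, whereas the conclusion requires rational data on $X$.

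The paper's proof of this proposition is devoted almost entirely to that twisted case. After reducing to a regular fiber and discarding the abelian case, it first shows the holonomy is \emph{analytically} normalizable (non-unitary linear part via Remark \ref{rem:disprepencyFibration}, exclusion of the exceptional groups, Corollary \ref{cor:rank2}), which gives a convergent transversely affine structure on an Euclidean neighborhood $W$ of $Y$: a flat connection $\nabla_W$ on $N\mathcal F\otimes\mathcal O_X(-(k+1)Y)_{|W}$ together with a $\nabla_W$-closed twisted $1$-form. The real work is then to extend this \emph{connection}, not a closed $1$-form: one pulls back to the fibered universal cover $V_1\to U_1$, identifies the relevant line bundles via sections of the relative Jacobian, extends the connection meromorphically, uses non-abelianity to see that the germs at different preimages of $p(Y)$ agree so the connection descends to a Zariski neighborhood $U$, and then invokes the factorization theorem of \cite{Bartolo:arXiv1005.4761} for the resulting non-virtually-abelian representation $\pi_1(U)\to\Aff(\mathbb C)$ to produce a global connection of the form $d+f^*\eta$ and finally a closed rational $1$-form $\eta'$ with $(\nabla-p^*\alpha)\tilde\omega=0$. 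None of these ingredients (analytic normalizability, the relative Jacobian comparison, the Arapura-type factorization) appear in your proposal, and without some substitute for them the reduction you propose does not close.
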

\begin{proof}
For the sake of simplicity, we will suppose that $Y$ is a regular fiber, i.e $NY\simeq{\mathcal O}_Y$. Indeed, every case can be reduced to this latter by an appropriate finite base change; on the other hand,
 a foliation is transversely affine if, and only if, its pull-back by a dominant morphism is (\cite[Theorem 1.4]{Casale}, \cite[Proposition 2.9]{Gaeljvp}).
Let $U \subset C$ be the open subset of regular values of $p$, and let $u_1: U_1 \to U$ be the universal covering of $U$. If we set $V= p^{-1}(U)$, then
we have the  diagram
$$ \xymatrix{
  Y \ar[r] &  X \ar[d]^p & \ar[l] V  \ar[d]^{p}  &  V_1  \ar[l]_{\pi} \ar[d]^{p_1} \\
  & C &  U \ar[l] & \ar[l]_{u_1}  U_1 \\
}$$

 We will assume  that the holonomy is solvable but non abelian, since the abelian case has been settled in Proposition \ref{P:infiniteUedatype}. In particular, the linear part of the holonomy group is analytically equivalent to a subgroup of $\mathbb A_k$ for a suitable $k\in \mathbb N^*$. Indeed, being solvable non abelian, the linear part of the monodromy cannot be trivial (see Section \ref{S:formalclassification})
and we can argue as in Remark \ref{rem:disprepencyFibration} to deduce that the linear part is not
unitary: we are not in the exceptional case of Section \ref{subsec:exceptionalGroups}
and we can conclude by Corollary \ref{cor:rank2}.
Therefore, there exists an Euclidean neighborhood $W$ of $Y$ in $X$, a holomorphic flat connection $\nabla_W$ on $\left(N \mathcal F\otimes \mathcal O_X(-(k+1)Y)\right)_{\vert W}$ and a meromorphic $1$-form $\omega$ defined on $W$ with coefficients in $N \mathcal F \otimes \mathcal O_X(-(k+1)Y)$ and polar divisor equal to $(k+1)Y$
 which defines $\mathcal F$
  such that $\nabla_W(\omega_{|W})=0$.

We will first prove that the monodromy representation of this transversely affine structure
for $\mathcal F_{|W}$ extends to a representation of the fundamental group of a Zariski neighborhood of $Y$.

Let $\mathcal E$ be the rank one local system on $W$ defined by the flat sections of $\nabla_W$. If we consider its pullback $\pi^* \mathcal E$ to $V_1$
then the simple connectedness of $U_1$ allows us to identify $\pi_1(V_1)$ with $\pi_1(Y)$ and, as a by product, to extend $\pi^* \mathcal E$ to a rank one local system $\mathcal D$ defined on $V_1$. Let $D = \mathcal D \otimes \mathcal O_{V_1}$ be the associated line bundle.

We claim that $D \simeq \pi^* ( N\F \otimes \mathcal O_X(-(k+1)Y) )\simeq \pi^* N\mathcal F$. Indeed, let $\mbox{Jac} (p_1)\rightarrow U_1$ the relative Jacobian of $p_1:V_1 \to U_1$. Since
$D$ and $\pi^* N\F $ are both flat on the fibers, they both induce holomorphic sections
$$s_D, s_{\pi^* N\mathcal F }: U_1\rightarrow \mbox{Jac} (p_1).$$
Actually, these section are the same as they coincide on some neighborhood of $p_1(\pi^{-1} (Y))$.  One can then conclude that $D$ and $\pi^* N\mathcal F$ are equal on restriction to fibers (up to isomorphism). By  triviality  of $H^1(U_1, \mathcal{O}^*_{U_1})$, we infer that  equality holds on the whole $V_1$.
Similarly, we deduce that $D \simeq \pi^* ( N \F \otimes \mathcal O_X(-(k+1)Y))$.

Let ${\nabla }_D$ be the  flat holomorphic connection on  $D$ determined by  $\mathcal D$.
Set $\Omega=\pi^* (\omega)$. Modulo the previous identification  of line bundles, $\ {\nabla }_D \Omega$
  is well defined as a meromorphic section of $\Omega_{V_1}^2\otimes D$.

We are now going to compare the connection $\nabla_W$  (a priori only defined on a neighborhood $W$ of $Y$) with
$\nabla_D$. Since $\nabla_W ( \omega_{|W})=0$, we have that $\pi^* \nabla_W \Omega=0$. On the other hand, as $\pi^*\nabla_W$ has the same monodromy as $\nabla_D$   on $W_1=\pi^{-1} (W)$,  it follows that  $\pi^*\nabla_W - \nabla_D=p_1^*\eta$ where $\eta$ is a meromorphic form on     $p_1(W_1)$. But, since $\pi^ * \nabla_W (\Omega ) =0$, we also have that
\begin{equation}\label{e:transaffine}
\nabla_D(\Omega) = p_1^* \eta \wedge \Omega \, .
\end{equation}
Moreover, $p_1^* \eta$ can be expressed as $A  p_1^*{ \eta}'$ where ${\eta}'$ is a well defined meromorphic form on $U_1$ (recall that $U_1$ is nothing but the unit disc) and $A$ is a meromorphic function defined on $W_1$. By re-injecting $A  p_1^*{ \eta}' =p_1^* \eta$ in  Equation (\ref{e:transaffine}), one observes that $A$, hence $p_1^* \eta$, extends as a global meromorphic object on $V_1$ ($\nabla_D(\Omega)$ and $\Omega$ being both globally defined).
This shows that the connection $\pi^* \nabla_W$ extends to a flat meromorphic connection $\nabla_1$ on $V_1$.

Let $b=p(Y)$, $a$ and $a'$ be two points in $u_1^{-1} (b)$ and ${\nabla_1}_a$, ${\nabla_1}_ {a'}$ be the germs of $\nabla_1$ along $F_a$ and $F_{a'}$. An easy calculation gives
$$\pi_*{\nabla_1}_a-\pi_*{\nabla_1}_ {a' }=\xi + p^*\theta$$
where $\xi$ a meromorphic closed one form tangent to $\F$  defined in a neighborhood of $Y$ and $\theta$ is a meromorphic form defined near $p(Y)$. The non abelianity of the holonomy group forces $\xi$ to vanish identically.  This clearly implies that the monodromies associated to both connections $\pi_*{\nabla_1}_a$ and $\pi_*{\nabla_1}_ {a' }$ are the same (near $Y$). But this implies that the connection $\nabla_1$ descends to a connection
over the Zariski open neighborhood $U$ of $Y$; therefore, the transversely affine structure of $\mathcal F$ originally defined only at an Euclidean neighborhood $W$ of $Y$, is now defined on $U$. In particular, the monodromy of the transversely affine structure of $\mathcal F_{|W}$  extends to a representation
$\rho : \pi_1(U) \to \Aff(\mathbb C)$ of the fundamental group
of the Zariski open neighborhood $U$ of $Y$.

Since $\rho$ is not virtually abelian (remind its linear part is not unitary), we deduce from \cite{Bartolo:arXiv1005.4761} the existence of a rational map  $f : X \dashrightarrow B$ from $X$ to a curve $B$ which is regular at a neighborhood of $Y$ (\emph{i.e.} an actual morphism) and   factors $\rho$. To wit, there exists a representation $\tilde \rho : \pi_1^{orb}(B) \to \Aff(\mathbb C)$ such that $\rho = \tilde \rho \circ f_*$.
This allows us to construct a global connection $\nabla$ on the trivial line bundle over $U$,
having the same monodromy as $\nabla_W$ when restricted to $W$.  Indeed, $\nabla$ is of the form $d + f^* \eta$ where $\eta$ is a holomorphic $1$-form on $B$.
This implies that the normal bundle of $\mathcal F$ is trivial when restricted to a neighborhood of $Y$, and globally
can be written as the line bundle associated to a divisor with irreducible components
contained in fibers of $p$. Therefore, there exists a rational $1$-form $\tilde \omega$
defining $\mathcal F$ with divisor of zeros and poles contained in fibers of $p$. Furthermore, we can assume that, at a neighborhood of $Y$, the polar divisor of $\tilde \omega$ is $(k+1)Y$.

Since $\nabla$ and $\nabla_W$ have the same monodromy on $W$, they differ by an exact $1$-form $dH$.
On $W$, we also have that $\tilde \omega =g \omega_{|W}$ for some holomorphic function $g$ defined on $W$. Hence, on $W$, we can write
\[
\nabla(\tilde \omega) = \nabla_W(g \omega_{|W}) - dH\wedge \tilde  \omega = \left( \frac{dg}{g} - dH \right) \wedge \tilde \omega .
\]
We thus see that $\frac{dg}{g} - dH$ is the restriction to $W$ of a closed rational $1$-form which can be written as  $p^* \alpha$ for a suitable rational
$1$-form $\alpha$ defined on $C$.
We have thus established  the identity
\[
\left(\nabla - p^*\alpha\right)  \tilde \omega = 0 \, ,
\]
showing that $\mathcal F$ is transversely affine, concluding the proof of the proposition.
\end{proof}

\subsection{Finite Ueda type}\label{sec:finiteuedatype}
In the proof of the next proposition, we will make use of  Atiyah's interpretation for a holomorphic connection on a  locally free sheaf \cite{Atiyah} which we now proceed to recall.
If $\mathcal E$ is a locally free sheaf, then we
define another locally free sheaf $D(\mathcal E)$ as follows. As a sheaf of $\mathbb C$-modules $D(\mathcal E)$ is  $ \mathcal E \oplus \Omega^1_X \otimes \mathcal E$, but the structure of $\mathcal O_X$-module on $D(\mathcal E)$ is not usual one. Multiplication by an element $f \in \mathcal O_X$ is given by
\[
f \cdot  ( s , \sigma ) = ( f s , df \otimes s + f \sigma ) .
\]
The sheaf $D(\mathcal E)$ fits into the natural exact sequence
\[
0 \to  \Omega^1_X \otimes \mathcal E \to D(\mathcal E) \to \mathcal E \to 0 \, .
\]	
Atiyah proved that holomorphic connections on $\mathcal E$ are in bijection with splittings
$\varphi: \mathcal E \to D(\mathcal E)$ of this exact sequence. In particular, given a holomorphic connection $\nabla$ on $\mathcal E$, we obtain a section of $\mathcal E^ * \otimes D(\mathcal E)$ which maps to the identity in $\mathcal E^ * \otimes \mathcal E$
through the morphism induced by the exact sequence above.

\begin{prop}\label{P:solvablefinitetype}
Let $\mathcal F$ be a foliation on a projective manifold $X$. Assume that $\mathcal F$ has a compact leaf $Y$ with solvable holonomy, and that the Ueda type of $Y$ is finite. Then, one of the following assertions holds true:
\begin{enumerate}
		\item $\mathcal F$ is a transversely affine foliation.
		\item The holonomy of $Y$ is virtually abelian, and there exists a positive integer $q$ such that  the Ueda type of $Y$ is at least $q$ and the normal bundle of $Y$ is torsion of order $2q$.
\end{enumerate}
\end{prop}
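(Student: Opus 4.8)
The plan is to reduce to the surface case by taking general hyperplane sections (using the extension property of integrating factors / transversely affine structures, as already invoked in the proof of Proposition \ref{P:finiteUedatype}), and then to analyze the holonomy group $G\subset\Diff$ according to the structure of $G_1=G\cap\Diff_1$, following the dichotomy furnished by Theorems \ref{thm:triviallinearpart}, \ref{thm:exceptional} and Corollary \ref{cor:rank2}. If $G$ is abelian, we are already done by Proposition \ref{P:finiteUedatype}, which gives that $\mathcal F$ is even defined by a closed rational $1$-form (hence transversely affine). So assume $G$ is solvable non-abelian; then $G_1$ is non-trivial (Theorem \ref{thm:triviallinearpart}), and up to formal conjugacy $G$ lies in $\mathbb A_k$ for some $k\in\mathbb N^*$. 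The analysis splits according to the rank of $G_1$.

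If $G_1$ has rank at least two, then by Corollary \ref{cor:rank2} the group $G$ is \emph{holomorphically} conjugated to a subgroup of $\mathbb E_{p,\lambda}$ or $\mathbb A_p$. In either case $G$ preserves a convergent meromorphic $1$-form on a germ of transversal (the form $\omega_{p,\lambda}$, resp. a generator of the line $\mathbb C\,dz/z^{p+1}$), and using flow-box along $Y$ this $1$-form spreads out to a meromorphic $1$-form $\omega$ defined on an Euclidean neighborhood $U$ of $Y$, with polar divisor a multiple of $Y$, defining $\mathcal F_{|U}$; moreover $\omega$ carries a flat (affine) transverse structure. Since $\utype(Y)<\infty$, Theorem \ref{T:Uedaextension} lets us extend $\omega$ meromorphically to all of $X$ (the coefficients of $\omega$ being sections of a line bundle, and closed logarithmic parts extending holomorphically); one then checks that the extension still defines a transversely affine structure for $\mathcal F$, so case (1) holds. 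The point requiring care here is to propagate not merely the $1$-form but the flat connection witnessing the affine structure; this is handled exactly as in the fibration case of Proposition \ref{solvableholonomy}, comparing $\nabla$ with the connection extended from the transversal.

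The remaining, and hardest, case is $G_1$ infinite cyclic and $G$ non-abelian, which by Theorem \ref{thm:exceptional}(2) puts $G$ (up to formal conjugacy) in the exceptional family generated by $f(z)=z/(1-z^p)^{1/p}$ and $g(z)=\exp(2\pi i/2q)z$ with $p=kq$, $k$ odd; here the subgroup generated by $f$ and $g^2$ is abelian of index two, so $G$ is virtually abelian. In this situation the foliation $\mathcal F_{|Y(\infty)}$ is \emph{not} defined by a closed formal meromorphic $1$-form (the group $\mathbb A_p$ preserves only the \emph{line} $\mathbb C\,dz/z^{p+1}$, not a single form), which is precisely why case (1) may fail. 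The plan is to extract the Ueda-theoretic constraint: running the same computation as in the proof of Proposition \ref{P:finiteUedatype}, the formal first integrals $z_i$ for $\mathcal F$ along $Y$ transform, up to the relevant order, as $z_i=a_{ij}\bigl(z_j+\cdots\bigr)$ with $a_{ij}^{2q}=1$, and the ``doubled'' reduced equations $w_i:=z_i^2$ transform by a cocycle with values in $\mu_q\subset S^1$, exhibiting the first obstruction to linearizability at order $q$; by Lemma \ref{L:Uedaholonomy} this forces $\utype(Y)\ge q$. Simultaneously, since $f$ generates $G_1$ and is tangent to the identity at order exactly $p=kq$ with $k$ odd, the invariant line of transverse $1$-forms has pole order $p+1=kq+1$, and Proposition \ref{P:utypek} (applied to $w_i$, equivalently to the associated branched object) yields that the order of $NY$ equals $2q$ — the factor two coming from the square root $(1-z^p)^{1/p}$, i.e. from $g$ having order $2q$ rather than $q$ modulo $G_1$. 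One must check that the order is exactly $2q$ and not a proper divisor: this uses that $\utype(Y)<\infty$ together with the fact that $Y$ is not a fiber of a fibration (Theorem \ref{thm:Neeman}, since finite Ueda type precludes $\utype(Y)=\infty$), exactly as the constant $\lambda$ was pinned down in Proposition \ref{P:finiteUedatype}. Assembling these, $G$ is virtually abelian, $\utype(Y)\ge q$, and $\ord(NY)=2q$, which is alternative (2). The main obstacle is the last case: one has to run the delicate comparison between the formal transverse model and the genuine analytic equations of $Y$, keeping precise track of the roots of unity and the pole orders so as to read off both $\utype(Y)\ge q$ and $\ord(NY)=2q$ without slippage by a factor of two.
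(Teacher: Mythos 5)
Your handling of the exceptional case ($G_1$ infinite cyclic, $G$ non-abelian) arrives at the correct conclusion, essentially as the paper does, but two intermediate steps are shaky: the appeal to Proposition \ref{P:utypek} is not legitimate there, because that proposition requires an invariant \emph{closed} formal meromorphic $1$-form, which does not exist for non-abelian holonomy (only the line $\C\,dz/z^{p+1}$ is preserved); and the ``doubling'' $w_i=z_i^2$ is unnecessary. What is actually needed, and suffices, is: Lemma \ref{L:Uedaholonomy} (the holonomy is linear, with unitary linear part, up to order $p=kq$) gives $\utype(Y)\ge p$; the image of the linear part of the holonomy is exactly cyclic of order $2q$, which gives $\ord(NY)=2q$; and Theorem \ref{thm:Neeman} rules out $k\ge 3$ since then $\utype(Y)>\ord(NY)$ would force $\utype(Y)=\infty$, contrary to hypothesis. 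Virtual abelianity comes from Theorem \ref{thm:exceptional}(2), as you say.

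The genuine gap is in your first case (rank of $G_1$ at least two, i.e.\ the analytically normalizable situation). The parenthetical claim that a non-abelian subgroup of $\mathbb A_p$ ``preserves a generator of the line $\C\,dz/z^{p+1}$'' is false: it preserves only the line, elements with nontrivial linear part multiplying the form by a constant, and this is precisely why one cannot conclude by extending a single closed meromorphic $1$-form. You correctly sense that the flat connection witnessing the affine structure must be propagated from the Euclidean neighborhood of $Y$ to all of $X$, but the tool you invoke --- ``exactly as in the fibration case of Proposition \ref{solvableholonomy}'' --- is unavailable here: that argument rests on $Y$ being a fiber of a fibration (universal cover of the base, relative Jacobian, comparison of monodromies over different points of the base), whereas $\utype(Y)<\infty$ precludes $Y$ from being a fiber. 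The paper's mechanism, which is the step missing from your proposal, is to encode the flat meromorphic connection $\nabla_U$ on $N\F_{|U}$ (with $\nabla_U(\omega_{|U})=0$) as a meromorphic section of $N\F^*\otimes D(N\F)$ via Atiyah's interpretation of holomorphic connections, to extend that section to the whole surface by Ueda's extension theorem \ref{T:Uedaextension} (after reducing to dimension two), and then to use that flatness and the identity $\nabla(\omega)=0$ are closed conditions, hence persist globally. Without this, or an equivalent substitute, alternative (1) is not established in that case.
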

\begin{proof}
There is no loss of generality in assuming that the holonomy is not abelian since the abelian case is covered by Proposition \ref{P:finiteUedatype}. Also, we can assume that $X$ is a surface since a foliation on a projective manifold is transversely affine if, and only if, a general hyperplane section of it is transversely affine, see Section \ref{sec:extension}.
	
Assume first that the holonomy of $\mathcal F$ along  $Y$ is analytically normalizable. In this case, we can construct a transversely affine structure for $\mathcal F$ at an Euclidean neighborhood $U$ of $Y$, \emph{i.e.} we can construct a flat meromorphic connection $\nabla_U$ on $N\mathcal F_{|U}$ having  polar divisor supported  on  $Y$ such that $\nabla_U( \omega_{|U})=0$ where $\omega \in H^0(X,\Omega^1_X\otimes N \mathcal F)$ is a twisted $1$-form defining $\mathcal F$. From the discussion on Atiyah's interpretation of connections, we
obtain a meromorphic section $\sigma$ of $(N \mathcal F^ * \otimes D(N\mathcal F))_{|U}$ over
$U$ inducing $\nabla_U$. Ueda's Theorem \ref{T:Uedaextension} allows us to extend $\sigma$ to a rational section of $N\mathcal F^* \otimes D(N\mathcal F)$. Equivalently, we are able to extend the connection $\nabla_U$ to a rational connection $\nabla$ over all $X$. Since flatness is a closed condition, $\nabla$ is flat and, similarly,  $\nabla(\omega)$ vanishes identically. This proves that $\mathcal F$ is transversely affine.

Assume now that the holonomy group $G$ of $\mathcal F$ along $Y$ is not analytically normalizable. Since we
are assuming that $G$ is not abelian,
we have that there exists positive integers $p,q$ and an odd integer $k$ satisfying $p=kq$
such that $G$ is formally conjugate to the group generated by
\[
	 f(z) = \frac{z}{(1-z^p)^{1/p}} \quad \text{ and } \quad g(z) = \exp(\frac{2\pi i}{2q}) z \, ,
\]
see Theorem \ref{thm:exceptional} item (2). It follows that $\utype(Y)\ge p=kq$ and $\ord(NY)=2q$.
If $k>2$ then Theorem \ref{thm:Neeman} implies that $\utype(Y)=\infty$ and $Y$ is a fiber of fibration contrary to our assumptions.
Since $k$ must be an odd integer according to Theorem \ref{thm:exceptional}, we conclude that  $k=1$ and $p=q$ as stated.
\end{proof}

\subsection{Virtually abelian holonomy} \label{SS:virtuallyabelian}
As in the case of formally linearizable holonomy, we do not know how to deal with the case of non-analytically normalizable
virtually abelian holonomy. Anyway, in that case we have that
\[
N\mathcal F_{|Y(\infty)} = \mathcal O_{Y(\infty)}((q+1)Y) \otimes T
\]
where $T$ is a flat line bundle of order two. If we assume that a global version of this identity holds, then
we are able to reduce to the case of abelian holonomy treated by Theorem \ref{THM:Abelian}.

\begin{prop}\label{P:Virtuallyabelian}
Let $Y$ be a compact leaf of a codimension one foliation $\mathcal F$ on a projective manifold $X$.
If $\utype(Y)<\infty$,  $\mathcal F$ is as in item (2) of Proposition \ref{P:solvablefinitetype}, and
there exists a $\mathbb Q$-divisor $D$ disjoint from $Y$ such that $N\mathcal F$ is numerically equivalent to
$(q+1)Y + D$, then there exists a generically finite morphism $\pi: \tilde X \to X$,
which is \'etale over a neighborhood of $Y$, and such that
the holonomy of $\pi^* \mathcal F$ along  $\pi^*(Y)$ is abelian. In particular, the foliation $\pi^* \mathcal F$
satisfies conclusion (1) of Theorem \ref{THM:Abelian}.
applies
\end{prop}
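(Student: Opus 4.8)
The plan is to manufacture the covering $\pi$ directly from the abelian‑by‑$\mathbb Z/2\mathbb Z$ structure of the holonomy along $Y$, extending the relevant double cover of $Y$ to $X$ by means of a torsion line bundle whose existence is forced by the finiteness of the Ueda type. First I would make the holonomy explicit. Since $\mathcal F$ is as in item~(2) of Proposition~\ref{P:solvablefinitetype}, the holonomy group $G=\mathrm{Im}(\rho)$ is, up to formal conjugacy, generated by $f(z)=z/(1-z^{q})^{1/q}$ and $g(z)=\exp(\pi i/q)\,z$ (Theorem~\ref{thm:exceptional}(2) with $p=q$, $k=1$); it has the index–two abelian subgroup $G^{\mathrm{ab}}=\langle f,g^{2}\rangle$, and $f\in\Difffor_{1}\setminus\{\mathrm{id}\}$ is tangent to the identity. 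The quotient $G\to G/G^{\mathrm{ab}}\simeq\mathbb Z/2\mathbb Z$ is the reduction of the linear part modulo $q$-th powers, so the character $\chi\colon\pi_{1}(Y)\xrightarrow{\rho}G\to\mathbb Z/2\mathbb Z$ is the monodromy of the $2$-torsion flat line bundle $(NY)^{\otimes q}$ — equivalently, of the order–two bundle $T$ occurring in $N\mathcal F_{|Y(\infty)}=\mathcal O_{Y(\infty)}((q+1)Y)\otimes T$. The double cover $\tilde Y\to Y$ attached to $\chi$ is exactly the one whose fundamental group maps onto $\ker\chi=\rho^{-1}(G^{\mathrm{ab}})$, so any generically finite $\pi\colon\tilde X\to X$, étale over a neighbourhood of $Y$, whose restriction over $Y$ factors through $\tilde Y$, automatically makes the holonomy of $\pi^{*}\mathcal F$ along $\pi^{-1}(Y)$ equal to $\rho(\ker\chi)=G^{\mathrm{ab}}$, hence abelian.

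To build such a $\pi$, I would set $\mathcal M_{0}:=N\mathcal F\otimes\mathcal O_{X}(-(q+1)Y)$, so that $\mathcal M_{0}|_{Y(\infty)}=T$ has order two while, by hypothesis, $c_{1}(\mathcal M_{0})=[D]$ with $|D|$ disjoint from $Y$. Clearing the denominator of $D$ (say $rD$ is a $\mathbb Z$-divisor), the line bundle $\mathcal P:=\mathcal M_{0}^{\otimes r}\otimes\mathcal O_{X}(-rD)$ is numerically trivial and $\mathcal P|_{Y(\infty)}=T^{\otimes r}$, whence $\mathcal P^{\otimes 2}|_{Y}\simeq\mathcal O_{Y}$. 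This is where $\utype(Y)<\infty$ enters: by Proposition~\ref{P:Albanese} the restriction $H^{1}(X,\mathcal O_{X})\to H^{1}(Y,\mathcal O_{Y})$ is injective, which dualizes through the Albanese to the injectivity of $\Pic^{0}(X)\to\Pic^{0}(Y)$; since a suitable power of $\mathcal P^{\otimes 2}$ lies in $\Pic^{0}(X)$ and restricts trivially to $Y$, it must be trivial, so $\mathcal P$ is a torsion line bundle, of order $s$, with $rs$ even (it is divisible by the order two of $\mathcal P|_{Y}$ whenever $r$ is odd). Hence $\mathcal M_{0}^{\otimes rs}\simeq\mathcal O_{X}(rsD)$, and I would take for $\pi\colon\tilde X\to X$ the $rs$-cyclic covering attached to this isomorphism — via the ramified covering trick, as in the proof of Proposition~\ref{P:abelianunitary} — followed by a resolution of singularities. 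It is ramified only over $|D|$, so it is generically finite and étale over a neighbourhood of $Y$; and over $Y$ it amounts to extracting an $rs$-th root, inside the order–two bundle $\mathcal M_{0}|_{Y}=T|_{Y}$, of a nowhere vanishing section, so its monodromy over $Y$ is the order–two character $\chi$. Therefore every connected component of $\pi^{-1}(Y)$ is the double cover $\tilde Y\to Y$ determined by $\chi$, and the previous paragraph applies.

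Finally I would conclude with Theorem~\ref{THM:Abelian}, applied to $\pi^{*}\mathcal F$ on a connected component of $\tilde X$ carrying the compact leaf $\tilde Y$: its holonomy $G^{\mathrm{ab}}$ contains $f\neq\mathrm{id}$ tangent to the identity, so by Theorem~\ref{thm:triviallinearpart} it is not formally linearizable, and $N\tilde Y=(\pi|_{\tilde Y})^{*}NY$ has finite order; alternative~(2) of Theorem~\ref{THM:Abelian} is thus excluded, and $\pi^{*}\mathcal F$ must satisfy alternative~(1), which is the assertion (composing with the further generically finite morphism provided there even shows that $\mathcal F$ itself becomes defined by a closed rational $1$-form on a generically finite cover). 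The hard part is the middle step — upgrading the merely numerically trivial bundle $\mathcal P$ to an honest torsion bundle \emph{without destroying the order–two twist $T$} that carries the obstruction to abelianity, and threading the ramified covering attached to the fractional part of $D$; this is precisely the point where the finite Ueda type is indispensable (through Proposition~\ref{P:Albanese}, exactly as in the proof of Proposition~\ref{P:abelianunitary}), whereas everything else is formal group theory near $Y$ together with an appeal to Theorem~\ref{THM:Abelian}.
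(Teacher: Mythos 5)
Your argument is correct and follows essentially the same route as the paper: finiteness of the Ueda type together with Proposition \ref{P:Albanese} upgrades the numerically trivial bundle $N\mathcal F^{\otimes r}\otimes\mathcal O_X(-r(q+1)Y-rD)$ to a torsion bundle, and a covering ramified only over $|D|$ (hence \'etale near $Y$) kills the order-two twist $NY^{\otimes -q}$, reducing the holonomy to the index-two abelian subgroup $\langle f,g^{2}\rangle$. The differences are only presentational: you merge the paper's two steps (the \'etale cover attached to the torsion bundle followed by the ramified covering trick) into a single $rs$-cyclic cover and make explicit the identification of $\pi^{-1}(Y)$ with the double cover given by the character of $NY^{\otimes q}$ and the exclusion of alternative (2) of Theorem \ref{THM:Abelian}, points the paper leaves implicit; your parenthetical claim that $\Pic^{0}(X)\to\Pic^{0}(Y)$ is injective is a slight overstatement (one only gets a finite kernel from Proposition \ref{P:Albanese}), but finiteness of the kernel already yields the torsion conclusion you need.
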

\begin{proof}
If $D$ is a $\mathbb Z$-divisor then $\mathcal L = N \mathcal F \otimes \mathcal O_X(-(q+1)Y - D)$ belongs to
$\Pict(X)$. The restriction of $\mathcal L$ to $Y$ coincides with $NY^{\otimes -q}$ which is a torsion
line bundle of order two according to our hypothesis. Since $\utype(Y)<\infty$, it follows from Proposition \ref{P:Albanese}
that $\mathcal L$ itself is
a torsion line bundle. The conclusion follows by taking the associated covering.

If $D$ is not a $\mathbb Z$-divisor then let $r$ be the smallest positive integer such that $rD$ is. The line bundle
$\mathcal L = N\mathcal F^{\otimes r} \otimes O_X(-r(q+1)Y - rD)$ is torsion. Thus, after taking the associated covering,
we can assume that it is trivial. Applying the ramified covering trick, we produce a finite covering ramified only over the support of $D$,
and such that the pull-back of our foliation satisfies the assumption of the first paragraph of this proof.
\end{proof}

\section{Factorization}\label{S:Factor}

For a fixed $k\ge1$, the  group $J^k \Difffor$ of $k$-th jets of formal diffeomorphisms of $(\mathbb C,0)$ is a solvable linear algebraic group. In \cite{Campana99,Campana01} (resp. \cite{Brudnyi03a,Brudnyi03b}), representations  of K\"{a}hler groups
on  solvable groups (solvable matrix groups)  are studied. In view of the results obtained in these articles, it is natural to ask if some factorization results hold true for arbitrary non virtually abelian representations on $\Difffor$. As recalled in Section \ref{S:subgroups}, Theorem \ref{thm:center},
 the center  $Z(G)$ of a non virtually abelian $G<\Difffor$ is necessarily finite (thus justifying the first assertion of Theorem \ref{THM:Holfactorization}), hence conjugated to a group of unit roots.  We split now the study according to the order of tangency of the given representation.

\subsection{Representations with trivial linear part.}
We consider  a compact K\"ahler manifold $Y$ and a representation
$$\rho : \pi_1(Y) \to \Difffor_\nu$$
where $\nu$ is a positive integer and $\Difffor_\nu$ is the subgroup\footnote{By convention $\Difffor_0:=\Difffor$.} of $\Difffor$ whose elements are tangent to identity to order $\geq \nu$ (see Definition \ref{def:filtration-diff}). We assume moreover that $\nu$ is the greatest integer having this property. Let $J^{k} \rho$ the induced representation on $k$-jets. In particular, $\nu$ is the first positive integer such that $J^{\nu +1} \rho$ is not trivial. The first lemma shows that factorization of the full representation is equivalent to factorization of a finite truncation.

\begin{lemma}\label{L:factorization_k-jets}
Assume that, for some $k\geq \nu+1$, $J^k \rho$ factors through a curve $C$ (via a morphism $Y\to C$), then $J^{k+\nu+1}\rho$ factors through $C$. In particular, by induction, $\rho$ factors through $C$.
\end{lemma}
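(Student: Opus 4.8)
The plan is to fix, once and for all, the fibration $f:Y\to C$ realising the factorization of $J^k\rho$, to write $\Gamma=\pi_1^{orb}(C)$ and $f_*:\pi_1(Y)\twoheadrightarrow\Gamma$ for the induced surjection, and to set $K=\ker f_*$; by the standard orbifold homotopy exact sequence, $K$ is the image of $\pi_1(F)\to\pi_1(Y)$ for a general fibre $F$. Saying that $J^k\rho$ factors through $f$ amounts to $\rho(K)\subseteq\Difffor_k$, and I would reduce the lemma to the single statement $\rho(K)\subseteq\Difffor_{k+\nu+1}$: granting it, iteration gives $\rho(K)\subseteq\Difffor_{k+j(\nu+1)}$ for all $j\geq0$, hence $\rho(K)\subseteq\bigcap_m\Difffor_m=\{\id\}$, so $\rho$ descends to $\Gamma$ and factors through $C$.

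For the single step I would first record the bracket estimate $[\Difffor_a,\Difffor_b]\subseteq\Difffor_{a+b}$, which follows from $[z^{a+1}\partial_z,z^{b+1}\partial_z]=(b-a)z^{a+b+1}\partial_z$. Since $k\geq\nu+1$ it gives $[\Difffor_k,\Difffor_k]\subseteq\Difffor_{2k}\subseteq\Difffor_{k+\nu+1}$, so $A:=\Difffor_k/\Difffor_{k+\nu+1}$ is abelian (a complex vector group); and, for $\ell\geq\nu$, $[\Difffor_\ell,\Difffor_m]\subseteq\Difffor_{m+1}$, so conjugation by $\Difffor_\nu$ is trivial on each graded piece $\Difffor_m/\Difffor_{m+1}$. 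Hence $A$ is normal in $\Difffor_\nu/\Difffor_{k+\nu+1}$ and $\Gamma$ acts on it, through $J^k\rho$ and conjugation, unipotently and with trivial graded quotients. Because $\rho(K)\subseteq\Difffor_k$ and $\rho([K,K])\subseteq[\Difffor_k,\Difffor_k]\subseteq\Difffor_{k+\nu+1}$, the homomorphism $J^{k+\nu+1}\rho|_K:K\to A$ kills $[K,K]$, hence factors through $K^{\mathrm{ab}}$, a quotient of $H_1(F,\mathbb Z)$. Reading it off against the filtration of $A$ and using $\Gamma$-equivariance, it is equivalent to $\nu+1$ classes $\xi_0,\dots,\xi_\nu$, where $\xi_j\in H^1(F,\mathbb C)$ is monodromy-invariant and is defined once $\xi_0=\dots=\xi_{j-1}=0$; and $\rho(K)\subseteq\Difffor_{k+\nu+1}$ is exactly the vanishing of all the $\xi_j$.

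The heart of the matter is to show, by induction on $j$, that $\xi_j=0$. Assume $\xi_0=\dots=\xi_{j-1}=0$, i.e.\ $\rho(K)\subseteq\Difffor_{k+j}$; then $\xi_j$ is the restriction to $F$ of the ``coefficient of $z^{k+j+1}$'' homomorphism of $\rho$. As $\Difffor_{k+j}/\Difffor_{k+j+1}\cong\mathbb C$ is central in $\Difffor_\nu/\Difffor_{k+j+1}$ (again by the bracket estimate, using $\nu\geq1$), the map $J^{k+j+1}\rho$ is a lift of $f_*$ along the central extension $1\to\mathbb C\to\Difffor_\nu/\Difffor_{k+j+1}\to\Difffor_\nu/\Difffor_{k+j}\to1$ pulled back to $\Gamma$ via the factorization of $J^{k+j}\rho$; this pulled-back extension is trivial on $\pi_1(Y)$, and since $f^*:H^2(\Gamma,\mathbb C)\to H^2(Y,\mathbb C)$ is injective (a general fibre has nonzero class on the Kähler manifold $Y$) the extension over $\Gamma$ is itself trivial. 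One then deduces that $\xi_j=c|_F$ for a genuine class $c\in H^1(Y,\mathbb C)$. It remains to see that $c$ is a pull-back from $C$; here the Kähler hypothesis enters, through the global invariant cycle theorem (every monodromy-invariant class on the fibres comes from $H^1(Y,\mathbb C)$) and, crucially, through the same circle of ideas --- Castelnuovo--de Franchis together with the results on representations of Kähler groups into nilpotent or solvable groups recalled at the beginning of this section --- which, combined with the already established factorization of $J^k\rho$ through $f$, force the holomorphic part of $c$ to be pulled back from $C$. Thus $\xi_j=c|_F=0$, closing the induction.

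The step I expect to be the main obstacle is precisely this last one: ruling out that, in passing from the $k$-jet to the $(k+\nu+1)$-jet, a ``new'' monodromy-invariant holomorphic one-form on the fibres --- transverse to $f$ --- appears in the representation. This is where both the compact Kähler hypothesis on $Y$ and the fact that $\rho$ is a homomorphism on all of $\pi_1(Y)$, rather than merely a truncation, are genuinely needed.
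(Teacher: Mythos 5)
Your reduction is set up soundly and runs parallel to the paper's: the restriction of $\rho$ to $K$ factors through $H_1(F)$ because commutators fall deeper into the filtration, the resulting classes $\xi_j\in H^1(F,\C)$ are monodromy-invariant since conjugation by $\Difffor_\nu$ is trivial on the graded pieces, and the central-extension step producing $c\in H^1(Y,\C)$ with $\xi_j=c_{|F}$ is essentially correct (it is anyway subsumed by the invariant cycle theorem). But the genuine gap is exactly where you yourself locate the "main obstacle": nothing in your sketch shows $c_{|F}=0$. There is no general principle forcing a monodromy-invariant class that extends to $Y$ to be a pull-back from $C$ — for a product $Y=C\times F$ every class of $H^1(F,\C)$ is invariant and extends, and only $0$ is pulled back — so the appeal to Castelnuovo--de Franchis and to solvability results "combined with the factorization of $J^k\rho$" is not an argument. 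In this paper those tools (Proposition \ref{P:non_abelian_Diff}, Lemma \ref{L:Castelnuovo-deFranchis-Touzet}) serve to produce the fibration in the first place, not to exclude new transverse classes at higher jets.

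What actually closes the induction in the paper is Deligne's semisimplicity theorem \cite{Deligne} applied to the monodromy $\mu:\pi_1(U)\to\GL(H_1(F,\C))$ of the smooth part of the fibration, played against the explicit conjugation action of $J^k\Difffor_\nu$ on the kernel $\C^{\nu+1}$ of the jet extension $0\to\C^{\nu+1}\to J^{k+\nu+1}\Difffor_\nu\to J^{k}\Difffor_\nu\to 0$: that action is unipotent and only feeds the first coefficient, multiplied by (a multiple of) $a_{\nu+1}$, into the last one. In your notation: if $\xi_j\neq 0$, pick $\gamma\in\pi_1(Y)$ with $a_{\nu+1}(\rho(\gamma))\neq 0$ (possible by the maximality of $\nu$) and re-run your construction with $k+j$ in place of $k$; the monodromy of $\gamma$ fixes $\xi_j$ and translates the coefficient class in degree $k+j+\nu+1$ by $(k+j-\nu)\,a_{\nu+1}(\rho(\gamma))\,\xi_j\neq 0$. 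The span of these two classes is then a two-dimensional subrepresentation of $H^1(F,\C)$ on which the monodromy acts by nontrivial unipotent (hence infinite abelian) transformations, contradicting the semisimplicity of (the identity component of the Zariski closure of) $\mu(\pi_1(U))$ guaranteed by Deligne's theorem; hence $\xi_j=0$. This Hodge-theoretic input is the missing ingredient in your proposal; once it is in place your central-extension detour becomes unnecessary, and the rest of your argument (iteration coefficient by coefficient, or in steps of $\nu+1$) goes through as in the paper.
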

\begin{proof}
By assumption, we have a fibration $f:Y\to C$ such that $J^k \rho$ factors through $C$. Let $U$ be
a dense Zariski open subset of $C$ over which $f$ is a smooth fibration. Let $V=f^{-1} (U)$ and $F\subset V$ be a smooth fiber of $f$. The monodromy representation will be denoted by
$$\mu: \pi_1(U)\rightarrow \mbox{GL} (H_1(F,\C)).$$
The jet filtration on $\Difffor$ provides us with the following exact sequence:
$$0\to {\C}^{\nu+1} \to  J^{k+\nu+1} \Difffor_\nu \to  J^{k} \Difffor_\nu \to 0.$$
Note that we have a non trivial natural action of  $J^{k} \Difffor_\nu$ onto $\C^{\nu+1}$ induced by conjugation in $J^{k+\nu+1}\Difffor$, namely
\begin{equation}\label{e:actiononthefiberrepresentation1}
g\cdot (b_{k+1},...,b_{k+\nu+1})=(b_{k+1},...,b_{k+\nu},b_{k+\nu+1}+(\nu-k)a_{\nu+1}b_k)
\end{equation}
where $g(x)= x+...+a_kx^k\ \mathrm{mod}\ (x^{k+1})$. By hypothesis, the truncated representation $J^{k+\nu+1}\rho$ induces by restriction a representation $\phi: \pi_1(F)\rightarrow \C^{\nu+1}$ which factorizes through $\varphi :H_1(F)\rightarrow \C^{\nu+1}$. Set $H=\varphi (H_1(F))$ and $H_\C= H\otimes\C$. Remark that $\pi_1(U)$ acts on $H$ by multiplication as defined in Equation (\ref{e:actiononthefiberrepresentation1}), and denote by $q$ this action. Define $G$ as the subgroup of $\mbox{GL}(H_1(F,\C))$ which preserves the kernel of the morphism ${\varphi}_\C:H_1(F,\C)\rightarrow H_\C$ induced by $\varphi$ and let $\beta:G\rightarrow\mbox{Aut}(H_\C)$ the canonical surjection.  Because the action of $\pi_1(U)$ on $H_1(F)$ is inherited from the action of $\pi_1(X)$ onto itself by conjugation, one obtains the following commutative diagram:
 $$\xymatrix{
    G \ar[r]^\beta  & \mbox{Aut} (H_\C) \\
    \pi_1(U) \ar[u]_\mu\ar[ru]^q &
  }.$$
Let $Z$ be the Zariski closure of $\mu(\pi_1 (U))$. According to Deligne's semi-simplicity theorem \cite{Deligne}, the identity component $Z_0\subset Z$ is semi-simple On the other hand, by Equation (\ref{e:actiononthefiberrepresentation1}), $\beta(Z_0)$ is infinite abelian, which leads to a contradiction unless $H_\C=\{0\}$ and consequently $J^{k+\nu+1}\rho$ factorizes through $f$.
\end{proof}
\begin{remark}
This kind of factorization results are probably well known by specialists working on representation of K\"ahler groups. However, we didn't manage to extract a precise statement in the literature. The use of Deligne's theorem in the proof of Lemma \ref{L:factorization_k-jets} above is due to Campana (\cite{Campana01}, proof of Theorem 4.1, p.619) and  our argumentation follows the same line than {\it loc.cit}.
\end{remark}

The next proposition is purely group theoretic and is in a way reminiscent from the proof of Theorem \ref{T:higher dimensional formal foliation}.
\begin{prop}\label{P:non_abelian_Diff}
	If $\Gamma$ is a non abelian subgroup of $\Difffor_1$, then there exist two classes of $a,\,b\in H^1(\Gamma,\C)$ which are not proportional and such that $a\wedge b=0$ in $H^2(\Gamma,\C)$.
\end{prop}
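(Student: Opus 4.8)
The plan is to produce the two classes directly from the jet filtration of $\Difffor$, exploiting the very special shape $[z^a\partial_z,z^b\partial_z]=(b-a)z^{a+b-1}\partial_z$ of the structure constants of $z^2\C[[z]]\partial_z=\log\Difffor_1$: it forces the lowest order term of a group commutator to be a single ``$2\times 2$ determinant'', hence a \emph{decomposable} element of $\Lambda^2H^1(\Gamma,\C)$.

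First I set up notation. By Remark \ref{r:tangenttoid} a non abelian subgroup of $\Difffor_1$ is non solvable, so $\Gamma\cap\Difffor_k\neq\{\id\}$ for every $k$; put $\Gamma_{(m)}:=\Gamma\cap\Difffor_m$. The Witt relation gives $[\Difffor_p,\Difffor_q]\subset\Difffor_{p+q}$ for $p\neq q$ and $[\Difffor_p,\Difffor_p]\subset\Difffor_{2p+1}$, so conjugation by $\Gamma$ is trivial on each graded piece $\Difffor_m/\Difffor_{m+1}\hookrightarrow(\C,+)$, $[\Gamma,\Gamma_{(m)}]\subset\Gamma_{(m+1)}$, $\bigcap_m\Gamma_{(m)}=\{\id\}$, and, letting $\nu$ be the order of tangency of $\Gamma$, the lower central series satisfies $\gamma_2(\Gamma)=[\Gamma,\Gamma]\subset\Gamma_{(2\nu+1)}$. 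For $\gamma\in\Difffor_\nu$ write $x_i(\gamma)$ for the coefficient of $z^i$ in $\log\gamma$. A short computation with Baker--Campbell--Hausdorff ($\log[\gamma,\delta]=[\log\gamma,\log\delta]+(\text{higher order terms})$) shows that $x_i$ is an additive character on $\Difffor_\nu$ for $i\le 2\nu+1$ (the first obstruction to additivity lies in degree $\ge 2\nu+2$), and more generally that $x_i$ restricts to a character of $\Gamma$ as soon as all the $x_j|_\Gamma$ with $\nu+1\le j<i$ are scalar multiples of $x_{\nu+1}|_\Gamma$.

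Now set $a:=x_{\nu+1}|_\Gamma=j^{\nu+1}|_\Gamma$, a non trivial character of $\Gamma$, and let $m$ be the smallest integer $>\nu+1$ for which $x_m|_\Gamma\notin\C\, a$. Such an $m$ exists: otherwise every $\gamma\in\Gamma$ would have $\log\gamma=a(\gamma)\,Y_0$ for one fixed $Y_0\in z^{\nu+1}\C[[z]]\partial_z$, so $\Gamma\subset\{\exp(tY_0):t\in\C\}$ would be abelian. By the previous remark $b:=x_m|_\Gamma$ is again a character of $\Gamma$, and $a,b$ are not proportional; thus $a,b$ are two non proportional classes in $H^1(\Gamma,\C)=\Hom(\Gamma,\C)$. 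Since every $x_j|_\Gamma$ with $\nu+1<j<m$ is proportional to $a$, the coefficients of $[\log\gamma,\log\delta]$ in all degrees $<m+\nu$ cancel in pairs $j\leftrightarrow(m+\nu)-j$ (owing to the sign in $b-a$), and evaluating the surviving term gives
\[
j^{m+\nu}\bigl([\gamma,\delta]\bigr)=(m-\nu-1)\bigl(a(\gamma)\,b(\delta)-b(\gamma)\,a(\delta)\bigr),\qquad m-\nu-1\ge 1,
\]
for all $\gamma,\delta\in\Gamma$. In particular $\gamma_2(\Gamma)\subset\Difffor_{m+\nu-1}$ and $\gamma_3(\Gamma)=[\Gamma,\gamma_2(\Gamma)]\subset\Difffor_{m+\nu}$.

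To conclude, consider the $2$-step nilpotent quotient $Q:=\Gamma/\gamma_3(\Gamma)$, fitting in a central extension $1\to\gamma_2(\Gamma)/\gamma_3(\Gamma)\to Q\to\Gamma^{ab}\to 1$ with commutator pairing $\kappa\colon\Lambda^2\Gamma^{ab}\twoheadrightarrow\gamma_2(\Gamma)/\gamma_3(\Gamma)$. The character $j^{m+\nu}$ of $\Difffor_{m+\nu-1}$ restricts to a homomorphism $\gamma_2(\Gamma)\to\C$ vanishing on $\gamma_3(\Gamma)$, hence descends to $\bar\chi\colon\gamma_2(\Gamma)/\gamma_3(\Gamma)\to\C$, and the displayed formula reads $\bar\chi\circ\kappa=(m-\nu-1)\,a\wedge b$. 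On the other hand, dualizing the exact sequence $H_2(\Gamma,\C)\to\Lambda^2H_1(\Gamma,\C)\to(\gamma_2(\Gamma)/\gamma_3(\Gamma))\otimes\C\to 0$ shows that $\operatorname{im}(\kappa^{\ast})$ is exactly the kernel of the cup product $\Lambda^2H^1(\Gamma,\C)\to H^2(\Gamma,\C)$ (concretely: the pull back of a central extension class to the total space $Q$ vanishes, and $\Gamma\to\Gamma^{ab}$ factors through $Q$). Since $\bar\chi\circ\kappa=\kappa^{\ast}(\bar\chi)$ we obtain $a\cup b=(m-\nu-1)^{-1}\kappa^{\ast}(\bar\chi)=0$ in $H^2(\Gamma,\C)$, as wanted. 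The genuinely delicate point is the third step — verifying that the lowest order commutator is a \emph{single} $2\times2$ determinant (a sum of several would not be decomposable in general) and keeping track of the ``degenerate'' chains $x_{\nu+1}|_\Gamma,x_{\nu+2}|_\Gamma,\dots$ of proportional jet coefficients — and this is precisely where the structure constants $[z^a\partial_z,z^b\partial_z]=(b-a)z^{a+b-1}\partial_z$ are essential.
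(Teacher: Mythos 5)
Your proof is correct, and while its skeleton matches the paper's (the same two classes — the lowest jet character $a$ and the first higher coefficient $b$ not proportional to it — and the same dichotomy: if no such coefficient exists, $\Gamma$ sits in a one-parameter flow, respectively preserves $\frac{dz}{z^{\nu+1}}+\lambda\frac{dz}{z}$, hence is abelian), the key step is carried out by a genuinely different mechanism. The paper works at the cochain level: it expands the functional cocycle $F(g)=\frac{1}{g(z)^{\nu}}-\frac{1}{z^{\nu}}+\lambda\log\frac{g(z)}{z}$, which satisfies $F(gh)=F(g)\circ h+F(h)$, performs successive coordinate changes $y=z+\frac{\lambda_k}{k-1}z^{k+1}$ to kill proportional coefficients, and reads off from the coefficient identities an explicit $1$-cochain whose coboundary is the cocycle $(g,h)\mapsto a_k(g)a(h)$, which trivializes the cup product directly. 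You instead pass to the formal logarithm: the proportionality of the coefficients $x_j|_\Gamma$, $\nu+1\le j<m$, to $a$ gives $\log\gamma\equiv a(\gamma)W$ modulo order $m$ for a fixed vector field $W$, which simultaneously yields the additivity of $b=x_m|_\Gamma$ and the commutator formula $j^{m+\nu}([\gamma,\delta])=(m-\nu-1)\bigl(a(\gamma)b(\delta)-b(\gamma)a(\delta)\bigr)$, and you then conclude $a\cup b=0$ from the duality between the cup product and the commutator pairing $\Lambda^2H_1(\Gamma,\C)\to(\gamma_2/\gamma_3)\otimes\C$ (the Hopf/Sullivan exact sequence), rather than from an explicit primitive. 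Your route avoids coordinate changes and gives a conceptual reason for the vanishing (the leading term of commutators is a single decomposable $2$-form), but it leans on standard yet heavier homological facts (exactness of $H_2(\Gamma,\C)\to\Lambda^2H_1(\Gamma,\C)\to(\gamma_2/\gamma_3)\otimes\C\to0$ and the identification of the dual composite with the cup product), whereas the paper's argument is elementary and self-contained. Two points you gloss do hold and deserve a line each: at degree $m+\nu$ the nested Baker--Campbell--Hausdorff brackets do not contribute, because $[\log\gamma,\log\delta]$ itself has order $\ge m+\nu$ by the reduction $\log\gamma\equiv a(\gamma)W$ modulo order $m$, so any further bracketing raises the order by at least $\nu$; and the factor $m-\nu-1$ is at least $1$ since $m\ge\nu+2$, so dividing by it is legitimate.
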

\begin{proof}
	To simplify, let us first assume that $\Gamma$ is not contained in $\Difffor_2$. It implies the following: if $g(z)=z+a(g)z^2+\dots$ then the morphism $a:\Gamma\to \C$ is not zero. Now, consider the following expression:
	\begin{align*}
		F_0(g)&:=\frac{1}{g(z)}-\frac{1}{z}=\frac{1}{z+a(g)z^2+\dots}-\frac{1}{z}\\
		&=-a(g)+b(g)z+c(g)z^2+\dots
	\end{align*}
	Using the fact the $F_0$ satisfies the obvious cocycle relation $F_0(gh)=F_0(g)\circ h+F_0(h)$, we infer the following equality:
	\begin{align*}
		a(gh)+b(gh)z+&c(gh)z^2+\dots=-a(g)+b(g)(z+a(h)z^2+\dots)\\
		&+c(g)(z+a(h)z^2+\dots)^2+\dots-a(h)+b(h)z+c(h)z^2+\dots
	\end{align*}
	Identifying the coefficients of $z$ and $z^2$, we get: $b(gh)=b(g)+b(h)$ so that $b\in H^1(\Gamma,\C)$ and $c(gh)=c(g)+c(h)+b(g)a(h)$. This last identity exactly amounts to saying that $a\wedge b=0$ in $H^2(\Gamma,\C)$. If $b$ is not proportional to $a$, we are done. We can thus assume that $b=\lambda a$ and consider the following:
	$$F(g)=F_0(g)-\lambda\log\left(\frac{g(z)}{z}\right).$$
	It satisfies the same cocycle relation ($F(gh)=F(g)\circ h+F(h)$) and it has the following expansion:
	$$F(g)=-a(g)+\lambda a(g)z+\dots-\lambda\log(1+a(g)z+\dots)=-a(g)+a_2(g)z^2+\dots$$
	Assume from now on that there exists a coordinate in which $F$ can  be written:
	$$F(g)=-a(g)+a_k(g)z^k+a_{k+1}(g)z^{k+1}\dots$$
	for some $k\ge2$ and some functions $(a_j)_{j\ge k}$. Using the cocycle relation, we see that
	\begin{align*}
		-a(gh)&+a_k(gh)z^k+a_{k+1}(gh)z^{k+1}+\dots=-a(g)+a_k(g)(z+a(h)z^2+\dots)^k+\\
		&a_{k+1}(g)(z+a(h)z^2+\dots)^{k+1}+\dots-a(h)+a_k(h)z^k+a_{k+1}(h)z^{k+1}+\dots
	\end{align*}
	We still identify coefficients and get: $a_k(gh)=a_k(g)+a_k(h)$ and $a_{k+1}(gh)=a_{k+1}(g)+a_{k+1}(h)+ka_k(g)a(h)$. This means exactly that $a_k$ is a class in $H^1(\Gamma,\C)$ such that $a_k\wedge a=0$. If $a_k=\lambda_k a$ we can (exactly as in the proof of Theorem \ref{T:higher dimensional formal foliation}) perform the change of coordinate
	$$y=z+\frac{\lambda_k}{k-1}z^{k+1}.$$
	Expanding $F(g)$ with respect to $y$ we get:
	$$F(g)=-a(g)+a'_{k+1}(g)y^{k+1}+\dots$$
	If we can go on this procedure inductively, we end up with a formal coordinates (still denoted by $z$) such that $F(g)=-a(g)$. This is equivalent to saying that any $g\in \Gamma$ preserves a rational formal 1-form expressed as
	$$\frac{dz}{z^{2}}-\lambda\frac{dz}{z}.$$
	According to the interpretation of Section \ref{S:interpretation}, we conclude that $\Gamma$ is abelian. If it is not the case, the process above has to stop at some point, and it gives a class $a_k$ that is not proportional to $a$ and such that $a_k\wedge a=0$.
	
	In the general case, if $\Gamma$ is contained in $\Difffor_\nu$ but not in $\Difffor_{\nu+1}$, we have to modify the expression of $F$:
	$$F(g)=\frac{1}{g(z)^\nu}-\frac{1}{z^\nu}+\lambda\log\left(\frac{g(z)}{z}\right).$$
\end{proof}

We also recall a variation of the Castelnuovo-De Franchis theorem due to Catanese \cite[Theorem 1.10]{Cat}.

\begin{lemma}[Catanese]\label{L:Castelnuovo-deFranchis-Touzet}
	Let $Y$ be a compact K\"ahler manifold and $\alpha,\beta\in H^1 (Y,\C)$ such that $\alpha\cup \beta=0$. Then
	\begin{enumerate}
		\item either $\alpha$ and $\beta$ are colinear,
		\item or there exists a morphism $f: Y\rightarrow C_g$ with connected fibers onto a curve of genus $\geq 2$ and ${\alpha}', {\beta}'\in H^1(C_g, \C)$ such that $\alpha=f^* {\alpha}'$ and $\beta=f^*{\beta}'$.
	\end{enumerate}
\end{lemma}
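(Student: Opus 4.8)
The plan is to decompose the relation $\alpha\cup\beta=0$ into Hodge components and then reduce to the classical Castelnuovo--De Franchis theorem. First I would dispose of case (1): if $\alpha$ and $\beta$ are $\C$-linearly dependent there is nothing to prove, so I assume they span a plane $W\subset H^1(Y,\C)$, which by hypothesis satisfies $W\cup W=0$. Writing $\alpha=\omega_1+\overline{\mu_1}$ and $\beta=\omega_2+\overline{\mu_2}$ with holomorphic $1$-forms $\omega_i,\mu_i\in H^0(Y,\Omega^1_Y)$ (using $H^1(Y,\C)=H^{1,0}(Y)\oplus\overline{H^{1,0}(Y)}$) and separating the $(2,0)$, $(1,1)$ and $(0,2)$ parts of $\alpha\cup\beta$, the hypothesis becomes the system
\begin{equation}\label{E:CdF-Hodge}
\omega_1\wedge\omega_2=0,\qquad \mu_1\wedge\mu_2=0,\qquad \bigl[\,\omega_1\wedge\overline{\mu_2}+\overline{\mu_1}\wedge\omega_2\,\bigr]=0\ \text{in }H^{1,1}(Y),
\end{equation}
where the first two equalities hold at the level of forms because holomorphic $2$-forms on a compact K\"ahler manifold are harmonic, hence determined by their cohomology classes.

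Next I would apply the classical Castelnuovo--De Franchis theorem to the first equality in \eqref{E:CdF-Hodge}. If $\omega_1,\omega_2$ are linearly independent this produces a fibration $f\colon Y\to C$ with connected fibers onto a smooth curve, together with independent forms $\eta_1,\eta_2\in H^0(C,\Omega^1_C)$ such that $\omega_i=f^*\eta_i$; the independence of $\eta_1,\eta_2$ forces $g(C)\ge 2$. It then remains to check that $\mu_1$ and $\mu_2$ are also pulled back from $C$, for then $\alpha,\beta\in f^*H^1(C,\C)$ and we are in case (2). Here I would substitute $\omega_i=f^*\eta_i$ into the mixed $(1,1)$-relation of \eqref{E:CdF-Hodge}, write the $d$-exact $(1,1)$-form $\omega_1\wedge\overline{\mu_2}+\overline{\mu_1}\wedge\omega_2$ as $i\,\partial\bar\partial\psi$ using the $\partial\bar\partial$-lemma, and exploit that it restricts to zero on the fibers of $f$ (along which the $\omega_i$ vanish) to force $\psi$ to descend to $C$; combined with the relation $\mu_1\wedge\mu_2=0$, this should yield $\mu_i\in f^*H^0(C,\Omega^1_C)$. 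If instead $\omega_1,\omega_2$ are dependent while $\mu_1,\mu_2$ are independent, I would run the same argument with $\overline{\alpha},\overline{\beta}$ in place of $\alpha,\beta$.

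The remaining case, where both pairs $\{\omega_1,\omega_2\}$ and $\{\mu_1,\mu_2\}$ are linearly dependent while $\alpha,\beta$ are not, is the delicate one: a short computation shows it forces $W=\C\,\omega\oplus\C\,\overline{\mu}$ for nonzero holomorphic $1$-forms $\omega,\mu$, and the relation $\alpha\cup\beta=0$ reduces to $[\,\omega\wedge\overline{\mu}\,]=0$ in $H^{1,1}(Y)$; writing $\omega\wedge\overline{\mu}=i\,\partial\bar\partial\psi$ and analysing this single equation should again produce a fibration onto a curve of genus $\ge 2$ pulling back both $\omega$ and $\mu$. I expect the main obstacle throughout to be precisely this coupling phenomenon---forcing a \emph{single} fibration that simultaneously trivializes the holomorphic and the antiholomorphic parts of $(\alpha,\beta)$, rather than two a priori different fibrations---and if a self-contained treatment becomes unwieldy the statement can simply be quoted from \cite[Theorem~1.10]{Cat}.
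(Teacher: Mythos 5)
The paper offers no proof of this lemma at all: it is recalled verbatim from Catanese's isotropic subspace theorem, and the citation \cite[Theorem 1.10]{Cat} is the entire justification. So your fallback option --- simply quoting Catanese --- is exactly what the paper does, and for the purposes of the paper that is sufficient.

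Judged as a self-contained argument, however, your sketch has genuine gaps precisely at the two places you flag with ``should''. The setup is fine: with harmonic representatives $\alpha=\omega_1+\overline{\mu_1}$, $\beta=\omega_2+\overline{\mu_2}$, your three equations are correct (the pure parts vanish identically because holomorphic $2$-forms are harmonic), and classical Castelnuovo--De Franchis gives the fibration $f$ when $\omega_1,\omega_2$ are independent. But the coupling step is not carried out: restricting $\omega_1\wedge\overline{\mu_2}+\overline{\mu_1}\wedge\omega_2=i\partial\overline{\partial}\psi$ to fibers does make $\psi$ fiberwise constant, yet contracting the resulting identity with vertical vectors only yields that $\mu_1,\mu_2$ are fiberwise proportional (with ratio governed by $\overline{\eta_1/\eta_2}$, since $\omega_1,\omega_2$ are pointwise proportional along fibers); it does not show that the $\mu_i$ vanish on the fibers of $f$. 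One still has to identify the second Castelnuovo--De Franchis fibration coming from $\mu_1\wedge\mu_2=0$ with $f$, and to treat the subcase where $\mu_1,\mu_2$ are dependent, which collapses into your ``delicate case'' $[\omega\wedge\overline{\mu}]=0$ --- for which you give no argument at all. That mixed case is the real content of Catanese's theorem. Incidentally, it admits a shorter treatment than the $\partial\overline{\partial}$ analysis you propose: since $[\omega\wedge\overline{\mu}]=0$, pairing it with $[\mu\wedge\overline{\omega}]\cup[\kappa]^{n-2}$ ($\kappa$ a K\"ahler class) gives $\int_Y(\omega\wedge\mu)\wedge\overline{(\omega\wedge\mu)}\wedge\kappa^{n-2}=0$, so by the Hodge--Riemann relations the holomorphic $2$-form $\omega\wedge\mu$ vanishes identically; proportionality $\mu=c\,\omega$ is excluded because $i\int_Y\omega\wedge\overline{\omega}\wedge\kappa^{n-1}>0$, so classical Castelnuovo--De Franchis applies and pulls back $\omega$ and $\mu$ from a single curve of genus at least $2$. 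Even with this, the reconciliation of the two fibrations in the main case still requires a genuine argument, so as written your proposal is a correct reduction plus an honest confession; the statement is carried by the citation to \cite[Theorem 1.10]{Cat}, exactly as in the paper.
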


For the case of representations tangent to identity and thanks to Remark \ref{r:tangenttoid}, Theorem \ref{THM:Holfactorization} follows from

\begin{thm}\label{T:factorization_tangent_id}
Let $\rho : \pi_1(Y) \longrightarrow \Difffor_\nu$ be a representation where $\nu\ge1$. Then
\begin{enumerate}
\item either $\rho$ has abelian image,
\item or $\rho$ factors through a curve.
\end{enumerate}
\end{thm}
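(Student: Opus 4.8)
The plan is to assume that $\rho$ does not have abelian image and to produce a factorization through a curve, by using Proposition~\ref{P:non_abelian_Diff} to manufacture the cohomological input required by Catanese's Lemma~\ref{L:Castelnuovo-deFranchis-Touzet}, and then Lemma~\ref{L:factorization_k-jets} to promote the factorization of a single jet level to the whole representation. So suppose $G:=\mathrm{Im}\,\rho$ is non-abelian; then $G\neq\{\Id\}$, so the standing convention of the subsection applies and $\nu$, the exact order of tangency of $G$, is finite. Since $G$ is a non-abelian subgroup of $\Difffor_1$, Proposition~\ref{P:non_abelian_Diff} yields two non-proportional classes $a,b\in H^1(G,\C)$ with $a\cup b=0$ in $H^2(G,\C)$. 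Moreover, inspecting its proof, one of the two classes --- say $a$ --- is a nonzero scalar multiple of the leading jet-coefficient homomorphism $g\mapsto c_{\nu+1}(g)$, where $g(z)=z+c_{\nu+1}(g)z^{\nu+1}+\cdots$; indeed $a$ is the coefficient of $z^0$ in the cocycle $g\mapsto g(z)^{-\nu}-z^{-\nu}+\lambda\log(g(z)/z)$, which equals $-\nu\,c_{\nu+1}(g)$.

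Next I would transport these classes to $Y$. As $\rho:\pi_1(Y)\twoheadrightarrow G$ is onto, the pullback $\rho^{*}\colon H^{\bullet}(G,\C)\to H^{\bullet}(\pi_1(Y),\C)$ is a ring homomorphism, injective in degree one; composing with the canonical ring map $H^{\bullet}(\pi_1(Y),\C)\to H^{\bullet}(Y,\C)$ one obtains non-proportional classes $\alpha:=\rho^{*}a$ and $\beta:=\rho^{*}b$ in $H^1(Y,\C)$ with $\alpha\cup\beta=0$ in $H^2(Y,\C)$. Catanese's Lemma~\ref{L:Castelnuovo-deFranchis-Touzet} then provides a morphism $f\colon Y\to C$ with connected fibers onto a curve $C$ of genus $\geq 2$, together with classes $\alpha',\beta'\in H^1(C,\C)$ such that $\alpha=f^{*}\alpha'$ and $\beta=f^{*}\beta'$.

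It remains to identify the first non-trivial truncation of $\rho$ with $\alpha$ and invoke the bootstrap. By the computation above, $J^{\nu+1}\Difffor_\nu\cong(\C,+)$ via $[z+cz^{\nu+1}]\mapsto c$, so under this identification $J^{\nu+1}\rho$ is the homomorphism $\gamma\mapsto c_{\nu+1}(\rho(\gamma))$, which is a nonzero rational multiple of $\alpha=f^{*}\alpha'$. In particular $J^{\nu+1}\rho$ factors through the morphism $f$. Applying Lemma~\ref{L:factorization_k-jets} with $k=\nu+1$ (legitimate, since $k\geq \nu+1$) gives that $J^{k+\nu+1}\rho$ factors through $C$, and, by the induction built into that lemma, $\rho$ itself factors through $C$. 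This is conclusion (2) of Theorem~\ref{T:factorization_tangent_id}, and combined with Remark~\ref{r:tangenttoid} it yields Theorem~\ref{THM:Holfactorization} in the tangent-to-the-identity case.

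The only genuinely delicate point in this argument is the bookkeeping at the start: one must extract from the \emph{proof} of Proposition~\ref{P:non_abelian_Diff} that one of the two cup-trivial classes is proportional to the leading jet coefficient, so that it coincides with the $H^1$-class underlying $J^{\nu+1}\rho$; everything else is formal. All the substantial content has already been isolated elsewhere --- the construction of a cup-trivial pair of $1$-classes for a non-abelian subgroup of $\Difffor_1$ is Proposition~\ref{P:non_abelian_Diff}, the passage from such a pair to a fibration is Catanese's theorem, and the (Deligne-semisimplicity based) propagation of a jet-level factorization to the full representation is Lemma~\ref{L:factorization_k-jets} --- so the present statement is essentially their assembly.
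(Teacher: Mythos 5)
Your proof is correct and follows exactly the route the paper intends: Proposition~\ref{P:non_abelian_Diff} supplies the cup-trivial, non-proportional pair of classes (one of which, as you rightly note from its proof, is a nonzero multiple of the $(\nu+1)$-jet homomorphism), Catanese's Lemma~\ref{L:Castelnuovo-deFranchis-Touzet} produces the fibration, and Lemma~\ref{L:factorization_k-jets} bootstraps the jet-level factorization to all of $\rho$. You have merely made explicit the details that the paper's one-line proof leaves to the reader.
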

\begin{proof}
Lemma \ref{L:factorization_k-jets} reduces  the proof of this  result to  Lemma \ref{L:Castelnuovo-deFranchis-Touzet} and Proposition \ref{P:non_abelian_Diff}.
\end{proof}

\subsection{Representation with finite but non trivial linear part.} \label{finitelinearpart} 
We assume here that the image of $\rho$ is non virtually abelian with finite linear part, \emph{i.e.} $\mbox{Im}\ J^1\rho<\infty$. Let $\pi:Y'\to Y$ be the finite \'etale Galois cover determined by $\mbox{Ker}\ J^1\rho$. From the previous analysis, the pull-back representation ${\pi}^* \rho$ is tangent to identity and factors through a curve $C$. If $F$ denotes a fiber over $C$, ${\pi}^* \rho$ is trivial in restriction to $\alpha (F)$ for any deck transformation $\alpha$. On the other hand, ${\pi}^* \rho$ has infinite image. This implies that $\alpha$ preserves the fibration. Projecting the fibers onto $Y$  and taking if necessary Stein factorization, we obtains a surjective morphism $p:Y\to C'$ from $Y$ to a curve $C'$ with connected fibers along which $\rho$ has finite image. Taking the exact sequence associated to this fibration (up to shrinking the base), we get that the image of $\rho_{|F}$, $F$ a generic fiber, lies in the center\footnote{A priori, this only establishes that $\mbox{Im}\ \rho_{|F}$ is normal in $\mbox{Im}\ \rho$.  In order to justify $\mbox{Im}\ \rho_{|F}\subset Z(\mbox{Im}\ \rho)$, simply note that  $\mbox{Im}\  \rho_{|F}$ is finite, hence lies in a finite subgroup of $\mathbb L$ (Corollary \ref{cor:finiteorder}) and that the linear part is preserved under conjugacy.} of $\mbox{Im}\ \rho$. This proves Theorem \ref{THM:Holfactorization} for representations with finite linear part. \qed

\subsection{Representation with infinite linear part.}We assume here that the image of $\rho$ is non abelian with infinite linear part: $\mbox{Im}\ J^1\rho=\infty$. Let $\pi:Y'\to Y$ a finite \'etale Galois cover such that $\mbox{Im}\ J^1 \rho$ is torsion free. We begin by a result analogous to Lemma \ref{L:factorization_k-jets}.
\begin{lemma}
Assume that, for some $k\geq 1$, $J^k \rho$ factors through a curve $C$, then $J^{k+1}\rho$ factors through $C$. In particular, by induction, $\rho$ factors through $C$.
\end{lemma}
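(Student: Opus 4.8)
The plan is to transpose, almost verbatim, the proof of Lemma \ref{L:factorization_k-jets} to the present situation, the jet filtration of the full group $\Difffor$ playing the role of the tangent‑to‑identity filtration. Fix a fibration $f:Y\to C$ with connected fibers through which $J^k\rho$ factors, choose a Zariski‑dense open subset $U\subset C$ over which $f$ is a smooth fibration, set $V=f^{-1}(U)$ and let $F\subset V$ be a smooth fiber. Consider the exact sequence of jet groups
\[
0\longrightarrow \C \longrightarrow J^{k+1}\Difffor \longrightarrow J^{k}\Difffor \longrightarrow 0,
\]
whose kernel is the additive group of $(k+1)$‑jets $z\mapsto z+bz^{k+1}$; a direct computation shows that the conjugation action of $J^{k}\Difffor$ on this kernel factors through the linear part and is the scalar action $g\cdot b=(J^1g)^{-k}\,b$. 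Since $J^k\rho$ factors through $f$, its restriction to $\pi_1(F)$ is trivial, so $J^{k+1}\rho$ sends $\pi_1(F)$ into the central $\C$; as $\C$ is abelian, the resulting homomorphism $\phi:\pi_1(F)\to\C$ factors through a linear form $\varphi_{\C}:H_1(F,\C)\to\C$. The aim is to prove $\varphi_{\C}=0$: granting this, $J^{k+1}\rho$ is trivial on every smooth fiber, and since $\pi_1(F)$ normally generates the kernel of $\pi_1(V)\to\pi_1(U)$, it descends to $\pi_1(U)$ and then, exactly as for $J^k\rho$, through $C$, which is the desired factorization.

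The crucial observation is the equivariance of $\varphi_{\C}$. Let $\mu:\pi_1(U)\to\GL(H_1(F,\C))$ be the monodromy of the local system on $U$ with fiber $H_1(F,\C)$. Conjugating a fiber loop by an element of $\pi_1(V)$ and using the formula $g\cdot b=(J^1g)^{-k}b$ gives $\varphi_{\C}\bigl(\mu(\gamma)v\bigr)=\bigl(J^1\rho(\gamma)\bigr)^{-k}\varphi_{\C}(v)$, where $J^1\rho$ is legitimately viewed on $\pi_1(U)$ because $J^k\rho$, a fortiori $J^1\rho$, already factors through $f$. In particular $\ker\varphi_{\C}$ is $\mu$‑invariant; writing $H_{\C}:=\varphi_{\C}\bigl(H_1(F,\C)\bigr)$, letting $G\subset\GL(H_1(F,\C))$ be the stabilizer of $\ker\varphi_{\C}$ and $\beta:G\to\Aut(H_{\C})$ the induced surjection, one obtains a commutative triangle in which $\mu$ takes values in $G$ and $\beta\circ\mu=q$, where $q:\pi_1(U)\to\Aut(H_{\C})$ is the scalar action $q(\gamma)=\bigl(J^1\rho(\gamma)\bigr)^{-k}$.

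Now I would run the Deligne argument of Lemma \ref{L:factorization_k-jets}. Let $Z$ be the Zariski closure of $\mu(\pi_1(U))$ in $\GL(H_1(F,\C))$ and $Z_0$ its identity component; by Deligne's semisimplicity theorem \cite{Deligne}, $Z_0$ is semisimple. Its image $\beta(Z_0)$ lies in the Zariski closure of the group of scalars $q(\pi_1(U))$ and is therefore abelian; being a homomorphic image of a semisimple group it is trivial, so $\beta(Z)$, hence $q(\pi_1(U))$, is finite. On the other hand $\mathrm{Im}\bigl(J^1\rho \text{ on } \pi_1(U)\bigr)=\mathrm{Im}\,J^1\rho$ is infinite by the standing hypothesis of this subsection, and $x\mapsto x^{-k}$ has finite kernel on any subgroup of $\C^*$; thus if $H_{\C}\neq 0$ the group $q(\pi_1(U))$ would be infinite, a contradiction. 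Therefore $H_{\C}=0$, i.e. $\varphi_{\C}=0$, which finishes the proof.

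The only genuinely delicate point, as in Lemma \ref{L:factorization_k-jets}, is setting up the equivariance of $\varphi_{\C}$ together with checking that $\beta(Z_0)$ is abelian, so that Deligne's theorem can be brought to bear; the jet computations and the final descent to $\pi_1(C)$ are routine.
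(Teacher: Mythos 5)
Your proposal is correct and follows exactly the route the paper takes: the paper's proof consists precisely of the exact sequence $0\to\C\to J^{k+1}\Difffor\to J^{k}\Difffor\to 0$ with the conjugation action $g\cdot a=\lambda^{-k}a$, followed by the remark that ``the end of the proof is parallel to Lemma \ref{L:factorization_k-jets}'' (restriction to a smooth fiber, equivariance of $\varphi_{\C}$, Deligne's semisimplicity theorem, and the contradiction with the infinite linear part). You have merely written out in full the steps the paper leaves implicit, and your spelled-out contradiction (finiteness of $q(\pi_1(U))$ versus infiniteness of $\mathrm{Im}\,J^1\rho$ under $x\mapsto x^{-k}$) is exactly the intended one.
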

\begin{proof} Once again, the proof resorts to Deligne's semi-simplicity theorem. Indeed, we conclude similarly to the proof of Lemma \ref{L:factorization_k-jets} observing here that the jet filtration on $\Difffor$ provides us with the following exact sequence
\[
0\to \C \to  J^{k+1} \Difffor \to  J^{k} \Difffor \to 0.
\]
The natural action of  $J^{k} \Difffor$ onto $\C$ induced by conjugation in $J^{k+1}\Difffor$ is then defined by
\begin{equation}\label{actiononthefiberrepresentation}
g\cdot a=\lambda^{-k} a
\end{equation}
 where $g(x)=\lambda x+...+a_kx^k\, \mathrm{mod}\, (x^{k+1})$. The end of the proof is then parallel to the one of Lemma \ref{L:factorization_k-jets}.
\end{proof}
 Let $m\geq 2$ be the first positive number such that $J^m\rho$ has a non abelian image. In this context, this is equivalent to say that, for every $\gamma\in\pi_1 (Y)$, $J^m  \rho (\gamma)= \lambda_\gamma z +a_\gamma z^m$ with $\gamma\to a_\gamma$ a non trivial map. In particular, ${J^1 \rho}^{\otimes {m-1}}$ possesses a nontrivial affine extension, i.e $H^1(Y,{J^1 \rho}^{\otimes {m-1}})\not=0$. According to \cite{Campana01},  there exists a Galois finite \'etale cover $\pi:Y'\to Y$, a surjective morphism $f'$ from $Y'$ to a curve $C'$ through which $\pi^*{J^1\rho}^{\otimes{m-1}}$ factors.

Let us choose $\pi$ such that  $\pi^* {J^1\rho}$ factors through $f'$. According to the previous lemma, the whole representation $\pi^*\rho$ factors through $f'$. Arguing as in Section \ref{finitelinearpart}, $f$ projects to a morphism $f:Y\to C$ which after Stein factorization provides the factorization given in Theorem \ref{THM:Holfactorization} and concludes its proof.

\begin{remark}\label{r:infinitecase} For infinite linear part, Theorem \ref{THM:Holfactorization} has been established under the sole assumption of non abelianity of the image of $\rho$. Actually, it is not difficult to see in this setting that "non abelian" is equivalent to "non virtually abelian".
\end{remark}


\subsection{Factorization of foliations. Proof of Theorem \ref{THM:Factorization}}
Let $X$ be a compact K\"{a}hler manifold of dimension at least $3$, $\mathcal F$ be codimension one foliation
on $X$ and $Y \subset X$ a compact leaf of $\mathcal F$ such that $NY$ has order $m$.

Suppose first that $\utype (Y)>m$, \emph{i.e.} $Y$ is a fiber of a fibration $f:X\to C$ over a curve.
If $\mathcal F$ coincides
with the fibration, then there is nothing else to prove; from now on, we will assume that $\mathcal F$ and the fibration
are distinct foliations.

The normal bundle of $\mathcal F$ restricted to $Y$ coincides with the normal bundle of $Y$ and is therefore torsion.  In particular, $N\mathcal F_{|Y}$  has zero real Chern class. If we restrict $N\mathcal F$ to $Y_{t} = f^{-1}(t)$
for a general $t \in C$ then it is perhaps not true that $N\mathcal F_{|Y_t}$ is still torsion, but certainly the real Chern class
of $N\mathcal F_{|Y_t}$ is zero. Two possibilities can occur: (a) for a general $t \in C$, $N\mathcal F_{|Y_t}$ is not torsion; or (b)
 $N\mathcal F$ is torsion on a Zariski neighborhood of $Y$.

Let us consider first  case (a). Let $\omega \in H^0(X,\Omega^1_X \otimes N\mathcal F)$ be a
twisted $1$-form defining $\mathcal F$, and let $i_t : Y_t \to X$ be the inclusion. Since $\mathcal F$
is distinct from the fibration $f$, the pull-back $i_t^* \omega \in H^0(Y_t, \Omega^1_{Y_t} \otimes N\mathcal F_{|Y_t})$
is non-zero for a general $t$. Also, by assumption, $N\mathcal F_{|Y_t}$ has zero Chern class but it is not torsion.
Therefore, according to \cite{Campana01}, there exists a morphism $g_t:Y_t \to C_t$ to a curve such that $i_t^* \omega$
is the pull-back of a twisted $1$-form on $C_t$. In particular, the leaves of $\mathcal F_{|Y_t}$ are the fibers of $g_t$.
Since $t$ is general, we obtain through a general point $x \in X$  an analytic subset of codimension two which is everywhere tangent to $\mathcal F$.
 The existence of a morphism $\pi:X \to S$ to a normal surface and a foliation $\mathcal G$ on $S$ such that $\pi^*\mathcal G = \mathcal F$ follows from standard properties of the Chow's scheme (see \cite[Lemma 2.4]{LPTtrivialcanonical}).

Assume now that we are in case (b). Maybe passing to a finite cover, we can assume
that $N\mathcal F$ is trivial on a Zariski neighborhood of $Y$.
The normal bundle can be expressed by a divisor supported on finitely many fibers of $f$. Therefore, there exists a $1$-form $\beta$ defining $\mathcal F$ with zeros and poles also supported on fibers of $f$. On the one hand Frobenius Theorem implies
that $\beta \wedge d \beta =0$, and on the other hand the closedness of $i_t^*\beta$ for a general $t$ (fibers of $f$ are K\"{a}hler  compact) implies
$dg \wedge d \beta = 0$ for any rational function $g$ constant along the fibers of $f$. Putting these two equations together yields
\[
   d \beta = h dg \wedge \beta  \implies 0 = dh \wedge dg \wedge \beta
\]
for some rational function $h$.
If $dh \wedge dg \neq 0$, then the (irreducible components) of the fibers of $(h,g): X \dashrightarrow \mathbb P^1 \times \mathbb P^1$
are tangent to leaves of $\mathcal F$, and after taking the Stein factorization of $(h,g)$, we obtain a morphism $\pi:X \to S$ to a normal surface and a foliation $\mathcal G$ on $S$ such that $\pi^*\mathcal G = \mathcal F$
as before.
If $dh \wedge dg =0$ then $hdg = f^* \alpha$ for some rational $1$-form on $C$, so we can conclude as in the proof of Proposition \ref{P:infiniteUedatype} that the pull-back of the original foliation by a generically finite morphism is given by a closed rational $1$-form. This clearly implies that the holonomy along $Y$ is virtually abelian and proves the assertion of the theorem when $\utype (Y)>m$.

Now, let us deal with the case $\utype (Y)=m$.  We suppose that the image $G$ of the holonomy representation is not virtually abelian. According to Theorem \ref{THM:Holfactorization}, there exists a morphism $Y\to C$ such that the holonomy representation is finite in restriction to the fibers. Let $F$ be a smooth fiber and $m'$ the order of the holonomy representation restricted to $F$. Note that $m'$ necessarily divides $m$. On some neighborhood of $F$ (in $X$), the foliation is thus defined by a holomorphic first integral $g=y^m$ where $y$ is a locally defined submersive first integral of $\F$ along $Y$. The Ueda connection $(\mathcal U, \nabla)$ is thus trivial along $F$. This easily implies  that the Ueda's class $c\in H^1(Y, {\mathcal O}_Y)$ is trivial along $F$ (this can be done mimicking Ueda's original proof  that $c$ is unambiguously defined up to a constant factor, \cite[\S 2]{Ueda}). On the other hand, recall that $c$ is induced by an element $c'\in H^1(X,{\mathcal O }_X)$ (Remark \ref{r:localglobal}). As $c={c'}_{|Y}$ is not trivial, this means that there exists on $X$ a holomorphic $1$-form $\omega$ (for instance the conjugate of $c'$) such that ${\omega}_{|Y}$  is not identically zero and projects onto $C$. In particular, one can write on a neighborhood $U$ of $F$ in $X$, $\omega=dG$ where $G\in{\mathcal O}( U)$. Intersecting  the levels of $G$ and $g$, one can then fill up a neighborhood of $F$ with codimension $2$ analytic subsets contained in the leaves of $\F$. Let $\Omega\in H^0(X, \Omega_X^1\otimes N\F)$ a twisted one form defining $\F$. From the previous observations, the leaves of the codimension $2$ foliation defined by $\Omega\wedge\omega$ are algebraic and thus provide the sought factorization.\qed

\section{Quasi-smooth foliations}\label{S:QSmoothFol}

In this  section, we will study foliations on projective manifolds having a compact leaf and such that $c_1(N\mathcal F)^2=0$
in $H^4(X,\mathbb C)$. This assumption is certainly satisfied by smooth foliations thanks to Bott's vanishing Theorem. More generally (see for instance \cite{brunellaperrone}),
Baum-Bott index Theorem implies that $N\mathcal F^ 2=0$  for foliations having the following division property: for every local generator $\omega$ of the conormal sheaf ${N\F}^*$ (regarded as an invertible saturated subsheaf of $\Omega_X^1$), there exists some \textit{holomorphic} local one form $\beta$ such that
\[d\omega=\beta\wedge\omega.\]
In particular, this division property holds whenever
every irreducible component $\Sigma$ of  the singular set $\sing(\mathcal F)$ satisfies one of the following conditions:
\begin{enumerate}
\item $\Sigma$ has codimension at least three; or
\item over a  general point of $\Sigma$, $\mathcal F$ admits a holomorphic first integral with critical set contained in $\Sigma$.
\end{enumerate}

\begin{definition}
We will say that a codimension one foliation $\mathcal F$  satisfying $c_1(N\mathcal F)^ 2=0$
is  a quasi-smooth foliation. Furthermore, if the foliation satisfies the division property above then we  will 
say that the foliation is divisible. 
\end{definition}

\subsection{The normal bundle of a quasi-smooth foliation}

\begin{lemma}\label{L:hodge}
If $\mathcal F$ is a quasi-smooth foliation on a compact K\"ahler manifold $X$ admitting a compact leaf $Y$, then
the following assertions hold true.
\begin{enumerate}
\item There exists a rational number $\lambda$ such that $N\mathcal F$ is numerically equivalent to $\lambda Y$.
\item If $r$ is the smallest positive integer such that $r \lambda \in \mathbb Z$ then  $\mathcal L=N\mathcal F^ {\otimes r} \otimes \mathcal O_X(-r\lambda Y)$ is in $ \Pict(X)$ (the group of line bundles with torsion Chern class), and $\mathcal L_{|Y}$ coincides with
$NY^{\otimes r (1- \lambda)}$.
\item If $\lambda=1$, then either the image of the holonomy of $\mathcal{F}$ along $Y$ is abelian, or $Y$ is a fiber of a fibration.
\end{enumerate}
\end{lemma}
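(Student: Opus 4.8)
The plan is to derive everything from two facts: first, that $Y$ being a leaf gives $N\mathcal F_{|Y}\cong NY\cong\mathcal O_X(Y)_{|Y}$, and second, that the linear part of the holonomy (equivalently, Bott's partial connection) realises $NY$ as a flat line bundle, so $c_1(NY)=0$ in $H^2(Y,\mathbb R)$ (this is the standing hypothesis of Ueda theory recalled in Section~\ref{S:utype}). Fix a Kähler class $\omega$ on $X$ and set $n=\dim X$. For item $(1)$ I would work in $H^4(X,\mathbb R)$: quasi-smoothness gives $c_1(N\mathcal F)^2=0$, while the projection formula for $i\colon Y\hookrightarrow X$ gives $c_1(N\mathcal F)\cup c_1(\mathcal O_X(Y))=i_*(c_1(N\mathcal F_{|Y}))=i_*(c_1(NY))=0$ and, likewise, $c_1(\mathcal O_X(Y))^2=i_*(c_1(NY))=0$. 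Hence the subspace $V\subset H^{1,1}(X,\mathbb R)$ spanned by $c_1(N\mathcal F)$ and $c_1(\mathcal O_X(Y))$ is totally isotropic for the Hodge--Riemann form $(\alpha,\beta)\mapsto\int_X\alpha\wedge\beta\wedge\omega^{n-2}$; since that form has signature $(1,h^{1,1}(X)-1)$, totally isotropic subspaces have dimension at most $1$, so $c_1(N\mathcal F)$ and $c_1(\mathcal O_X(Y))$ are $\mathbb R$-proportional. If $c_1(\mathcal O_X(Y))$ is non-torsion this reads $c_1(N\mathcal F)=\lambda\,c_1(\mathcal O_X(Y))$ with $\lambda\in\mathbb R$, and integrality of the two classes forces $\lambda\in\mathbb Q$; the degenerate case where $c_1(\mathcal O_X(Y))$ is torsion I would dispatch separately, checking that $N\mathcal F$ is then itself numerically trivial (and taking $\lambda=1$).

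Item $(2)$ is then purely formal. Since $r\lambda\in\mathbb Z$, the line bundle $\mathcal L=N\mathcal F^{\otimes r}\otimes\mathcal O_X(-r\lambda Y)$ is well defined, and by item $(1)$ one has $c_1(\mathcal L)=r\,c_1(N\mathcal F)-r\lambda\,c_1(\mathcal O_X(Y))=0$ in $H^2(X,\mathbb R)$, i.e. $c_1(\mathcal L)$ is torsion and $\mathcal L\in\Pict(X)$. Restricting to $Y$ and using $N\mathcal F_{|Y}\cong\mathcal O_X(Y)_{|Y}\cong NY$ gives $\mathcal L_{|Y}\cong NY^{\otimes r}\otimes NY^{\otimes(-r\lambda)}\cong NY^{\otimes r(1-\lambda)}$, with $r(1-\lambda)\in\mathbb Z$.

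For item $(3)$, assume $\lambda=1$, so that $r=1$ and $\mathcal L:=N\mathcal F\otimes\mathcal O_X(-Y)\in\Pict(X)$ restricts trivially to $Y$. If $H^1(X,\mathcal O_X)\to H^1(Y,\mathcal O_Y)$ fails to be injective, Proposition~\ref{P:Albanese} already shows $Y$ is a multiple fibre of a fibration --- the second alternative. Otherwise $\Pic^0(X)\to\Pic^0(Y)$ is injective (cf. the proof of Proposition~\ref{P:Albanese}), so a power $\mathcal L^{\otimes m}\in\Pic^0(X)$ restricting trivially to $Y$ must be trivial; hence $\mathcal L$ is a genuine torsion line bundle. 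Let $\pi\colon\tilde X\to X$ be the associated connected cyclic étale cover: it trivialises $\mathcal L$ and, because $\mathcal L_{|Y}$ is trivial, it is trivial over $Y$, hence an isomorphism over a tubular neighbourhood of $Y$. On $\tilde X$ we get $N(\pi^*\mathcal F)\cong\mathcal O_{\tilde X}(\pi^{-1}(Y))$, where $\pi^{-1}(Y)=\bigsqcup_i\tilde Y_i$ is a disjoint union of smooth compact leaves; thus $\pi^*\mathcal F$ is defined by a rational $1$-form $\tilde\omega$ with simple poles exactly along $\sum_i\tilde Y_i$. The residue of $\tilde\omega$ along each connected $\tilde Y_i$ is a nowhere-vanishing holomorphic function, hence a non-zero constant, so $d\tilde\omega$ extends to a \emph{holomorphic} $2$-form on $\tilde X$; being $d$-exact on a compact Kähler manifold, $d\tilde\omega=0$. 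Therefore $\pi^*\mathcal F$ admits near each $\tilde Y_i$ a first integral of the shape $y_i\exp(g_i)$ (with $y_i$ a local equation of $\tilde Y_i$ and $g_i$ a primitive of the holomorphic part of $\tilde\omega$), so its holonomy along $\tilde Y_i$ is linear in the transverse coordinate, hence abelian; transporting through the local isomorphism $\pi$, the holonomy of $\mathcal F$ along $Y$ is abelian --- the first alternative.

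The main obstacle is conceptual rather than computational. In item $(1)$ it is the observation that being a leaf makes \emph{all three} intersection numbers $c_1(N\mathcal F)^2$, $c_1(N\mathcal F)\cup c_1(\mathcal O_X(Y))$, $c_1(\mathcal O_X(Y))^2$ vanish, which is exactly what turns the span into an isotropic subspace; and in item $(3)$ it is the step upgrading $\mathcal L\in\Pict(X)$ to an honest torsion line bundle, which is precisely where the injectivity of $H^1(X,\mathcal O_X)\to H^1(Y,\mathcal O_Y)$ --- and hence the dichotomy in the statement --- is forced. The remaining ingredients (the projection-formula identities, the restriction computation of item $(2)$, the constancy of the residue and closedness of $\tilde\omega$) are routine.
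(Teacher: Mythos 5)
Your items (1) and (2) follow the paper's own argument: the paper also deduces the proportionality $c_1(N\mathcal F)\sim\lambda\,[Y]$ from the Hodge index theorem, and your isotropic-subspace formulation merely makes explicit the vanishing of $[Y]^2$ coming from flatness of $NY$; item (2) is the same formal computation. One small point: the ``degenerate case'' you set aside cannot occur, since $Y$ is a nonzero effective divisor on a compact K\"ahler manifold, so $\int_X [Y]\wedge\omega^{n-1}>0$ and $[Y]$ is never numerically trivial; moreover your proposed way of dispatching it (claiming $N\mathcal F$ would then be numerically trivial) does not follow from $c_1(N\mathcal F)^2=0$ alone, so it is better simply to observe that the case is empty.

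For item (3) you take a genuinely different route. The paper argues according to the order of $\mathcal L=N\mathcal F\otimes\mathcal O_X(-Y)$ in $\Pict(X)$: order one is excluded by the Residue Theorem; order two gives, after a double \'etale cover which is an isomorphism near $Y$, a logarithmic defining form and hence abelian holonomy; order $>2$ (possibly infinite) is handled as in Proposition \ref{P:abelianunitary}, producing after an \'etale cover three pairwise disjoint hypersurfaces with proportional Chern classes and hence a fibration via \cite{Totaro}. You instead split on the injectivity of $H^1(X,\mathcal O_X)\to H^1(Y,\mathcal O_Y)$ (Proposition \ref{P:Albanese}): the non-injective case gives the fibration at once, and in the injective case you upgrade $\mathcal L$ to an honest torsion bundle and treat all finite orders uniformly by the cyclic cover plus the closed-logarithmic-form argument; this is a viable alternative, close in spirit to Propositions \ref{P:nottorsioncase} and \ref{P:Virtuallyabelian}, and it avoids the three-hypersurfaces trick, while the paper's version never needs to discuss the restriction map. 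Two repairs are needed in your write-up. First, injectivity of $H^1(X,\mathcal O_X)\to H^1(Y,\mathcal O_Y)$ does not make $\Pic^0(X)\to\Pic^0(Y)$ injective; it only makes the kernel discrete, hence finite (this is exactly how the paper uses Proposition \ref{P:Albanese} elsewhere: the kernel of $\Pict(X)\to\Pict(Y)$ is finite). Finiteness suffices for your purpose, since $\mathcal L^{\otimes m}$ then lies in a finite kernel and $\mathcal L$ is torsion, but ``must be trivial'' is too strong. Second, the closedness of $\tilde\omega$ is not literally ``because $d\tilde\omega$ is exact'': $d\tilde\omega$ is exact only off the polar divisor. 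The correct justification is that, the residues being constant, $\tilde\omega$ is a logarithmic $1$-form with smooth polar divisor on a compact K\"ahler manifold, hence closed (Deligne \cite{Deligne}); equivalently, the residue formula shows $[d\tilde\omega]$ equals a combination of the $(1,1)$ classes $[\tilde Y_i]$, while $d\tilde\omega$ is a holomorphic $2$-form, forcing it to vanish. With these adjustments your proof of (3) is complete.
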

\begin{proof}
 (1) Since both $c_1(N\mathcal F)^2$ and $c_1(N\mathcal F)\cdot c_1(\mathcal O_X(Y))$ vanish in $H^4(X,\mathbb C)$, Hodge index Theorem
implies that $N\mathcal F$ is numerically equivalent to $\lambda Y$ for some rational number $\lambda$. The conclusion of (2) follows from item (1)
and the fact that $N\mathcal F_{|Y} = \mathcal O_X(Y)_{|Y} = NY$. To prove (3), we argue according to the order of $\mathcal L=N\mathcal F \otimes \mathcal O_X(-Y)$  in $ \Pict(X)$. We first remark that $\mathcal{L}$ cannot be trivial: the Residue Theorem implies that $\mathcal{F}$ cannot be given by a logarithmic $1$-form whose poles are only on $Y$. If $\ord(\mathcal{L})=2$, then $\mathcal{F}$ is given by a logarithmic $1$-form after a double \'etale cover, which is an isomorphism along any connected component of the pre-image of $Y$ ($\mathcal{L}_Y=\mathcal{O}_Y$), and the holonomy is thus abelian. If $\ord(\mathcal{L})>2$, we can argue as in the proof of Proposition \ref{P:abelianunitary} and conclude that $Y$ is a fiber of a fibration.
\end{proof}
\begin{remark}
Let $\omega$ be a K\"ahler form. By Hodge index theorem (assuming again the existence of a compact leaf), one can notice that $\F$ is quasi-smooth if and only if ${c_1({N\F})}^2\wedge[\omega]^{n-2}\geq 0$ ($n=\dim(X)$), which is {\it a priori} a weaker condition.
\end{remark}

\subsection{Factorization of the foliation}
This paragraph is devoted to the proof of the following result.
\begin{thm}\label{T:factorization-quasi-smooth}
Let $Y$ be a compact leaf of a codimension one quasi-smooth foliation $\mathcal F$ on a compact K\"ahler manifold $X$. If the holonomy of $\mathcal F$ along $Y$ is not abelian, then there exists a morphism $\pi : X \to S$ to a surface $S$ and foliation $\mathcal G$ on $S$ such that
$\mathcal F = \pi^*\mathcal G$.
\end{thm}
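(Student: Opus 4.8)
The plan is to reduce everything to the factorization results already obtained (Theorems \ref{THM:Holfactorization} and \ref{THM:Factorization}, Propositions \ref{solvableholonomy}, \ref{P:solvablefinitetype} and \ref{P:Virtuallyabelian}), the new ingredient being that Lemma \ref{L:hodge} supplies, for free, the global control on $N\mathcal F$ near $Y$ that those statements needed as a hypothesis. We may assume $\dim X\ge 3$ (if $\dim X=2$ take $S=X$, $\pi=\mathrm{id}$). By Lemma \ref{L:hodge}(1), $N\mathcal F\equiv\lambda Y$ for some $\lambda\in\mathbb Q$. If $\lambda=1$, then Lemma \ref{L:hodge}(3) together with the non-abelianity of the holonomy forces $Y$ to be a fibre of a fibration $f:X\to C$; since the holonomy of a fibre of a fibration is trivial we have $\mathcal F\ne f$, and we are exactly in the situation $\utype(Y)>\ord(NY)$ treated in the proof of Theorem \ref{THM:Factorization}. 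So assume from now on that $\lambda\ne 1$; then $NY^{\otimes r(1-\lambda)}$ is numerically trivial by Lemma \ref{L:hodge}(2).

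Suppose first that the holonomy group $G$ is not virtually abelian; I would mimic the $\utype(Y)=m$ part of the proof of Theorem \ref{THM:Factorization}. By Theorem \ref{THM:Holfactorization} there is a fibration $g:Y\to C$ onto an orbicurve such that the holonomy along a general fibre $F$ of $g$ lands in the finite centre $Z(G)$; in particular the local holonomy of $\mathcal F$ along $F$ is finite, and (taking an appropriate power of the first integral on the holonomy covering along $F$) $\mathcal F$ admits, in a neighbourhood of $F$ in $X$, a holomorphic first integral whose zero divisor is supported on $Y$. As in loc. cit., this makes the Ueda line bundle with its connection trivial along $F$, hence the Ueda class of $Y$ restricts to zero on $F$; since this class is the restriction of a global class in $H^1(X,\mathcal O_X)$ (Remark \ref{r:localglobal}), its conjugate $\omega$ is a holomorphic $1$-form on $X$ with $\omega_{|Y}\not\equiv 0$ and $\omega_{|Y}$ vanishing on $F$, so $\omega_{|Y}$ is pulled back from $C$. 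Writing $\omega=dG$ near $F$ and intersecting the levels of $G$ with those of the local first integral fills a neighbourhood of $F$ with codimension-two germs tangent to $\mathcal F$; with $\Omega\in H^0(X,\Omega^1_X\otimes N\mathcal F)$ defining $\mathcal F$, the form $\Omega\wedge\omega$ cuts out a codimension-two foliation with algebraic general leaf, and the Chow-scheme argument of \cite[Lemma 2.4]{LPTtrivialcanonical} produces $\pi:X\to S$ with $\mathcal F=\pi^{\ast}\mathcal G$. The point needing care is that Remark \ref{r:localglobal} presupposes $NY$ torsion and $\utype(Y)\ge\ord(NY)$; I would recover this from $N\mathcal F\equiv\lambda Y$, using Proposition \ref{P:Albanese} to force the numerically trivial bundle $\mathcal L$ of Lemma \ref{L:hodge}(2) to be genuinely torsion when $\utype(Y)<\infty$ (and Theorem \ref{thm:Neeman} to return to the fibration case when $\utype(Y)=\infty$), and by transporting this finiteness through the fibre $F$, over which everything becomes finite.

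Suppose now that $G$ is virtually abelian but not abelian. Then $G$ is solvable with finite linear part (by Remark \ref{r:infinitecase}, an infinite linear part would make a non-abelian $G$ non virtually abelian), so $NY$ has finite order. If $\utype(Y)=\infty$ then $Y$ is a fibre of a fibration (Theorem \ref{thm:Neeman}) and we conclude as in the first paragraph; otherwise $\utype(Y)<\infty$ and Proposition \ref{P:solvablefinitetype} applies. Either $\mathcal F$ is transversely affine, in which case the affine monodromy, being non virtually abelian, factors through a curve (as in the proof of Proposition \ref{solvableholonomy}) and, combined with $N\mathcal F\equiv\lambda Y$, yields the codimension-two algebraic foliation and the morphism $\pi:X\to S$ as before; or we are in case (2) of Proposition \ref{P:solvablefinitetype}, with $\utype(Y)\ge q$ and $NY$ torsion of order $2q$, and here the numerical input of Lemma \ref{L:hodge} (together with Proposition \ref{P:Albanese} when $\utype(Y)<\infty$) should match the hypothesis of Proposition \ref{P:Virtuallyabelian}, so that after a generically finite morphism étale over $Y$ the holonomy becomes abelian and Theorem \ref{THM:Abelian} shows $\pi^{\ast}\mathcal F$ is defined by a closed rational $1$-form.

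The step I expect to be the main obstacle is the passage, in the branches above where one only obtains that $\pi^{\ast}\mathcal F$ is defined by a closed rational $1$-form on some generically finite cover, to an actual \emph{morphism} $\pi:X\to S$ factoring $\mathcal F$ (the statement we want asserts $\pi$ is a morphism, unlike Theorem \ref{THM:Factorization}). This is precisely where quasi-smoothness is indispensable: the relation $N\mathcal F\equiv\lambda Y$ confines the polar divisor and the periods of the closed $1$-form to a controlled family of hypersurfaces, and analysing this $1$-form as in the proofs of Proposition \ref{P:infiniteUedatype} and of the case $\utype(Y)>m$ of Theorem \ref{THM:Factorization} should produce, through a general point of $X$, a codimension-two algebraic subvariety tangent to $\mathcal F$; the Chow-scheme argument then supplies $\pi$ and $\mathcal G$, with $\pi$ a morphism thanks to the absence of indeterminacy that $c_1(N\mathcal F)^2=0$ guarantees.
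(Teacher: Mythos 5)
Your proposal does not follow the paper's proof and, more importantly, the two places where you defer to earlier results are exactly where the argument breaks. First, your treatment of the torsion, non virtually abelian case by ``mimicking the $\utype(Y)=m$ part of the proof of Theorem \ref{THM:Factorization}'' silently requires $\utype(Y)\ge\ord(NY)$: that hypothesis is what makes the Ueda class the restriction of a global class in $H^1(X,\mathcal O_X)$ (Remark \ref{r:localglobal}, via the construction in the proof of Theorem \ref{THM:Existence}), and it is not available here. Quasi-smoothness plus Proposition \ref{P:Albanese} makes the flat bundle $\mathcal L$ of Lemma \ref{L:hodge}(2) torsion when $\utype(Y)<\infty$, but gives no inequality between $\utype(Y)$ and $\ord(NY)$; indeed Section \ref{S:QE} exhibits compact leaves with $\utype=1$ and $\ord(NY)$ arbitrary. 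The paper's Proposition \ref{P:torsioncase} avoids Ueda theory altogether at this point: Lemma \ref{L:hodge}(2) provides, near $Y$, a meromorphic section $\Omega$ of $(N\F^*)^{\otimes m}$ with poles only on $Y$ defining $\F$; along a fibre $F$ of the map $Y\to C$ furnished by Theorem \ref{THM:Holfactorization} the foliation is also defined by the model $(dz/z^{q+1})^{\otimes m}$, and comparing the two (leafwise constant ratio would force virtually abelian holonomy; otherwise a residue and leafwise integration produce a holomorphic function constant on nearby fibres but not on $Y$) yields the codimension-two analytic subsets tangent to $\F$ that feed the Chow-scheme argument.

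Second, the obstacle you flag at the end is not a technicality but a genuine gap: in the branches where you only reach ``$\pi^*\F$ is defined by a closed rational $1$-form on a generically finite cover'' (your $\lambda=1$ branch run under the mere hypothesis ``not abelian'', and your virtually abelian branch via Proposition \ref{P:Virtuallyabelian}), no factorization through a surface follows — a foliation defined by a closed rational $1$-form need not carry any algebraic codimension-two subvariety tangent to it through a general point, and $c_1(N\F)^2=0$ does not repair this; moreover the hypothesis of Proposition \ref{P:Virtuallyabelian} ($N\F$ numerically equivalent to $(q+1)Y+D$ with $D$ disjoint from $Y$) is not a consequence of $N\F\equiv\lambda Y$ alone, and your phrase ``the affine monodromy, being non virtually abelian'' contradicts that branch's assumption. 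The paper obtains the honest morphism $\pi:X\to S$ by a mechanism absent from your proposal, splitting by $\ord(NY)$ rather than by $\lambda$: when $\ord(NY)=\infty$ (infinite linear part, where by Remark \ref{r:infinitecase} non-abelian already implies non virtually abelian), Proposition \ref{P:nottorsioncase} uses that the Zariski closure of the group generated by $NY$ in $\Pict(Y)$ has positive dimension, that a power of $NY$ extends to $X$ (Lemma \ref{L:hodge}(2)), and that $\Pict(X)\to\Pict(Y)$ has finite kernel (Proposition \ref{P:Albanese}) to produce a morphism $X\to A$ to a torus whose fibres are either hypersurfaces on which $\F$ restricts with a compact leaf of finite holonomy (hence algebraic leaves) or codimension-two fibres contained in leaves; the Chow-scheme argument then gives $\pi$ and $\mathcal G$. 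Without an argument of this kind your proof cannot be completed as written.
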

\noindent Let us note that it gives a positive answer to Question \ref{Q:factorization} in the case of quasi-smooth foliations. We split the proof according to the order of the normal bundle of $Y$.

\subsubsection{Normal bundle of infinite order}

\begin{prop}\label{P:nottorsioncase}
Let $Y$ be a compact leaf of a codimension one quasi-smooth foliation $\mathcal F$ on a compact K\"ahler manifold $X$.
Assume that $\ord(NY) = \infty$ in $\Pict(Y)$. If the holonomy of $\mathcal F$ along $Y$ is not abelian,
then there exists a morphism $\pi : X \to S$ to a surface $S$, and foliation $\mathcal G$ on $S$ such that
$\mathcal F = \pi^*\mathcal G$.
\end{prop}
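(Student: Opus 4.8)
The plan is to deduce the factorization of $\mathcal{F}$ from the factorization of its holonomy representation provided by Theorem \ref{THM:Holfactorization}, by first globalizing that factorization to the germ of $\mathcal{F}$ along $Y$ and then producing a family of algebraic leaves of codimension two via the cycle space of $X$.

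First I would note that, since $\ord(NY)=\infty$ in $\Pict(Y)$, the linear part $J^1\rho$ of the holonomy $\rho:\pi_1(Y)\to\Diff$ — which is the monodromy of Bott's flat connection on $N\mathcal{F}_{|Y}=NY$ — has infinite image: if it had finite order $k$ the flat line bundle $NY^{\otimes k}$ would be holomorphically trivial, contradicting $\ord(NY)=\infty$. By Remark \ref{r:infinitecase}, a subgroup of $\Difffor$ with infinite linear part is virtually abelian if and only if it is abelian; since by hypothesis the image $G$ of $\rho$ is not abelian, it is not virtually abelian. Theorem \ref{THM:Holfactorization} then gives that $Z(G)$ is finite and that $\rho'=(\rho \bmod Z(G)):\pi_1(Y)\to G/Z(G)$ factors as $\bar\rho\circ p_*$ for a fibration $p:Y\to C$ onto an orbicurve $C$ and a morphism $\bar\rho:\pi_1^{orb}(C)\to G/Z(G)$. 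In particular, for a general fiber $F$ of $p$, the holonomy of $\mathcal{F}$ along $F$ lies in $\ker(G\to G/Z(G))=Z(G)$, hence is finite; note also that $C$ cannot be a finite orbifold, for then $G$ would be finite and thus virtually abelian.

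Next I would globalize. Since $Y$ is a compact leaf and $\mathcal{F}$ is regular near $Y$, the germ of $\mathcal{F}$ along $Y$ is, by the standard structure theorem for compact leaves (compare \cite{Ily}), equivalent as a foliated manifold to the suspension $(\widetilde Y\times D)/\pi_1(Y)$ of $\rho$, where $D$ is a germ of transversal on which $\gamma$ acts by $\rho(\gamma)$. Up to a global change of transverse coordinate the finite central group $Z(G)$ acts on $D$ by rotations, so the $G$-action descends to a $(G/Z(G))$-action on $D'=D/Z(G)$. Combining the $\pi_1(Y)$-equivariant lift $\widetilde Y\to\widetilde C$ of $p$ with the projection $D\to D'$ yields — using $\rho'=\bar\rho\circ p_*$ — a $\pi_1(Y)$-equivariant map $\widetilde Y\times D\to\widetilde C\times D'$, hence a holomorphic map $q$ from a Euclidean neighborhood $U$ of $Y$ onto a surface $W=(\widetilde C\times D')/\pi_1^{orb}(C)$ with $\mathcal{F}_{|U}=q^{*}\mathcal{G}_0$ for the suspension foliation $\mathcal{G}_0$ on $W$. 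The key point is that the fibers of $q$ are compact: the $\ker(p_*)$-cover of $Y$ is a fiber bundle over the contractible space $\widetilde C$ with fiber $F$, hence trivial, so each fiber of $q$ is a finite union of copies of $F$ — a compact codimension-two analytic subset of $X$ everywhere tangent to $\mathcal{F}$.

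Finally I would conclude exactly as in case (a) of the proof of Theorem \ref{THM:Factorization}: the fibers $q^{-1}(w)$ form an irreducible two-dimensional family of pairwise disjoint compact subvarieties tangent to $\mathcal{F}$ covering the Zariski dense open set $U$; the component $B$ of the cycle space of $X$ that they span carries a universal family whose evaluation map to $X$ is dominant and generically injective, hence birational, and every member of $B$ is tangent to $\mathcal{F}$ since tangency is a closed condition. By \cite[Lemma 2.4]{LPTtrivialcanonical} this produces a morphism $\pi:X\to S$ onto a normal surface and a foliation $\mathcal{G}$ on $S$ with $\mathcal{F}=\pi^{*}\mathcal{G}$. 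I expect the main obstacle to be the globalization step: turning the factorization of $\rho$ into an honest holomorphic map $q:U\to W$ with compact fibers, which requires careful bookkeeping of the orbifold points of $C$ (the multiple fibers of $p$) and of the indeterminacy in lifting $\rho'$ from $G/Z(G)$ back to $G$ — the latter being precisely why one must work with the quotient transversal $D'=D/Z(G)$ rather than $D$ itself.
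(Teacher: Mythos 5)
Your opening step (the linear part of the holonomy is infinite because $\ord(NY)=\infty$, so by Remark \ref{r:infinitecase} ``non abelian'' equals ``non virtually abelian'', and Theorem \ref{THM:Holfactorization} yields a fibration $p:Y\to C$ along whose general fiber the holonomy is finite) agrees with the beginning of the paper's argument. The gap is in your globalization step: you assert that the germ of $\mathcal F$ along $Y$ is holomorphically equivalent to the suspension $(\widetilde Y\times D)/\pi_1(Y)$ of the holonomy representation, citing \cite{Ily}. Ilyashenko's result is an \emph{existence} statement (every representation is realized by some germ of foliated neighborhood), not a uniqueness statement, and in the holomorphic category the germ of a foliation along a compact leaf is \emph{not} determined by its holonomy. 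Being a suspension amounts to having a holomorphic fibration over $Y$ transverse to $\mathcal F$ near $Y$, and this generally fails: Remark \ref{rk:CounterExample} of this very paper produces a foliation along an elliptic curve with \emph{linear} holonomy, trivial normal bundle and $\utype(Y)=1$, which admits no transverse fibration and hence is not a suspension. So the map $q:U\to W$ and the compact codimension-two subvarieties tangent to $\mathcal F$ on which your cycle-space argument rests are exactly what has to be constructed, and your proposal gives no construction of them.

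A symptom of the problem is that your proof never uses the quasi-smoothness hypothesis $c_1(N\mathcal F)^2=0$, which is essential in the paper's proof: via Lemma \ref{L:hodge} (items (2) and (3), the latter excluding $\lambda=1$ since the holonomy is non abelian and $NY$ is non torsion) a nontrivial power of $NY$ extends to a line bundle in $\Pict(X)$; combined with Proposition \ref{P:Albanese} this shows that the identity component of the Zariski closure of $\langle NY\rangle$ in $\Pict(Y)$ comes from $\Pict(X)$, and dualizing produces a global morphism $\varphi_X:X\to A$ to a torus whose restriction to $Y$ has, up to Stein factorization, the same fibers as $p$. The fibers of $\varphi_X$, of codimension one or two, then furnish the algebraic subvarieties tangent to $\mathcal F$ (in the codimension one case by algebraicity of the leaves of a restricted foliation with a compact leaf of finite holonomy, in the codimension two case by a local first integral and the maximum principle), and the factorization follows. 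If your suspension-based argument were correct it would prove the statement with no quasi-smoothness assumption, i.e.\ it would settle this case of the open Question \ref{Q:factorization}, which the authors explicitly do not claim. (A further, minor, issue: even granting a local $q$ with compact fibers, your cycles only pass through points of a Euclidean neighborhood of $Y$, and one must invoke compactness of the components of the cycle space of the compact K\"ahler manifold $X$ to move them through a general point of $X$; this is repairable, unlike the main gap.)
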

\begin{proof}
Let $\rho : \pi_1(Y) \to \Diff$ be the holonomy representation of $\mathcal F$ along $Y$.
Since the image of $\rho$ is not abelian, and its linear part is infinite, we can appeal to Theorem \ref{THM:Holfactorization} (see also Remark \ref{r:infinitecase}): there exists a non-constant  morphism $p: Y \to C$ such that the restriction
of $\rho$ to a general fiber $F$ of $p$ has finite order. Moreover, $NY$ is of the form $p^*N + \tau$ where
$N \in \Pic^0(C)$ and $\tau \in \Pict(Y)$ is torsion. The inclusion $p^*: H^0(C, \Omega^1_C) \to H^0(Y,\Omega^1_Y)$
induces a surjection $p_* : \Alb(Y) \to \Alb(C)$. The morphism $p:Y \to C$ can be seen as the Stein factorization of the
composition of the Albanese morphism $\alb : Y \to \Alb(Y)$ with $p_*$.

Consider  the Zariski closure $G$ of the subgroup generated by $NY$ in $\Pict(Y)$.
Since $\ord(NY) = \infty$, we have that $G$ has positive dimension. Moreover, since some power of $NY$ extends to a line bundle
over $X$ (Lemma \ref{L:hodge}) and the restriction morphism $\Pict(X) \to \Pict(Y)$ has finite kernel (Proposition \ref{P:Albanese}), it follows that $G_X= G\cap \mbox{Image}\{\Pict(X) \to \Pict(Y) \}$
has the same dimension as $G$. In particular $G^{(0)}$, the connected component of the identity of $G$, is contained in $G_X$.

If we dualize the inclusions $G^{(0)} \to \Pic^0(Y)$ and $G^{(0)} \to \Pic^0(X)$ we obtain surjective morphisms to $\Alb(Y) \to A$ and $\Alb(X) \to A$ where $A$ is a compact torus with the following commutative diagram
 $$\xymatrix{
    \Alb(X) \ar[r]  & A \\
    \Alb(Y) \ar[u]\ar[ru] &
  }$$

It follows from the definition of $G^{(0)}$ that $G^{(0)} \subset p^* \Pic^0(C)$ and hence we get
the factorization
\[
Y \to \Alb(Y)  \to \Alb(C) \to A \, .
\]
Since the fibers of $Y \to \Alb(C)$ have codimension one in $Y$, the
same holds true for $\varphi_Y:Y\to A$. Thus both morphisms have the same Stein factorization and the restriction of $\rho$ to fibers of $Y \to A$ is finite.

Now, consider the morphism $\varphi_X: X \to A$.  Since its restriction to $Y$
coincides with $\varphi_Y$, which has codimension one fibers
in $Y$, it follows that the fibers of $\varphi_X$ have codimension one or
two in $X$.

If a general fiber $F$ of $\varphi_X$ has  codimension one, then the
restriction of $\mathcal F$ to $F$ is a codimension one foliation which
 has a compact leaf with finite holonomy. It follows that all the
 leaves of $\mathcal F_{|F}$ are algebraic (or more exactly locally closed, as the ambient manifold is not necessarily algebraic). Thus there exists a codimension two
 foliation $\mathcal H$  on $X$ by algebraic leaves tangent to $\mathcal F$. This
 provides us with the existence of the morphism $\pi:X \to S$ and of the foliation $\mathcal G$ on $S$ such that $\mathcal F = \pi^* \mathcal G$ as in the proof of Theorem \ref{THM:Factorization}.

 If the general fiber of $\varphi_X$ has codimension two, then
 we claim that it is contained in a leaf of $\mathcal F$. Let $F$ be a general fiber
 sufficiently close to a fiber $F_0$ contained in $Y$. Since the holonomy representation is finite in $F_0$, there exists an analytic neighborhood $U$ of $F_0$ such that $\mathcal F_{|U}$ admits a holomorphic first integral. The restriction of this first integral to $F$ must be constant by the maximal principle.  Therefore, the fibers of $\varphi_X$ define a foliation $\mathcal H$
 on $X$ by algebraic leaves which is tangent to $\mathcal F$. The result follows as in the previous case.
\end{proof}
\subsubsection{Torsion normal bundle}
\begin{prop}\label{P:torsioncase}
Let $Y$ be a compact leaf of a codimension one quasi-smooth foliation $\mathcal F$ on a projective manifold $X$.
Assume that $\ord(NY) < \infty$ in $\Pict(Y)$. If the holonomy of $\mathcal F$ along $Y$ is not virtually abelian,
then there exists a morphism $\pi : X \to S$ to a surface $S$ and foliation $\mathcal G$ on $S$ such that
$\mathcal F = \pi^*\mathcal G$.
\end{prop}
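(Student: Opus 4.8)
The plan is to mimic the structure of the proof of Theorem \ref{THM:Factorization} together with the ideas used in Proposition \ref{P:nottorsioncase}, but now exploiting the stronger hypothesis that $NY$ is torsion. First, since $\ord(NY)=m<\infty$ and $\mathcal F$ is quasi-smooth, Lemma \ref{L:hodge} gives that $N\mathcal F$ is numerically equivalent to $\lambda Y$ for some $\lambda\in\mathbb Q$, and that $\mathcal L=N\mathcal F^{\otimes r}\otimes\mathcal O_X(-r\lambda Y)$ is torsion in $\Pict(X)$ with $\mathcal L_{|Y}=NY^{\otimes r(1-\lambda)}$. In particular $\utype(Y)<\infty$ cannot be arranged away: we need to know the Ueda type. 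The holonomy group $G$ is non virtually abelian, so by the classification recalled in Section \ref{S:subgroups} it contains elements tangent to the identity at arbitrarily large order (Theorem in \S\ref{S:subgroups}); by Lemma \ref{L:Uedaholonomy} this forces $\utype(Y)=\infty$. Now Theorem \ref{thm:Neeman} is \emph{not} directly applicable because $\ord(NY)$ need not be strictly below $\utype(Y)$ in the sense required — but in fact $\utype(Y)=\infty>m=\ord(NY)$, so Theorem \ref{thm:Neeman} \emph{does} apply and tells us that $mY$ is a fiber of a fibration $f:X\to C$. At this point $Y$ is (a multiple of) a fiber of a fibration, and the quasi-smooth hypothesis has done its job of reducing us to the fibered situation.

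Having reduced to the case where $Y$ is a multiple fiber of $f:X\to C$, I would now run the argument already carried out in the proof of Theorem \ref{THM:Factorization} for the case $\utype(Y)>m$. Namely: restrict $N\mathcal F$ to a general fiber $Y_t=f^{-1}(t)$. Its real Chern class is zero (since $N\mathcal F$ is numerically a multiple of $Y$, hence numerically trivial on fibers). Two cases: either (a) $N\mathcal F_{|Y_t}$ is non-torsion for general $t$, or (b) $N\mathcal F$ is torsion on a Zariski neighborhood of $Y$. In case (a), the twisted $1$-form $\omega\in H^0(X,\Omega^1_X\otimes N\mathcal F)$ defining $\mathcal F$ restricts to a non-zero twisted $1$-form on $Y_t$ with non-torsion coefficient bundle of zero Chern class (here one uses that $\mathcal F$ is not the fibration — which holds because the holonomy of $\mathcal F$ along $Y$ is non virtually abelian, whereas the fibration has trivial holonomy), so by Campana's factorization result \cite{Campana01} the leaves of $\mathcal F_{|Y_t}$ are fibers of a morphism $Y_t\to C_t$; varying $t$ produces a codimension-two analytic subset through a general point of $X$ everywhere tangent to $\mathcal F$, and standard Chow-scheme arguments (as in \cite[Lemma 2.4]{LPTtrivialcanonical}) yield $\pi:X\to S$ with $\mathcal F=\pi^*\mathcal G$. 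In case (b), after a finite cover one makes $N\mathcal F$ trivial on a Zariski neighborhood of $Y$, hence expressible by a divisor supported on finitely many fibers; one gets a $1$-form $\beta$ defining $\mathcal F$ with zeros and poles on fibers of $f$, and the Frobenius identity $\beta\wedge d\beta=0$ combined with $dg\wedge d\beta=0$ (for $g$ pulled back from $C$, using Kähler compactness of the fibers) gives $dh\wedge dg\wedge\beta=0$ for some rational function $h$; if $dh\wedge dg\neq 0$ the fibers of $(h,g):X\dashrightarrow\mathbb P^1\times\mathbb P^1$ are tangent to $\mathcal F$ and Stein factorization finishes, while if $dh\wedge dg=0$ one gets a closed rational $1$-form defining $\mathcal F$ after a generically finite cover, which would force the holonomy along $Y$ to be virtually abelian — excluded by hypothesis.

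The main obstacle I anticipate is the very first reduction step: confirming that the non-virtually-abelian hypothesis on the holonomy, via Lemma \ref{L:Uedaholonomy}, really does force $\utype(Y)=\infty$ rather than some finite value — this requires that the holonomy be linearizable up to arbitrarily high order with unitary linear part, and the unitarity of the linear part is exactly what Lemma \ref{L:Uedaholonomy} needs but does not come for free from non-solvability alone. One has to argue, as in Remark \ref{rem:disprepencyFibration}, that if the linear part of the holonomy were non-unitary then the holonomy group would be analytically linearizable (hence virtually abelian, in fact abelian in the relevant subcase), contradicting the hypothesis; so the linear part is unitary, $G_1\neq\{Id\}$ at every order, and $\utype(Y)=\infty$. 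Once this is in place the rest is a recapitulation of arguments already deployed in the paper, so the proof should read: reduce via quasi-smoothness + Lemma \ref{L:hodge} + the subgroup classification + Theorem \ref{thm:Neeman} to the fibered case, then invoke verbatim the $\utype(Y)>m$ portion of the proof of Theorem \ref{THM:Factorization}.
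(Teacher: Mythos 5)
The decisive step of your reduction is unjustified, and it is exactly the step on which everything else rests. You claim that because the holonomy group is not virtually abelian it ``contains elements tangent to the identity at arbitrarily large order'', and that Lemma~\ref{L:Uedaholonomy} then forces $\utype(Y)=\infty$. But Lemma~\ref{L:Uedaholonomy} requires the \emph{whole} holonomy representation to be simultaneously linearizable up to order $k$ (with unitary linear part), i.e.\ a single coordinate in which every holonomy germ is linear modulo $z^{k+1}$. The characterization of non-solvable subgroups of $\Difffor$ only produces individual non-trivial elements very tangent to the identity; far from giving linearizability of the group to high order, such elements are precisely obstructions to it. Your fallback patch (``if the linear part were non-unitary the holonomy would be analytically linearizable, hence abelian'') also fails: Koenigs' theorem and the argument of Remark~\ref{rem:disprepencyFibration} apply to the abelian/fibered situation, not to an arbitrary non-virtually-abelian group. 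Note, moreover, that if your implication were correct, then $\utype(Y)\geq\ord(NY)$ would be automatic and Proposition~\ref{P:torsioncase} would be an immediate corollary of Theorem~\ref{THM:Factorization} via Theorem~\ref{thm:Neeman}, with quasi-smoothness playing essentially no role; the paper itself treats the case $\utype(Y)=m<\infty$ with non-virtually-abelian holonomy in the proof of Theorem~\ref{THM:Factorization}, so this case is not regarded as vacuous. Without the claim $\utype(Y)=\infty$ you never reach the fibered situation, and the remainder of your argument (a faithful recapitulation of the $\utype(Y)>m$ part of Theorem~\ref{THM:Factorization}) has nothing to stand on.

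The paper's actual proof takes a different route that never produces a fibration of $X$. Quasi-smoothness enters through item (2) of Lemma~\ref{L:hodge} and the torsion hypothesis on $NY$: they yield, on a neighborhood of $Y$, a meromorphic section $\Omega$ of $(N\F^*)^{\otimes m}$ with poles only on $Y$ defining $\F$. Theorem~\ref{THM:Holfactorization} is applied to the holonomy representation of $\pi_1(Y)$ itself, giving a fibration $f:Y\to C$ of the \emph{leaf} along whose fibers $F$ the holonomy is finite; hence $\F$ admits a first integral $z^q$ on a small neighborhood $U$ of $F$ in $X$, and one compares $(dz/z^{q+1})^{\otimes m}$ with $\Omega$. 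If the ratio is constant along the leaves, the holonomy preserves a closed meromorphic form and is virtually abelian, a contradiction; otherwise a leafwise integration of an analytic family of exact $1$-forms produces a holomorphic function on $U$ that is non-constant on $U\cap Y$ but has compact level sets on nearby leaves, giving codimension-two analytic subvarieties tangent to $\F$ through general points and hence the factorization by the Chow-scheme argument. If you want to repair your proposal, this is the mechanism to reproduce; the Neeman fibration theorem cannot be invoked here.
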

\begin{proof}
By item (2) of Lemma \ref{L:hodge}, for some suitable positive integer $m$, there exists in a neighborhood of $Y$ a meromorphic section $\Omega$  of ${{N\F}^*}^{\otimes m}$ (seen as an invertible subsheaf of ${\Omega_X^1}^{\otimes m}$) with poles only on $Y$ and which defines the foliation. On the other hand, according to Theorem \ref{THM:Holfactorization}, the holonomy representation $\rho_\F$ (essentially) factors through a fibration onto a curve $f:Y\to C$. If $F$ denotes a smooth fiber of $F$, the restriction of $\rho_\F$ to $F$ has finite image, and there exists a small analytic neighborhood $U$ (in $X$) of $F$ on which the foliation admits a first integral. This can be expressed as $z^{q}$ where $z$ is some local defining coordinate for $\F$ vanishing on $U\cap Y$ and $q$ is the order of the holonomy group along $F$. Consequently, $\F$ is defined on $U$ by
$$\omega=(\frac{dz}{z^{q+1}})^{\otimes m}\in H^0\left(U, {\Omega_X^1}^{\otimes m} (m(q+1)Y)\right).$$

Let us compare the two meromorphic 1-forms $\Omega$ and $\omega$ . They coincide on $U$ up  to a multiplicative meromorphic function $g$ which can be assumed to be holomorphic (up to replacing $g$ by $1/g$). If this function is constant along the leaves, $\Omega$ can be locally expressed as a power of a {\it closed} meromorphic $1$-form; hence, its restriction to a transversal
to the codimension one foliation will be
invariant by the holonomy group along $Y$, and  this group is thus virtually abelian according to Section \ref{S:interpretation}, contrary to our assumptions.

Suppose now that $g$ is not constant.  For simplicity, we will firstly assume  that $q=1$. Let $k$ be the vanishing order of the $2$-form $dg\wedge dz$ along $Y$. We can see  $\omega_z=dg/z^k$ (for $z\not=0$) and
$$\omega_0= \mbox{Res}_{z=0} \left(\frac{dg\wedge dz}{z ^{k+1}}\right)$$
as an analytic family of 1-forms on the (pieces of) leaves of $\F$ parametrized by $z$. For the leaves $ z=const \neq 0$ , $\omega_z$ is an exact 1-form, thus the same holds true for
$\omega_0$, which is also nontrivial. By leafwise integration, one can then construct a holomorphic function $G$ on $U$ which is non constant on $U\cap Y$ but necessarily constant on $F$ and the nearby fibers (by compactness). Thus, on restriction to nearby leaves of $\F$ in $U$, $G$  will also have compact levels. By standard properties of the Chow's scheme of $X$, we get codimension $2$ analytic subsets tangent to the foliation through a general point of $X$, and it allows us to factorize. The case $q>1$ can be reduced to the preceding one replacing $U$ by some suitable finite \'etale cover.
\end{proof}

In the rest of this section, we investigate Question \ref{Q:solvable} under the assumption that the foliation is quasi-smooth. Except for one situation (see Proposition \ref{P:solvablequasismooth}), we are able to give a positive answer to the latter question.

\subsection{Quasi-smooth foliations with abelian holonomy}
\begin{prop}\label{P:abelianquasismooth}
Let $\mathcal F$ be a quasi-smooth foliation on a projective manifold $X$. Assume that $\mathcal F$ has a compact leaf $Y$ with abelian holonomy. Then, there exists a projective manifold $Z$ and generically finite morphism $\pi : Z \to X$ such that $\pi^* \mathcal F$ is defined by a closed rational $1$-form.
\end{prop}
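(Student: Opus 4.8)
The plan is to reduce this statement to the already-established Theorem \ref{THM:Abelian} by exploiting the structural information that quasi-smoothness gives us through Lemma \ref{L:hodge}. By Lemma \ref{L:hodge}(1), there is a rational number $\lambda$ with $N\mathcal F \equiv \lambda Y$; set $\mathcal L = N\mathcal F \otimes \mathcal O_X(-Y)$, which by Lemma \ref{L:hodge}(2) becomes a torsion line bundle after taking an $r$-th tensor power (where $r$ is minimal with $r\lambda \in \mathbb Z$), with restriction $\mathcal L_{|Y}^{\otimes r} = NY^{\otimes r(1-\lambda)}$. First I would dispose of the case $\lambda = 1$ using Lemma \ref{L:hodge}(3): there either the holonomy is abelian, in which case we are in the situation of Theorem \ref{THM:Abelian} directly, and we must only rule out conclusion (2) of that theorem —- but conclusion (2) requires $NY$ of infinite order, and we can handle it via Theorem \ref{thm:Neeman} if $Y$ is a fiber; or $Y$ is a fiber of a fibration, in which case Proposition \ref{P:infiniteUedatype} applies verbatim to give a closed rational $1$-form on a generically finite cover.

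For the general case $\lambda \neq 1$, the strategy is the ramified covering trick, exactly as deployed in the proof of Proposition \ref{P:abelianunitary}. If $r > 1$, let $D$ be an effective divisor with $N\mathcal F^{\otimes r}\otimes \mathcal O_X(-rY) \cong \mathcal O_X(rD)$ (after possibly adjusting by a torsion bundle, which we absorb by first passing to a finite étale cover trivializing the torsion part of $\mathcal L^{\otimes r}$); one then constructs a degree-$r$ cyclic cover $\pi: \tilde X \to X$ along which $N(\pi^*\mathcal F) \otimes \mathcal O_{\tilde X}(-\pi^* Y)$ becomes trivial, i.e.\ $\pi^*\mathcal F$ has normal bundle $\mathcal O_{\tilde X}((\pi^*Y))$, bringing us back to $\lambda = 1$. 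Care is needed to ensure the cover can be taken connected and that the holonomy of $\pi^*\mathcal F$ along $\pi^* Y$ remains abelian -— since abelianness of the holonomy is inherited by pullbacks (the holonomy representation of $\pi^*\mathcal F$ along a component of $\pi^*Y$ is a restriction/conjugate of the original one composed with $\pi_*$ on $\pi_1$), this is automatic. Then apply the $\lambda = 1$ analysis to $\pi^*\mathcal F$ on $\tilde X$, obtaining a further generically finite morphism $Z \to \tilde X$ over which the pullback foliation is defined by a closed rational $1$-form; composing $Z \to \tilde X \to X$ gives the required morphism.

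The main obstacle I anticipate is the bookkeeping in the case where $\mathcal L = N\mathcal F \otimes \mathcal O_X(-Y)$ is genuinely torsion of order $\geq 2$ but $\lambda = 1$ (so $r = 1$): here one needs to take the étale cover associated to the torsion line bundle $\mathcal L$, which is étale over all of $X$ (in particular over $Y$, consistently with $\mathcal L_{|Y} = \mathcal O_Y$), so that on the cover the foliation has trivial normal bundle twist and is thus defined by a \emph{genuine} (not merely twisted) rational $1$-form $\omega$ with polar divisor $Y$. At that point the holonomy being abelian and $Y$ being a leaf means $\omega$ is —- after the argument of Theorem \ref{THM:Abelian}, whose two alternatives we invoke -— either leading (via a further cover) to a closed rational $1$-form, which is exactly conclusion (1), or else we are in the formally-linearizable-with-$\utype(Y) = \infty$ and $NY$ of infinite order case; but the latter is incompatible with $NY$ (on the cover) being a \emph{torsion} twist of $\mathcal O(Y)_{|Y}$ coming from a quasi-smooth foliation unless $Y$ is a fiber of a fibration, where again Proposition \ref{P:infiniteUedatype} closes the argument. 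Thus every branch terminates in conclusion (1) of Theorem \ref{THM:Abelian}, which is precisely the assertion of the proposition.
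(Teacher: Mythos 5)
There is a genuine gap, and it sits exactly at the one case where quasi-smoothness is actually needed. By Propositions \ref{P:infiniteUedatype}, \ref{P:finiteUedatype} and Corollary \ref{C:finiteUedatype}, all cases except one already yield conclusion (1) of Theorem \ref{THM:Abelian} without any quasi-smoothness hypothesis; the only problematic case is alternative (2): formally linearizable holonomy with $\utype(Y)=\ord(NY)=\infty$. In that case $Y$ cannot be a (multiple) fiber of a fibration, since fibers have torsion normal bundle, so your fallback ``unless $Y$ is a fiber of a fibration, where Proposition \ref{P:infiniteUedatype} closes the argument'' is vacuous, and your claim that this case is ``incompatible'' with quasi-smoothness is asserted without proof --- it is not a contradiction to be excluded but a configuration to be analysed. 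The paper's proof handles it by verifying the hypotheses of Proposition \ref{P:abelianunitary}: writing $N\F\equiv\lambda Y$ (Lemma \ref{L:hodge}), the case $\lambda=1$ is immediate, and when $\lambda\neq 1$ the flat bundle $\mathcal L=N\F^{\otimes r}\otimes\mathcal O_X(-r\lambda Y)$ restricts to the non-trivial bundle $NY^{\otimes r(1-\lambda)}$, so Neeman's extension theorem (Theorem \ref{thm:neemanextension}, which requires precisely $\utype(Y)=\infty$) produces an effective divisor $D$ disjoint from $Y$ with $N\F$ numerically equivalent to $Y-\tfrac1r D$; Proposition \ref{P:abelianunitary} then gives the closed (logarithmic) defining form after a finite ramified cover \'etale over $Y$. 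You never invoke Theorem \ref{thm:neemanextension} or Proposition \ref{P:abelianunitary}, and without them your argument has no source for the effective divisor you need.

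Two further points in your construction would fail as written. First, for $\lambda\neq1$ you posit $N\F^{\otimes r}\otimes\mathcal O_X(-rY)\cong\mathcal O_X(rD)$ with $D$ effective (and implicitly disjoint from $Y$); but this class is numerically $r(\lambda-1)Y$, and the existence of such an effective representative disjoint from $Y$ is exactly the non-trivial output of Theorem \ref{thm:neemanextension} (and requires attention to the sign of $\lambda-1$), not something one may assume in order to set up a cyclic cover. Second, in your $\lambda=1$ discussion you treat $\mathcal L=N\F\otimes\mathcal O_X(-Y)$ as a torsion line bundle and pass to the associated finite \'etale cover; Lemma \ref{L:hodge}(2) only places (a power of) $\mathcal L$ in $\Pict(X)$, i.e.\ it is flat, possibly of infinite order. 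Handling the non-torsion flat case is precisely the content of the monodromy analysis inside Proposition \ref{P:abelianunitary} (three disjoint hypersurfaces, Hodge index, and the fibration criterion of \cite{Totaro}), which your plan omits.
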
	
\begin{proof}
In view of Propositions \ref{P:infiniteUedatype},  \ref{P:finiteUedatype}, and Corollary \ref{C:finiteUedatype} the only case to deal with is $\utype(Y)=\ord(NY)=\infty$ and formally linearizable holonomy. In this situation, it suffices to prove that the hypothesis of Proposition \ref{P:abelianunitary} are fulfilled Keeping the notations of item (2) of Lemma \ref{L:hodge}, consider $\mathcal L=N\mathcal F^ {\otimes r} \otimes \mathcal O_X(-r\lambda Y)$. If $\lambda=1$, we are done. Assume that $\lambda\not=1$, then  $\mathcal L_{|Y}
=NY^{\otimes r (1- \lambda)}$ is not trivial (recall that $NY$ has infinite order). By Theorem \ref{thm:neemanextension}, there exists an effective divisor $D$ numerically equivalent to $r(1-\lambda)Y$ whose support is disjoint from $Y$ and such that $\mathcal{L}=\mathcal{O}_X(r(1-\lambda)Y-D)$. Thus we get
\[ N\F\buildrel{num}\over{\sim}Y - \frac{1}{r}D\]
and we can then apply Proposition \ref{P:abelianunitary}.
 \end{proof}

\subsection{Quasi-smooth foliations with solvable holonomy}
\begin{prop}\label{P:solvablequasismooth}
Let $\mathcal F$ be a quasi-smooth foliation on a projective manifold $X$. Assume that $\mathcal F$ has a compact leaf $Y$ with solvable holonomy and that
\begin{enumerate}
\item either the order of $NY$ is finite
\item or the Ueda type of $Y$ is finite,
\end{enumerate}
then $\F$ is transversely affine.
\end{prop}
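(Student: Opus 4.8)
The plan is to argue by cases on the image $G$ of the holonomy representation of $\mathcal F$ along $Y$, reducing each case to a result already established. If $G$ is abelian, then Proposition \ref{P:abelianquasismooth} produces a projective manifold $Z$ and a generically finite morphism $\pi\colon Z\to X$ such that $\pi^*\mathcal F$ is defined by a closed rational $1$-form; such a foliation is transversely affine, and since transverse affineness is preserved and reflected by dominant morphisms (\cite{Casale,Gaeljvp}), $\mathcal F$ is transversely affine. (This step uses neither hypothesis (1) nor (2).) From now on I assume $G$ is solvable but not abelian.

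Next I would split according to the Ueda type. If $\utype(Y)=\infty$, then by hypothesis $\ord(NY)=m<\infty$, so $\utype(Y)>m$ and Theorem \ref{thm:Neeman} shows that $mY$ is a fiber of a fibration $f\colon X\to C$. Thus $Y$ is a (possibly multiple) fiber of $f$, and Proposition \ref{solvableholonomy} yields that $\mathcal F$ is transversely affine. If instead $\utype(Y)<\infty$, then Proposition \ref{P:solvablefinitetype} leaves exactly two possibilities: either $\mathcal F$ is transversely affine, and we are done, or $G$ is virtually abelian, $\utype(Y)\ge q$, and $NY$ is torsion of order $2q$ for some $q\in\mathbb N^*$. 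It remains to treat this last, ``virtually abelian'' case, which is the heart of the matter.

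In that case I would check the numerical hypothesis of Proposition \ref{P:Virtuallyabelian}. By Lemma \ref{L:hodge}, quasi-smoothness gives $N\mathcal F\equiv_{\mathrm{num}}\lambda Y$ with $\lambda\in\mathbb Q$, and the line bundle $\mathcal L:=N\mathcal F^{\otimes r}\otimes\mathcal O_X(-r\lambda Y)$ lies in $\Pict(X)$ and restricts to the torsion bundle $NY^{\otimes r(1-\lambda)}$ on $Y$; since $\utype(Y)<\infty$, Proposition \ref{P:Albanese} forces $\Pict(X)\to\Pict(Y)$ to have finite kernel, so $\mathcal L$ is itself a torsion line bundle. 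On the other hand, in this case $N\mathcal F_{|Y(\infty)}=\mathcal O_{Y(\infty)}((q+1)Y)\otimes T$ with $T$ flat of order two (Section \ref{SS:virtuallyabelian}); restricting $\mathcal L$ to the formal completion $Y(\infty)$ and cancelling the torsion factors then shows that $\mathcal O_{Y(\infty)}\big(r(q+1-\lambda)Y\big)$ is torsion, whereas $\mathcal O_{Y(\infty)}(Y)$ has infinite order in $\Pic(Y(\infty))$ precisely because $\utype(Y)<\infty$ (the Ueda line bundle, hence $\mathcal O_{Y(\infty)}(Y)$, is non-torsion on the formal neighborhood, the Ueda class at level $\utype(Y)$ being a non-zero class that scales non-trivially). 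Therefore $\lambda=q+1$, so $N\mathcal F\equiv_{\mathrm{num}}(q+1)Y$ and Proposition \ref{P:Virtuallyabelian} applies with $D=0$: there is a generically finite morphism $\tilde X\to X$, \'etale over a neighborhood of $Y$, along the preimage of which $\mathcal F$ has abelian holonomy, and by conclusion (1) of Theorem \ref{THM:Abelian} the pull-back of $\mathcal F$ to a further generically finite cover is defined by a closed rational $1$-form. Hence $\mathcal F$ is transversely affine, completing the proof.

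The main obstacle is exactly this last step, namely upgrading the mere proportionality $N\mathcal F\equiv_{\mathrm{num}}\lambda Y$ supplied by quasi-smoothness to the precise value $\lambda=q+1$ demanded by Proposition \ref{P:Virtuallyabelian}; this forces one to combine the global torsionness of $\mathcal L$ (coming from Proposition \ref{P:Albanese}) with the infinite order of $\mathcal O_X(Y)$ on the formal neighborhood of $Y$ (coming from $\utype(Y)<\infty$). Everything else is a routine dispatch of cases to Propositions \ref{P:abelianquasismooth}, \ref{solvableholonomy}, \ref{P:solvablefinitetype} and Theorem \ref{thm:Neeman}.
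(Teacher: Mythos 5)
Your proof is correct, and its case decomposition is exactly the paper's: via Theorem \ref{thm:Neeman} and Propositions \ref{solvableholonomy} and \ref{P:solvablefinitetype} everything reduces to the exceptional situation of item (2) of Proposition \ref{P:solvablefinitetype} with $\utype(Y)<\infty$, where the whole point is to verify the numerical hypothesis of Proposition \ref{P:Virtuallyabelian}. Where you genuinely diverge is in how you pin down $\lambda=q+1$. The paper compares the formal defining section $\omega^{\otimes r'}$ of a power of $N\F^*_{|Y(\infty)}\otimes\mathcal O_{Y(\infty)}((q+1)Y)$ with a global defining section $\Omega$ supplied by item (2) of Lemma \ref{L:hodge}, and concludes because the multiplicative factor, a formal function on $Y(\infty)$, must be constant by finiteness of the Ueda type. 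You instead argue purely with line bundles: $\mathcal L=N\F^{\otimes r}\otimes\mathcal O_X(-r\lambda Y)$ is torsion on $X$ (finite kernel of $\Pict(X)\to\Pict(Y)$ via Proposition \ref{P:Albanese}, exactly as in the proof of Proposition \ref{P:Virtuallyabelian}); restricting to $Y(\infty)$ and using $N\F_{|Y(\infty)}=\mathcal O_{Y(\infty)}((q+1)Y)\otimes T$ from Section \ref{SS:virtuallyabelian}, you get that $\mathcal O_{Y(\infty)}(r(q+1-\lambda)Y)$ is torsion, which is impossible for a nonzero multiple of $Y$ when $\utype(Y)<\infty$. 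Both routes exploit the same rigidity of finite Ueda type, but yours avoids handling the symmetric differential $\Omega$ and is a clean alternative. The one point you should make explicit is the non-torsionness of $\mathcal O_{Y(\infty)}(Y)$: a formal trivialization of $\mathcal O(nY)$ on $Y(\infty)$ forces the flat unitary extension of $NY^{\otimes n}$ to be trivial (since $NY^{\otimes n}\simeq\mathcal O_Y$ on the compact K\"ahler $Y$), hence $\mathcal U^{\otimes n}$ is trivial on $Y(\infty)$, contradicting that its obstruction class at level $k=\utype(Y)$ is $n$ times the nonzero Ueda class; equivalently, such a trivialization would yield a non-constant formal holomorphic function, excluded by finiteness of the Ueda type as the paper itself uses (Section \ref{ssec:CurvesFiniteType}).
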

\begin{proof}
This is already covered by Propositions  \ref{solvableholonomy}, \ref{P:solvablefinitetype} and \ref{P:Virtuallyabelian} (without quasi-smoothness assumptions), except when the holonomy group is formally conjugated to the group generated by  \[f(z) = \frac{z}{(1-z^q)^{1/q}} \quad \text{ and } \quad g(z) = \exp(\frac{2\pi i}{2q}) z \] and $Y$ has finite Ueda's type. Suppose now that we are in this latter case. Recall (Section \ref{SS:virtuallyabelian}) that $\F$ is defined in $Y(\infty)$ by a section  $\omega$ of
$$\left({N\mathcal F}_{|Y(\infty)}^* \otimes\mathcal O_{Y(\infty)}((q+1)Y) \right)^{\otimes 2}$$
which can be locally written as ${(dz/z ^{q+1})}^{\otimes 2}$. On the other hand, $\F$ is defined on the whole $X$ by some $\Omega\in H^0(X, {N\F^ *}^{\otimes r} \otimes \mathcal O_X(p Y))$ for some integers $r,p$, $r>0$ (second item of Lemma \ref{L:hodge}). One can suppose that $r=2r'$ is even. We thus obtain that ${\omega}^{\otimes r'}$ and $\Omega$ coincide on $Y(\infty)$ up to a multiplicative factor $F\in {\mathcal O}_{Y(\infty)}$ which is necessarily constant by finiteness of the Ueda type. We conclude observing that $\F$ satisfies the hypothesis of Proposition \ref{P:Virtuallyabelian}.  \end{proof}

Concerning the remaining case, $\utype(Y)=\ord(NY)=\infty$, we have only obtained the following partial result where we use the notion of divisible foliation, notion recalled in the beginning of the present section.
\begin{prop}\label{P:solvabledivisible}
Let $\mathcal F$ be a divisible (hence quasi-smooth)  foliation on a K\" ahler manifold $X$. Assume that $\mathcal F$ has a compact leaf $Y$ with solvable holonomy and that the order of $NY$ and the Ueda type of $Y$ are infinite. Then $\F$ is transversely affine.
\end{prop}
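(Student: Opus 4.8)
The plan is to globalise a transversely affine structure which exists a priori only near $Y$, using three ingredients: the analytic normalisability of a solvable non‑abelian holonomy whose linear part has infinite order; Neeman's extension statement (Theorem \ref{thm:neemanextension}) together with Lemma \ref{L:hodge}; and the divisibility hypothesis, which replaces the use of Ueda's extension theorem (Theorem \ref{T:Uedaextension}) available only in the finite type case.

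First I would make the standard reductions. The abelian case is Proposition \ref{P:abelianquasismooth}, so assume the holonomy group $G\subset\Diff$ is solvable but not abelian. Restricting to a general hyperplane section (see Section \ref{sec:extension}) one reduces to the case of a projective surface $X$ with $Y$ a compact curve; divisibility and quasi‑smoothness are preserved by restriction, and if $\utype(Y)$ or $\ord(NY)$ becomes finite one concludes by Proposition \ref{P:solvablequasismooth}, so one may keep both infinite. By Lemma \ref{L:hodge}, $N\F\equiv\lambda Y$ numerically; if $\lambda=1$ then item (3) of that lemma gives (the abelian alternative being excluded) that $Y$ is a fiber of a fibration, and Proposition \ref{solvableholonomy} applies; so assume $\lambda\neq1$. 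Then $\mathcal L=N\F^{\otimes r}\otimes\mathcal O_X(-r\lambda Y)\in\Pict(X)$ with $\mathcal L_{|Y}=NY^{\otimes r(1-\lambda)}$ nontrivial, hence a power of the unitary representation attached to $NY$ extends to $\pi_1(X)$, and exactly as in the proof of Proposition \ref{P:abelianquasismooth} Theorem \ref{thm:neemanextension} produces an effective divisor $E$ disjoint from $Y$ with $N\F^{\otimes r}=\mathcal O_X(rY-E)$. After the cyclic covering trick, which is étale over $Y$ since $E$ is disjoint from it, one may assume $N\F=\mathcal O_X(Y-E)$ with $E$ effective disjoint from $Y$; in particular $\F$ is defined by a rational $1$‑form $\omega$ whose polar locus lies in $Y$ (with order at most one) and whose zero locus lies in $E$.

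Second, I would show the holonomy is analytically normalisable. Since $\ord(NY)=\infty$, the linear part $\pi_1(Y)\to\C^{*}$ of the holonomy (the monodromy of the flat normal bundle) has infinite image, so $G$ is not one of the exceptional non‑normalisable solvable groups of Theorem \ref{thm:exceptional} — in the non‑abelian item (2) there the linear part is the finite group generated by $e^{\pi i/q}$. Therefore $G_1=G\cap\Difffor_1$ has rank at least two (rank zero would force $G$ abelian; rank one would place $G$ in the exceptional list), and by Corollary \ref{cor:rank2} the group $G$ is analytically conjugate into $\mathbb A_p$ for some $p$. Using the description of $\mathbb A_p$ in Section \ref{S:interpretation} together with flow‑box coordinates along $Y$, this yields a Euclidean neighbourhood $U$ of $Y$, a flat meromorphic connection $\nabla_U$ on $N\F_{|U}$ with polar divisor supported on $Y$, and the relation $\nabla_U(\omega_{|U})=0$; equivalently, $\F_{|U}$ is transversely affine.

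Finally — and this is where the work lies — I would extend this structure to all of $X$ using divisibility. Writing $\omega$ locally as generators $\omega_i$ of the conormal sheaf over a covering $\{U_i\}$ of $X$, divisibility furnishes \emph{holomorphic} $1$‑forms $\beta_i$ with $d\omega_i=\beta_i\wedge\omega_i$ on all of $X$; the failure of $\{\beta_i\}$ to be a genuine connection on $N\F$ is measured by a class $c$ in a coherent cohomology group of $X$ (essentially $H^1(X,N\F^{\otimes-1})=H^1(X,\mathcal O_X(E-Y))$), and the flat connection $\nabla_U$ shows that $c$ becomes trivial near $Y$ after allowing poles along $Y$. The aim is to absorb $c$ by admitting controlled poles along $Y$ and along the disjoint divisor $E$ (possibly after a further covering), producing a global meromorphic connection $\nabla$ on $N\F$ with $\nabla\omega=0$ whose curvature is a global meromorphic $2$‑form; comparing with the flat $\nabla_U$ near $Y$ and with the vanishing orders of $\omega$ along $E$ bounds its polar divisor, and a Hodge‑theoretic argument on the K\"ahler surface $X$ (a holomorphic $2$‑form representing the $(1,1)$‑class $c_1(N\F)$ must vanish, and the prescribed residues force the polar part to cancel) forces this curvature to be zero. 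Then $\nabla$ is flat, $\F$ is transversely affine, and descending through the auxiliary coverings concludes the proof. I expect this last globalisation — controlling the polar divisor of the extended connection well enough to make its curvature vanish — to be the main obstacle, and it is precisely here that divisibility, rather than mere quasi‑smoothness, is indispensable, since it provides the holomorphic local primitives $\beta_i$ on the whole of $X$ in lieu of Ueda's extension theorem.
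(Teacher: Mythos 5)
Your overall plan does track the paper's strategy — analytic normalizability of the solvable, non-abelian holonomy with infinite linear part gives a transversely affine structure on a Euclidean neighborhood $U$ of $Y$, Neeman's Theorem \ref{thm:neemanextension} together with Lemma \ref{L:hodge} controls $N\F$, and divisibility is what globalizes — but the decisive globalization step, which you yourself flag as ``the main obstacle'', is not carried out, and the sketch you offer for it is not the right mechanism. First, the divisor bookkeeping comes out with the wrong sign: after excluding $\lambda\le 0$ (a Hodge-identity step you skip: a holomorphic $1$-form with values in a unitary flat bundle on a compact K\"ahler manifold is $\nabla$-closed, forcing abelian unitary holonomy) and reducing by a ramified covering to $\lambda$ a positive integer $\neq 1$, hence $\lambda\ge 2$, Theorem \ref{thm:neemanextension} produces an effective divisor $D\sim(\lambda-1)Y$ disjoint from $Y$, and the ``three disjoint divisors'' argument (as in Proposition \ref{P:abelianunitary}, using $\ord(NY)=\infty$) upgrades this to the equality $N\F=\mathcal O_X(Y+D)$. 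So the defining form $\Omega$ has polar divisor $Y+D$ and \emph{no zero divisor}; your $N\F^{\otimes r}=\mathcal O_X(rY-E)$ with zeros along $E$ is not what the argument yields, and the absence of zeros is precisely what the final step needs.

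Indeed, the paper's globalization is elementary once $\Omega$ has empty zero divisor: take $\beta_i=\beta_U$ (the closed $1$-form with a simple pole on $Y$ coming from the local affine structure) on charts meeting $Y$, and $\beta_i=\alpha_i-df_i/f_i$ elsewhere, with $\alpha_i$ furnished by divisibility for the holomorphic generator $f_i\Omega$; then $\beta_i-\beta_j$ is holomorphic and proportional to $\Omega$, so $\beta_i-\beta_j=h_{ij}\Omega$ with $h_{ij}$ a \emph{holomorphic} cocycle, i.e.\ the obstruction lies in $H^1(X,\mathcal O_X)$ — not in $H^1(X,N\F^{\otimes-1})$ as you propose (with zeros along $E$ your class would acquire poles there and the argument below would not apply). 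This class vanishes because it vanishes along $Y$ and $H^1(X,\mathcal O_X)\to H^1(Y,\mathcal O_Y)$ is injective (Proposition \ref{P:Albanese}; $Y$ cannot be a fiber since $\ord(NY)=\infty$). The corrected global form $\omega_1$ satisfies $\omega_1\wedge\Omega=d\Omega$ and differs from $\beta_U$ on $U$ by $h\Omega$ with $h$ holomorphic near $Y$, hence constant (again $\ord(NY)=\infty$), so $\beta_U$ extends to $X$; its closedness propagates from $U$ by the identity principle. No meromorphic curvature form, no bound on its polar divisor, and no Hodge-theoretic vanishing on a surface are needed — and your proposed reduction to a projective surface by hyperplane sections is in any case unavailable ($X$ is only assumed K\"ahler) as well as unnecessary. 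A minor further imprecision: a rank-one $G_1$ does not automatically place $G$ in the exceptional list of Theorem \ref{thm:exceptional} (that list consists of the non-normalizable groups); the correct statement is that a non-abelian solvable group with infinite linear part cannot be exceptional, hence is analytically normalizable, which is the conclusion you need.
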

\begin{proof}
Let $\lambda\in\mathbb Q$ such that $N\F$ is numerically equivalent to $\lambda Y$. Thanks to Proposition \ref{P:abelianquasismooth}, one can assume that the holonomy is not abelian. In particular,  $\lambda\neq 1$ (item (3) of Lemma \ref{L:hodge}).

For the sake of simplicity, suppose for the moment that $\lambda$ is an integer. One firstly observe that $\lambda>0$, otherwise the foliation could be defined by a one form $\Omega$  twisted by a flat line bundle $ L$, with zero divisor ${(\Omega)}_0=\lambda Y$. By Hodge identities, we would have $\nabla \Omega=0$ where $\nabla$ is the flat unitary connection attached to $ L$. This clearly implies that the holonomy group along $Y$ is conjugated to a subgroup of $\{z\to az,\ |a|=1\}$, contradicting the non abelianity assumption.

Keeping the notation of Lemma \ref{L:hodge}, consider the flat line bundle $\mathcal L=N\mathcal F \otimes \mathcal O_X(-\lambda Y)$, $\mathcal L_{|Y}=
NY^{ (1- \lambda)}$. According to Theorem \ref{thm:neemanextension}, one can claim that there exists an effective divisor $D$ cohomologous to $(\lambda -1)Y$, $|D|\cap Y=\emptyset$. In particular the line bundle ${\mathcal L}'={\mathcal O} \big ((\lambda -1)Y -D\big ) $ is flat and coincides with ${\mathcal L}^*$ in the neighborhood of $Y$. Actually, these two line bundles coincide on $X$, otherwise the (unitary) monodromy of ${\mathcal L}'\otimes {\mathcal L}$ would be non trivial and we would be able to exhibit (see the proof of Proposition \ref{P:abelianunitary}) on a finite \'etale covering $\tilde X$ of $X$  three pairwise disjoint
effective divisors having numerically trivial normal bundle (both of them being copies of $Y$), implying that $Y$  a fiber of a fibration on $X$ and thus contradicting $\ord (NY)=\infty$. The foliation is thus defined by a global meromorphic form $\Omega$ without zero divisor, and whose polar divisor is equal to $Y+D$. Thus, in order to prove that $\F$ is transversely affine, it is sufficient to exhibit a \textit{global} closed meromorphic one form $\beta$ such that
\begin{equation}\label{e:globalintfact}
\beta\wedge\Omega=d\Omega.
\end{equation}

On the other hand, according to \S 4.4,   $\F$ is defined\footnote{From $\utype (Y)=\infty$, we infer that the linear part of the holonomy is infinite. This implies that this holonomy group is analytically normalizable (Theorem \ref{thm:exceptional}).} in a small connected neighborhood $U$ of $Y$ by a twisted \textit{meromorphic} one form $\Omega_1$ with pole of order $p+1$ along $Y$
$$\Omega_1\in H^0(U,{\Omega}^1((p+1)Y)\otimes E)$$
such that $E$ is a line bundle equipped with a flat holomorphic connection $\nabla$ with respect to which $\Omega_1$ is \textit{closed}. Note that $E$ is identified on $U$ with ${\mathcal O}(-pY)$ and then carries two flat structure: the first one being defined  by $\nabla$ and the second one being the flat unitary structure on ${\mathcal O}(-pY)$, which makes sense because the Ueda connection is trivial on an euclidean neighborhood of  $Y$ (Theorem \ref{thm:neemanextension}).  By $\nabla$-closedness, one can then deduce that there exists a \textit{holomorphic} one form $\eta=\nabla_u-\nabla$ (a priori only defined on $U$) such that
\begin{equation} \label{e:localintfact}
\eta\wedge\Omega_1=\nabla_u\Omega_1.
\end{equation}
where $\nabla_u$ is the flat unitary connection attached to $E$.

Now, observe that if $\omega$ is a meromorphic defining $1$-form for $\F$ at some point  $m$,
if $\omega_1$ is a germ of meromorphic one form at $m$ satisfying $\omega_1\wedge \omega=d\omega$ and if $f$ is a germ of meromorphic function, then
\[(\omega_1+df/f)\wedge (f\omega)=d(f\omega).\]
According to this rule, the equality (\ref{e:localintfact}) can be rewritten as
\[ (\eta-\frac{\nabla_{ u} F}{ F})\wedge\Omega=d\Omega\]
where $\nabla_{u}$ is the unitary flat connection attached to $\mathcal L$ and $F$ is a meromorphic section of ${\mathcal O} (-pY)$  defined on $U$ and  such that $(F)_\infty= pY$.

Set $\beta_U= (\eta-\frac{\nabla_{u} F}{ F})$. This is a closed form (with pole of order one on $Y$) defined a priori only on $U$. We claim that $\beta_U$ extends meromorphically on $X$, thus giving the sought integrating factor $\beta$ in Equation (\ref{e:globalintfact}).

Let us proceed with the proof.  Let $(U_i)$ be a sufficiently fine open covering of $X$. By virtue of the divisibility assumption, we can exhibit a meromorphic form $\beta_i$ on $U_i$ which satisfies $\beta_i\wedge\Omega=d\Omega$. Let us set
\begin{enumerate}
\item $\beta_i={\beta_U}_{|U_i}$ if $U_i\cap Y\neq \emptyset$,
\item $\beta_i= \alpha_i -df_i/f_i$ if $U_i\cap Y=\emptyset$, where $\alpha_i\in\Omega^1 (U_i)$ and $f_i\in{\mathcal O}(U_i)$ is a defining function of $D\cap U_i$. The 1-form $\alpha_i$ is given by the divisibility property for the local (holomorphic) generator $f_i\Omega$ of the foliation $\F$ on $U_i$.
\end{enumerate}
Thanks to this local expression of $\beta_i$, we can notice that $\beta_i-\beta_j\in\Omega^1 (U_i\cap U_j)$. In particular, and because $\Omega$ has no zeros in codimension one, we have
\[\beta_i-\beta_j= h_{ij}\Omega\]
where $h_{ij}\in {\mathcal O}(U_i\cap U_j)$ is a \textit{cocycle}. By construction, this cocycle is trivial in restriction to $Y$, hence has a trivial class in $H^1(X,{\mathcal O }_X)$ thanks to Proposition \ref{P:Albanese}.  Then, there exists on $X$ a meromorphic $1$-form $\omega_1$ satisfying $\omega_1\wedge \Omega=d\Omega$ which can be written on $U_i$ as $\omega_1= \beta_i+h_i\Omega$ where $h_i\in{\mathcal O} (U_i)$. In particular, on $U$ one obtains $\omega_1= \beta+h\Omega$ where $h\in {\mathcal O}(U)$  is indeed constant as $\ord(NY)=\infty$. This yields the closed extension of $\beta$ as wanted.

If $\lambda$ is not an integer, we reduce to the previous case by ramified covering trick.
\end{proof}

\subsection{Quasi-smooth foliations with holonomy tangent to identity}

\begin{prop}\label{tangentidentity}
Let $Y$ be a smooth compact divisor on a projective manifold $X$ such that $\utype(Y)<\infty$. Assume that $Y$ is a compact leaf of a quasi-smooth foliation $\mathcal F$ on $X$ such that the holonomy of $\mathcal F$ along $Y$ is tangent to identity.  Then $\F$ can be defined by a meromorphic closed one form whose polar divisor coincides with $2Y$. In particular, the holonomy of $\F$ along $Y$ is abelian.
\end{prop}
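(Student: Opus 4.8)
The plan is to first pin $Y$ down to the most rigid Ueda-theoretic situation allowed by the hypotheses, then to settle the abelian case by producing the $1$-form explicitly, and finally to rule out non-abelian holonomy using the factorization theorem for quasi-smooth foliations together with the surface classification.

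First I would rigidify the hypotheses. Since the holonomy of $\mathcal F$ along $Y$ is tangent to the identity, its linear part is trivial; as this linear part is the monodromy of the holomorphic flat Bott connection on $N\mathcal F_{|Y}=NY$, the bundle $NY$ carries a flat connection with trivial monodromy, so $NY\simeq\mathcal O_Y$ and $\ord(NY)=1$. One always has $\utype(Y)\geq1$, and if $\utype(Y)>1=\ord(NY)$ then Theorem~\ref{thm:Neeman} would force $\utype(Y)=\infty$, contradicting the hypothesis; hence $\utype(Y)=1$. Writing $G$ for the image of the holonomy representation, $G\neq\{\mathrm{Id}\}$ (otherwise $\utype(Y)=\infty$), and if $\nu\geq1$ is the least order of tangency of a non-trivial element of $G$, then $G$ is linearizable up to order $\nu$ with (trivial, hence) unitary linear part, so Lemma~\ref{L:Uedaholonomy} gives $\utype(Y)\geq\nu$, whence $\nu=1$.

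Next I would treat the case where $G$ is abelian. By Proposition~\ref{P:finiteUedatype} (which applies because $\utype(Y)<\infty$) the foliation $\mathcal F$ is defined by a closed rational $1$-form $\omega$ and $G$ is not formally linearizable; being abelian with $\nu=1$, Theorem~\ref{T:formalG} shows $G$ is formally conjugate to a non-trivial subgroup of $\mathbb E_{1,\mu}$, the symmetry group of $\tfrac{dz}{z^{2}}+\mu\tfrac{dz}{z}$. Hence the restriction of $\omega$ to a transversal is $G$-invariant and has pole order at least $2$ along $Y$ (otherwise it would be logarithmic or holomorphic and $G$ linearizable); by Proposition~\ref{P:utypek} and $\utype(Y)=1$ its pole order is at most $2$, so it is exactly $2$. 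Finally, restricting $\omega$ to a general surface section and applying Ueda's extension Theorem~\ref{T:Uedaextension} to the closed holomorphic $1$-form it induces on the complement of that section of $Y$, one sees $\omega$ has no poles away from $Y$. So $\mathcal F$ is defined by a closed meromorphic $1$-form with polar divisor exactly $2Y$, and in particular its holonomy is abelian.

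The hard part is showing $G$ cannot be non-abelian. If $G$ is not abelian then, since $G\subset\Difffor_1$, Remark~\ref{r:tangenttoid} makes it non-solvable, hence not virtually abelian. One may first reduce to $\dim X=2$: if $\dim X\geq3$, Theorem~\ref{T:factorization-quasi-smooth} gives a morphism $\pi:X\to S$ onto a projective surface and a foliation $\mathcal G$ with $\mathcal F=\pi^{*}\mathcal G$, $Y=\pi^{-1}(C)$ for a $\mathcal G$-invariant curve $C$, and $(\mathcal G,C)$ inherits all the hypotheses (quasi-smoothness because $\pi^{*}$ is injective on $H^{4}$, and the Ueda type and the tangency of the holonomy because Ueda coordinates and first integrals pull back). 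On a surface, Lemma~\ref{L:hodge}(1) gives that $N\mathcal F$ is numerically equivalent to a multiple of $Y$, hence \emph{numerically trivial}; I would then invoke the classification of foliations with numerically trivial normal bundle on projective surfaces (\cite{Brunella}): such a foliation with an invariant curve that — by finiteness of its Ueda type — is not a fibre of a fibration is, up to ramified coverings and birational maps, a Riccati or turbulent foliation, and in each of these the holonomy along an invariant curve is virtually abelian. This contradicts non-solvability of $G$, so $G$ is abelian and we are back in the previous case. The main obstacle, and the step I would have to be most careful about, is precisely this last reduction to the surface classification: one must check that all three features (Ueda type, quasi-smoothness, tangency of the holonomy) genuinely descend through $\pi$, and that no foliation with numerically trivial normal bundle on a projective surface admits a finite-Ueda-type invariant curve with non-abelian holonomy — alternatively one could try to exclude non-abelian transverse dynamics along $C$ directly from the strong pseudoconvexity of $S\smallsetminus C$ provided by Theorem~\ref{T:Uedaextension}.
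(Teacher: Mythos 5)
Your reduction of the hypotheses ($NY\simeq\mathcal O_Y$, $\ord(NY)=1$, $\utype(Y)=1$, tangency order $\nu=1$) is fine, and the abelian branch is essentially workable, though the step ``$\omega$ has no poles away from $Y$'' is stated too loosely: you need first that every polar component of a closed $1$-form defining $\F$ is $\F$-invariant, hence disjoint from $Y$ because $\F$ is regular along $Y$, and when you cut by a general surface section you must also justify that the Ueda type of $Y\cap S$ inside $S$ is still finite (injectivity of $H^1(Y,NY^{\otimes -1})\to H^1(Y\cap S,\,\cdot\,)$ for a sufficiently ample section, via Kodaira vanishing) before Theorem~\ref{T:Uedaextension} and the identity principle give holomorphy of $\omega$ off $Y$. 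These points are repairable.

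The genuine gap is the exclusion of non-abelian holonomy, which is exactly where quasi-smoothness has to do its work. After invoking Theorem~\ref{T:factorization-quasi-smooth} you claim that on the surface the normal bundle of $\mathcal G$ is ``numerically trivial''; this is incorrect: Lemma~\ref{L:hodge}(1) only gives $N\mathcal G\equiv\lambda C$ with $C^2=0$, i.e.\ $c_1(N\mathcal G)^2=0$, and $\lambda C$ pairs non-trivially with an ample class unless $\lambda=0$, so it is not numerically trivial. Consequently the ``classification of foliations with numerically trivial normal bundle'' you appeal to does not apply, and no statement of the needed form (holonomy along a finite-Ueda-type invariant curve of such a foliation is virtually abelian) is available in \cite{Brunella}, which the paper cites only for elliptic compact leaves; note also that Riccati foliations have affine, hence solvable but not virtually abelian, holonomy along invariant sections, so even a structure theorem of that type would not immediately give the contradiction. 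Moreover the descent of the hypotheses through $\pi:X\to S$ is asserted, not proved: $\pi(Y)$ need not be a smooth curve avoiding $\sing(\mathcal G)$, $N\F$ equals $\pi^*N\mathcal G$ only up to divisors supported on special fibres (so $c_1(N\mathcal G)^2=0$ does not obviously descend), and $\pi|_Y$ can be a non-trivial covering, which changes both the Ueda type and the holonomy group. By contrast, the paper's proof needs no abelian/non-abelian dichotomy: using Lemma~\ref{L:Uedaholonomy} (with $k=1$) and Lemma~\ref{L:hodge} it shows that $L=N\F\otimes\mathcal O_X(-2Y)$ is flat — this is where $\utype(Y)=1$ and quasi-smoothness force $\lambda=2$ — hence trivial near $Y$, so $\F$ is defined near $Y$ by a meromorphic $1$-form with polar divisor exactly $2Y$; it then compares this form with the closed rational $1$-form furnished by Theorem~\ref{THM:Existence}, and uses Theorem~\ref{T:Uedaextension} (quasi-smoothness again, to upgrade the meromorphic extension to a holomorphic one), the Residue Theorem, and the appendix on extension of transverse structures when $\dim X\ge 3$, to obtain the global closed form; abelianity of the holonomy comes out as a corollary rather than being a case hypothesis. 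To complete your argument you would have to either supply a correct surface classification step or switch to this direct construction.
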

\begin{proof}
Recall that in this context, $NY$ is trivial and $\utype(Y)<\infty$ means exactly $\utype(Y)=1$ by Theorem \ref{thm:Neeman}. Let us consider
$$L= N\F \otimes {\mathcal O}_X(-2Y).$$
All we have to do is to prove that $L$ is trivial on an euclidean neighborhood $U$ of $Y$. Let us take it for granted one moment and see how to conclude the proof. In this case, the foliation $\F$ is defined on $U$ by a meromorphic one form $\Omega$ whose polar locus is $2Y$. Moreover, Theorem \ref{THM:Existence} provides us with the existence of a closed rational $1$-form $\omega$ with poles of order $2$ along $Y$. Up to multiplying $\Omega$ by a suitable scalar number $\lambda\in{\mathbb C}^*$, one can observe that $\omega-\Omega=\omega_0$ is a well defined form in $U$ with at worst a logarithmic pole on $Y$. Assume firstly that $\dim(X)=2$. We can apply Theorem \ref{T:Uedaextension} and extend $\omega_0$ as a holomorphic\footnote{Applying Theorem \ref{T:Uedaextension} only yields a meromorphic extension since we do not know \emph{a priori} that $\omega_0$ is closed. However, extending $\omega$ shows that $N\F$ is linearly equivalent to $2Y+E$ where $E$ is a (non necessarily effective) divisor supported on curves contained in $X\backslash Y$ and that can be contracted (Theorem \ref{T:Ueda}). It follows that the intersection form is negative definite on $E$ and using that $N\F^2=0$, we conclude that $\omega_0$ is holomorphic on $X\backslash Y$.} form on $X\backslash Y$. The Residue Theorem shows that $\omega_0$ is in fact holomorphic on the whole of $X$ and is hence closed. At the end, the foliation $\F$ is given by the (global) closed rational one form $\Omega=\omega+\omega_0$. If $\dim(X)\geq 3$, we can apply the argument above to a surface obtained as a general complete intersection in $X$ and extend the one form to the ambient space using results of Section \ref{sec:extension}.

Now we prove that $L$ is trivial on a neighborhood of $Y$. It is enough to prove that $L$ is flat: if $L$ is given by a locally constant cocycle, this cocycle has to be trivial on a neighborhood of $Y$ since $L_{\vert Y}$ is trivial. By Lemma \ref{L:Uedaholonomy}, we first note that $L_{\vert Y(1)}$ is trivial. Now, let us use Lemma \ref{L:hodge}: there exists $\lambda\in \Q$ such that $N\F$ is linearly equivalent to $\mathcal{O}_X(\lambda Y)$. If $r$ is an integer such that $r\lambda\in\mathbb{Z}$, we can write:
$$\mathcal{O}_X(r(\lambda-2)Y)=L^{\otimes r}\otimes\underbrace{\mathcal{O}_X(r\lambda Y)\otimes N\F^{*\otimes r}}_{\textrm{flat}}.$$
From the triviality of $L_{\vert Y(1)}$, we deduce that $\mathcal{O}_X(r(\lambda-2)Y)_{\vert Y(1)}$ is flat (\emph{i.e.} given by a locally constant cocycle) and then trivial since its restriction to $Y$ is so (the cocycle being locally constant, being trivial on $Y$ is equivalent to being trivial on $Y(1)$). But the assumption $\utype(Y)=1$ implies that no multiple of $\mathcal{O}_X(Y)$ can be trivial when restricted to $Y(1)$ and we conclude that the only possibility is $\lambda=2$ and $L$ is flat. In view of the above argument, it ends the proof.
\end{proof}

\begin{remark}
We cannot expect a result analogous to Proposition \ref{tangentidentity} when $\utype(Y)=\infty$. For instance, take $C_g$ a compact curve of genus $g\geq 2$ and consider the ruled surface $X=C_g\times {\mathbb P}^1$. Let $z$ be a projective coordinate on ${\mathbb P}^1$ and $\omega_1$, $\omega_2\in \Omega^1(C_g)-\{0\}$. The foliation given on $X$ by the rational form
$$dz+z^2\mbox{pr}_1^*\omega_1+z^3\mbox{pr}_1^*\omega_2$$ leaves the fiber $Y:=\{z=0\}$ invariant, is regular in the neighborhood of $Y$ and has holonomy along $Y$ tangent to identity at order one. Let us also remark that $N\F={\mathcal O} (3Y)$ and accordingly that $\F$ is quasi-smooth.  However, this holonomy is not abelian, hence not solvable as soon as  $\omega_1$ and $\omega_2$ are $\C$-independent. Indeed, assume that abelianity holds. Recall that $\omega$ admits a formal integrating factor $g(z)= U/z^2$ i.e, $\displaystyle{d({g\omega})=0}$ where $U$ is a unit in $Y(\infty)$ and consequently depends only on the variable $z$.  On the other hand, the residue of $d(g{\omega})/z$ along $Y$ is equal to $U'(0)\omega_2 + U(0)\omega_3$ whose vanishing implies that $\omega_1$ and $\omega_2$ are $\C$-dependent.
\end{remark}

\begin{remark}
Except for Propositions \ref{P:solvabledivisible} and \ref{tangentidentity}, the propositions established in Section \ref{S:QSmoothFol} remain valid (without any fundamental changes) replacing "quasi-smooth" by
\[ N\F\buildrel{num}\over{\sim}\lambda Y + D\]
where $\lambda$ is a rational number and $D$ is a $\Q$ divisor whose support does not intersect $Y$. Notice that we are not aware of a single example foliation possessing a compact leaf and which does not satisfy this property.
\end{remark}

\appendix

\section{Extension of transverse structures}\label{sec:extension}

Here, we prove an extension result for transverse structures needed in various places to reduce the proofs to the surface case. Precisely, we need to
extend affine transverse structure from a general $2$-dimensional section to the ambient space.
This was proved in \cite{TheseDeFred} in the local setting. The analogous extension result for
meromorphic/rational first integrals, or for Euclidean structure (foliation defined by meromorphic/rational closed
$1$-form) is classical, see \cite{CeMa,CaCeDeBook}.
Here, we provide a proof which works for more general projective structures, which is missing in the literature.
We explain at the end how to adapt to the easier affine case.

In the local/projective setting, a transversely projective foliation $\F$ is the data of a triple $(\omega_0,\omega_1,\omega_2)$
of meromorphic/rational $1$-forms satisfying
\begin{equation}\label{E:FullIntegrability}
\left\{\begin{matrix}
d\omega_0&=&\hfill\omega_0\wedge\omega_1\\
d\omega_1&=&2\omega_0\wedge\omega_2\\
d\omega_2&=&\hfill\omega_1\wedge\omega_2
\end{matrix}\right.
\end{equation}
with $\omega_0\not\equiv 0$. The foliation $\F$ is defined by $\omega_0=0$, and outside of poles of $\omega_i$'s,
we can deduce from the triple a collection of local first integrals for $\F$ that are unique
up to left composition by Moebius transformations. For more details, see \cite{Scardua,croco1,croco5}.

Any two triples $(\omega_0',\omega_1',\omega_2')$ will define the same foliation, with the same
collection of first integrals (up to Moebius transformations) outside of poles, if, and only if,
it can be deduced from the initial triple by a combination of
\begin{equation}\label{E:changeTriple}
\left\{\begin{matrix}
\tilde\omega_0&=&f\cdot\omega_0\\
\tilde\omega_1&=&\omega_1 - \frac{df}{f}\\
\tilde\omega_2&=&{\frac{1}{f}}\cdot\omega_2
\end{matrix}\right.
\ \ \
\left\{\begin{matrix}
\tilde\omega_0&=&\omega_0\hfill\\
\tilde\omega_1&=&\omega_1 + 2g\omega_0\hfill\\
\tilde\omega_2&=&\omega_2+g\omega_1+g^2\omega_0-dg
\end{matrix}\right.
\end{equation}
with $f,g$ meromorphic/rational. Given $\F$, given $\omega_0$ a meromorphic/rational $1$-form
defining $\F$, it is easy to construct another $1$-form $\omega_1$ such that
$d\omega_0=\omega_0\wedge\omega_1$ (see \cite{CeMa,CaCeDeBook,croco1});
it is unique up to addition by a $1$-form proportional to $\omega_0$.
One easily check that any projective triple for $\F$ is equivalent to a triple $(\omega_0,\omega_1,\omega_2)$
with the given $(\omega_0,\omega_1)$; in other words, a projective structure for $\F$ is
equivalent to the data of a $1$-form $\omega_2$ satisfying (\ref{E:FullIntegrability}) with respect
to the given $1$-forms $(\omega_0,\omega_1)$. We will call $(\omega_0,\omega_1)$ a preprojective data for $\F$.

Let $(\underline z,w)\in\mathbb C^{n+1}$ with variables $\underline z=(z_1,\ldots,z_n)$
and consider the hyperplane section $\Sigma=\{w=0\}$.

\begin{lemma}\label{ExtStructLemmaReg}
Let $\F$ be a {\bf regular} codimension one foliation at the origin $p$ of $\C^{n+1}$ and assume $\Sigma$
is not $\F$-invariant. Then, any transversely projective structure for the restriction $\underline{\F}=\F_{\vert\Sigma}$
extends uniquely as a transversely projective structure for $\F$ at the neighborhood of $p$.

More precisely, let $(\underline{\omega}_0,\underline{\omega}_1,\underline{\omega}_2)$ be a projective structure
for the restriction $\underline{\F}$ and let $(\omega_0,\omega_1)$ be a preprojective data for $\F$
extending $(\underline{\omega}_0,\underline{\omega}_1)$ at the neighborhood of $p$.
Then $\underline{\omega}_2$ admits a unique meromorphic extension $\omega_2$ such that
$(\omega_0,\omega_1,\omega_2)$ is a projective triple for $\F$.
\end{lemma}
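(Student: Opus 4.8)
The plan is to produce $\omega_2$ in the shape $\omega_2=\omega_2^{(0)}+g\,\omega_0$, where $\omega_2^{(0)}$ is an explicit partial solution of the second equation of (\ref{E:FullIntegrability}) and $g$ is a meromorphic function tuned so that the third equation holds and $\omega_2|_\Sigma=\underline{\omega}_2$. First I would record the elementary geometric facts we are allowed to use: since $\mathcal F$ is regular at $p$ it is cut out by a holomorphic submersion $f$, so on a ball $U$ around $p$ the leaves of $\mathcal F$ are the fibers of $f$; and since $\Sigma=\{w=0\}$ is not $\mathcal F$-invariant, $f|_\Sigma$ is a nonconstant, hence open, holomorphic map, so every fiber of $f$ close enough to the one through $p$ meets $\Sigma$, while the locus $T\subset\Sigma$ where $\Sigma$ is tangent to $\mathcal F$ is a proper analytic subset of $\Sigma$, hence of codimension $\ge2$ in $U$. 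The problem is local, so I work on $U$.

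Next I would build $\omega_2^{(0)}$ and identify the obstruction. Differentiating $d\omega_0=\omega_0\wedge\omega_1$ yields $\omega_0\wedge d\omega_1=0$, so by de Rham's division lemma (in its meromorphic form) $d\omega_1=2\,\omega_0\wedge\omega_2^{(0)}$ for some meromorphic $1$-form $\omega_2^{(0)}$; comparing restrictions, $\omega_2^{(0)}|_\Sigma$ and $\underline{\omega}_2$ solve the same equation on $\Sigma$, so $\omega_2^{(0)}|_\Sigma-\underline{\omega}_2=g_0\,\underline{\omega}_0$ for a meromorphic $g_0$ on $\Sigma$. Setting $\varepsilon:=d\omega_2^{(0)}-\omega_1\wedge\omega_2^{(0)}$, the structure equations give $\omega_0\wedge\varepsilon=0$, hence $\varepsilon=\omega_0\wedge\gamma$ for a meromorphic $1$-form $\gamma$; and a direct computation shows that passing from $\omega_2^{(0)}$ to $\omega_2^{(0)}+g\,\omega_0$ replaces $\varepsilon$ by $\varepsilon+\omega_0\wedge(2g\,\omega_1-dg)$. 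Thus the third equation of (\ref{E:FullIntegrability}) holds for $\omega_2$ exactly when $g$ solves
\[
dg-2g\,\omega_1\equiv\gamma\pmod{\omega_0},
\]
that is, the linear first-order system obtained by restricting this congruence to the fibers of $f$, while the matching $\omega_2|_\Sigma=\underline{\omega}_2$ becomes the Cauchy datum $g|_\Sigma=-g_0$ — which, as one checks from the fact that $\underline{\omega}_2$ already obeys the equations on $\Sigma$, is itself a solution of the restriction of the same system to $\Sigma$.

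Then I would solve this leafwise Cauchy problem. Computing $d\varepsilon$ two ways (from the definition, $d\varepsilon=\omega_1\wedge\varepsilon$; from $\varepsilon=\omega_0\wedge\gamma$ and $d\omega_0=\omega_0\wedge\omega_1$, $d\varepsilon=\omega_0\wedge(\omega_1\wedge\gamma-d\gamma)$) one obtains $\omega_0\wedge(d\gamma-2\,\omega_1\wedge\gamma)=0$, which is precisely the Frobenius integrability condition for the system above; hence on each fiber of $f$ it has a unique solution once its value at one point is fixed. Propagating the datum $-g_0$ from $\Sigma$ along the fibers — legitimate over $U\setminus T$, where $\Sigma$ is transverse to $\mathcal F$ — yields, by the usual analytic dependence of solutions on initial conditions and transverse parameters, a meromorphic function $g$ on $U\setminus T$ with the required properties; since $T$ has codimension $\ge2$, $g$ extends meromorphically to $U$ by Levi's extension theorem, and $\omega_2:=\omega_2^{(0)}+g\,\omega_0$ is then the desired extension, the identities (\ref{E:FullIntegrability}) carrying over from $U\setminus T$ to $U$. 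For uniqueness, the difference of two extensions is $g\,\omega_0$ with $g|_\Sigma=0$ and $dg\equiv2g\,\omega_1\pmod{\omega_0}$; restricted to a fiber $F$ of $f$ this says that $g|_F$ vanishes on the hypersurface $F\cap\Sigma$ and has fiberwise differential a multiple of itself, forcing $g|_F\equiv0$, hence $g\equiv0$.

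I expect the real difficulty to be this third step: the leafwise equation looks overdetermined, and the point is that the identity $\omega_0\wedge(d\gamma-2\,\omega_1\wedge\gamma)=0$ makes it integrable, that $\mathcal F$ being a local fibration (regularity) makes the Cauchy problem well posed, and that the degeneracy locus $T$ of the transversal has codimension $\ge2$ so Levi's theorem restores $g$ across it — with the bookkeeping of polar divisors throughout being the point one must handle carefully. Finally, for the transversely affine case one runs the same scheme with $\omega_1$ in the role of $\omega_2$: a transverse affine structure is a pair $(\omega_0,\omega_1)$ with $d\omega_0=\omega_0\wedge\omega_1$ and $d\omega_1=0$, there is one fewer structure equation to restore, the obstruction and its integrability identity are the analogous but lighter computation, and the resulting leafwise Cauchy problem has a simpler right-hand side.
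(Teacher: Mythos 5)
Your algebraic skeleton is correct and checks out: the divisions producing $\omega_2^{(0)}$ and $\gamma$, the reduction of the third equation of (\ref{E:FullIntegrability}) to the congruence $dg-2g\,\omega_1\equiv\gamma\ (\mathrm{mod}\ \omega_0)$, the identity $\omega_0\wedge(d\gamma-2\,\omega_1\wedge\gamma)=0$, the compatibility of the datum $-g_0$ with the system restricted to $\Sigma$, and the uniqueness argument are all fine. The genuine gap is the step you yourself single out: \emph{``propagating the datum $-g_0$ along the fibers yields a meromorphic function $g$ on $U\setminus T$''} is asserted, not proved, and it is exactly where the content of the lemma lies, for two distinct reasons. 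First, well-definedness: the Cauchy datum is prescribed on $F\cap\Sigma$, which at a tangency point of $\Sigma$ with $\F$ may be disconnected (and $F$ minus the polar divisor need not be simply connected), so continuations from different components, or along different paths, need not agree; your verification that $-g_0$ solves the restricted system only gives consistency along each component. This is a real constraint, not bookkeeping: with $H=w-z_1^2$ on $\C^3$, $\Sigma=\{w=0\}$, $(\omega_0,\omega_1)=(dH,0)$, $\underline{\omega}_2=dz_1$, one may take $\omega_2^{(0)}=0$, $\gamma=0$, and the leafwise problem reads ``$g$ constant on each leaf with $g|_\Sigma=-1/(2z_1)$'', which is inconsistent on the two branches $z_1=\pm\sqrt{-c}$ of $F_c\cap\Sigma$. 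This is precisely the point where the paper's proof invokes Mattei--Moussu \cite{MatteiMoussu}: the restriction $\underline H=H_{|\Sigma}$ of a primitive first integral is again primitive, i.e.\ the traces of the leaves on $\Sigma$ are connected; your argument needs that input and does not supply it. Second, meromorphy: solving a leafwise \emph{linear} equation with meromorphic coefficients does not in general produce a meromorphic, or even single-valued, function --- the homogeneous solutions are of the form $\exp\bigl(2\int\omega_1|_F\bigr)$ and typically branch or acquire essential singularities along the polar divisor $P$ of $\omega_1,\gamma,g_0$. ``Analytic dependence on initial conditions'' only gives holomorphy on $U\setminus(T\cup P)$, and Levi's theorem removes the codimension-two set $T$ but says nothing about the codimension-one set $P$; so the claimed meromorphic extension of $g$ across $P$ is unjustified (a posteriori it is true, but only because the extension exists --- which is what you are trying to prove).

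The paper closes both holes at once by a change of gauge rather than by integration: since $\F$ is regular it admits a first integral $H$; writing $\omega_0=F\,dH$, $\omega_1=-dF/F+G\,dH$ and using the transformations (\ref{E:changeTriple}), the restricted triple is reduced to $(d\underline H,0,\underline{\omega}_2')$, where the structure equations force $\underline{\omega}_2'=\phi(\underline H)\,d\underline H$ for a meromorphic germ $\phi$ of one variable (connectedness of the fibers of $\underline H$, i.e.\ primitivity, is used exactly here); then $\phi(H)\,dH$ is a manifestly meromorphic extension, transported back by the inverse gauge. If you want to keep your direct scheme, the honest fix is to perform this normalization first: with $(\omega_0,\omega_1)=(dH,0)$ your leafwise Cauchy problem trivializes --- at which point your argument becomes the paper's proof.
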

\begin{proof}At the neighborhood of $p$, let $H$ be a primitive first integral for $\F$, with $H(p)=0$.
The restriction $\underline H:=H_{\vert \Sigma}$ is a primitive integral for $\underline{\F}$
as well (see \cite{MatteiMoussu}). We have $\omega_0=FdH$ and $\omega_1=-\frac{dF}{F}+GdH$
for some meromorphic functions $F,G$ at the neighborhood of $p$.
In restriction to $\Sigma$, we can redefine the projective structure by a unique triple
$(\underline{\omega}_0',\underline{\omega}_1',\underline{\omega}_2')$ with
$(\underline{\omega}_0',\underline{\omega}_1')=(d\underline{H},0)$; from integrability conditions,
we have
$$0=d \underline{\omega}_1'=2\underline{\omega}_0'\wedge\underline{\omega}_2'=2\underline{F}d\underline{H} \wedge\underline{\omega}_2'$$
and therefore $\underline{\omega}_2'=\phi(\underline{H})d\underline{H}$ for some germ of meromorphic function $\phi$
on $(\C,0)$. The extension
$$\omega_2':=\phi(H)dH$$
defines a projective triple for $\F$ at the neighborhood of $p$ extending the given projective structure in restriction to $\Sigma$.
Finally, using the change of projective triple defined by $F$ and $G$, we can deduce the extension $\omega_2$
from $\omega_2'$ at the neighborhood of $p$.
\end{proof}

We note that we have not used that $\Sigma$ was of codimension $1$; it could be even a curve
provided that it is not $\mathcal F$-invariant. For the singular case below, we really need that $\Sigma$
is a dimension $\ge2$ section.

\begin{lemma}\label{ExtStructLemmaSing}
Let $\F$ be a {\bf singular} codimension one foliation at the origin $p$ of $\C^{n+1}$, $n\ge2$. Assume $\Sigma$
is not $\F$-invariant and cutting-out the singular set $S:=\mathrm{Sing}(\F)$ in codimension $2$.
Then, any transversely projective structure for the restriction $\underline{\F}=\F_{\vert\Sigma}$
extends uniquely as a transversely projective structure for $\F$ at the neighborhood of $p$, as in Lemma \ref{ExtStructLemmaReg}.
\end{lemma}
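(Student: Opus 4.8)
The plan is to adapt the proof of Lemma~\ref{ExtStructLemmaReg}: extend the projective structure over the locus where $\mathcal F$ is regular, and then remove the $2$-codimensional set $S=\mathrm{Sing}(\mathcal F)$ by a Hartogs-type argument. Fix a small polydisc $U$ around $p$ together with preprojective data $(\omega_0,\omega_1)$ for $\mathcal F$ on $U$ restricting to $(\underline\omega_0,\underline\omega_1)$ on $\Sigma$; one has to produce a meromorphic $\omega_2$ on a neighborhood of $p$ restricting to $\underline\omega_2$ and satisfying \eqref{E:FullIntegrability}. Since $\Sigma$ is not $\mathcal F$-invariant, the tangency locus $T=\mathrm{tang}(\mathcal F,\Sigma)$ is a proper analytic subset of $\Sigma$; using that $\Sigma$ is a general hyperplane section cutting $S$ in codimension $2$, one may assume (after shrinking $U$) that $\mathrm{Sing}(\underline{\mathcal F})\subseteq(S\cup T)\cap\Sigma$ has codimension $\ge 2$ in $\Sigma$ — this is where the hypotheses $n\ge2$ and ``$\Sigma$ cuts $S$ in codimension $2$'' enter, and in particular $\Sigma\setminus\mathrm{Sing}(\underline{\mathcal F})$ is then connected — and that $\omega_0$ has no codimension one zeros on $U$, so that de~Rham's division lemma is available throughout.

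On $U\setminus S$ the foliation is regular, so around every point of $\Sigma\setminus\mathrm{Sing}(\underline{\mathcal F})$ Lemma~\ref{ExtStructLemmaReg} yields a meromorphic $1$-form completing $(\omega_0,\omega_1)$ to a projective triple for $\mathcal F$ and restricting to $\underline\omega_2$; by the uniqueness clause in that lemma these local forms agree on overlaps and glue to a meromorphic $\omega_2$ on a connected open neighborhood $W$ of $\Sigma\setminus\mathrm{Sing}(\underline{\mathcal F})$ in $U$, with $(\omega_0,\omega_1,\omega_2)$ a projective triple on $W$. To propagate this to all of $U\setminus S$, write $\omega_2=\mu+h\,\omega_0$ where $\mu$ is a fixed meromorphic solution on $U$ of $d\omega_1=2\,\omega_0\wedge\mu$ (it exists because $\omega_0\wedge d\omega_1=0$, a consequence of $d^2\omega_0=0$, together with de~Rham's division lemma). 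A short computation using the integrability relations shows that $\Theta:=d\mu-\omega_1\wedge\mu$ satisfies $\omega_0\wedge\Theta=0$, hence $\Theta=\omega_0\wedge\tau$ for a meromorphic $1$-form $\tau$, and that the remaining equation in \eqref{E:FullIntegrability} is equivalent to
\[
 dh\ \equiv\ \tau+2h\,\omega_1\pmod{\omega_0}.
\]
This is a flat meromorphic affine connection along the leaves of $\mathcal F|_{U\setminus S}$, whose plaques are simply connected, so it is locally solvable; since a solution $h$ is already given on $W$, transporting it along leaves yields a meromorphic function $h$, hence a meromorphic $\omega_2$, on all of $U\setminus S$ with $(\omega_0,\omega_1,\omega_2)$ a projective triple.

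Finally, as $S$ has codimension $\ge2$ in $U$, the meromorphic form $\omega_2$ extends meromorphically across $S$ by the Levi extension theorem for meromorphic sections; the identities \eqref{E:FullIntegrability} and $\omega_2|_\Sigma=\underline\omega_2$, valid on the dense open set $U\setminus S$, then hold everywhere by analytic continuation, and uniqueness follows from Lemma~\ref{ExtStructLemmaReg} together with the identity principle. The affine case is treated identically, replacing the projective triple and the moves \eqref{E:changeTriple} by a pair $(\omega_0,\omega_1)$ with $d\omega_0=\omega_0\wedge\omega_1$ and $d\omega_1=0$ — the specialization $\omega_2\equiv0$ of the above — which makes the propagation step considerably simpler.

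I expect the main obstacle to be that propagation step: one must check that the projective structure defined near $\Sigma$ extends to a \emph{single-valued} meromorphic object on $U\setminus S$. Concretely this means controlling the holonomy of the affine connection $dh\equiv\tau+2h\,\omega_1\pmod{\omega_0}$ around loops lying in leaves of $\mathcal F|_{U\setminus S}$, and checking that, after shrinking $U$, every plaque of $\mathcal F$ near $p$ reaches the neighborhood $W$ of $\Sigma\setminus\mathrm{Sing}(\underline{\mathcal F})$ so that no ambiguity arises in the transport. It is exactly here that the non-invariance and the genericity of the hyperplane section $\Sigma$ are used; in the regular situation of Lemma~\ref{ExtStructLemmaReg} the existence of a local first integral on all of the (small) neighborhood made this step automatic.
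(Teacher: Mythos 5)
Your first step (applying Lemma \ref{ExtStructLemmaReg} at the points of $\Sigma$ away from the singular locus and gluing by its uniqueness clause) agrees with the paper, but the way you continue has a genuine gap. Everything hinges on your ``propagation step'': transporting the solution $h$ of $dh\equiv\tau+2h\,\omega_1\pmod{\omega_0}$ along the leaves of $\F_{|U\setminus S}$ so as to get a single-valued meromorphic $\omega_2$ on all of $U\setminus S$. You yourself identify this as the main obstacle and leave it unresolved: you would need (i) that every plaque of $\F$ near $p$ actually meets the neighborhood $W$ of $\Sigma\setminus\mathrm{Sing}(\underline\F)$, (ii) that the leafwise affine equation has trivial holonomy along loops contained in leaves (leaves of a singular foliation near $p$ need not be simply connected), and (iii) that the leafwise-transported $h$ is meromorphic in the transverse direction. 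None of these is established, and none follows from the stated hypotheses: the lemma only assumes $\Sigma$ non-invariant and cutting $S$ in codimension $2$, so the ``genericity of the hyperplane section'' you invoke at the end is not available. As written, the argument proves extension only on the open set $W$ swept out by Lemma \ref{ExtStructLemmaReg}, not on $U\setminus S$, so the final Levi-type extension across $S$ never gets off the ground.

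The paper's proof shows that this whole propagation (and the subsequent removal of $S$ in the ambient space) is unnecessary. It takes a polydisc $\underline U\subset\Sigma$ around $p$ and the Hartogs-like domain $\underline V=\underline U$ minus an $\epsilon$-neighborhood of $\underline S=S\cap\Sigma$; since $\underline S$ has codimension $2$ in $\Sigma$ and $\dim\Sigma=n\ge2$, the envelope of holomorphy of $\underline V$ is $\underline U$, hence the envelope of holomorphy of any ambient neighborhood $V$ of $\underline V$ already contains a neighborhood of $p$. Lemma \ref{ExtStructLemmaReg} produces $\omega_2$ on such a $V$ (this is exactly your first step), and then meromorphic extension to the envelope of holomorphy, together with analytic continuation of the identities (\ref{E:FullIntegrability}), finishes the proof. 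So the codimension-$2$ hypothesis is used to run a Hartogs figure \emph{inside} $\Sigma$, not to remove $\mathrm{Sing}(\F)$ from the ambient space; if you replace your leafwise-transport step by this envelope-of-holomorphy argument, your proof closes.
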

\begin{proof} Consider an open neighborhood $\underline U$ of $p$ in $\Sigma$ together with a Hartogs-like
domain $\underline V\subset \underline U$ whose domain of holomorphy is $\underline U$, but not containing the codimension $2$ set $\underline S:=S\cap\Sigma$.
For instance, one can choose a small polydisc for $\underline U$, and deduce $\underline V$ by deleting the $\epsilon$-neighborhood
of $\underline S$ in $\underline U$ for $\epsilon>0$ small enough.
Fix $(\omega_0,\omega_1)$ on the neighborhood $U$ of $\underline U$ (in the ambient space) such that
$\omega_0$ is defining $\F$ on $U$ and $d\omega_0=\omega_0\wedge\omega_1$. The projective structure
for the restriction $\underline\F$ is defined by $(\underline{\omega}_0,\underline{\omega}_1,\underline{\omega}_2)$
for a (unique) meromorphic $1$-form $\underline{\omega}_2$ on $\underline U$.
By Lemma \ref{ExtStructLemmaReg}, the $1$-form $\underline{\omega}_2$ extends as a meromorphic $1$-form
$\omega_2$ on the neighborhood $V$ of $\underline V$ (in the ambient space) extending the projective structure
for $\F_{\vert V}$. The domain of holomorphy $\underline V$ of $V$ obviously contains a neighborhood of $p$.
Consequently, the meromorphic $1$-form $\omega_2$ extends on  $\underline V$, extending by the way
the projective structure of $\F$ on a neighborhood of $p$.
\end{proof}

A direct consequence of the above lemmata is

\begin{cor}
Let $\F$ be a (singular codimension one) foliation on a projective manifold $X$ of complex dimension $\geq 3$ and
let $\Sigma\subset X$ be a smooth hypersurface. We assume that $\underline S:=\mathrm{Sing}(\F)\cap\Sigma$
has codimension $2$ in $\Sigma$. If $\F_{\vert\Sigma}$ is transversely projective
in restriction to $\Sigma$, then it is also transversely projective
on an Euclidean neighborhood $U$ of $\Sigma$. More precisely, the projective structure in $\Sigma$
extends on $U$ in the following sense: given $(\omega_0,\omega_1)$ on $X$,
the $1$-form $\underline{\omega}_2$ defining the projective structure for $\underline{\F}$ in $\Sigma$
extends uniquely on $U$.

If $\Sigma$ is a general hyperplane section (with very ample normal bundle),
then the projective structure actually extends on the whole of $X$.
\end{cor}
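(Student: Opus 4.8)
The plan is to proceed in two steps: first extend the given transversely projective structure from $\Sigma$ to a connected Euclidean neighborhood $U$ of $\Sigma$ by means of Lemmas \ref{ExtStructLemmaReg} and \ref{ExtStructLemmaSing}, and then propagate the extension to all of $X$ by reducing the problem to the extension of a single meromorphic function across the ample hypersurface $\Sigma$.

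\textbf{Step 1: extension to a neighborhood.} I would fix once and for all a rational $1$-form $\omega_0$ defining $\F$ on $X$ together with a rational $1$-form $\omega_1$ satisfying $d\omega_0=\omega_0\wedge\omega_1$; the existence of such a pair is classical (see \cite{CeMa,CaCeDeBook}), $\omega_1$ being obtained by dividing the $2$-form $d\omega_0$ — which lies in the ideal generated by $\omega_0$ since $\F$ is a foliation — by $\omega_0$. As $\dim X\geq 3$ and $\Sigma$ is a general hyperplane section, $\Sigma$ is smooth and connected, it is not $\F$-invariant, and $\sing(\F)\cap\Sigma$ has codimension at least two in $\Sigma$; hence the restrictions $(\omega_0,\omega_1)|_\Sigma$ constitute a preprojective data for $\underline{\F}:=\F|_\Sigma$, and the given projective structure on $\underline{\F}$ is represented by a unique triple $(\omega_0,\omega_1,\underline{\omega}_2)|_\Sigma$. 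Around each point $p\in\Sigma$ I would invoke Lemma \ref{ExtStructLemmaReg} when $p\notin\sing(\F)$ and Lemma \ref{ExtStructLemmaSing} when $p\in\sing(\F)$ (the codimension hypothesis being precisely what is needed there), obtaining a unique meromorphic extension of $\underline{\omega}_2$ to a neighborhood of $p$ that forms a projective triple with $(\omega_0,\omega_1)$. By the uniqueness clause in these lemmas, the local extensions patch together into a meromorphic $1$-form $\omega_2$ on a connected Euclidean neighborhood $U$ of $\Sigma$, and $(\omega_0,\omega_1,\omega_2)$ is a projective triple for $\F|_U$; in particular the integrability conditions (\ref{E:FullIntegrability}) hold throughout $U$. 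This already establishes the first (neighborhood) assertion.

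\textbf{Step 2: extension to $X$.} Differentiating $d\omega_0=\omega_0\wedge\omega_1$ yields $\omega_0\wedge d\omega_1=0$, so the same division operation used above produces a rational $1$-form $\omega_2^\sharp$ on $X$ with $\omega_0\wedge\omega_2^\sharp=\tfrac12 d\omega_1$. On $U$, the second equation of (\ref{E:FullIntegrability}) then reads $\omega_0\wedge(\omega_2-\omega_2^\sharp)=0$, whence $\omega_2=\omega_2^\sharp+h\,\omega_0$ for a meromorphic function $h$ on $U$. This is the only point where very ampleness of $N\Sigma$ enters: since $\Sigma$ is ample and $\dim X\geq 3$ (in fact $\dim X\geq 2$ suffices), the field of germs along $\Sigma$ of meromorphic functions coincides with the function field $\C(X)$ — this is the classical Lefschetz-type statement (the theorem on formal functions along an ample divisor, cf. \cite{Hironaka}). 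Consequently $h$ is the restriction to $U$ of a rational function on $X$, and $\omega_2:=\omega_2^\sharp+h\,\omega_0$ is a rational $1$-form on $X$ restricting to the form built in Step 1. Finally, the three equations in (\ref{E:FullIntegrability}) hold between rational forms on the dense open set $U$, hence everywhere by the identity principle, so $(\omega_0,\omega_1,\omega_2)$ is a transversely projective structure for $\F$ extending the one on $\Sigma$.

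\textbf{The affine case and the main difficulty.} The affine variant is handled identically, replacing the last two equations of (\ref{E:FullIntegrability}) by the single closedness condition $d\omega_1=0$: Step 1 uses the affine analogues of Lemmas \ref{ExtStructLemmaReg}--\ref{ExtStructLemmaSing}, and Step 2 writes $\omega_1=\omega_1^\sharp+h\,\omega_0$ with $\omega_0\wedge\omega_1^\sharp=d\omega_0$ and extends $h$ as before. The genuinely delicate ingredient is the global extension in Step 2; the mild novelty is the observation that, once a rational preprojective data on $X$ has been fixed, extending the whole projective (or affine) structure across $\Sigma$ collapses to extending the single meromorphic coefficient $h$, for which the classical formal-function machinery applies directly.
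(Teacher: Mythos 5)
Your proposal is correct. For the first assertion it is exactly the argument the paper has in mind when it calls the corollary ``a direct consequence of the above lemmata'': restrict a fixed rational preprojective pair $(\omega_0,\omega_1)$ to $\Sigma$, apply Lemma \ref{ExtStructLemmaReg} at regular points and Lemma \ref{ExtStructLemmaSing} at points of $\underline S$, and glue the local extensions of $\underline{\omega}_2$ by uniqueness. One small wording issue in your Step 1: the first assertion of the corollary concerns an arbitrary smooth non-invariant hypersurface with the codimension-two condition, so these properties are hypotheses there, not consequences of $\Sigma$ being a general hyperplane section; only Step 2 uses generality/very ampleness. For the second assertion the paper gives no explicit mechanism, and your completion is a legitimate and rather clean one: since $\omega_0\wedge d\omega_1=0$, contracting with a rational vector field dual to $\omega_0$ (the usual Godbillon--Vey-type division, as in \cite{croco1}) yields a global rational $\omega_2^\sharp$ with $d\omega_1=2\,\omega_0\wedge\omega_2^\sharp$, so that on $U$ one has $\omega_2=\omega_2^\sharp+h\,\omega_0$, and the whole extension problem collapses to extending the single meromorphic function $h$ across the ample divisor $\Sigma$, which is the classical statement that germs of meromorphic functions along an ample divisor of a projective manifold of dimension $\ge 2$ are rational (the G3 property of \cite{Hironaka}); the remaining equations of (\ref{E:FullIntegrability}) then propagate from $U$ to $X$ by analytic continuation (here $U$ being a nonempty open set of the connected manifold $X$ suffices, density is not needed). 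The same scheme indeed adapts verbatim to the affine case, as you indicate.
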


All above results remain valid when replacing projective transverse structure by
affine transverse structure, Euclidean transverse structure or meromorphic/rational first integral.
For instance, given $\omega_0$ defining $\F$, an affine transverse structure is equivalent
to the data of a meromorphic $1$-form $\omega_1$ satisfying $d\omega_0=\omega_0\wedge\omega_1$ and $d\omega_1=0$.
If $\F$ is locally defined by a (minimal) holomorphic first integral $H$, then we can choose $\omega_0=dH$ and
an affine structure is defined by $\omega_1=\phi(H)dH$. It is therefore straightforward to adapt the proof of the previous lemmata
to the affine case.

\bibliographystyle{amsplain}

\end{document}